\documentclass[11pt, reqno]{amsart}
\usepackage{authblk}
\usepackage{graphicx} 
\usepackage{xcolor}
\usepackage{dsfont}
\usepackage{epstopdf}
\usepackage{framed}
\usepackage{pifont}
\usepackage[foot]{amsaddr}
\usepackage{tikz}
\usetikzlibrary{quotes}
\usepackage{siunitx}
\usetikzlibrary{positioning}
\usepackage{pgf}
\usepackage[normalem]{ulem}
\usepackage{multirow}
\usepackage{geometry}
\usepackage{hyperref}
\usepackage[sort]{cite} 
\geometry{letterpaper} 

\usepackage{newpxtext}
\linespread{1.1}

\usepackage{todonotes}

\definecolor{colorJRblue}{rgb}{0.,0.,1.}%67}
\definecolor{colorJRred}{rgb}{1.,0.,0.}%67}
\definecolor{colorJRgreen}{rgb}{0.,0.6,0.}%67}

\DeclareMathOperator{\sech}{sech}
\DeclareMathOperator{\diag}{diag}
\def\P{P}

\def\cP{\check{P}}

\def\uE{\underline{E}}
\def\calT{\mathcal{T}}
\def\calL{\mathcal{L}}
\def\calD{\mathcal{D}}

\def\calO{\mathcal{O}}
\def\vb{V^*_1}

\def\cP{\check{P}}

\def\calF{\mathcal{F}}
\def\eps{\varepsilon}
\def\al{\alpha}
\def\be{\beta}

\def\ga{\gamma}

\def\O{\mathcal{O}}

\def\calF{\mathcal{F}}

\def\R{\mathbb{R}}
\def\N{\mathbb{N}}

\def\ta{\underline{a}}
\def\tE{\tilde{E}}
\def\te{b}

\def\ec{e}
\def\Np{{N'}} 
\def\Geps{G_\eps}
\def\Gred{G_0^{\rm red}} %G
\def\Gerr{G_\eps^{\rm err}} % \widetilde{G}
\def\Jnil{\mathcal{J}}
\def\GCM{\mathcal{G}_{\rm CM}}
\def\Gnot{{S_\eps}}

%%%%%% Theorem styles 
\newtheorem{theorem}{Theorem}[section] 
\newtheorem{remarkss}{Remarks}[]
\newtheorem{remarks}{Remarks}[section]
\newtheorem{remark}[remarks]{Remark}

\newtheorem{lemma}[theorem]{Lemma}

\newtheorem{proposition}[theorem]{Proposition}

\newtheorem{deff}[remarkss]{Definition}
\newtheorem{main}[theorem]{Main Result}
\newtheorem{condition}[theorem]{Condition}
%%%%%%%%%%%%%%%%%%%%%%%%%%%%%%

\allowdisplaybreaks

% TITLE
\title[Chaos and singularities of fronts in multi-component FitzHugh-Nagumo-type systems]{Chaotic motion and singularity structures of front solutions 
in multi-component FitzHugh-Nagumo-type systems}

% Martina
\author{Martina Chirilus-Bruckner*}
\address[*]{Mathematisch Instituut, Leiden University, P.O.Box 9512, 2300 RA Leiden, the Netherlands}
% Peter
\author{Peter van Heijster**}                  \address[**]{Biometris, Wageningen University \& Research, 6708 PB Wageningen, the Netherlands}
% Jens
\author{Jens D.M.\ Rademacher***}
\address[***]{Universit\"at  Hamburg, MIN Faculty, Department of Mathematics, 20146 Hamburg, Germany}

\begin{document}

\maketitle

%%%
%% ABSTRACT
%%%

\begin{abstract}

We study the dynamics of front solutions in a certain class of multi-component reaction-diffusion systems, where one fast component governed by an Allen-Cahn equation is weakly coupled to a system of $N$ linear slow reaction-diffusion equations. By using geometric singular perturbation theory, Evans function analysis and center manifold reduction, we demonstrate that and how the complexity of the front motion can be controlled by the choice of coupling function and the dimension $N$ of the slow part of the multi-component reaction-diffusion system. On the one hand, we show how to imprint and unfold a given scalar singularity structure. On the other hand, for $N\geq 3$ we show how chaotic behaviour of the front speed arises from the unfolding of a nilpotent singularity via the breaking of a Shil'nikov homoclinic orbit. The rigorous analysis is complemented by a numerical study that is heavily guided by our analytic findings.\\

\noindent {\sc Keywords.} Allen-Cahn equation, spatial dynamics, geometric singular perturbation theory, Evans function analysis, center manifold reduction, normal forms
nilpotent linearisation, Shil'nikov homoclinic bifurcations, and persistent strange attractors.
\end{abstract}

\pagebreak
\tableofcontents
\pagebreak

%%%
%% SEC: INTRO
%%%

\section{Introduction}
\label{S:intro}
The dynamics of spatial phase separation is a classical and broadly studied phenomenon, in particular through sharp interface limits and phase field models, that can also have multiple components \cite[e.g]{
Garcke1998, Steinbach2023,WheelerMcFadden}. In this paper we {take a mathematical view and restrict to a one-dimensional extended homogeneous domain. We} are interested in 
the dynamics of interfaces that is caused by the interaction between the components in a multi-scale system with a distinguished phase field component. 
In \cite{CBDvHR15, CBvHHR19}, together with collaborators, we analysed the dynamics of localised single front solutions ({\it i.e.}, interfaces) in a three-component 
singularly perturbed partial differential equation (PDE). This consists of a {\it{fast}} Allen-Cahn-type phase field PDE which is linearly coupled to two {\it{slow}} linear reaction-diffusion equations with constant coefficients. This system of PDEs turned out to exhibit surprisingly complex analytically tractable behaviour. The dynamics of (the position of) a single front solution includes a butterfly catastrophe~\cite{CBDvHR15} and a symmetric Bogdanov-Takens bifurcation~\cite{CBvHHR19}, leading to velocity changing fronts and stable travelling breathers. However, in the hitherto considered settings the 
dynamics of these front solutions is determined by a two-dimensional 
ordinary differential equation (ODE), which precludes the occurrence of temporally chaotic dynamics. In this paper, we extend the model studied in~\cite{CBDvHR15,CBvHHR19} and show rigorously 
that any given scalar singularity structure can be generated and unfolded, and that chaotic behaviour of the front speed occurs.

In more detail, we consider here the following multi-component reaction-diffusion system with one fast Allen-Cahn-type phase field component and $N$ slow linear components of the form
\begin{align}\label{eq:multi-component-RD}
\left\{
\begin{aligned}
 \partial_t U & =   \varepsilon^2 \partial_x^2 U+ U - U^3 - \varepsilon  \mathcal{F}^N(\vec{V})\,, \\[.2cm]
 \tau_j \partial_t V_j & =  \varepsilon^2 d_j^2 \partial_x^2 V_j  + \varepsilon^2( U -  V_j )\, , \qquad j = 1, \ldots, N\,,
 \end{aligned}
\right.
\end{align}
for $ (U, \vec{V})(x,t) :=  (U, V_1, \ldots, V_N)(x,t) \in \mathbb{R}^{1+N}, N \in \mathbb{N}$, and where $ x \in \mathbb{R}, t \geq 0 $. Here, $ \tau_j, d_j > 0$ are parameters and $ 0 < \varepsilon \ll 1 $ is a distinguished small parameter that yields a sharp interface (in the $U$-profile) and gives access to perturbation methods, see also Figure~\ref{f:intro_front}.
The pointwise coupling function $ \mathcal{F}^N:\mathbb{R}^{N}\to\mathbb{R}$ is assumed to be sufficiently smooth and tangible results will involve conditions on expansion coefficients of this coupling function that we view as parameters. The coupling structure, where $\vec{V}$-components are coupled only via $U$, turns out to be simple enough to admit arbitrary number of components in the analysis. Since the equations for $\vec{V}$ can be solved in integral form, we can also view the systems as a scalar diffusive phase-field equation with nonlocal terms.

System \eqref{eq:multi-component-RD} can also be considered as a generalised FitzHugh-Nagumo equation with multiple diffusive inhibitors or refractory variables. The latter often arises in biological modelling \cite{SCHWEISGUTH2019659} and for $N=3$ system \eqref{eq:multi-component-RD} is in this sense linked to the Hodgkin-Huxley equations, although the refractory dynamics is usually nonlinear \cite{HodgkinHuxley}; see also \cite{GIUNTA} for a similarly related model with $N=2$. For the specific form of \eqref{eq:multi-component-RD}, upon rescaling time, the $\eps$-dependence can be moved entirely into the equation for $U$ that takes a `fast reaction' form. For $N=1$ this gives the same $\eps$-scaling as in the FitzHugh-Nagumo equations studied in \cite{Alfaro2008} in higher dimensional bounded domains. 
We refer to \S\ref{s:related} for a specific application background and previous results on \eqref{eq:multi-component-RD} with $N=2$. 

The dynamic single front solutions that we are concerned with are solutions of~\eqref{eq:multi-component-RD} with one interface that spatially asymptote for fixed $t$ to the constant steady states of \eqref{eq:multi-component-RD}, {\it i.e.},
\begin{align*}
 \lim_{x \rightarrow \pm \infty} 
 ( U, \vec{V})(x,t) =
  \pm (1, 1, \ldots, 1) + \mathcal{O}(\varepsilon) \, ,
\end{align*}
see also Figure~\ref{f:intro_front}. The relevant special type
are {\bf uniformly travelling front solutions}
\begin{align}\label{eq:traveling_wave_intro}
(U, \vec{V} )(x,t) = ( U,\vec{V} )_{\rm TF}(x-\varepsilon^2 c t),\;\qquad c \in \mathbb{R}\, ,
\end{align}
for $x, t \in \mathbb{R}$. In the case of $c=0$ we refer to these as {\bf stationary front solutions} 
$$( U, \vec{V} )(x,t) = ( U, \vec{V} )_{\rm SF}(x)\,.$$ See panels (a) and (b) of Figure~\ref{f:intro_front_paths} for the temporal dynamics of the front positions for these types of dynamic single front solutions.
%%%%
% Figure (front profile)
%%%%
\begin{figure}[!t]
\includegraphics[width = 0.8\textwidth]{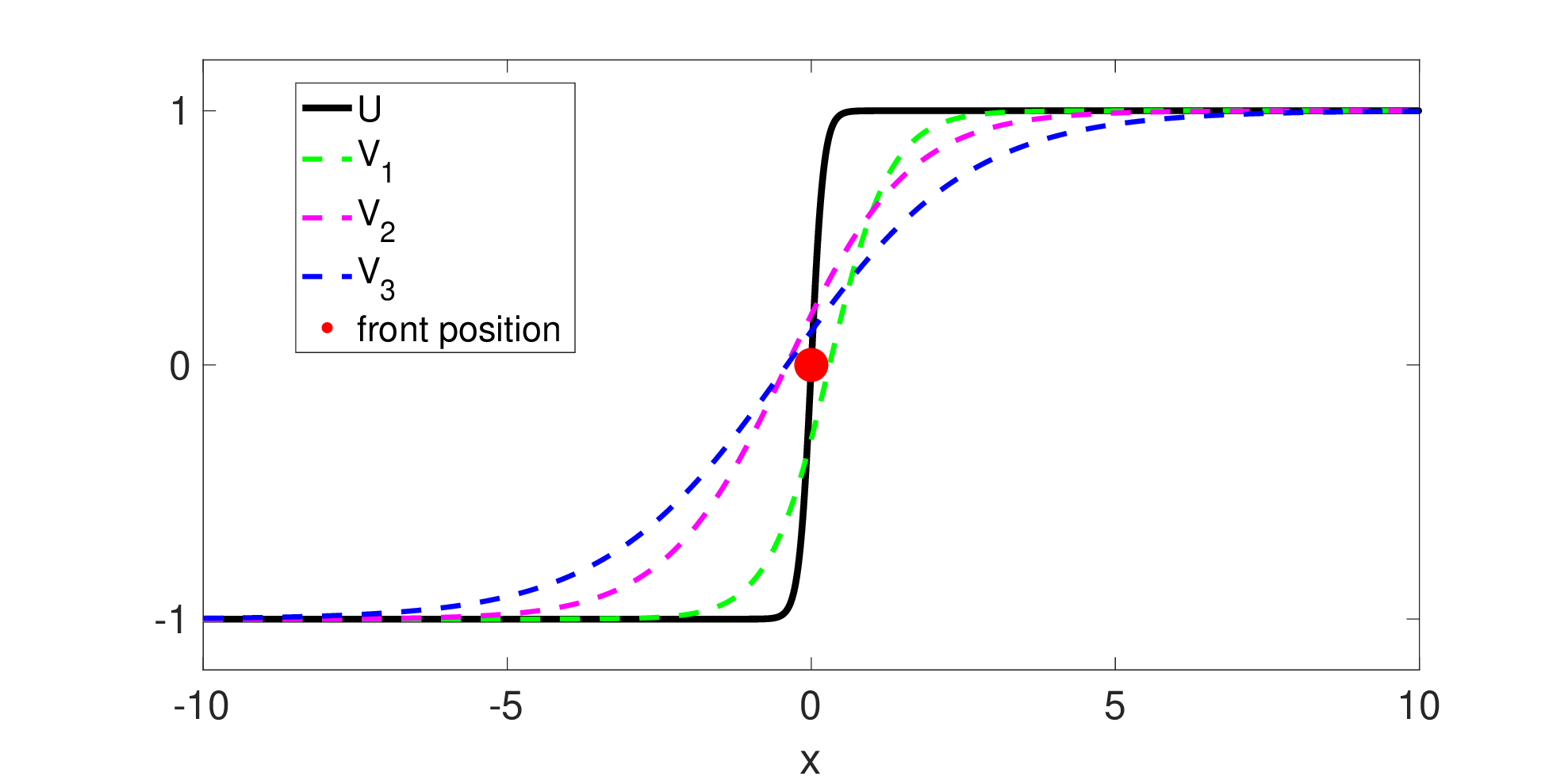}
\caption{Snapshot in time of the spatial profile of a single front solution. Various types of dynamics for the front position, indicated with the red dot, are displayed in Figure~\ref{f:intro_front_paths}. See also Figure~\ref{f:trans_branches_spec} for more details on parameter settings.}
   \label{f:intro_front}
\end{figure}

One can anticipate the existence of single front solutions in \eqref{eq:multi-component-RD} as follows. 
For $\calF^N(\vec{V})=0$ the first equation of \eqref{eq:multi-component-RD} is the one-dimensional Allen-Cahn equation on the real line, which is well known to feature the stationary front solutions $U(x,t) = \pm \tanh(x/ (\sqrt{2}\eps))$. Due to the {small} prefactor $\eps$ of $\calF^N(\vec{V})$, these provide the seed for the front solutions of the full system~\eqref{eq:multi-component-RD}. The Allen-Cahn equation on the real line by itself, however, does not generate non-trivial dynamic single front solutions, but we note that multi-front configurations feature slow {\it metastable} coarsening dynamics, see, for instance, \cite{We21} for a recent account. The origin of the single front motion that occurs in the fully coupled system \eqref{eq:multi-component-RD} can be explained from the asymmetric variant of the Allen-Cahn equation
\begin{align}
\label{AC}
\partial_t \bar{U}  =   \varepsilon^2 \partial_x^2 \bar{U} + \bar{U} - \bar{U}^3 + \varepsilon \gamma \, , \quad \gamma \in \mathbb{R} \,,
\end{align}
{It is well known that this} equation possesses uniformly travelling front solutions as in \eqref{eq:traveling_wave_intro} with a speed $c = \calO(\gamma)$ related to Nagumo fronts and  sometimes called Huxley waves. {The equation for the $U$-component of \eqref{eq:multi-component-RD} results from \eqref{AC} when replacing $\gamma$ by} the coupling function $\calF^N(\vec{V})$. Hence,~\eqref{eq:multi-component-RD} can be expected to generate self-organised $\vec{V}$-dependent front motion.
%%%%
% Figure (paths)
%%%%
\begin{figure}[!t]
\includegraphics[width = 0.95\textwidth]{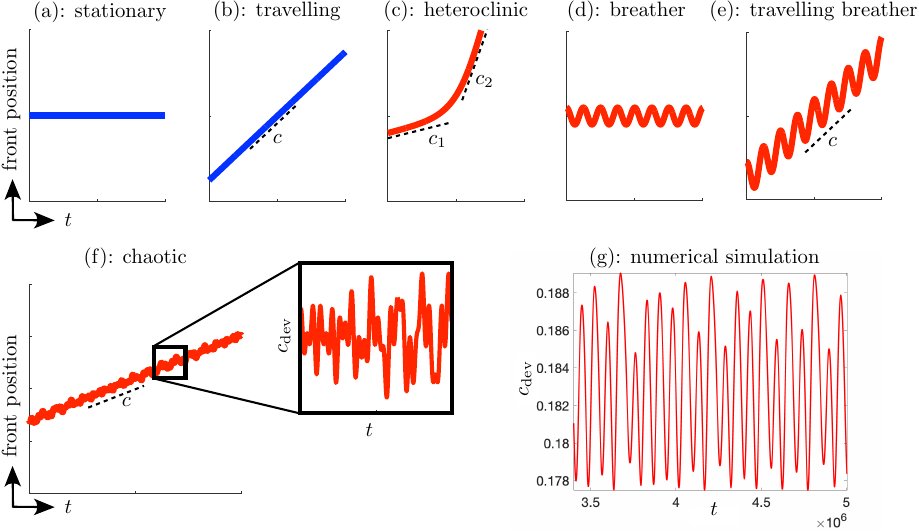}
\caption{(a-f): Sketches of possible temporal dynamics for the front position, see Figure~\ref{f:intro_front}. (a): Stationary front solution $(U, \vec{V})_{\rm SF}$ with $c = 0$, (b): Uniformly travelling front solution $(U, \vec{V})_{\rm TF}$ with speed $c \neq 0$ (c):~Dynamic heteroclinic front solution connecting two different travelling front solutions, (d): Dynamic breathing front solution (with periodically varying front position), (e): Dynamic breathing travelling front solution, (f):~Dynamic chaotic front solution. (g): Numerical simulation of \eqref{eq:multi-component-RD} with $N=3$ displaying complex front dynamics. Here, $c_{\rm dev}$ is the deviation from the mean front speed; cf.\ inset in panel (f). See \S\ref{s:num} for details. }
\label{f:intro_front_paths}
\end{figure}

In \cite{CBDvHR15, CBvHHR19} it was proven that for $N=2$ and $0<\eps\ll1$, affine $\calF^2(\vec{V})$ can create non-trivial {\bf dynamic front solutions} of \eqref{eq:multi-component-RD} of the form 
\begin{align} \label{DFS}
  ( U, \vec{V})(x,t) = ( U, \vec{V})_{\rm SF}(x-a(t)) + \mathcal{R}(x,t) \, ,  
\end{align}
with small $\mathcal{R}$ and the front position $a(t)$ tracking the location of the interface; see \S\ref{S:CMR}. These are neither stationary nor uniformly travelling front solutions, but are characterised by the front position $a(t)$ whose dynamics is governed by a system of ODEs. See panels (c)-(f) of Figure~\ref{f:intro_front_paths} for four different types of dynamic front solutions, where the heteroclinic, the breather and the travelling breather were found in \cite{CBDvHR15} and \cite{CBvHHR19}. The {occurrence of} chaotically travelling front is one of the main highlights of the present work.

%%%
% SUBSEC: Main results
%%%
\subsection{Main results} 
In this paper we significantly extend these insights from \cite{CBDvHR15, CBvHHR19}, and, in particular, generalise the rigorous ODE reduction obtained via center manifold theory. This then provides a rigorous description of dynamic front solutions bifurcating from stationary front solutions in the multi-component model \eqref{eq:multi-component-RD} for any $N$. In particular, we show that 
for any  $\eps>0$ sufficiently small
the front position $ a = a(t) $ of \eqref{DFS}  
satisfies 
\begin{align}
\label{FR}
 \dfrac{d}{dt} a & = \varepsilon^2 c_1, 
\end{align}
and $c_1$ is determined through an $\Np$-dimensional ODE, with $\Np \leq N$ if $\tau_j$, $1\leq j\leq\Np$, are pairwise distinct. This ODE entails the variable ~$\vec{c}=(c_1,\ldots,c_{\Np})$ and has the %{\bf nilpotency} 
structure
\begin{align}\label{eq:main_ODE}\left\{
\begin{array}{lcl}
  \dfrac{d}{dt} c_{k} & = & \varepsilon^2 c_{k+1} \,,  \qquad  k = 1, \ldots, \Np-1, \quad \textnormal{(for $N'>1$)}\\[.4cm]
  \dfrac{d}{dt} c_{\Np}   & = & \varepsilon^2 \Geps(\vec{c})  \,,
\end{array} \right.
\end{align}
of an $\Np$-th order equation, and hence {\bf nilpotent} linear part. The nonlinear function $\Geps$ is determined by the coupling function $\calF^N(\vec{V})$ and the parameters of \eqref{eq:multi-component-RD}. Importantly, {as a result of the spatial translation symmetry of \eqref{eq:multi-component-RD}, it} does not dependent on the front position $a(t)$ directly. Henceforth, we refer to \eqref{eq:main_ODE} as the {\bf speed ODE}, while we call \eqref{eq:main_ODE} combined with \eqref{FR} the {\bf reduced ODE} (since it corresponds to the PDE reduced to the center manifold).
Here, the dimension of the center manifold is thus $\Np+1$. The possible dynamics of~\eqref{eq:main_ODE}, in particular, its dimension~$\Np$, depends on the specific details of the coupling function $\calF^N(\vec{V})$ and the parameters of \eqref{eq:multi-component-RD}.

A simple, but important observation linking dynamic front solutions of \eqref{eq:multi-component-RD} to their stationary and uniformly travelling counterparts{, see \eqref{eq:traveling_wave_intro},} is the following: Fixed points of~\eqref{eq:main_ODE} { ({\it i.e.} roots of $\Geps$)} with $c_1(t)=0$ so $a(t) = a(0)$ correspond to stationary front solutions, while fixed points of~\eqref{eq:main_ODE} with $c_1(t) = c \in \mathbb{R},$ so $a(t) = a(0) + c\,t$, correspond to uniformly travelling ones.

The main specific results of this paper can be summarised as follows -- see Theorem~\ref{thm:one_slow_sing} and Theorem~\ref{thm:chaos} for the exact statements. 
{These fundamentally rely on applying the theoretical framework that we develop in this paper for the existence, the location of point spectrum, and the structure of the center manifold reduction (Proposition~\ref{prop:reduced_system}) of dynamic fronts in \eqref{eq:multi-component-RD}.}

%%%%%%%
%% Main Result 1
%%%%%%%
\begin{main}\label{thm:intro} 
There exists $\varepsilon_0 >0$ such that for all $\varepsilon \in (0, \varepsilon_0)$  the dynamic front solutions \eqref{DFS} of \eqref{eq:multi-component-RD} are described by reduced ODE \eqref{FR}-\eqref{eq:main_ODE}. Furthermore, the following holds.
\begin{itemize}
\item[(a)] \underline{Singularities for $N = 1$:} {The} coupling function $\calF^1(V)$ {can be chosen such that} the speed ODE \eqref{eq:main_ODE} is scalar (so $\Np = 1$) and {in it one can} generate and unfold any scalar singularity structure. In particular, {for $\tau_1/(2\sqrt{d_1}) \neq \sqrt{2}/3$ one has} a 1-to-1 correspondence between the Taylor coefficients of $\calF^1$ and $\Geps$ to any order $M \in \mathbb{N}$.
\item[(b)] \underline{Chaos for $N = 3$:} 
For $\tau_1,\tau_2,\tau_3$ pairwise distinct and { $\sum_{j=1}^3 \partial_{V_j}^2\calF^3(0)\tau_j^2/d_j^2\neq 0$,
there are choices of $\calF(0), \nabla\calF^3(0)$} ensuring that the dimension of the speed ODE~\eqref{eq:main_ODE} is $\Np = 3$ and that it features chaotic dynamics through Shil'nikov homoclinic bifurcations.
\end{itemize}
\end{main}
The type of chaotic dynamics that we identify in the speed ODE~\eqref{eq:main_ODE} for $N'=3$ is linked to the existence of persistent strange attractors. Classic scenarios that lead to these are global bifurcations of special homoclinics or heteroclinic cycles, see~\cite{HS10} for an overview. Our specific scenario features saddle-focus homoclinics that are now commonly called Shil'nikov homoclinic orbits, see~\cite{Shi65}. It is known that every neighbourhood of these contains infinitely many horseshoes and that breaking a Shil'nikov homoclinic orbit leads to the destruction of horseshoes and to persistent 
strange attractors, see \cite[e.g.]{MV93,Tr81}. Early examples and numerical evidence for such homoclinic phenomena was provided in~\cite[e.g.]{ACT82}. 

Rigorously verifying such global bifurcations in a given ODE can be a formidable task. This is even more so for our case, since the speed ODE is derived via center manifold reduction from a PDE. Hence, explicit control of the dynamics of the speed ODE is analytically and algebraically involved. Our inspiration stems from a collection of papers concerning the emergence of this type of chaos from singularities, see the monograph \cite{BarrientosBook} for an overview. In particular, we follow~\cite{IR05} which specifies conditions for the occurrence of Shil'nikov homoclinics (from certain heteroclinic cycles that are often called T-points, see also \cite{DIK01}) in an unfolding around a nilpotent organising center. In this way,  via {\it local} bifurcation analysis one gets control over {\it global} bifurcations of Shil'nikov homoclinics which, in turn, yield persistent strange attractors.

As will become clear in the course of this paper, nilpotent organising centres naturally occur in the speed ODE \eqref{eq:main_ODE} due to the special form of the reaction-diffusion system~ \eqref{eq:multi-component-RD}. This makes the just described route to chaos via unfolding a conceivable and algebraically convenient scenario, whose detailed rigorous justification is one of the main novelties of this paper. We note that our results are structurally stable and thus remain valid for systems that result from perturbing the specific right hand side in \eqref{eq:multi-component-RD}.We caution the reader that we are \emph{not} concerned with complicated spatial dynamics, {\it{e.g.}} associated to homoclinics (in the spatial variable) as in the classical FitzHugh-Nagumo system~\cite{carter2016stability, sandstede1998stability}. 

%%%
%REMARK
%%%

\begin{remark}[Imprinting singularity structures and chaotic motion for general $N$] \label{r:impactN}
Central to the proof of both (a) and (b) in the Main Result~\ref{thm:intro} is having explicit control over (i) the dimension $N'$ and (ii) the dynamically relevant terms of the speed ODE. For the singularity imprinting it is particularly convenient to set $N = 1$, since it allows to prove the 1-to-1 correspondence between the Taylor coefficients of $\calF^1$ and $\Geps$ to any order $M \in \mathbb{N}$. Similarly, the choice $N = 3$ and quadratic coupling function $\calF^3$ allowed to display how chaotic motion can be realised along the technically least involved case. For general $N$ the special structure of \eqref{eq:multi-component-RD} still allows to clearly track the impact of the choice of parameters and coupling function (see \S\ref{subsec:recipe} and, in particular, Table~\ref{T:overview}, and Remark~\ref{R:GEN}) on the speed ODE, but it is technically more involved than the minimal cases $N = 1$ and $N = 3$. 

A technically straightforward extension to general~$N$ is possible for coupling functions that allow the splitting {with small parameter $\delta\geq 0$ of the form} 
$$\calF^N(\vec{V}) = \calF^{k}(\underline{\vec{V}}) + \delta \mathcal{\overline{F}}^{N}(\vec{V}), \qquad k=1,3\,, N\geq k$$ with $\underline{\vec{V}} \in \mathbb{R}^{k}$ part of $\vec{V} \in \mathbb{R}^{N}$ and $\calF^{k}, k=1, 3,$ as chosen in Main Result~\ref{thm:intro}. In particular, setting $\delta=0$ decouples~\eqref{eq:multi-component-RD} from the linear $V_j$-components, which also do not influence any stability properties. A more detailed discussion of such a straightforward extension is given in \S\ref{subsec:sing}.
\end{remark}

%%%
% SUBSEC: Relation to previous work
%%%
\subsection{Relation to previous work}\label{s:related} 
In \cite{CBDvHR15, CBvHHR19}, 
(with varying co-authors) we analysed the dynamics of single front solutions in~\eqref{eq:multi-component-RD} with affine coupling term $\mathcal{F}^N=\mathcal{F}^N_\mathrm{aff}$ of the form
\begin{align}
\label{N:affine}
\mathcal{F}^N_\mathrm{aff}(\vec{V}) :=  \gamma + \sum_{j=1}^{N} \alpha_j V_j\,,
\end{align}
and for $N=2$. That is, we analysed {dynamic fronts in}
\begin{align}
\label{1F2S}
\left\{
\begin{aligned}
 \partial_t U & =   \varepsilon^2 \partial_x^2 U + U - U^3 - \varepsilon  \mathcal{F}^2_\mathrm{aff}(\vec{V})\,, \\[.2cm]
 \tau_j \partial_t V_j & =  \varepsilon^2 d_j^2 \partial_x^2 V_j  + \varepsilon^2( U -  V_j )\, , \qquad j = 1, 2.
\end{aligned}
\right.
\end{align}
See Remark~\ref{R:par} below concerning notational differences between \eqref{1F2S} and \cite{CBDvHR15, CBvHHR19}. The dimensional version of \eqref{1F2S} was introduced in the mid-1990s as a phenomenological model for gas-discharge \cite{PUR14, Woesler1996}. It has been studied intensively, also in different forms and asymptotic scalings, ever since~\cite[e.g.]{P02,brown2023analysing, CBDvHR15, CBvHHR19, DvHK09, GAP06, NTU03, P98,P97, teramoto2021traveling, vHCNT16, van2019pinned, vHDK08, vHDKNU11, vHDKP10,vHS11, vHS12, vHS14, VE07,YTN07}. As mentioned, we were able to understand various aspects of the dynamics resulting from a single front solution analytically in \cite{CBDvHR15, CBvHHR19}. This involved, in particular, the successive examination of existence, stability and bifurcations of stationary and uniformly travelling front solutions, and a center manifold reduction. In \cite{CBDvHR15}, we restricted ourselves to a parameter regime for which the speed ODE \eqref{eq:main_ODE} turned out to be scalar ({\it i.e.}, $\Np=1$) and features the imprint of a (partial) butterfly catastrophe, while in~\cite{CBvHHR19} we determined a parameter regime giving rise to a planar speed ODE \eqref{eq:main_ODE} ({\it i.e.}, $\Np=2$) with a symmetric Bogdanov-Takens bifurcation. This latter case allowed to explain the occurrence of stable standing and travelling breathers (where ``breather'' in this case meant that the front position moves periodically in time). See panels (d) and (e) of Figure~\ref{f:intro_front_paths}. 

System \eqref{1F2S}
enjoys the symmetry
\begin{align}
\label{SYM}
(U,\vec{V}, \gamma) \mapsto (-U,-\vec{V}, -\gamma)\,,
\end{align}
and its combination with the spatial reflection symmetry gives the symmetry
\begin{align}
\label{SYM2}
(c, x, U,\vec{V}, \gamma) \mapsto (-c, -x, -U,-\vec{V}, -\gamma)\,,
\end{align}
where $c$ is the speed of a uniformly travelling front solution. 
The multi-component system~\eqref{eq:multi-component-RD} (in travelling wave coordinates) with an affine coupling function $\calF^N(\vec{V}) = \calF^N_{\rm aff}(\vec{V})$ still enjoys these symmetries, which precludes the realisation of the particular normal form in the speed ODE \eqref{eq:main_ODE} that will generate the chaotic dynamics of Main Result~\ref{thm:intro}. In this paper we therefore break~\eqref{SYM2} by allowing the coupling function $\mathcal{F}^N(\vec{V})$ to also contain nonlinear terms.

%%%%
% Remark
%%%%

\begin{remark}[Notation in previous papers]
\label{R:par} 
To ease reading, we compare the notation to several earlier papers about \eqref{1F2S}: $ (\alpha, \beta) $ corresponds to $ (\alpha_1, \alpha_2) $ here, $ (\hat{\tau}, \hat{\theta}) $  to $(\tau_1,\tau_2)$, and $D$ to $d_2$. The choice $d_1=1$ in \cite[e.g.]{CBDvHR15, CBvHHR19} can be achieved without loss of generality by spatial rescaling.
\end{remark}

%%%%
% Remark
%%%%

\begin{remark}[Breaking the symmetry via advection] 
\label{ADV} In order to break \eqref{SYM2} alternatively to nonlinear coupling (with even powers), one can break the spatial reflection symmetry. For instance, this can be realised by adding advection terms to the slow components of \eqref{eq:multi-component-RD} as in 
\begin{align}   \label{eq:ADV}
\left\{
\begin{aligned}
 \partial_t U & =   \varepsilon^2 \partial_x^2 U+ U - U^3 - \varepsilon  \mathcal{F}^N_\mathrm{aff}(\vec{V})\,, \\[.2cm]
 \tau_j \partial_t V_j & =  \varepsilon^2 d_j^2 \partial_x^2 V_j   + \eps \delta_j \partial_x V_j + \varepsilon^2( U -  V_j )\, , \qquad j = 1, \ldots, N \,,
 \end{aligned}
\right.
\end{align}
such that \eqref{SYM2} no longer holds (while \eqref{SYM} is not violated). This type of advection occurs in models that contain some coupling with fluid flow, e.g., FitzHugh-Nagumo-type equations with advection as in \eqref{eq:ADV} have recently been linked to turbulent pipe flow, cf.\ \cite{barkley2015rise, engel2021traveling}.
{Although our approach generally applies, we} do not pursue this direction further in this paper. 
\end{remark}

%%%%%%%%%%%%%%%%%%%%%%%
%% SUBSEC: Related work on chaos in PDEs
%%%%%%%%%%%%%%%%%%%%%%%

\subsection{Related work on chaos in PDEs}
While the occurrence of chaos in high-dimensional systems of nonlinear ODEs and PDEs is sometimes
considered ``folklore", actual proofs of this behaviour -- in particular, in the context of PDEs -- remain rather rare. 
There are, however, several related results that we would like to highlight, without giving an exhaustive list or full overview.

First, we point out two instances of single pulses in the context of reaction-diffusion systems without any spatial or temporal forcing. In \cite{VD13} the dynamics near stationary pulse solutions in a multi-scale reaction-diffusion systems on the real line was studied. Chaotic motion in the vicinity of a Hopf bifurcation was numerically observed, but since the reduced dynamics was planar, it was postulated that the observed chaos could be related to a numerical approximation error of the translation symmetry. In \cite{DSZ15} a new formal reduction procedure, coined {\it{extended center manifold reduction}}, for studying pattern formation in rather general multi-scale two-component reaction-diffusion systems on bounded domains was introduced. This formal procedure led to a reduced ODE system featuring chaotic motion in a model for phytoplankton-nutrients.

Another way to create chaotic motion in reaction-diffusion systems is by introducing spatial or temporal forcing of the PDE. In \cite{NTYU07} the dynamics of pulses was studied for a variant of the Gray-Scott model with spatially periodic coefficients (along other studies of heterogeneous terms in the three-component system as treated here (so $N=2$)). The dynamics of pulses was formally reduced to an ODE system for which numerical simulations suggest chaotic behaviour of the pulse path reminiscent of the Lorenz attractor.

Since the dimension of the reduced ODE is related to the number of eigenvalues near zero of the linearisation around an interface ({\it{e.g.}}, around a front and pulse), which is in turn related to the number of interfaces, another approach to generate chaotic dynamics is through interacting fronts or pulses. An example of such a scenario is studied in~\cite{NU01} for interacting pulse solutions in Gray-Scott-type reaction-diffusion systems. Such interactions lead to the more complex concept of spatio-temporal chaos. An example with forcing and rigorous proofs concerns interacting pulses in the Ginzburg-Landau equation \cite{TZ10}. Spatio-temporal chaos in abstract dissipative systems has been rigorously studied~\cite{MZ09}, including an example for chaos in the periodically forced Swift-Hohenberg equation.

Finally, we direct the attention to the many inspiring papers in the mathematical physics literature on chaotic motion in fluid dynamics. An early paper in this context that conjectured the occurrence of Shil'nikov homoclinics was \cite{ACST85}, this was later proved in \cite{IR05} -- the paper that guided our analysis here. We also point the reader to \cite{DKT87,GIUNTA,HKTP10,KLM99,KMTW86,KW87,MTKW83,RK17}, to name just a few of the papers in which the described phenomena relate to those that we establish in this paper. 

%%%%%
% SUBSEC: Outline and approach
%%%%%

\subsection{Outline and approach}
In \S\ref{S:1+N} our focus lies on using spatial dynamics and Geometric Singular Perturbation Theory~\cite[e.g.]{F79, J95, K99} (GSPT) in order to derive the existence condition for stationary and uniformly travelling front solutions with sufficiently detailed information for the subsequent analysis. In particular, the results of \S\ref{S:1+N} will be used in \S\ref{subsec:sing} to derive the degree $M$ of $\Geps(\cdot)$ as stated in the first part of Main Result~\ref{thm:intro}.

In~\S\ref{S:1+N_S} we employ the so-called Non-Local Eigenvalue Problem (NLEP) method~\cite{D98, D01, D02, vHDK08} to determine the Evans function, which provides all relevant information on the part of the spectrum that can lead to instabilities and bifurcations of the front solutions. We also identify parameters for which an up to $(N+1)$-fold zero eigenvalue of a stationary front solution occurs, which is the main organising center for the dynamics. Notably, in these computations Vandermonde matrices occur repeatedly, see Lemma~\ref{L:VDM}, which originate from the specific slow-fast structure and coupling form of~\eqref{eq:multi-component-RD}. 
In combination with the Singular Limit Eigenvalue Problem (SLEP) method \cite{nishiura1987stability, nishiura1990singular} we prove the key result that an $(\ell+1)$-fold zero eigenvalue comes with a full Jordan chain for all $0 \leq \ell \leq N$. This implies that the speed ODE~\eqref{eq:main_ODE}, where $\ell=\Np$, will always have a nilpotent linear part, which again relates to the slow-fast structure and coupling form of \eqref{eq:multi-component-RD}. This forms the backbone for deriving the second part of Main Result~\ref{thm:intro}.

In \S\ref{S:DFS} we first perform a center manifold reduction and normal form analysis, see~\S\ref{S:CMR}, which form the basis for our main results as informally summarised in Main Result~\ref{thm:intro}. 
These main results can be viewed as two highlight illustrations of what can be done based on the general framework provided in \S\ref{S:1+N}-\S\ref{subsec:recipe}. 
For instance, in \S\ref{S:sing} we show that Taylor expansions of $ \mathcal{F}^N(\vec{V})$ are in an (to order $\eps$) explicit 1-to-1 correspondence with the (near zero) locations of equilibria of \eqref{eq:main_ODE}, {\it{i.e.}}, the velocities of front solutions in \eqref{eq:multi-component-RD}. This may be exploited for phenomenological modelling by means of reaction-diffusion equations.
Furthermore, in \S\ref{S:1+3} we show that for $N = 3$ the ``simplest'' nonlinear coupling function 
\begin{align}\label{NONLnew}
\mathcal{F}^N(\vec{V}) = \mathcal{F}^N_\beta(\vec{V}) := \gamma+\sum_{j=1}^N \alpha_j V_j + \sum_{j=1}^N \beta_j V_j^2, \qquad \alpha_j, \beta_j, \gamma \in \mathbb{R} \, ,
\end{align}
with (to order $\eps$) explicitly selected coefficients, generates a Shil'nikov bifurcation for~\eqref{eq:main_ODE}. 
By structural stability, this also occurs when higher order terms are added to $\calF^N$.
We emphasise that our normal form analysis for the reduced ODE on the center manifold does not follow the tedious standard route of computing various projections, but rather makes use of a combination of existence and stability information that is then woven into the reduced system. In \S\ref{s:num} we numerically corroborate several of our findings (see \S\ref{subsec:recipe} for an overview).

We end the paper with a brief summary of the main results and a discussion on
possible further research directions, see \S\ref{S:DIS}.

%%%
% SUBSEC
%%%

\subsection{Notation, definitions, conventions}
Before we continue, we introduce some crucial notation that we use throughout the paper.

\begin{deff}[Slow and fast variables]\label{CORD} The spatial variable $x$ in \eqref{eq:multi-component-RD}  will be referred to as the {\it{slow variable}}. We will also make use of the {\it{slow travelling wave variable}}
$$
y:= x - \eps^2 c t\,,
$$
where $\eps^2 c$ represents the leading order speed (in terms of the slow variable) of a uniformly travelling front solution. 
The related {\it{fast (travelling wave) variables}} $z$ and $\xi$ are given by
$$
z:=\dfrac{y}{\eps} = \dfrac{x}{\eps} - \eps c t =: \xi -\eps c t \,.
$$
\end{deff}

\begin{deff}[Slow and fast fields]\label{fields} To study uniformly travelling and stationary front solutions we will split the spatial domain into two {\it{slow fields}} and one {\it{fast field}}. In the slow scaling ({\it{i.e.}} in $x$ and $y$) these are given by
$$
I_s^- :=  (-\infty, -\sqrt\eps), \qquad
I_f :=  [-\sqrt\eps,\sqrt\eps]\,, \qquad
I_s^+ :=  (\sqrt\eps, \infty)\,, 
$$ 
where we note that the centering of $I_f$ around zero is without loss of generality by the translation invariance of the system \cite{vHDK08}. 
\end{deff}

\begin{deff}[Taylor expansion]
\label{TAYDEF}
For $f: U \subset \mathbb{R} \rightarrow \mathbb{R}$, with $U$ an open neighbourhood of $x_0 = 0$, and $f$ sufficiently smooth in $U$, we denote its approximating Taylor, or Maclaurin, polynomial up to order $\mathcal{O}(n)$, for some $n \in \mathbb{N}$, by 
$$\mathcal{T}_f^n(x):=\sum_{k=0}^n \frac{1}{k!}f^{(k)}(0) x^k\, $$
and, if $f$ is real analytic on $U$, we write {$\mathcal{T}_{f}$ for the infinite series.}
\end{deff}

\begin{remark}[Convention for perturbation results with respect to $0 <\varepsilon \ll 1$]
In the following we will repeatedly use the formulation ``Let $\varepsilon$ be sufficiently small ..." for the more precise statement ``There exists $\varepsilon_0 >0$ such that for all $\varepsilon \in (0, \varepsilon_0)$...". 
\end{remark}

%%%%
%%  SECTION: $1$-Fast and $N$-slow components: singularity embedding \& Jordan blocks
%%%%

\section{Existence of front solutions}
\label{S:1+N}
We study stationary or uniformly travelling front solutions in the $N+1$-component system~\eqref{eq:multi-component-RD} for $\eps>0$ sufficiently small. The nonlinear coupling function~$\mathcal{F}^N(\vec{V})$ is assumed to be smooth without further conditions at this stage. However, for convenience, we split it into an affine part from \eqref{N:affine} and a nonlinear part $\mathcal{F}^N_{\rm{nl}}(\vec{V})= \mathcal{O}(|\vec{V}|^2)$, {\it{i.e.}},
\begin{align}
\label{N:NONL22}
\mathcal{F}^N(\vec{V}) := \mathcal{F}^N_{\rm{aff}}(\vec{V}) + \mathcal{F}^N_{\rm{nl}}(\vec{V}) = \gamma + \sum_{j=1}^{N} \alpha_j V_j + \mathcal{F}^N_{\rm{nl}}(\vec{V}).
\end{align}

%%%%%%%%%%%%%%%%

\subsection{Uniformly travelling front solutions: existence condition and singularities}
{As defined in  \eqref{eq:traveling_wave_intro}, a uniformly travelling or stationary front $Z_{\rm TW}$ is a heteroclinic orbit in the travelling wave ODE given by \eqref{eq:multi-component-RD} with $\partial_t$ replaced by $-\eps^2 c \partial_x$.} 
Concerning the existence and leading order expansion of such $Z_{\rm TW}$, we next formulate straightforward generalisations of the results from \cite{CBDvHR15}, whose proofs are rooted in spatial dynamics and GSPT. 
In particular, the existence of front solutions near the singular limit is equivalent to solving an algebraic equation 
{$\Gamma_\eps(c)=0$} %\eqref{E:N} 
in terms of the system parameters and the speed $c$. 
{Since $Z_{\rm TW}$ is also a fixed point of the speed ODE \eqref{eq:main_ODE}
, {\it i.e.} $ 
 \Geps(c, 0,\ldots, 0)= 0$, the functions  $\Gamma_\eps$ and $G_\eps(\cdot,0,\ldots,0)$ have the same roots.} 
We will also discuss some implications for singularity structures.  
%%%
% LEMMA
%%%
\begin{lemma}[Uniformly travelling front solutions: existence condition]
\label{L:N}
For any choice of 
system parameters of \eqref{eq:multi-component-RD} with \eqref{N:NONL22} there exists at least one $c\in \mathbb{R}$ solving
\begin{align}
\label{E:N}
0= \Gamma_0(c) := \mathcal{F}^N(\vec{V}^*) - \frac13\sqrt2 c = \gamma + \sum_{j=1}^{N} \alpha_j V_j^* + \mathcal{F}^N_{\rm{nl}}(\vec{V}^*) - \frac13\sqrt2 c \,,
\end{align}
with
\begin{align}
\label{VSTARN}
\vec{V}^* = (V^*_1, \ldots, V^*_N)\,, \qquad
V^*_j =  
 \frac{c \tau_j}{\sqrt{4 d^2_j+c^2 \tau_j^2}}\,,  \qquad
 j = 1,2,\ldots,N \,.
\end{align}

For any sufficiently small $\eps>0$, 
there exist an adjusted choice of system parameters such that~\eqref{eq:multi-component-RD} supports a uniformly travelling front solution $Z_{\rm TF} = (U_{\rm TF},\vec{V}_{\rm TF})$ with leading order speed $\eps^2 c$ (in the slow scaling). For $c\neq 0$ this can be realised by an $\calO(\eps)$-adjustment of the parameter $\gamma$. Stationary front solutions, {\it{i.e.}}, $c=0$, exist for $\gamma=0$ and arbitrary other system parameters, and no adjustment is required for $\eps > 0$ sufficiently small.

The leading order profile $Z_{\rm TF}^0$ of the travelling front solution $Z_{\rm TF}$, in the co-moving coordinate $y = x - \varepsilon^2 c t$ (see Definition~\ref{CORD}) is given by
\begin{align}\label{N:PROFILES}
U_{\rm TF}^0(y) &= \left\{
\begin{aligned}
-1 &\,,& y \in I_s^-\,, \\
\tanh{(y/(\sqrt2 \eps))} &\,,& y \in I_f\,, \\ 
1 &\,,& y \in I_s^+\,,
\end{aligned}
\right. \quad
V^{0}_{{\rm{TF,}}j}(y) = \left\{
\begin{aligned}
(V_j^{*}+1)e^{\Lambda_j^+ y} -1 &\,,& y \in I_s^-\,, \\
V_j^{*} &\,,& y \in I_f\,, \\ 
(V_j^{*}-1)e^{\Lambda_j^- y} +1 &\,,& y \in I_s^+\,,
\end{aligned}
\right.
\end{align}
with $V^*_j$ given in \eqref{VSTARN}, $I_s^\pm$ and $I_f$ in Definition~\ref{fields},
and 
\begin{align*}
 \Lambda_j^{\pm} = - \frac{c \tau_j}{2 d_j^2} \pm \frac{1}{2 d_j^2}\sqrt{4 d^2_j+c^2 \tau_j^2}\,, \qquad
 j = 1,2, \dots, N \,.
\end{align*}
Conversely, there is $\delta>0$ such that for $\eps>0$ sufficiently small, any uniformly travelling front solution that is $\delta$-close to $Z_{\rm TF}^0$ belongs to this family.
\end{lemma}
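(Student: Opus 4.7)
The plan is to perform a geometric singular perturbation (GSPT) analysis of the travelling wave ODE obtained from \eqref{eq:multi-component-RD} in the slow co-moving variable $y=x-\eps^2 c t$, directly extending the scheme of \cite{CBDvHR15} from $N=2$ with affine coupling to arbitrary $N\ge 1$ with smooth, possibly nonlinear, coupling $\mathcal{F}^N$. The key structural observation is that the $V_j$-equations remain linear and decoupled (driven only by $U$), so the slow reduced problem is a set of $N$ independent inhomogeneous linear second-order ODEs solvable in closed form, while $\mathcal{F}^N(\vec{V})$ enters the scalar fast Allen--Cahn equation for $U$ only as a smooth forcing.

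In each outer slow region $I_s^\pm$, the singular limit $\eps=0$ confines the dynamics to the normally hyperbolic slow manifolds $\{U=\pm 1\}$ (Fenichel, see e.g.~\cite{F79,J95,K99}), on which the $V_j$-equations reduce to $d_j^2V_j''+c\tau_j V_j'+(\pm 1-V_j)=0$; the unique solutions with $V_j\to\pm 1$ as $y\to\pm\infty$ are precisely the profiles in~\eqref{N:PROFILES} with the stated $\Lambda_j^\pm$. Rescaling to the fast variable $z=y/\eps$, the leading-order fast subsystem is the scalar Allen--Cahn equation with heteroclinic $U_0(z)=\tanh(z/\sqrt{2})$, while the $V_j$ remain $\eps$-close to constants $V_j^*$ throughout the layer. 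Imposing continuity of $V_j$ and of $V_j'$ at $y=0$ to leading order determines the constants $A_j^\pm$ and, via the relation $\Lambda_j^+(V_j^*+1)=\Lambda_j^-(V_j^*-1)$, produces the formula $V_j^*=c\tau_j/\sqrt{4d_j^2+c^2\tau_j^2}$ of \eqref{VSTARN}.

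At order $\eps$ the fast equation reads $\mathcal{L}U_1=\mathcal{F}^N(\vec{V}^*)-cU_{0,z}$, with self-adjoint linearisation $\mathcal{L}=\partial_z^2+1-3U_0^2$ whose one-dimensional kernel is spanned by $U_{0,z}$. The Fredholm alternative combined with the explicit evaluation $\int_{-\infty}^\infty U_{0,z}^2\,dz=2\sqrt{2}/3$ yields exactly the existence condition \eqref{E:N}. Existence of at least one real root of $\Gamma_0$ then follows from the intermediate value theorem, since $\vec{V}^*(c)\in(-1,1)^N$ is uniformly bounded in $c$, so $\mathcal{F}^N(\vec{V}^*(c))$ stays bounded while $-\tfrac{\sqrt{2}}{3}c\to\mp\infty$ as $c\to\pm\infty$. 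For persistence to $\eps>0$ I would apply the implicit function theorem to the full travelling wave system: for $c\neq 0$ the identity $\partial_\gamma \Gamma_\eps\equiv 1$ supplies an $\mathcal{O}(\eps)$-correction of $\gamma$ at each simple root of $\Gamma_0$; for $c=0$ one has $\vec{V}^*=0$ and $\Gamma_0(0)=\gamma$, and persistence at $\gamma=0$ without $\eps$-adjustment follows from the reversibility $y\mapsto -y$ of the stationary travelling wave ODE, which renders the leading-order reversible heteroclinic structurally stable.

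The converse uniqueness statement is a direct consequence of Fenichel's theorem: any uniformly travelling front $\delta$-close to $Z_{\rm TF}^0$ is forced by normal hyperbolicity to track the perturbed slow manifolds on $I_s^\pm$ and the perturbed Allen--Cahn connection on $I_f$, so the matched-asymptotics construction exhausts all nearby fronts. The only step requiring genuine care in lifting the $N=2$ affine analysis of \cite{CBDvHR15} to general $N$ and nonlinear $\mathcal{F}^N$ is verifying that the transversal intersection of (un)stable manifolds along the Allen--Cahn heteroclinic survives; I expect this to be the main technical obstacle, but essentially reducible to bookkeeping, since each additional $V_j$-component only adds a pair of normally hyperbolic slow directions and $\mathcal{F}^N$ enters the fast layer purely through $\vec{V}^*$, i.e.\ as a smooth $c$-dependent forcing.
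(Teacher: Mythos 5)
Your proposal follows essentially the same route as the paper: the published proof of Lemma~\ref{L:N} consists only of the intermediate-value argument for a root of \eqref{E:N} (boundedness of $\vec V^*$ in $c$ together with linearity of the last term) and otherwise defers to a ``straightforward generalisation'' of Proposition~1, Corollary~1 and Lemma~12 of \cite{CBDvHR15}, noting $\partial_\gamma\Gamma_0=1$ for the $\gamma$-adjustment. You reconstruct exactly that GSPT construction, and the details you supply check out: the slow reduced equations on $\{U=\pm1\}$ give $\Lambda_j^\pm$ and the profiles \eqref{N:PROFILES}, the $C^1$-matching at $y=0$ yields \eqref{VSTARN}, and the Fredholm condition at order $\eps$ in the fast layer, with $\int U_{0,z}^2\,dz=2\sqrt2/3$, reproduces \eqref{E:N}. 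The IVT and IFT-in-$\gamma$ steps coincide with the paper's.

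The one step I would not accept as written is your justification of the stationary case. Reversibility $y\mapsto -y$ of the $c=0$ spatial ODE maps a front (connecting $-1$ to $+1$) to a back (connecting $+1$ to $-1$); since both rest states lie in the fixed-point set of the reverser, the front is not a symmetric orbit, and reversibility alone does not make the heteroclinic structurally stable: the dimension count ($\dim W^u(p_-)+\dim W^s(p_+)=2N+2$ in a $(2N+2)$-dimensional phase space, with an intersection required along a one-dimensional orbit) still leaves one scalar condition to be satisfied. In \cite{CBDvHR15} the exactness of $\gamma=0$ for stationary fronts rests on the odd symmetry \eqref{SYM} combined with spatial reflection (equivalently, on a first integral of the reversible stationary ODE that exists in the affine case), and precisely this symmetry is destroyed by the even terms of $\calF^N_{\rm nl}$. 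The paper itself glosses over this point by citation, but if you want to retain the claim that no $\eps$-adjustment is required for general nonlinear coupling, you need an argument other than reversibility; as stated, your sentence proves only that fronts and backs have equivalent existence conditions.
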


\begin{proof}
The fact that \eqref{E:N} always has at least one solution follows from the fact that $\vec{V}^*$ \eqref{VSTARN} is bounded as function of $c$ and $\calF^N(\vec{V}^*)$ is smooth, while the last term in \eqref{E:N} is linear in $c$. 

The remaining statements directly follow from {a straightforward generalisation of} the results and proofs of Proposition~1, Corollary 1, and Lemma 12 of \cite{CBDvHR15}, so we omit the details. Here, we made the possibility to adjust $\gamma$ explicit, which is due to $\partial_\gamma \Gamma_0=1\neq 0$.
\end{proof}

Lemma~\ref{L:N} shows how the nonlinearity $\mathcal{F}_{\rm nl}^N(\vec{V})$ of the coupling function $\mathcal{F}^N(\vec{V})$ \eqref{N:NONL22} influences the structure of $\Gamma_0(c)$ \eqref{E:N}, which we call the {\it{existence function}}, and where the subscript $0$ indicates its leading order nature.  
The fact that the $\vec{V}^*$-dependent part of $\calF^N(\vec{V}^*)$ is always bounded in $c$  means that the coupling is a bounded perturbation of the relation $c =   3 \sqrt2 \gamma/2$, which relates to the speed of the travelling front solutions in the original Allen-Cahn equation \eqref{AC}.
Next, we are more specifically interested in the role of $N$ in the singularity structure of the existence condition in the sense of the possible Taylor polynomials $\calT_{\Gamma_0}^k(c)$ of $\Gamma_0(c)$ to order $k$, see Definition~\ref{TAYDEF}. The following proposition shows that with $N$ slow components and  $\calF^N(\vec{V})=\calF^N_\beta(\vec{V})$ \eqref{NONLnew}, we can typically realise an arbitrary $\mathcal{T}^{2N}_{\Gamma_0}(c)$ by means of the parameters $\gamma$ and $\alpha_i, \beta_i$. In \S\ref{S:sing} we will see that for $N=1$ we can also realise an arbitrary $\mathcal{T}^{2N}_{\Gamma_0}(c)$ by a suitable choice of $\calF^1(V_1)$ {that typically needs to be more general than} \eqref{NONLnew}. The advantage of fixing the simplest $\calF^N(\vec{V})=\calF^N_\beta(\vec{V})$ {but allowing general} $N$ is that, simultaneous to $\mathcal{T}^{2N}_{\Gamma_0}(c)$, we can control the dimension $\Np$ of the reduced dynamics {in the range $1\leq \Np\leq N$. This} relates to the multiplicity of the zero eigenvalue being at most $N+1$, cf.\ 
{Proposition~\ref{L:N+1}.} For $N=1$ and generic $\calF^1(V_1)$ this means that the dimension of the non-trivial reduced dynamics is at most one-dimensional, see also~\S\ref{S:sing}. 
Before we state the result, we first recall the definition of a double factorial. 
\begin{deff}[Double factorial]\label{def:double_factorial} The double factorial $n!!$ for $n \in \mathbb{N}$ denotes the product of all the positive integers up to $n$ that have the same parity (odd or even) as $n$.
\end{deff}

%%%
% PROPOSITION
%%%

\begin{proposition}[Order of the singularity for the existence condition]
\label{L:EX}
Set $\mathcal{F}^N(\vec{V})= \mathcal{F}^N_\beta(\vec{V})$~\eqref{NONLnew}. Then, the Taylor series of $\Gamma_0(c)$~\eqref{E:N} near $c=0$ is given by
\begin{align}\label{EQ:EX_AFF}
\begin{aligned}
\mathcal{T}_{\Gamma_0}(c) =&
\gamma +  \left( \frac12 \sum_{j=1}^N\left(\alpha_j \dfrac{\tau_j}{d_j}\right)  - \frac13\sqrt2\right) c 
+ \sum_{k=1}^{\infty} (-1)^{k+1} \dfrac{1}{2^{2k}} \left( \sum_{j=1}^N \beta_j \dfrac{\tau_j^{2k}}{d_j^{2k}}\right) c^{2k} \\&
+ \sum_{k=1}^\infty (-1)^{k} \dfrac{(2k-1)!!}{(2k)!!2^{2k+1}} \left( \sum_{j=1}^N \alpha_j \dfrac{\tau_j^{2k+1}}{d_j^{2k+1}}\right) c^{2k+1}\,.
\end{aligned}
\end{align}
Assume $\tau_j/d_j >0$ are pairwise distinct for $1\leq j\leq N$. Then the coefficients of the Taylor polynomial 
$\mathcal{T}^{2N}_{\Gamma_0}(c)$,
see Definition~\ref{TAYDEF}, from the existence condition are in 1-to-1 correspondence with the $2N+1$ parameters $\gamma$, $\alpha_j, \beta_j$, $j=1,\ldots, N$. 
That is, there are parameter combinations such that $\calT_{\Gamma_0}(c) = \mathcal{O}(c^{\ell})$  for $\ell = 0,1, \ldots, 2N+1$.
Specifically, $\calT_{\Gamma_0}^{2N-1}(c)=0$, {\it{i.e.}}, $\calT_{\Gamma_0}(c) = \mathcal{O}(c^{2N})$, for $\gamma=0$ and 
\begin{align}
\label{EX:MAXa}
\alpha_j
=
\dfrac{2\sqrt2}{3}  \dfrac{d_j}{\tau_j} \prod_{k =1, k \neq j}^N \dfrac{(\tau_k/d_k)^2}{(\tau_k/d_k)^2-(\tau_j/d_j)^2}
\,, \qquad j=1, \ldots, N\,,
\end{align}
and for $(\beta_j)_{j=1}^N$ in the one-dimensional subset solving
\begin{align}
\label{EX:MAXb}
\sum_{j=1}^N \beta_j \dfrac{\tau_j^{2k}}{d_j^{2k}}=0
\,, \qquad k=1, \ldots, N-1\,.
\end{align}
Finally, $\mathcal{T}^{2N}_{\Gamma_0}(c)=0$, {\it i.e.}, $\calT_{\Gamma_0}(c) = \mathcal{O}(c^{2N+1})$, is the maximal degree of degeneracy with $\calF^N(\vec{V})=\calF^N_\beta(\vec{V})$ and requires $\beta_j =0$ for $j=1, \ldots, N$ so that $\mathcal{F}^N_\beta(\vec{V})$ \eqref{NONLnew} is affine, {\it{i.e.}}, $\mathcal{F}^N_{\rm{nl}}(\vec{V}) =0$ in~\eqref{N:NONL22}. 
\end{proposition}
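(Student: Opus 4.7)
The plan is to obtain the Taylor series \eqref{EQ:EX_AFF} by elementary substitution, then to leverage its structural dichotomy---even powers of $c$ in $\mathcal{T}_{\Gamma_0}(c)$ involve only the $\beta_j$'s while odd powers involve only the $\alpha_j$'s---together with non-singularity of Vandermonde-type matrices to extract the 1-to-1 correspondence, the explicit formula \eqref{EX:MAXa}, and the maximal-degeneracy statement.

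First, I would substitute $\mathcal{F}^N_\beta(\vec{V})$ from \eqref{NONLnew} into \eqref{E:N} and write $V_j^* = u_j/\sqrt{1+u_j^2}$ with $u_j := c\tau_j/(2d_j)$. The binomial series yields $V_j^* = \sum_{k\geq 0}(-1)^k \frac{(2k-1)!!}{(2k)!!} u_j^{2k+1}$ (using the convention $(-1)!!=0!!=1$), while the geometric series yields $(V_j^*)^2 = u_j^2/(1+u_j^2) = \sum_{k\geq 1}(-1)^{k+1} u_j^{2k}$. Substituting $u_j^m = c^m \tau_j^m/(2^m d_j^m)$ and collecting powers of $c$ gives \eqref{EQ:EX_AFF}. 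From that formula, the coefficient of $c^0$ is $\gamma$; the coefficient of $c^{2k+1}$ for $k = 0,\ldots,N-1$ is a prescribed nonzero multiple of $\sum_j \alpha_j (\tau_j/d_j)^{2k+1}$ (with a correction by $-\sqrt{2}/3$ at $k=0$); and the coefficient of $c^{2k}$ for $k = 1,\ldots,N$ is a nonzero multiple of $\sum_j \beta_j (\tau_j/d_j)^{2k}$. Factoring $\tau_j/d_j$ respectively $(\tau_j/d_j)^2$ out of each column reduces the two coefficient matrices to Vandermonde matrices in the nodes $(\tau_j/d_j)^2$, which are positive and pairwise distinct by hypothesis. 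Hence both matrices are non-singular, establishing the asserted 1-to-1 correspondence between the $2N+1$ Taylor coefficients and the parameters $\gamma, \alpha_j, \beta_j$.

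To derive the explicit formula \eqref{EX:MAXa}, I would specialise to the conditions required for $\mathcal{T}^{2N-1}_{\Gamma_0}(c) = 0$: $\gamma = 0$, an $N \times N$ inhomogeneous system for the $\alpha_j$ with right-hand side $(2\sqrt{2}/3, 0, \ldots, 0)$ after the substitutions $\tilde{\alpha}_j := \alpha_j \tau_j/d_j$ and $y_j := (\tau_j/d_j)^2$, and the $N-1$ homogeneous constraints \eqref{EX:MAXb} on the $\beta_j$. The most transparent way to solve the $\alpha_j$-system is via Lagrange interpolation: the equations assert that $\sum_j \tilde{\alpha}_j p(y_j) = (2\sqrt{2}/3)\, p(0)$ for every polynomial $p$ of degree less than $N$. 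Taking $p$ to be the Lagrange basis polynomial $L_i$ at the nodes $\{y_j\}$ gives $\tilde{\alpha}_i = (2\sqrt{2}/3)\, L_i(0) = (2\sqrt{2}/3)\prod_{k\neq i} y_k/(y_k - y_i)$, which rearranges to \eqref{EX:MAXa}. The $N-1$ homogeneous constraints on the $\beta_j$ admit a one-dimensional solution space because the associated $(N-1)\times N$ matrix has full row rank, again by a Vandermonde determinant.

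For the maximal-degeneracy claim $\mathcal{T}^{2N}_{\Gamma_0}(c) = 0$, I would simply append the vanishing of the $c^{2N}$ coefficient, $\sum_j \beta_j (\tau_j/d_j)^{2N} = 0$, to \eqref{EX:MAXb}. This produces a square $N \times N$ Vandermonde system on the $\beta_j$ whose only solution is $\beta_j = 0$ for all $j$, forcing $\mathcal{F}^N_\beta$ to be affine. The main obstacle is essentially bookkeeping: tracking signs and double-factorial prefactors carefully enough to recognise the Vandermonde structure consistently across the even-power and odd-power coefficient families. Once that is in place, everything reduces to non-singularity of Vandermonde matrices in the distinct positive nodes $(\tau_j/d_j)^2$ and to the Lagrange interpolation identity above.
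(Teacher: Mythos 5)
Your argument is correct and follows essentially the same route as the paper for everything up to and including the claim that $\calT^{2N}_{\Gamma_0}(c)=0$ forces $\beta_j=0$: the series expansion of $V_j^*$ and $(V_j^*)^2$, the observation that even powers of $c$ involve only the $\beta_j$ while odd powers involve only the $\alpha_j$, and the reduction of both coefficient matrices to Vandermonde matrices in the pairwise distinct positive nodes $(\tau_j/d_j)^2$ all match the paper's proof. Your derivation of \eqref{EX:MAXa} via Lagrange interpolation --- reading the first $N$ odd-order conditions as $\sum_j\tilde\alpha_j\,p(y_j)=(2\sqrt2/3)\,p(0)$ for every polynomial $p$ of degree less than $N$ and testing with the Lagrange basis polynomials --- is a genuinely different and arguably more transparent route to the explicit formula than the paper's invocation of the inverse-Vandermonde identity \eqref{VDM_ID} from Lemma~\ref{L:VDM}; the two are of course equivalent.

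There is, however, one genuine gap: you do not prove that $\mathcal{O}(c^{2N+1})$ is the \emph{maximal} degree of degeneracy. Your final paragraph shows that $\calT^{2N}_{\Gamma_0}=0$ requires $\beta_j=0$ for all $j$, but the proposition also asserts that one cannot additionally annihilate the coefficient of $c^{2N+1}$. This does not follow from the 1-to-1 correspondence, which only governs the $2N+1$ coefficients up to order $2N$; once those are set to zero the parameters are uniquely determined, and the $c^{2N+1}$ coefficient --- a nonzero multiple of $\sum_j\alpha_j(\tau_j/d_j)^{2N+1}$ with $\alpha_j$ from \eqref{EX:MAXa} --- must still be shown to be nonzero. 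The paper closes this by noting that vanishing of the first $N+1$ odd-order coefficients would yield an overdetermined system whose last $N$ equations form a homogeneous square generalised Vandermonde system, forcing $\alpha_j=0$ for all $j$ and contradicting \eqref{EX:MAXa}. The same Vandermonde argument you already apply to the $\beta_j$ supplies this missing step, but it must also be applied to the odd-power family.
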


Proposition~\ref{L:EX} generalises \cite[Lemma 2]{CBDvHR15} related to the maximal order of the singularity for~\eqref{1F2S}, {\it i.e.}, $N=2$, which in that case was restricted to $\mathcal{O}(c^5)$. As shown in \cite{CBDvHR15}  this leads to the coexistence of at most five uniformly travelling front solutions with different speeds. Proposition~\ref{L:EX} shows that increasing $N$ increases the possible number of coexisting front solutions with different speeds and, hence, also a possibly very intricate interplay of these.  
The proof of Proposition~\ref{L:EX} relies on the fact that Vandermonde matrices are invertible under mild conditions. As we need this result several times throughout this paper, we state it here for completeness. Next, and throughout the remainder of the paper, we use the short hand notation $\vec{0}_{k}$ to denote, depending on the context, a zero row or column vector of length $k$. Similarly, we use $\mathds{1}_{k}$ to denote a row or column vector of length $k$ with all ones.

%%%
% Lemma
%%%
\begin{lemma}[Vandermonde matrix]\label{L:VDM}
Let $M$ be a square $m \times m$-Vandermonde matrix\footnote{We note that $M^t$ is a more standard definition of a Vandermonde matrix.}
$$M:=\begin{pmatrix} m_{ik} \end{pmatrix} = \begin{pmatrix} 
m_{k}^{i-1}
\end{pmatrix}_{i,k=1}^{m}, \quad m_1,\ldots, m_m\in\R.$$
Then, the Vandermonde determinant is given by $$\det{(M)}=\prod_{1 \leq i < k \leq m} (m_k-m_i).$$   Moreover, if the Vandermonde determinant is non-zero, then the equation
$$
M \vec{x} = \begin{pmatrix} b\\ \vec{0}_{m-1}\end{pmatrix}\,,
$$
where $b \in \mathbb{R}$ and $\vec{x} \in \mathbb{R}^m$ ,
is solved by
\begin{align}
\label{VDM_ID}
\vec{x} = M^{-1} \begin{pmatrix} b\\ \vec{0}_{m-1}\end{pmatrix} = b \begin{pmatrix} \prod\limits_{k=1, k \neq i}^m\dfrac{ m_k}{m_k-m_i}\end{pmatrix}_{i=1}^m \,.
\end{align}
\end{lemma}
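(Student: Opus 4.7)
The plan is to split the lemma into two independent parts: the Vandermonde determinant formula and the explicit solution formula, and to handle the solution formula via Lagrange interpolation rather than by grinding through Cramer's rule.

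For the determinant formula, I would follow the classical argument, which is entirely standard. View $\det(M)$ as a polynomial in $m_1,\ldots,m_m$. It vanishes whenever two of the $m_i$'s coincide, because two columns of $M$ then agree, so $\det(M)$ is divisible by every factor $(m_k - m_i)$ for $i<k$, and hence by the full product $P := \prod_{1\leq i < k \leq m}(m_k - m_i)$. Comparing total degrees: both $\det(M)$ and $P$ have total degree $\binom{m}{2}$, so $\det(M) = \lambda P$ for some constant $\lambda \in \mathbb{R}$. Matching the coefficient of the monomial $m_2^1 m_3^2 \cdots m_m^{m-1}$ (the one coming from the diagonal of $M$) on both sides gives $\lambda = 1$. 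This could also be proved by a short induction on $m$ using elementary column operations, but the vanishing argument is cleaner.

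For the solution formula, the key observation is that the system $M\vec{x} = (b, \vec{0}_{m-1})^{t}$ is, read componentwise, the collection of identities
\[
\sum_{k=1}^{m} m_k^{i-1}\, x_k \;=\; b\,\delta_{i,1}, \qquad i = 1,\ldots,m.
\]
Taking arbitrary linear combinations with coefficients $q_0, q_1, \ldots, q_{m-1}$ and defining the polynomial $q(t) := \sum_{i=0}^{m-1} q_i t^{i}$, this is equivalent to the single statement
\[
\sum_{k=1}^{m} q(m_k)\, x_k \;=\; b\, q(0) \qquad \text{for every polynomial } q \text{ with } \deg q \leq m-1.
\]
The assumption that the Vandermonde determinant is nonzero is exactly the statement that $m_1,\ldots,m_m$ are pairwise distinct, so the Lagrange basis polynomials $L_j(t) := \prod_{k\neq j}(t-m_k)/(m_j - m_k)$ are well-defined and satisfy $L_j(m_i) = \delta_{ij}$. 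Substituting $q = L_j$ isolates $x_j$ on the left and gives immediately
\[
x_j \;=\; b\, L_j(0) \;=\; b \prod_{k\neq j} \frac{-m_k}{m_j - m_k} \;=\; b \prod_{k\neq j} \frac{m_k}{m_k - m_j},
\]
which is precisely formula \eqref{VDM_ID}.

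There is no real obstacle here; both parts are classical. The only small bookkeeping point is the sign flip $(-m_k)/(m_j-m_k) = m_k/(m_k-m_j)$ when rewriting $L_j(0)$ in the form appearing in the statement. One could alternatively give a Cramer's rule proof, factoring $m_k$ out of each column of the relevant minor to expose a smaller Vandermonde determinant and then cancelling with $\det(M)$, but this is strictly more tedious and yields the same formula after a sign count that amounts to the same reflection identity.
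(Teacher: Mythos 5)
Your proof is correct, and it is more self-contained than what the paper offers: the paper's ``proof'' of this lemma consists of declaring the determinant formula well known (suggesting row operations and an iterative process) and citing an external reference for the solution formula \eqref{VDM_ID}, whereas you actually supply arguments for both parts. For the determinant, your divisibility-plus-degree-count argument is a legitimate alternative to the column-operation induction the paper gestures at; the one point worth making explicit is that the monomial $m_2 m_3^2\cdots m_m^{m-1}$ arises from exactly one permutation in the Leibniz expansion (since the exponents $0,1,\ldots,m-1$ are distinct), which pins down $\lambda=1$. For the solution formula, your Lagrange-interpolation reading of the system --- recasting $M\vec{x}=(b,\vec{0}_{m-1})^t$ as ``$\sum_k q(m_k)x_k = b\,q(0)$ for all $q$ of degree $\leq m-1$'' and then testing against the Lagrange basis --- is cleaner and more illuminating than extracting the entries of $M^{-1}$ via Cramer's rule or quoting the explicit inverse; it also makes transparent why the hypothesis $\det M\neq 0$ (pairwise distinct nodes) is exactly what is needed, and your sign bookkeeping $(-m_k)/(m_j-m_k)=m_k/(m_k-m_j)$ correctly recovers the form in \eqref{VDM_ID}. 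The only thing your approach ``costs'' is the implicit use of the equivalence between the original system and the polynomial statement, which you justify by noting that the row functionals span the dual of the degree-$(m-1)$ polynomials; this is immediate from the invertibility of $M$ and is fine as stated.
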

\begin{proof}[Proof of Lemma~\ref{L:VDM}]
The first part of this lemma is a well-known result and can, for instance, be shown by using row operations and an iterative process. We omit the details. For the second part of the proof see \cite[e.g]{MS58}.
\end{proof}

\begin{proof}[Proof of Proposition~\ref{L:EX}]
Expression \eqref{EQ:EX_AFF} follows directly from Taylor expanding \eqref{E:N} near $c=0$.
Equating the leading order term to zero gives $\gamma=0$. 
Furthermore, 
observe that the odd powered $c$-terms of \eqref{EQ:EX_AFF} do not depend on $\beta_j$, while the even powered $c$-terms do not depend on $\alpha_j$.\footnote{In particular, all the even powered $c$-terms of \eqref{EQ:EX_AFF} drop out when $\be_j=0$ for all $j$ and $\gamma=0$. That is, if the coupling function $\mathcal{F}^N(\vec{V})$~\eqref{NONLnew} is affine, then the existence function $\Gamma_0(c)$ is an odd function. This again relates to the original necessity to break the symmetry of~\eqref{eq:multi-component-RD}, see also Remark~\ref{ADV}.} This allows us to equate the first order correction term up to the $2N$-th order correction term of \eqref{EQ:EX_AFF} to zero as two independent matrix-vector equations. In particular, equating the first $N$ odd-powered $c$-terms of \eqref{EQ:EX_AFF} up to 
$\mathcal{O}(c^{2N-1})$ to zero gives 
\begin{align}\label{VECC}
\begin{aligned}
M^{\rm o} \vec \alpha = (m_{kj}^{\rm o}) \vec \alpha := 
\begin{pmatrix}
\left(\dfrac{\tau_j}{d_j}\right)^{2k-1} 
\end{pmatrix}_{k,j=1}^{N}
\begin{pmatrix}
\alpha_j
\end{pmatrix}_{j=1}^N = \begin{pmatrix}
\dfrac{2\sqrt2}{3} \\[2mm] \vec{0}_{N-1}
\end{pmatrix}\,.
\end{aligned}
\end{align}
Equating the first $N$ even-powered $c$-terms of \eqref{EQ:EX_AFF}, excluding the zeroth order term, up to 
$\mathcal{O}(c^{2N})$ to zero gives 
\begin{align}\label{EQ:EX2}
\begin{aligned}
M^{\rm e } \vec \beta = (m_{kj}^{\rm e}) \vec \beta:=
\begin{pmatrix}
\left(\dfrac{\tau_j}{d_j}\right)^{2k} 
\end{pmatrix}_{k,j=1}^{N}
\begin{pmatrix}
\beta_j
\end{pmatrix}_{j=1}^N = 
 \vec{0}_{N}
\,.
\end{aligned}
\end{align}
Next, we observe that both $N \times N$ square matrices $M^{\rm o}$ and $M^{\rm e}$ can be written as generalised Vandermonde matrices, that is,  
they both can be written as the
product of a Vandermonde matrix and a diagonal matrix. Upon introducing 
$\chi_j := (\tau_j/d_j)^2 \neq 0$ (by assumption) we have
$$
M^{\rm o} =\begin{pmatrix} \left(\dfrac{\tau_j}{d_j}\right)^{2k-1} 
\end{pmatrix}_{k,j=1}^{N} 
= 
\begin{pmatrix}
\chi_j ^{k-1} 
\end{pmatrix}_{k,j=1}^{N} 
\left(\diag\begin{pmatrix}
\sqrt{\chi_j}
\end{pmatrix}\right)_{j=1}^N\,,
$$
and
$$
M^{\rm e} =
\begin{pmatrix}
\left(\dfrac{\tau_j}{d_j}\right)^{2k} 
\end{pmatrix}_{k,j=1}^{N}
=
\begin{pmatrix}
\chi_j ^{k-1} 
\end{pmatrix}_{k,j=1}^{N} 
\left(\diag\begin{pmatrix}
\chi_j
\end{pmatrix}\right)_{j=1}^N\,.
$$
The matrix $(
\chi_j^{k-1})_{k,j=1}^{N}$ is now a standard Vandermonde matrix and, by Lemma~\ref{L:VDM}, we have that 
$$
\det(
\chi_j^{k-1})_{k,j=1}^{N} = \prod\limits_{1 \leq j < k \leq N} (\chi_k-\chi_j), $$
which is unequal to zero if $\chi_j$ are pairwise distinct for $j=1,\ldots,N$, {\it i.e.}, $\tau_j/d_j \neq \tau_k/d_k >0$, $j\neq k$.
Hence, by assumption, $(
\chi_j^{k-1})_{k,j=1}^{N}$ is invertible, and, consequently, $M^{\rm o}$ and $M^{\rm e}$ are invertible. 
This means that \eqref{VECC} and \eqref{EQ:EX2} can be uniquely solved for arbitrary right hand sides, which proves the claimed relation of the coefficients of the Taylor polynomials with the system parameters.
In addition, by \eqref{VDM_ID} of Lemma~\ref{L:VDM} we get from \eqref{EQ:EX2} that $\vec{\beta} = 0$
and from \eqref{VECC} that $\alpha_j$ is as given in \eqref{EX:MAXa}
\begin{align*}
\begin{aligned}
\vec{\alpha} &=
\begin{pmatrix}  
\alpha_j
\end{pmatrix}_{j=1}^N
 = 
\left(M^{\rm o}\right)^{-1}
 \begin{pmatrix}
\dfrac{2\sqrt2}{3} \\ \vec{0}_{N-1}
\end{pmatrix} =
\left(\diag\begin{pmatrix}
\dfrac{1}{\sqrt\chi_j} 
\end{pmatrix}\right)_{j=1}^N
\left(\begin{pmatrix}
\chi_j^{k-1} 
\end{pmatrix}_{k,j=1}^{N} \right)^{-1}  \begin{pmatrix}
\dfrac{2\sqrt2}{3} \\ \vec{0}_{N-1}\, 
\end{pmatrix}\\&=
\begin{pmatrix}
\dfrac{2\sqrt2}{3}  \dfrac{1}{\sqrt\chi_j} \displaystyle\prod_{k =1, k \neq j}^N \dfrac{\chi_k}{\chi_k-\chi_j}
\end{pmatrix}_{j=1}^N \,
=
\begin{pmatrix}
\dfrac{2\sqrt2}{3}  \dfrac{d_j}{\tau_j} \prod\limits_{k =1, k \neq j}^N \dfrac{(\tau_k/d_k)^2}{(\tau_k/d_k)^2-(\tau_j/d_j)^2}
\end{pmatrix}_{j=1}^N \,.
\end{aligned}
\end{align*}
That is, the existence function 
$\Gamma_0(c) = \mathcal{O}(c^{2N+1})$ if $\alpha_j$ is as given in \eqref{EX:MAXa} and the coupling term $\mathcal{F}^N_\beta(\vec{V})$ \eqref{NONLnew} is affine. Furthermore, it directly follows by equating 
only the first $2N-1$ terms
that $\Gamma_0(c) = \mathcal{O}(c^{2N})$ if $\alpha_j$ is still as given in \eqref{EX:MAXa}, while $\vec{\beta}$ solves the underdetermined solvable system~\eqref{EX:MAXb}. 

To complete the proof, we show that, in addition, it is not possible that also the 
next order odd-powered $c$-term of the Taylor series of the existence function
$\Gamma_0(c)$ equates to zero.
Say, $\Gamma_0(c) = \mathcal{O}(c^{2N+2})$, then equating the first $N+1$ odd-powered $c$-terms of \eqref{EQ:EX_AFF} to zero gives 
\begin{align*}
\begin{aligned}
\begin{pmatrix}
\left(\dfrac{\tau_j}{d_j}\right)^{2k-1} 
\end{pmatrix}_{k,j=1,1}^{N+1,N}
\begin{pmatrix}
\alpha_j
\end{pmatrix}_{j=1}^N = \begin{pmatrix}
\dfrac{2\sqrt2}{3} \\[2mm] \vec{0}_{N}
\end{pmatrix}\,.
\end{aligned}
\end{align*}
This is an overdetermined system and the last $N$ equations 
\begin{align}\label{EQ:EX22}
\begin{aligned}
\begin{pmatrix}
\left(\dfrac{\tau_j}{d_j}\right)^{2k-1} 
\end{pmatrix}_{k,j=2,1}^{N+1,N}
\begin{pmatrix}
\alpha_j
\end{pmatrix}_{j=1}^N = 
\vec{0}_{N}
\,,
\end{aligned}
\end{align}
give an invertible 
generalised Vandermonde matrix. 
Consequently, \eqref{EQ:EX22} is uniquely solvable and yields the trivial solution, {\it i.e.}, $\vec{\alpha}=\vec{0}_N$. This contradicts \eqref{EX:MAXa}.
\end{proof}

We 
illustrate the role of the quadratic terms in the coupling function $\calF^N$ in \eqref{eq:multi-component-RD} (which were absent in our previous papers \cite{CBDvHR15,CBvHHR19}) for the bifurcation scenarios of uniformly travelling front solutions. To this end, we set $N=2$, take $\calF=\mathcal{F}^N_\beta(\vec{V})$~\eqref{NONLnew}, so that Proposition~\ref{L:EX} guarantees control of the Taylor expansion of the existence function up to order $\mathcal{O}(c^5)$ in \eqref{EQ:EX_AFF}. In Figure~\ref{fig:butterfly} we plot the singularity structure for a specific set of parameters, which shows how the quadratic terms break the symmetry and deform the symmetric butterfly catastrophe, see also Figures~6 and 7 in~\cite{CBDvHR15}.
\begin{figure}
\centering
\includegraphics[width=0.7\textwidth]{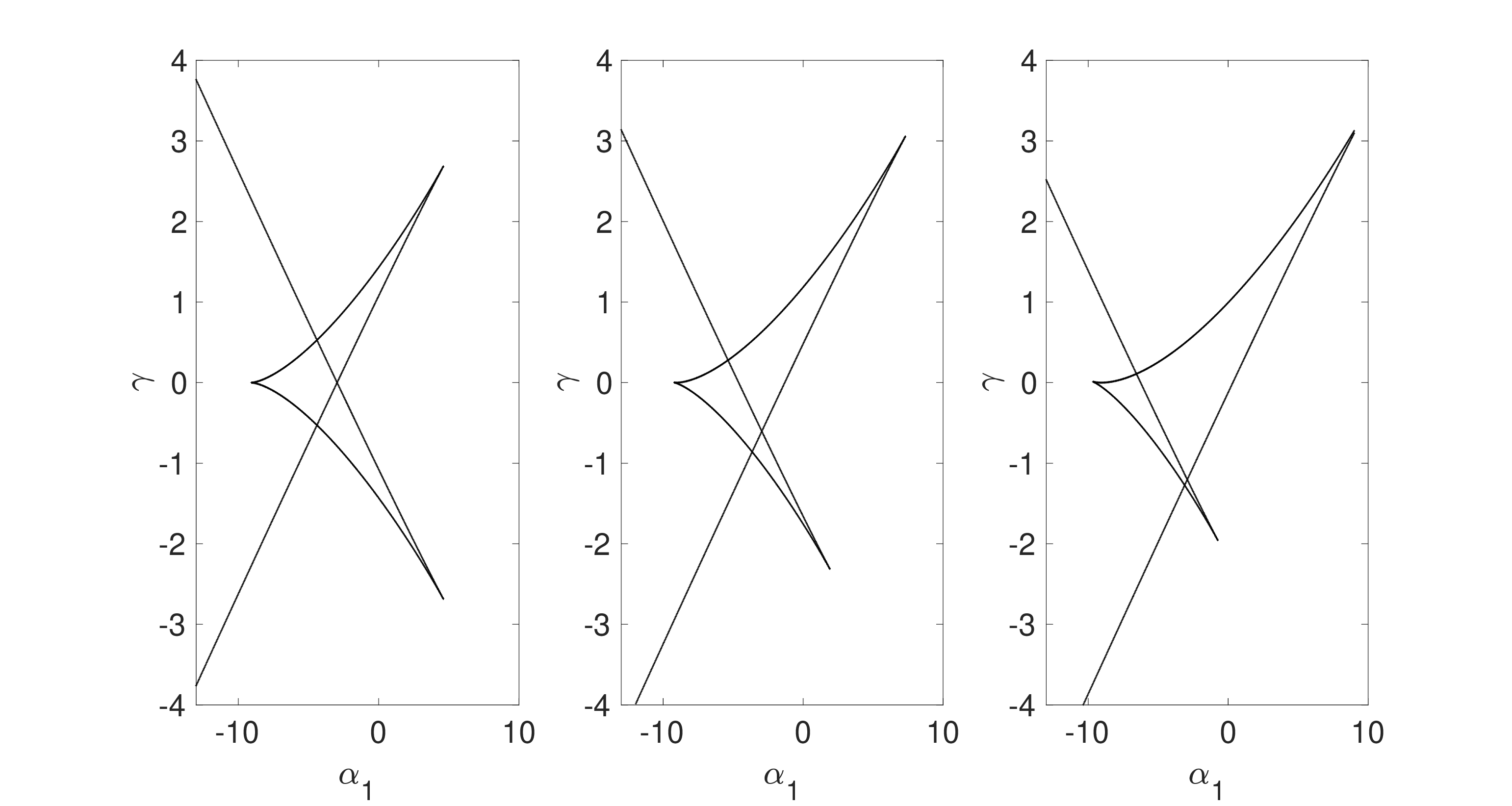}
\caption{
Curves of fold points of the existence condition $\Gamma_0(c)=0$~\eqref{E:N} in the $(\alpha_1, \gamma)$-plane for $\beta_1 = 0, 5, 10$ in the left, middle and right panel, respectively; the remaining system parameters are $ \alpha_2  = 4, \beta_2 = 0, \tau_1 = 1, \tau_2 = 5, d_1 = 1$ and $d_2 = 2$. The middle and right panels highlight the symmetry breaking effect of $\beta$ needed for a complete unfolding.}
\label{fig:butterfly}
\end{figure}

Next, we turn to the persistence of the results for $\eps>0$ sufficiently small.

%%%
%% Proposition
%%%

\begin{proposition}[Singularities for $0<\eps\ll1$]\label{P:EXeps}
Consider \eqref{eq:multi-component-RD} with \eqref{N:NONL22}. There is $\delta>0$ and a function 
$\Gamma_\eps(c)$ depending smoothly on $\calF^N(\vec{V})$ and the parameters of \eqref{eq:multi-component-RD}, and that limits to $\Gamma_0(c)$ as $\eps\to0$ such that the following holds.
For any $\eps>0$ sufficiently small, the system parameters for which there exist travelling front solutions $Z_{\rm TF}$ that are $\delta$-close to $Z_{\rm TF}^0$ are in 1-to-1 correspondence with roots of  $\Gamma_\eps(c)$. 
Moreover, for $\mathcal{F}^N(\vec{V})= \mathcal{F}^N_\beta(\vec{V})$ from \eqref{NONLnew}, arbitrary Taylor polynomials of degree $2N$ of $\Gamma_\eps$ with respect to $c$  can be realised by suitable choice of the parameters of \eqref{eq:multi-component-RD} and $\mathcal{F}^N_\beta(\vec{V})$.
\end{proposition}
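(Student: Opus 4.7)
The plan is to combine the geometric singular perturbation construction that underlies Lemma~\ref{L:N} with a regular-perturbation argument around the invertible Vandermonde structure exploited in the proof of Proposition~\ref{L:EX}.

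First, I would upgrade the singular matching condition to $\eps>0$ via standard Fenichel theory and exchange-lemma arguments, in the spirit of \cite{CBDvHR15}. In travelling-wave coordinates, \eqref{eq:multi-component-RD} is a slow-fast system whose $\eps=0$ critical manifolds $\{U=\pm 1\}$ are normally hyperbolic and hence persist as locally invariant slow manifolds $M_\eps^\pm$ carrying smooth local stable and unstable manifolds $W^{s,u}(M_\eps^\pm)$. A travelling front that is $\delta$-close to $Z_{\rm TF}^0$ corresponds to a heteroclinic orbit from $M_\eps^-$ to $M_\eps^+$ following the fast Allen-Cahn layer near $\tanh(y/(\sqrt{2}\eps))$. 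A Melnikov-type computation measuring the signed distance between $W^u(M_\eps^-)$ and $W^s(M_\eps^+)$ transverse to the unperturbed fast heteroclinic produces a single scalar matching function $\Gamma_\eps(c)$, smooth in $c$, $\eps$, the system parameters, and the coupling function $\calF^N$, and satisfying $\Gamma_\eps(c) = \Gamma_0(c) + \mathcal{O}(\eps)$ uniformly on bounded ranges of $c$ and parameters. This simultaneously delivers the 1-to-1 correspondence between $\delta$-close travelling fronts and roots of $\Gamma_\eps$ as well as the claimed singular limit.

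For the realisation statement I would apply the inverse function theorem to the parameter-to-Taylor-coefficient map
\[
\Psi_\eps:(\gamma,\vec{\alpha},\vec{\beta})\in\mathbb{R}^{2N+1}\longmapsto \left(\tfrac{1}{k!}\partial_c^k \Gamma_\eps(0)\right)_{k=0}^{2N}\in\mathbb{R}^{2N+1}.
\]
Since $\calF^N_\beta(\vec V)$ is polynomial and the singular layer values $\vec{V}^*(c)$ from \eqref{VSTARN} are independent of $(\gamma,\vec{\alpha},\vec{\beta})$, the unperturbed map $\Psi_0$ is linear, and Proposition~\ref{L:EX} (together with Lemma~\ref{L:VDM}) shows that $\Psi_0$ is a linear bijection precisely under the hypothesis that the $\tau_j/d_j$ are pairwise distinct and positive. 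Because $\Gamma_\eps = \Gamma_0+\mathcal{O}(\eps)$ smoothly in all parameters, the linearisation $D\Psi_\eps(0)$ is an $\mathcal{O}(\eps)$-perturbation of $\Psi_0$ in operator norm and therefore remains an isomorphism for $\eps$ sufficiently small. The inverse function theorem then yields local surjectivity of $\Psi_\eps$ near any target; combined with an exact $\eps=0$ pre-image furnished by Proposition~\ref{L:EX} and an $\mathcal{O}(\eps)$-correction, this realises arbitrary Taylor polynomials of degree $2N$ for $\Gamma_\eps$.

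The main obstacle is the first step: setting up the Melnikov/matching computation with enough uniformity to view $\Gamma_\eps$ as a single smooth object depending on all parameters and on $\calF^N$ as a function, and establishing $\Gamma_\eps(c) = \Gamma_0(c)+\mathcal{O}(\eps)$ uniformly on a bounded $c$-interval around $c=0$ large enough to extract the first $2N+1$ Taylor coefficients. Once this geometric scaffolding is in place, the parameter-counting consequences of the invertible Vandermonde matrices $M^{\rm o}, M^{\rm e}$ transfer from $\eps=0$ by regular perturbation, and no further essential input is required.
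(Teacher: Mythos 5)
Your proposal is correct and follows essentially the same route as the paper: the existence function $\Gamma_\eps$ and the $\delta$-correspondence come from the GSPT/matching construction behind Lemma~\ref{L:N} (which is exactly what your Fenichel/Melnikov sketch reconstructs), and the realisation of arbitrary degree-$2N$ Taylor polynomials follows from the invertibility of the (affine, Vandermonde-based) parameter-to-coefficient map at $\eps=0$ from Proposition~\ref{L:EX} together with persistence under the $\mathcal{O}(\eps)$ perturbation — the paper phrases this as continuity of the image containing an open neighbourhood of zero, while you phrase it via the inverse function theorem, which is the same argument. The only cosmetic point is that $\Psi_0$ is affine rather than linear (the constant $-\sqrt{2}\,c/3$ term), but this does not affect the invertibility of its derivative.
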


% Proof
\begin{proof}
The function $\Gamma_\eps(c)$ and $\delta>0$ for the existence statement results from the statement and proof of Lemma~\ref{L:N} (analogous to \cite{CBDvHR15,CBvHHR19} for $N=2,3$). 
By Proposition~\ref{L:EX}, when choosing parameters $\gamma, \alpha_j, 
\beta_j$ from an open neighbourhood of zero in $\R^{2N+1}$, the coefficient vectors of the Taylor polynomials of $\Gamma_0$ form an open neighbourhood of zero in $\R^{2N+1}$. By continuity, {the perturbation of the latter for $\eps>0$ sufficiently small contains an open neighbourhood of zero}, which proves the claim.
\end{proof}

\subsection{Global attractor}
Before we analyse stability of the front solutions, we note that the PDE \eqref{eq:multi-component-RD} possesses a global attractor that, in particular, attracts all dynamic front solutions; see also \cite{Alfaro2008}. 

%%%
% LEMMA
%%%

\begin{lemma}[Trapping region]\label{lem:trap}
For each $r>1$ there is $\eps_r>0$ such that for all $\eps\in(0,\eps_r)$ the set $M_r=\{-r<U,V_1,\ldots, V_N<r\}\subset\R^{N+1}$ is a trapping region for the PDE dynamics of \eqref{eq:multi-component-RD} in the sense that if $(U(x,t_0), \vec{V}(x,t_0))\in \overline{M_r}$ for all $x$, then $(U(x,t), \vec{V}(x,t) \in M_r$ for all $x$ and $t>t_0$. In particular, \eqref{eq:multi-component-RD} possesses a global attractor, whose elements have range contained in $M_{r_\eps}$ for {an $r_\eps\in(1, r]$}, and $r_\eps\to 1$ as $\eps\to 0$.
\end{lemma}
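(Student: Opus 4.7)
My plan is to invoke the standard invariant-region theorem for diagonal reaction–diffusion systems (Chueh–Conley–Smoller / Weinberger type) applied to the box $M_r$. Since the diffusion matrix in \eqref{eq:multi-component-RD} is diagonal, it suffices to verify that on each face of $M_r$ the reaction vector points strictly inward for the $U$-component and non-strictly inward for each $V_j$-component; parabolic smoothing will then promote the non-strict inequalities on the $V_j$-faces to strict interior dynamics for $t>t_0$.

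First, on the faces $\{U=\pm r\}$, the $U$-reaction equals $\pm r\mp r^3\mp\eps\calF^N(\vec{V})$. Because $r>1$ one has $r^3-r>0$, and since $\calF^N$ is smooth it attains a finite maximum $C_r:=\sup_{|\vec{V}|_\infty\le r}|\calF^N(\vec{V})|$. Hence choosing $\eps_r>0$ such that $\eps_r C_r<r^3-r$ yields a strictly inward reaction on both $U$-faces for all $\eps\in(0,\eps_r)$. Second, on the faces $\{V_j=\pm r\}$ with $|U|\le r$, the $V_j$-reaction is $\eps^2(U\mp r)/\tau_j$, which is non-positive at $V_j=r$ and non-negative at $V_j=-r$. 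This is the classical sufficient condition for invariance of $\overline{M_r}$ under the associated parabolic evolution: apply the scalar maximum principle to each component, using the remaining components as known coefficients of a linear problem, and iterate with a Picard-type argument to conclude that a solution leaving $\overline{M_r}$ through a $V_j$-face is impossible. The equality case ($V_j\equiv r$ on some parabolic slice) forces $U\equiv r$ by the $V_j$-equation, but then the $U$-equation instantly yields $\partial_t U<0$ on that slice, a contradiction; this gives the strict trapping property claimed.

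For the global attractor, the construction is then standard: the semigroup generated by \eqref{eq:multi-component-RD} is dissipative in $L^\infty(\R)$ (uniformly for initial data with range in any fixed $\overline{M_r}$), and by the smoothing estimates of the underlying parabolic semigroup the orbit of such bounded initial data is pre-compact in a suitable locally uniform topology (see e.g. \cite{Alfaro2008} for the analogous $N=1$ setting). The $\omega$-limit sets are thus compact, invariant, and contained in every $\overline{M_r}$ for which the preceding argument applies.

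For the statement that $r_\eps\to 1$: the construction above works with $r_\eps:=\inf\{r>1:\eps C_r<r^3-r\}$. As $r\downarrow 1$ one has $r^3-r=2(r-1)+\mathcal{O}((r-1)^2)$, while $C_r\to C_1$ remains bounded. Solving $\eps C_r\approx 2(r-1)$ gives $r_\eps-1=\mathcal{O}(\eps)$, hence $r_\eps\to 1$. The main technical obstacle I anticipate is justifying the invariant-region / maximum principle machinery on the unbounded domain $\R$ in the precise function-space setting used elsewhere in the paper; this is resolved by restricting to uniformly locally bounded classical solutions (which is the natural class given the global-attractor statement), where parabolic maximum principles apply verbatim once the source terms are controlled via $\calF^N$ being smooth and $M_r$ being compact.
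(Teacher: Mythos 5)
Your proposal is correct and follows essentially the same route as the paper: both verify that the reaction kinetics point inward on the faces of the box $M_r$ (strictly on the $U$-faces using $r^3-r>0$ and $\eps$ small, weakly on the $V_j$-faces), invoke invariant-region/maximum-principle theory for diagonal diffusion to conclude trapping, and obtain $r_\eps\to1$ from the uniform smallness of $\eps\calF^N$ on $M_r$. Your explicit strong-maximum-principle treatment of the equality case on the $V_j$-faces and the quantitative estimate $r_\eps-1=\calO(\eps)$ are slightly more detailed than the paper's argument, but the underlying mechanism is identical.
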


\begin{proof}
In absence of diffusion each $M_r$ is a trapping region for the kinetics ODE 
\begin{align*}
\left\{
\begin{aligned}
 \partial_t U & =    U - U^3 - \varepsilon  \mathcal{F}^N(\vec{V}) \\[.2cm]
 \tau_j \partial_t V_j & =   \varepsilon^2( U -  V_j )\, , \qquad j = 1, \ldots, N\,, 
 \end{aligned}
\right.
\end{align*}
for all $\eps>0$ sufficiently small. In particular, since $\calF^N(\vec{V})$ is smooth and $\vec{V}$ bounded on $\partial M_r$, there is for given $r>1$ an $\eps_r$ such that the vector field of the kinetics points into $M_r$, except at the corners where $\partial_t V_j=0$. However, at a corner $\partial_t U$ is non-zero and $(\partial_t U,0)$ is tangent to $\partial M_r$ away from the corner. In combination, $M_r$ traps the ODE flow, and conversely, {given any $r>1$,} for each {$\eps\in(0,\eps_r)$ there is $r_\eps\leq r$ such that $M_R$ is trapping for all $R\geq r_\eps$. In addition, 
$r_\eps\to 1$ as $\eps\to 0$ since $\eps\calF^N(\vec{V}) \to 0$ 
uniformly on $M_r$ and $r_0=1$ for trapping the $U$-component.} 
For a reaction-diffusion PDE with diagonal diffusion, like~\eqref{eq:multi-component-RD}, this implies the claims\cite[e.g. \S14.B]{Smoller}. 
Briefly, suppose a trajectory for initial state inside $M_r$ touches the boundary at some time $t>0$. If this occurs at $U=\pm r$ the sign of $\partial^2_{x} U$ is $\mp 1$ or zero, and analogously for $V_j$. 
Hence, the diffusion, independent of the spatial scale, does not work against the kinetics so that $M_r$ indeed {traps as claimed}. Since boundedness of the sup-norm implies global existence for parabolic flows, it follows that there exists a global attractor as claimed.
\end{proof}

%%%
% SUBSECTION: Stability
%%%

\section{Stability of front solutions}
\label{S:1+N_S}
In this section, we determine the stability properties of the front solutions as constructed in the previous section, see in particular Proposition~\ref{L:S} below. 
More precisely, we determine the critical spectrum of the linearisation of \eqref{eq:multi-component-RD} around the travelling front solution  $Z_{\rm TF}$. 
We show that stationary front solutions $Z_{\rm SF}$, {\it i.e.}, travelling front solutions $Z_{\rm TF}$ with speed zero, can have a zero eigenvalue of multiplicity (up to) $N+1$ with $N$ generalised eigenfunctions, see Proposition~\ref{L:N+1}, Lemma~\ref{lem:evals} and Lemma~\ref{L:GEIGSG}. These
results will also form the basis for the upcoming center manifold reduction and subsequent analysis of \S\ref{S:DFS}.

Upon introducing $Z=(U,\vec{V})= (U,V_1, \ldots, V_N)$, we write \eqref{eq:multi-component-RD} as 
\begin{align}\label{eq:Zeqn}
 \partial_t Z = \mathcal{G}(Z; \P, \eps)\,,
\end{align}
with $\P$ the abstract short hand notation denoting all system parameters and 
$$
\mathcal{G}(Z; \P, \eps) := M(\vec{\tau})^{-1} \begin{pmatrix}
\eps^2 \partial_{x}^2 U  +  U - U^3 - \varepsilon \mathcal{F}^N(\vec{V})\\[2mm]
\eps^2 d_1^2 \partial_x^2{V_1}  + \eps^2(U - V_1) \\ \vdots \\
\eps^2 d_N^2 \partial_x^2{V_N}  + \eps^2(U - V_N)
\end{pmatrix}\,,
$$
with $M(\vec{\tau}):= \diag(1, \tau_1, \ldots, \tau_N)$. The linearisation around a uniformly travelling front solution $Z_{\rm TF}$ gives rise to the linear operator 
$\partial_Z \mathcal{G}(Z_{\rm TF}; \P, \eps)$ whose spectrum determines the spectral stability of the travelling front solution. 
We will be mostly concerned with stationary front solutions $Z_{\rm SF}=(U_{\rm SF}, \vec{V}_{\rm SF})$ and denote 
\begin{align}
\label{Ldef}
\mathcal{L} := \partial_Z \mathcal{G}(Z_{\rm SF}; \P, \eps)\,.
\end{align}
However, if the context is clear, we also use $\mathcal{L}$ for $\partial_Z \mathcal{G}(Z_{\rm TF}; \P, \eps)$.
In the following, by the spectrum associated with a uniformly travelling or stationary front solution we mean the spectrum of the corresponding operator, that is, $\sigma(\mathcal{L})$, which can be decomposed (as described in \cite[e.g.]{henry2006geometric, sandstede2002stability}) into point spectrum and essential spectrum:
$\sigma(\mathcal{L}) = \sigma_{\rm ess}(\mathcal{L}) \cup \sigma_{\rm pt}(\mathcal{L}) \, .$

%%%
% SUBSEC
%%%
\subsection{Travelling front solutions: stability condition}
\begin{proposition}[Stability of uniformly travelling front solutions and the Evans function]
\label{L:S}
{Consider} a uniformly travelling front solution $Z_{\rm TF} = (U_{\rm TF},\vec{V}_{\rm TF})$ {given by Lemma~\ref{L:N}}.
For $\varepsilon > 0$ sufficiently small this front solution is orbitally exponentially stable if and only if, besides the zero solution, all roots of
\begin{align}
\label{EV}
E_0(\lambda) = 
\lambda + 3\sqrt2\sum_{j=1}^N  
\partial_j\mathcal{F}^N(\vec{V}^{*})
\left(\frac{1}{\sqrt{c^2 \tau_j^2+ 4 d_j^2 (\lambda \tau_j+1)}}-\frac{1}{\sqrt{4 d_j^2+ c^2 \tau_j^2}} \right) \,,
\end{align}
have negative real part. Here, $\vec{V}^{*}$ is given in \eqref{VSTARN} and
$
\partial_j \mathcal{F}^N(\vec{V}^*) :=\left.\dfrac{\partial \mathcal{F}^N(\vec{V})}{\partial V_j}\right|_{\vec{V} = \vec{V}^*} \,$. 

{More precisely, for real parts bigger than 
$-\eps^2/\max\{\tau_j: j = 1,2, \ldots, N\}  +  \calO(\eps^3)$, 
the point spectrum associated to $Z_{\rm TF}$ is given by 
$\{\eps^2 \lambda\,:\, \lambda \text{ is a root of } E_\eps\}$,
where $E_\eps:\mathbb{C}\to\mathbb{C}$ are analytic functions, smooth in the parameters $P$ and in $\eps>0$, and continuous in $\eps\geq 0$.}
\end{proposition}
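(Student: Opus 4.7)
My plan is to adapt the NLEP (non-local eigenvalue problem) approach that has been developed for related multi-scale reaction-diffusion systems (see, \textit{e.g.}, \cite{D01,vHDK08} and the $N=2$ analysis in \cite{CBDvHR15,CBvHHR19}) to the present $(1+N)$-component setting. The argument splits naturally into locating the essential spectrum, constructing an Evans function $E_\eps$ for the point spectrum, and identifying its singular limit $E_0$ via a Fredholm solvability condition in the fast field.

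I would begin with the essential spectrum. Linearising $\mathcal{G}$ at the asymptotic states $Z_{\rm TF}(\pm\infty) = \pm(1,\ldots,1) + \calO(\eps)$ produces a constant-coefficient block-triangular operator. The $U$-block has essential spectrum capped at $-2 + \calO(\eps^2)$, and each of the $N$ decoupled $V_j$-blocks contributes a parabolic branch with rightmost real part $-\eps^2/\tau_j + \calO(\eps^3)$; the travelling-wave drift $\eps^2 c\tau_j v_j'$ shifts only the imaginary part of these branches. Hence $\sigma_{\rm ess}(\calL) \subset \{\mathrm{Re}(\lambda) \leq -\eps^2/\max_j \tau_j + \calO(\eps^3)\}$, justifying the half-plane claimed in the proposition. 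To the right of $\sigma_{\rm ess}$, I would define the Evans function $E_\eps(\lambda)$ via the first-order spatial-dynamics formulation of the eigenvalue problem as a determinant matching stable and unstable subspaces at $\pm\infty$; analyticity in $\lambda$, smoothness in the parameters $P$, and continuity down to $\eps=0$ follow from standard theory \cite{sandstede2002stability}. The fast-scale rescaling $\lambda_{\rm slow} = \eps^2\lambda$ is built into this construction, so zeros of $E_\eps$ deliver exactly the point spectrum $\eps^2\lambda$ of $\calL$.

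Next, to compute $E_0 = \lim_{\eps\to 0} E_\eps$, I would exploit the slow-fast structure. In the slow fields $I_s^\pm$ the component $u$ is exponentially small (since $1-3U_{\rm TF}^2 \approx -2$ there), so each $v_j$ satisfies the homogeneous constant-coefficient ODE $d_j^2 v_j'' + c\tau_j v_j' - (1+\lambda\tau_j)v_j = 0$, whose interface Green's function evaluates to $G_j(0,0;\lambda) = -1/\sqrt{c^2\tau_j^2 + 4d_j^2(1+\lambda\tau_j)}$. In the fast variable $z = y/\eps$, the expansion $u = AU_0'(z) + \eps u_1 + \eps^2 u_2 + \ldots$, with $U_0(z) = \tanh(z/\sqrt 2)$, solves $u_{0,zz} + (1-3U_0^2)u_0 = 0$ at leading order, whose kernel is spanned by the translation mode $U_0'$. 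Since $u$ concentrates in the fast field with $\int u\,dy = 2A\eps + \calO(\eps^2)$, the slow Green's-function representation gives $v_j(0;\lambda) = -2A\eps\, G_j(0,0;\lambda)$.

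Finally, I would extract $E_0$ from the Fredholm solvability condition at order $\eps^2$, obtained by projecting the fast-scale $u$-equation against $U_0'$. The contributions are: (i) the $\lambda u_0$-term, yielding $(2\sqrt 2/3)\lambda A$ since $\|U_0'\|_{L^2}^2 = 2\sqrt 2/3$; (ii) the non-local coupling $\sum_j \partial_j\calF^N(\vec V^*)v_j(0;\lambda)$ multiplied by $\int U_0'\,dz = 2$; and (iii) the inhomogeneous corrections arising from $U_{\rm TF} = U_0 + \eps U_1 + \eps^2 U_2$ and from the drift $cu_{0,z}$, which via the order-$\eps$ existence solvability (Lemma~\ref{L:N}) assemble exactly into the $\lambda=0$ subtraction $1/\sqrt{4d_j^2 + c^2\tau_j^2}$ required by translation invariance ($E_0(0) = 0$). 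Rearranging delivers the stated formula for $E_0(\lambda)$; together with the essential-spectrum bound and continuity of $E_\eps$ in $\eps$, this proves the claimed equivalence. The principal technical obstacle is step (iii): the careful order-$\eps^2$ book-keeping needed to verify that all front-expansion corrections collapse precisely onto the $G_j(0,0;0)$-subtraction. This generalises the corresponding computation in \cite{CBDvHR15,CBvHHR19} for $N=2$ to arbitrary $N$; the generalisation is streamlined by the fact that the $V_j$-channels decouple, so the bookkeeping per channel is structurally identical to the two-component case.
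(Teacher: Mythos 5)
Your proposal is correct and follows essentially the same route as the paper: the essential spectrum is located exactly as in Lemma~\ref{L:SE} (constant-coefficient linearisation at the asymptotic states, with the $V_j$-branches capped at $-\eps^2/\tau_j+\calO(\eps^3)$), and the singular limit $E_0$ of the Evans function is obtained from the same slow-field Green's functions $-1/\sqrt{c^2\tau_j^2+4d_j^2(1+\lambda\tau_j)}$ combined with a fast-field solvability condition against $U_0'$ (your normalisations $\|U_0'\|_{L^2}^2=2\sqrt2/3$, $\int U_0'\,dz=2$ reproduce the factor $3\sqrt2$ in \eqref{EV}). The only organisational difference is that the paper packages the computation in the NLEP transmission-function factorisation $\mathcal{D}=d\,\mathcal{D}_f\mathcal{D}_s$ and collapses the $N\times N$ slow block via the matrix determinant lemma, whereas you sum the decoupled channel contributions directly and pin down the $\lambda=0$ subtraction via translation invariance; both yield the identical formula.
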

\begin{proof}

This follows from combining the upcoming Lemma~\ref{L:SE} and Lemma~\ref{L:SP}.
\end{proof}
The proposition generalises Theorem~1 of~\cite{CBDvHR15}, which, in turn, depends on several intermediate results of that paper. We will incautiously call \eqref{EV} the {\it{Evans function}}
{since it defines the factor of} the Evans function \cite{AGJ90}{, whose roots give the (relevant) point spectrum with multiplicity. It 
always has the root $\lambda=0$, which is} indeed an eigenvalue for $\eps \ge 0$ and it is related to the translation invariance of the system and its corresponding eigenfunction is $\partial_y Z_{\rm TF}$. 
The following lemma generalises Lemma~3.2 of \cite{vHDK08} related to the essential spectrum of a standing pulse solution in the original three-component system~\eqref{1F2S}, see also Lemma~4 of~\cite{CBDvHR15}.
\begin{lemma}[Essential spectrum]
\label{L:SE}
There exists $C > 0$ such that for $\varepsilon > 0$ sufficiently small the essential spectrum 
$\sigma_{\rm ess}(\mathcal{L})$ 
associated to a uniformly travelling front solution $Z_{\rm TF}$ of \eqref{eq:multi-component-RD} with \eqref{N:NONL22} as constructed in Proposition~\ref{P:EXeps} lies in the left-half plane 
$\{\lambda \in \mathbb{C} \, | \, \Re(\lambda) < -\eps^2\chi\}$, with $\chi = \min\limits_{j = 1,2, \ldots, N} \left\{1/\tau_j  -  \eps C\right\}.$
\end{lemma}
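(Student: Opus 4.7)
The plan is to follow the standard Weyl-type reduction of $\sigma_{\rm ess}(\mathcal{L})$ to the spectra of the asymptotic constant-coefficient operators at $y\to\pm\infty$, followed by a perturbative analysis of the resulting dispersion relation. First I would pass to the co-moving frame $y = x - \eps^2 c t$ and linearise \eqref{eq:multi-component-RD} at $Z_{\rm TF}$, obtaining the eigenvalue problem
\begin{align*}
\lambda u &= \eps^2 u_{yy} + \eps^2 c\, u_y + (1 - 3U_{\rm TF}^2) u - \eps \sum_j \partial_j\mathcal{F}^N(\vec{V}_{\rm TF})\, v_j, \\
\tau_j \lambda v_j &= \eps^2 d_j^2 v_{j,yy} + \eps^2 c \tau_j v_{j,y} + \eps^2(u - v_j).
\end{align*}
By Lemma~\ref{L:N} the profile $Z_{\rm TF}$ approaches $\pm(1,\vec{1})+\mathcal{O}(\eps)$ exponentially (at rates $\Lambda_j^\pm$), so the coefficients $1-3U_{\rm TF}^2$ and $\partial_j\mathcal{F}^N(\vec{V}_{\rm TF})$ converge exponentially to $-2+\mathcal{O}(\eps)$ and $\partial_j\mathcal{F}^N(\pm\vec{1})+\mathcal{O}(\eps)$, respectively. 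Standard Weyl-type results (see e.g.\ \cite{henry2006geometric, sandstede2002stability}) then yield $\sigma_{\rm ess}(\mathcal{L}) = \sigma(\mathcal{L}_+^\infty) \cup \sigma(\mathcal{L}_-^\infty)$, where $\mathcal{L}_\pm^\infty$ are the corresponding asymptotic constant-coefficient operators.

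Next I would compute $\sigma(\mathcal{L}_\pm^\infty)$ by Fourier analysis: substituting $e^{i\nu y}\hat\phi$ with $\nu\in\R$ reduces the problem to an $(N{+}1)\times(N{+}1)$ matrix pencil $B_\pm(\nu)\hat\phi=\lambda M\hat\phi$ with $M=\diag(1,\tau_1,\ldots,\tau_N)$. The diagonal entries of $B_\pm(\nu)$ are $g_\pm - \eps^2\nu^2 + i\eps^2 c\nu$ (with $g_\pm = -2+\mathcal{O}(\eps)$) and $-\eps^2 d_j^2\nu^2 + i\eps^2 c\tau_j\nu - \eps^2$, the first-row off-diagonal entries are $-\eps a_j^\pm$ with $a_j^\pm = \partial_j\mathcal{F}^N(\pm\vec{1})+\mathcal{O}(\eps)$, and the first-column off-diagonal entries are $\eps^2$. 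A Schur-complement expansion along the $(1,1)$-block gives
\[
\det(B_\pm(\nu) - \lambda M) = A(\nu,\lambda) \prod_{j=1}^N \Delta_j(\nu,\lambda) + \eps^3 \sum_{j=1}^N a_j^\pm \prod_{k\neq j} \Delta_k(\nu,\lambda),
\]
with $A(\nu,\lambda) = g_\pm - \eps^2\nu^2 + i\eps^2 c\nu - \lambda$ and $\Delta_j(\nu,\lambda) = -\tau_j\lambda - \eps^2(1+d_j^2\nu^2) + i\eps^2 c\tau_j\nu$.

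Finally I would analyse the roots of this dispersion polynomial by perturbation off the decoupled case (coupling term dropped). The unperturbed roots consist of a \textit{fast} branch $A=0$, giving $\Re\lambda\approx-2$, and $N$ \textit{slow} branches $\lambda_j(\nu) = -\eps^2/\tau_j - \eps^2 d_j^2\nu^2/\tau_j + i\eps^2 c\nu$, whose rightmost real parts $-\eps^2/\tau_j$ are attained at $\nu=0$. On a slow branch each $\Delta_k=\mathcal{O}(\eps^2)$, so the coupling perturbation $\eps^3\sum_j a_j^\pm\prod_{k\neq j}\Delta_k$ is $\mathcal{O}(\eps^{2N+1})$, whereas $\partial_\lambda\bigl(A\prod_k\Delta_k\bigr)\big|_{\lambda_j(\nu)}=\mathcal{O}(\eps^{2(N-1)})$; the implicit function theorem therefore yields a root shift of order $\mathcal{O}(\eps^3)$, uniformly in $\nu$. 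For $|\nu|$ large the slow branches lie far in the left-half plane, so the bound is trivial there. Hence $\Re(\lambda)\leq -\eps^2/\tau_j + \eps^3 C$ on each perturbed slow branch for some $C>0$ independent of $\eps$, while the fast branch stays near $-2$. Maximising over $j$ produces $\Re(\lambda)\leq -\eps^2\chi$ with $\chi=\min_j\{1/\tau_j - \eps C\}$, and strict inequality follows by enlarging $C$ slightly. The hard part is precisely the uniformity in $\nu$ of the $\mathcal{O}(\eps^3)$ root shift; this is handled by the structural smallness of each $\Delta_k$ along the slow branches together with the fact that the rightmost points of the spectral curves occur at bounded $\nu$.
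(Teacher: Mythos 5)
Your proposal is correct and follows essentially the same route as the paper: linearise at the asymptotic constant states, Fourier-transform to get a dispersion relation whose determinant splits into the decoupled product of a fast factor and $N$ slow factors plus an $\mathcal{O}(\eps^3)$ coupling correction, and conclude that the roots shift by only $\mathcal{O}(\eps^3)$ from the decoupled branches $\Re\lambda\approx -2$ and $\Re\lambda_j = -\eps^2(1+d_j^2\nu^2)/\tau_j$. The only differences are cosmetic (the paper works in the fast variable $z$ rather than $y$, and linearises directly at the constant background states rather than invoking the Weyl reduction explicitly); your remarks on the uniformity in $\nu$ and the trivial large-$\nu$ regime are, if anything, slightly more careful than the paper's ``locally uniformly in $k$''.
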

Since the essential spectrum is contained in the left half plane, it does not lead to instabilities.
However, we note that it is $\eps^2$-close to the imaginary axis, see also~\cite{CBDvHR15, CBvHHR19,vHDK08}.
\begin{proof}[Proof of Lemma~\ref{L:SE}]
In the fast co-moving frame $z := y/\eps := \xi - \varepsilon c t$, see Definition~\ref{CORD}, the $N+1$-component system 
\eqref{eq:multi-component-RD} becomes
\begin{align}\label{EQ:1FNS_TW}
\left\{
\begin{aligned}
\partial_t U -\varepsilon c \partial_z U_z&= \partial^2_zU  + \ U - U^3 - \varepsilon \mathcal{F}^N(\vec{V}) \, ,\\[.3cm]
\tau_j \partial_t V_j - \varepsilon c \tau_j \partial_zV_j&=  d_j^2 \partial^2_z V_j  + \ \varepsilon^2 (U - V_j) \ , \qquad j = 1, \ldots, N \,.
\end{aligned}
\right.
\end{align}
To determine bounds on the essential spectrum of~$Z_{\rm TF}$, we linearise around the background states of $Z_{\rm TF}$, that is,  around 
$(U,\vec{V})=u_\varepsilon^{\pm}\mathds{1}_{N+1}$ 
with
$$
u_\varepsilon^{\pm} = \pm 1 - \frac{1}{2} \varepsilon \mathcal{F}^N(\pm \mathds{1}_{N}) + \mathcal{O}(\varepsilon^2),$$
where we recall that $\mathds{1}_{N}$ is a vector of length $N$ with all ones.
In particular, we substitute
$$
(U, \vec{V}) = (U, V_1, \ldots, V_N) = u_\varepsilon^{\pm}\mathds{1}_{N+1}+ (u,v_1, \ldots, v_N)e^{i k z + \omega t}\,,
$$
into \eqref{EQ:1FNS_TW} and linearise to obtain the matrix-vector equation $M_1 \vec{w} = 0$,
where $\vec{w} := (u,\vec{v}) =  (u,v_1, \ldots, v_N)^t$ and
$M_1$ is the $(N+1) \times (N+1)$-matrix 
\begin{align*}
M_1  = 
\begin{pmatrix}
K{_\eps(\omega,k)} & -\eps\partial_j \mathcal{F}^N(u_\eps^\pm\mathds{1}_{N} )\\[2mm]    \eps^2 u^\pm_\eps \mathds{1}_{N} 
& \diag(-\tau_j \omega + \eps i k c \tau_j - d_j^2 k^2 -\eps^2)
\end{pmatrix}
\,,
\end{align*}
with 
$
K_\eps(\omega,k) = -k^2-2-\omega + \eps i k c \mp  3 \eps \mathcal{F}^N(\pm \mathds{1}_{N}) + \mathcal{O}(\eps^2) 
$. Its {determinant} 
reads
\[Q_\eps(\omega, k) =P_\eps(\omega,k) + \eps^3 R_\eps(\omega,k)\,,\] 
where $R_\eps$ is a polynomial of degree $N-1$ in $\omega$ and $2N-2$ in $k$ and
\[
P_\eps(\omega,k) = K_\eps(\omega,k) \prod_{j=1}^{N} (-\tau_j \omega + \eps i k c \tau_j - d_j^2 k^2 -\eps^2).
\]
Since $P_\eps$ is a polynomial of degree $N+1$ in $\omega$ and $2N+2$ in $k$, the non-leading coefficients of $Q_\eps$ are those of $P_\eps$ with an order $\eps^3$-perturbation due to $R_\eps$. 
The solutions of $P_0(\omega,k)=0$ are non-degenerate and, to the relevant order in $\eps$, the real parts are given by
$$
\Re(\omega_0) = -2 - k^2\,, \qquad \Re(\omega_j) =  -\frac{d_j^2 k^2 +\eps^2}{\tau_j}\,, \qquad j=1,\ldots, N.
$$
Therefore, the solutions of $Q_\eps(\omega,k)=0$ are $\eps^3$-near to these, locally uniformly in $k$. Consequently, the essential spectrum is in the left half plane for sufficiently small $\eps>0$.
\end{proof}

We generalise Lemma~6 of \cite{CBDvHR15} to obtain the following result {about the point spectrum.}
\begin{lemma}[Point spectrum]
\label{L:SP}
Let $Z_{\rm TF}$ be a uniformly travelling front solution of \eqref{eq:multi-component-RD} with~\eqref{N:NONL22} 
as constructed in Lemma~\ref{L:N} and Proposition~\ref{P:EXeps}. For $\varepsilon > 0$ sufficiently small the point spectrum~$\sigma_{\rm pt}(\mathcal{L})$ associated to $Z_{\rm TF}$
whose
real parts are
bigger than the maximum real part of the essential spectrum is given by 
\begin{align*}
\{\eps^2 \lambda\,:\, \lambda \text{ is a root of } E_\eps\},    
\end{align*}
where $E_\eps:\mathbb{C}\to\mathbb{C}$ are analytic functions, smooth in the parameters $P$ and in $\eps>0$, and continuous in $\eps\geq 0$ with limit $E_0$ from \eqref{EV}.
\end{lemma}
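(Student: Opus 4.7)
The plan is to derive \eqref{EV} by the NLEP (Nonlocal Eigenvalue Problem) method, generalising \cite{CBDvHR15}, and to obtain $E_\eps$ via a Lyapunov--Schmidt reduction. First, I would write the eigenvalue problem $\mathcal{L}W = \eps^2\lambda W$ for $W = (u,\vec{v})$ in the co-moving slow coordinate $y = x - \eps^2 c t$, with the rescaling $\lambda \mapsto \eps^2\lambda$ chosen so that eigenvalues near zero, which are the only ones lying to the right of the essential spectrum identified in Lemma~\ref{L:SE}, correspond to order-one $\lambda$. In these coordinates the $u$-component retains its singular diffusion $\eps^2\partial_y^2$ while the $v_j$-components become regular linear inhomogeneous ODEs
\begin{equation*}
d_j^2\partial_y^2 v_j + c\tau_j\partial_y v_j - (1+\lambda\tau_j)v_j = -u, \qquad j=1,\ldots,N,
\end{equation*}
with bounded-solution constraints at $y\to\pm\infty$. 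This splits the spectral problem into a slow and a fast part.

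Second, I would explicitly solve for $v_j$ in terms of $u$ via the Green's function of the above second-order ODE, whose characteristic roots are
\begin{equation*}
\mu_j^\pm(\lambda) = \tfrac{1}{2d_j^2}\bigl(-c\tau_j \pm \sqrt{c^2\tau_j^2 + 4 d_j^2(1+\lambda\tau_j)}\bigr),
\end{equation*}
so that $v_j = v_j[u;\lambda]$ becomes an explicit integral operator in $u$ whose kernel is analytic in $\lambda$ and smooth in the system parameters away from branch points. Substituting into the $u$-equation produces a nonlocal eigenvalue problem for $u$ alone. Passing to the fast variable $z = y/\eps$ and using the leading order profile $U_{\rm TF}^0(z) = \tanh(z/\sqrt{2})$ from Lemma~\ref{L:N}, the principal part of the $u$-equation is the classical linearised Allen--Cahn operator $\partial_z^2 + 1 - 3\tanh^2(z/\sqrt{2})$ whose one-dimensional kernel is spanned by $\partial_z U_{\rm TF}^0 \propto \mathrm{sech}^2(z/\sqrt{2})$, and the nonlocal $v_j$-contribution appears multiplied by $\eps$.

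Third, I would apply a Lyapunov--Schmidt reduction based on this explicit Fredholm structure: the range condition projects out along $\partial_z U_{\rm TF}^0$, and the resulting reduced bifurcation equation is (after dividing by $\eps$) the defining equation $E_\eps(\lambda) = 0$. In the limit $\eps\to 0$ the nonlocal term collapses to a jump contribution because $u$ tends to a multiple of the kernel $\mathrm{sech}^2$-type element concentrated at $y=0$, so that $v_j[u;\lambda]$ enters only through its evaluation at $y=0$; computing this evaluation from the Green's function (which requires an integral of $\mathrm{sech}^2(z/\sqrt{2})$ whose value is $2\sqrt{2}$, producing the factor $3\sqrt{2}\partial_j\mathcal{F}^N(\vec{V}^*)$ after normalisation) together with the jump formula $\frac{1}{\sqrt{c^2\tau_j^2+4d_j^2(\lambda\tau_j+1)}} - \frac{1}{\sqrt{c^2\tau_j^2+4d_j^2}}$ arising from the $\mu_j^\pm$ above, yields exactly $E_0(\lambda)$ from \eqref{EV}. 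This piggybacks on the analogous calculation for $N=2$ in \cite{CBDvHR15}, with the sum over $j=1,\ldots,N$ replacing the two terms there; the only new structural input is that the $\vec{V}$-components couple to $u$ independently.

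Finally, I would verify the smooth/continuous dependence on $\eps$ and parameters by the analytic implicit function theorem applied to the Lyapunov--Schmidt reduced map. The main obstacle is ensuring that roots of $E_\eps$ in a fixed right half-plane with $\Re\lambda > -1/(\eps^2\max_j\tau_j) + \mathcal{O}(\eps^3)$ correspond bijectively to eigenvalues of $\mathcal{L}$ with the correct multiplicities; this requires ruling out spurious zeros coming from the branch points $\lambda\tau_j = -1$ of $\mu_j^\pm$ (which lie inside the essential spectrum identified in Lemma~\ref{L:SE} for $\eps$ sufficiently small) and relating the order of vanishing of $E_\eps$ to the algebraic multiplicity via the standard identification of $E_\eps$ with a factor of the Evans function in the sense of \cite{AGJ90}. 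Both points follow from Evans function theory once the exponential dichotomies on the slow manifolds away from $y=0$ are established using GSPT, mirroring the treatment in \cite{CBDvHR15,vHDK08}.
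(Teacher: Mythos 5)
Your proposal follows essentially the same route as the paper: both exploit the slow--fast structure to reduce the spectral problem to a nonlocal scalar (NLEP) problem for $u$, rescale $\lambda\mapsto\eps^2\lambda$, extract $E_\eps$ from the solvability condition against the Allen--Cahn kernel $\sech^2(z/\sqrt2)$, and identify roots with eigenvalues (including multiplicity) via the Evans function framework of \cite{AGJ90}. The only difference is bookkeeping --- the paper uses the transmission-function factorisation $\mathcal{D}=d\,\mathcal{D}_f\mathcal{D}_s$ of \cite{vHDK08,CBDvHR15} and evaluates the $N\times N$ determinant $\mathcal{D}_s$ with the matrix determinant lemma, whereas you eliminate the $v_j$ by Green's functions and run a Lyapunov--Schmidt reduction directly --- and these are equivalent and produce the same $E_0$.
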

\begin{proof}
Linearising \eqref{EQ:1FNS_TW} around a uniformly travelling front solution $Z_{\rm TF}= (U_{\rm TF}, \vec{V}_{\rm TF})$
results in the following eigenvalue problem 
\begin{align}\label{EQ:1FNS_EVPW}
\left\{
\begin{aligned}
\lambda u &= \left(\partial_{z}^2  + \ 1 - 3U_{\rm TF}^2 + \eps c \partial_z \right) u - 
\varepsilon \nabla \mathcal{F}^N(\vec{V}_{\rm TF}) \cdot  \vec{v}\,,\\
\tau_j \lambda v_j &=  \left(d_j^2 \partial_{z}^2 + \varepsilon c \tau_j \partial_z - \eps^2\right) v_j + \eps^2 u, \qquad \qquad j = 1,2, \ldots, N \,,
\end{aligned}
\right.
\end{align}
where $\nabla$ is the usual gradient operator and $\vec{v}= (v_1, \ldots, v_n)$.
Using the slow-fast separation of the problem at hand {implies the structure of} the Evans function~\cite{AGJ90, D98, D01, D02, vHDK08} given by 
$$
\mathcal{D}(\lambda) = d(\lambda) \mathcal{D}_f(\lambda) \mathcal{D}_s(\lambda),
\quad \calD_f =t_1^+(\lambda),\;  \calD_s =  \det{[s_{ij}^+(\lambda)]}_{i,j=1}^{N}\,,
$$
where we suppress the explicit $\eps$ dependence in the notation. Here, $d(\lambda)$ is an analytic function without roots (determined up to a scaling constant), $\calD_f=t_1^+(\lambda)$ is also an analytic function and referred to as the {\it{fast-fast transmission function}},
and $s_{ij}^+(\lambda), i,j=1, \ldots N$, are meromorphic, so-called, {\it{slow-fast transmission functions}} corresponding to $\mathcal{D}_s(\lambda)$.
Notably, poles of $\mathcal{D}_s(\lambda)$ are necessarily roots of $ \mathcal{D}_f(\lambda)$ \cite{vHDK08}. Hence, roots of $\mathcal{D}_f(\lambda)$ need not be roots of $\mathcal{D}(\lambda)$, which relates to the so-called NLEP paradox~\cite{D98, D01, D02}. 

Following the proofs of \cite[Lemma~4.4]{vHDK08} and \cite[Lemma 6]{CBDvHR15},  the fast-fast transmission function satisfies
$
t_1^+(\lambda) = \lambda - \eps^2 \lambda_{\rm fast},
$
with, to leading order, 
$$
 \lambda_{\rm fast} =3\sqrt2\sum_{j=1}^N  \frac{\partial_j \mathcal{F}^N(\vec{V}^{*})}{\sqrt{4 d_j^2+ c^2 \tau_j^2}}   \,,
$$
where $\vec{V}^*$ is as given in \eqref{VSTARN}.
In particular, the roots $\lambda$ of $\mathcal{D}$ scale as $\eps^2$. 

The roots associated to the slow-fast transmission functions have the same $\mathcal{O}(\eps^2)$-asymptotic scaling. So, $\mathcal{D}$ is a function of $\hat\lambda = \eps^2 \lambda$. With abuse of notation, we write $\lambda$ for $\hat\lambda$ and  
the slow-fast transmission functions are given by 
\begin{align*}
s_{ii}^+(\lambda) = 1 - \frac{2\sqrt2}{\sqrt{G_i}} C_i\,, \quad i=1,\ldots, N, \quad {\rm and} \quad
s_{ij}^+(\lambda) =  - \frac{2\sqrt2}{\sqrt{G_j}} C_i\,, \quad i,j=1,\ldots, N, i \neq j\,,
\end{align*}
with $N$ integration constants $C_i$ and 
$
G_i = c^2 \tau_i^2+ 4 d_i^2 (\lambda \tau_i+1),
$
see \cite{CBDvHR15} for the details.
Upon defining the column vectors $\vec{n},\vec{m}$ via $n_j:=2\sqrt{2/G_j}$ and $m_j:=C_j, j=1,\ldots,N$ and upon using the matrix determinant lemma \cite[e.g.]{harville1998matrix}, we observe that we can write~$\mathcal{D}_s(\lambda)$ as 
$$
\mathcal{D}_s(\lambda) = \det{[s_{ij}^+(\lambda)]}_{i,j=1}^{N} = \det{[I - \vec{m}\vec{n}^t]} = 1- \sum_{j=1}^N\frac{2\sqrt2}{\sqrt{G_j}} C_j \,.
$$
Finally, from a solvability condition, again see \cite{CBDvHR15, vHDK08}, we get  
$
C_j = -3 \partial_j \mathcal{F}^N(\vec{V}^{*})/(2 t_1^+(\lambda)),
$
where we explicitly observe the NLEP paradox.

Combining the above expressions gives the Evans function 
\begin{align}
\label{EVANS}
\begin{aligned}
\mathcal{D}(\lambda) =& d(\lambda)t_1^+(\lambda)\left(1+ \left(\sum_{j=1}^N\frac{3\sqrt2}{\sqrt{c^2 \tau_j^2+ 4 d_j^2 (\lambda \tau_j+1)}} \left(  \frac{\partial_j \mathcal{F}^N(\vec{V}^{*}) }{t_1^+(\lambda)} \right) \right)\right)\, \\ 
=&
d(\lambda)\left( \lambda + 3\sqrt2\sum_{j=1}^N  
\partial_j \mathcal{F}^N(\vec{V}^{*})\left(\frac{1}{\sqrt{c^2 \tau_j^2+ 4 d_j^2 (\lambda \tau_j+1)}}-\frac{1}{\sqrt{4 d_j^2+ c^2 \tau_j^2}} \right)\right)
\,,
\end{aligned}
\end{align}
and the point spectrum of interest near the imaginary axis is thus determined by the roots of \eqref{EV}. {(For $c=0$ 
and disregarding multiplicity, see also the calculation in Appendix~\ref{A:E}.)}
\end{proof}

% SUBSECTION: Jordan block structure
\subsection{Stationary front solutions: linear structure}
The main objective of this section is to prepare the center manifold reduction in \S\ref{S:DFS} which forms the basis of the main results. To this end one needs a fine tuning of the {\emph{critical spectrum}} of $\mathcal{L}$ from \eqref{Ldef}, which is equivalent to controlling the roots of the Evans function (as stated in Proposition~\ref{L:S}/Lemma~\ref{L:SP}). Here `critical' refers to eigenvalues that can become unstable upon parameter variation; although fixed at zero, we include the trivial translation eigenvalue among these. We analogously refer to critical roots of the Evans function. Since a stationary front solution $Z_{\rm SF}$ is a uniformly travelling front solution with $c = 0$, the Evans function \eqref{EV}/\eqref{EVANS} significantly simplifies. In particular, it becomes independent of the nonlinear part of \eqref{N:NONL22}, {\it i.e.}, it is independent of $\mathcal{F}_{\rm nl}^N(\vec{V})$ because $V^*_j=0$ for $c=0$, see \eqref{VSTARN}, and $\mathcal{F}_{\rm nl}^N(\vec{0})=0$ by construction. Hence, the Evans function \eqref{EV} reduces to
\begin{align}
\label{EV0}
E_0(\lambda) = 
\lambda + \frac{3\sqrt2}{2}\sum_{j=1}^N  
\dfrac{\alpha_j}{d_j} \left(\frac{1}{\sqrt{\lambda \tau_j+1}}-1 \right) = 0\,,
\end{align}
where $\alpha_j$ are the coefficients of the linear part of $\mathcal{F}^N(\vec{V})$, see \eqref{N:NONL22}.
We caution the reader that in the following $E_0$ and $E_\eps$ refer to the evaluation of the Evans functions at $c=0$. 

%%%
% SUBSEC
%%%

\subsubsection{Organising centers of maximal degeneracy $N+1$}\label{s:organising}
The following result shows that the root of the Evans function~\eqref{EV0} at $\lambda=0$ can have algebraic multiplicity up to $N+1$. This result will be pivotal in the upcoming sections, especially in the search for chaotic behaviour. Indeed, in \S\ref{s:gen_efunc} we prove that the generalised kernel of the linearisation $\calL$ from \eqref{Ldef} around $Z_{\rm SF}$ is of this relatively high dimension. 

%%%
% PROPOSITION
%%%

\begin{proposition}[Multiplicity of the root $\lambda = 0$ for $c = 0$]
\label{L:N+1}
A stationary front solution~$Z_{\rm SF}$ of~\eqref{eq:multi-component-RD} with \eqref{N:NONL22} necessarily has $\gamma=0$ and 
the Taylor expansion of the Evans function $E_0(\lambda)$ at $c=0$, see \eqref{EV0}, in $\lambda=0$  
is given by 
\begin{align}
\label{eq:taylor_evansN}
\mathcal{T}_{E_0}(\lambda) =  
\left(1- \frac{3 \sqrt2}{4}\sum_{j=1}^N  \frac{\alpha_j \tau_j }{d_j} \right) \lambda 
+ \frac{3\sqrt2}{2}\sum_{k=2}^\infty \left( (-1)^k\frac{(2k-1)!!}{(2k)!!} \sum_{j=1}^N\ \frac{\alpha_j \tau_j ^k}{d_j} \right) \lambda^k \,,
\end{align}
with the double factorial as in Definition~\ref{def:double_factorial}.
Furthermore, the algebraic multiplicity of $\lambda=0$ as a root of the Evans function is $N+1$ if and only if $\tau_j \neq \tau_k > 0$ for $j\neq k$,
and 
\begin{align}
\label{N:ZERO}
\begin{aligned}
 \alpha_j
 = 
 \dfrac{2 \sqrt 2 d_j}{3 \tau_j} \prod_{k =1, k \neq j}^N \dfrac{\tau_k}{\tau_k-\tau_j}
\,, \qquad \qquad j=1, \ldots, N\,.
\end{aligned}
\end{align}
Moreover, the $N$ non-constant terms of the Taylor polynomial $\mathcal{T}^{N}_{E_0}(\lambda)$ of $E_0(\lambda)$ of degree $N$ are in 1-to-1 correspondence with the parameters $\alpha_j$, $j=1,\ldots,N$. 
In particular, there are parameter combinations such that $\mathcal{T}_{E_0}(\lambda) = \mathcal{O}(\lambda^{\ell+1})$  for any $\ell = 0,1, \ldots, N${, but none for $\ell>N$}.
\end{proposition}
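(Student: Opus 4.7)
The fact that $\gamma=0$ is inherited directly from Lemma~\ref{L:N}, since stationary front solutions correspond to $c=0$, which forces $\gamma=0$ in \eqref{E:N}. For the remainder I would proceed as follows.

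\textbf{Step 1: Taylor expansion of $E_0$.} I would apply the generalised binomial series
\[
(1+\lambda\tau_j)^{-1/2} \;=\; \sum_{k=0}^\infty (-1)^k\frac{(2k-1)!!}{(2k)!!}(\lambda\tau_j)^k,
\]
(with the convention $(-1)!!=(2\cdot 0)!!=1$) to expand each summand in \eqref{EV0}. Subtracting the constant term and collecting the $k=1$ contribution with the $\lambda$ already present in $E_0(\lambda)$ produces \eqref{eq:taylor_evansN} directly, modulo checking that $\frac{(2k-1)!!}{(2k)!!}$ is the correct binomial coefficient.

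\textbf{Step 2: Setting up the linear system for degeneracy.} Introduce the rescaled parameters $\tilde\alpha_j := \alpha_j\tau_j/d_j$. Requiring $\mathcal{T}_{E_0}(\lambda)=\mathcal{O}(\lambda^{N+1})$ amounts, after absorbing the nonzero prefactors $\frac{3\sqrt 2}{2}(-1)^k\frac{(2k-1)!!}{(2k)!!}$, to the $N\times N$ system
\[
\bigl(\tau_j^{k-1}\bigr)_{k,j=1}^{N}\,\bigl(\tilde\alpha_j\bigr)_{j=1}^N \;=\; \begin{pmatrix} \tfrac{2\sqrt 2}{3} \\[1mm] \vec 0_{N-1}\end{pmatrix}.
\]
This is exactly the situation of Lemma~\ref{L:VDM} with nodes $\tau_j$; invertibility is equivalent to the $\tau_j$ being pairwise distinct (they are positive by assumption). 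Applying the explicit inversion formula \eqref{VDM_ID} gives $\tilde\alpha_j=\tfrac{2\sqrt 2}{3}\prod_{k\neq j}\tau_k/(\tau_k-\tau_j)$, which unscales to \eqref{N:ZERO}.

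\textbf{Step 3: Impossibility of higher degeneracy.} To show that multiplicity $>N+1$ is excluded, I would assume in addition that the coefficient of $\lambda^{N+1}$ in \eqref{eq:taylor_evansN} vanishes and look at the enlarged $(N+1)\times N$ linear system. The bottom $N$ equations form
\[
\bigl(\tau_j^{k-1}\bigr)_{k=2,\ldots,N+1;\,j=1,\ldots,N}\,\bigl(\tilde\alpha_j\bigr)_{j=1}^N \;=\; \vec 0_N,
\]
which, after factoring $\mathrm{diag}(\tau_j)$ out of the columns, is again an invertible Vandermonde system (precisely as in the closing argument of Proposition~\ref{L:EX}). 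Hence $\tilde\alpha_j=0$ for all $j$, contradicting the first equation $\sum_j\tilde\alpha_j=\tfrac{2\sqrt 2}{3}\neq 0$. So the maximum algebraic multiplicity is exactly $N+1$.

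\textbf{Step 4: The 1-to-1 correspondence and intermediate degeneracies.} The map $\vec\alpha\mapsto(\text{coefficients of }\lambda,\lambda^2,\ldots,\lambda^N\text{ in }E_0)$ is linear and factors through the invertible Vandermonde matrix of Step 2 (composed with the invertible diagonal scalings $\alpha_j\mapsto\tilde\alpha_j$ and the nonzero binomial prefactors). Hence it is a linear isomorphism $\R^N\to\R^N$, which immediately yields the bijective correspondence. In particular, for each $\ell\in\{0,1,\ldots,N\}$ one can prescribe the first $\ell$ Taylor coefficients to vanish and solve uniquely for $\vec\alpha$, giving $\mathcal{T}_{E_0}(\lambda)=\mathcal{O}(\lambda^{\ell+1})$; the exclusion of $\ell>N$ is Step 3.

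The main obstacle is bookkeeping rather than conceptual: one must keep track of the diagonal factors $1/d_j$ and $\tau_j$ and the binomial coefficients so that the coefficient-vanishing conditions manifest cleanly as a Vandermonde system in the rescaled variable $\tilde\alpha_j$. Once this is set up, everything reduces to two applications of Lemma~\ref{L:VDM}, mirroring the structure already used in the proof of Proposition~\ref{L:EX}.
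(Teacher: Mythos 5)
Your proposal is correct and follows essentially the same route as the paper: expand $(1+\lambda\tau_j)^{-1/2}$ via the binomial series to obtain \eqref{eq:taylor_evansN}, recast the vanishing of the first $N$ coefficients as a generalised Vandermonde system solved explicitly by Lemma~\ref{L:VDM} (the paper works with $\alpha_j/d_j$ and factors out $\diag(\tau_j)$, which is the same as your rescaling $\tilde\alpha_j=\alpha_j\tau_j/d_j$), and rule out multiplicity $N+2$ by the identical overdetermined-system contradiction. Your Step 4 merely makes explicit the linear-isomorphism reading of the invertibility that the paper leaves implicit, so there is no substantive difference.
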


As expected, condition \eqref{N:ZERO} is independent of the nonlinear part $\mathcal{F}_{\rm nl}^N(\vec{V})$ of the coupling function $\mathcal{F}^N(\vec{V})$~\eqref{N:NONL22}. 
Since \eqref{N:ZERO} means $\al_j \neq 0$ for all $j = 1, \ldots, N$, all components of the linear part of the coupling function must be non-trivial in order to create a zero root of the Evans function of algebraic multiplicity $N+1$. Furthermore, we note that $\ell$ relates to $\Np$ in Main Result~\ref{thm:intro}, {\it i.e.}, the dimension of the speed ODE. We refer to \S\ref{S:CMR} for more details.

%% PROOF %%
\begin{proof}
The existence condition $\Gamma_0(c)=0$ of Lemma~\ref{L:N} immediately implies that a stationary front solution $Z_{\rm SF}$ necessarily has $\gamma=0$. 
Next, expanding the Evans function $E_0(\lambda)$ at $c=0$, see~\eqref{EV0}, around $\lambda=0$ results in \eqref{eq:taylor_evansN}. 
Equating \eqref{eq:taylor_evansN} to zero at each $\lambda$-order gives
\begin{align}
\label{TAYLOR}
\mathcal{O}(\lambda): \sum_{j=1}^N  \frac{\alpha_j \tau_j }{d_j}= \dfrac{2\sqrt2}{3}\,, \qquad 
\mathcal{O}(\lambda^k):\sum_{j=1}^N\ \frac{\alpha_j \tau_j^k}{d_j}= 0\,.
\end{align}
The first $n$ conditions in \eqref{TAYLOR} can be written as a matrix-vector equation
\begin{align}
\label{VDM}
M \begin{pmatrix}
\dfrac{\alpha_j}{d_j}
\end{pmatrix}_{j=1}^{N} 
:= 
\begin{pmatrix}
\tau_j^k
\end{pmatrix}_{k,j=1}^{n,N}
\begin{pmatrix}
\dfrac{\alpha_j}{d_j}
\end{pmatrix}_{j=1}^{N} = 
\begin{pmatrix}
\dfrac{2\sqrt2}{3} \\[2mm] \vec{0}_{n-1}
\end{pmatrix}\,.
\end{align}
For $n=N$, the matrix $M$ is, again, a generalised Vandermonde matrix
$$
\begin{pmatrix}
\tau_j^k
\end{pmatrix}_{k,j=1}^{N}
=
\begin{pmatrix}
\tau_j^{k-1}
\end{pmatrix}_{k,j=1}^{N}
(\diag(\tau_j))_{j=1}^N\,,
$$
and from \eqref{VDM_ID} of Lemma~\ref{L:VDM} we get
\begin{align*}
\begin{aligned}
\begin{pmatrix}
\dfrac{\alpha_j}{d_j}
\end{pmatrix}_{j=1}^{N} &= 
\left(\begin{pmatrix}
\tau_j^k
\end{pmatrix}_{k,j=1}^{N}\right)^{-1}
\begin{pmatrix}
\dfrac{2\sqrt2}{3} \\ \vec{0}_{N-1}
\end{pmatrix}
= 
\left(\diag\left(1/\tau_j\right)\right)_{j=1}^N
\left(\left(
\tau_j^{k-1}\right)_{k,j=1}^{N}\right)^{-1}
\begin{pmatrix}
\dfrac{2\sqrt2}{3} \\ \vec{0}_{N-1}
\end{pmatrix}\,
\\
&= \left( \dfrac{2 \sqrt 2 }{3 \tau_j} \prod_{k =1, k \neq j}^N \dfrac{\tau_k}{\tau_k-\tau_j}\right)_{j=1}^N\,.
\end{aligned}
\end{align*}
This proves \eqref{N:ZERO}. 

The observation that the multiplicity cannot be larger follows from a similar argument as in the proof of Proposition~\ref{L:EX}. Say the multiplicity is $N+2$ (or larger), then the first $N+1$ equalities of \eqref{TAYLOR} can be written as {in \eqref{VDM} with $n=N+1$.} This is an overdetermined system and the last $N$ equations
are given by
\begin{align}
\label{VDM1}
\begin{pmatrix}
\tau_j^k
\end{pmatrix}_{k,j=2,1}^{N+1,N}
\begin{pmatrix}
\dfrac{\alpha_j}{d_j}
\end{pmatrix}_{j=1}^{N} = 
 \vec{0}_{N}\,.
\end{align}
As before, the generalised Vandermonde matrix $(
\tau_j^k
)_{k,j=2,1}^{N+1,N}$ is an invertible matrix as long as $\tau_i \neq \tau_j \neq 0$. Consequently,
\eqref{VDM1}
is uniquely solvable and yields the trivial solution, {\it i.e.}, $\alpha_j=0$ for all $j=1, \dots, N$. 
This contradicts \eqref{N:ZERO} and proves the claim about the Taylor polynomials (whose constant coefficient is always zero).
\end{proof}

%%%
% SUBSUBSEC
%%%

\subsubsection{Full linear unfolding at $\eps=0$ around maximal degeneracy of order $N+1$}\label{s:fulllinear}
Proposition~\ref{L:N+1} shows that parameters satisfying \eqref{N:ZERO} and $\gamma=0$, 
which we denote as
\begin{align}\label{e:Porg}
P=P_{\rm org}^0,
\end{align} 
create an organising center such that the Evans function $E_0$ possesses a root at zero of multiplicity $N+1$. 
\begin{remark}
    In the following, whenever we refer to $P_{\rm org}^0$ (and the related $P_{\rm org}^\eps, P_\Np^\eps$ introduced below), we assume that $\tau_j$ are pairwise distinct for $1\leq j\leq N$ or $1\leq j\leq\Np$, respectively.
\end{remark}
As a basis for further bifurcation analysis, we next show that this can be unfolded in the sense that arbitrary configurations of roots of $E_0$ can be realised by parameters $P=P_{\rm org}^0+\cP$ with $\cP$ near zero (except for the root related to the fixed translation eigenvalue at zero). Moreover, we show that also for any sufficiently small $\eps>0$
the actual eigenvalue configurations associated to a stationary front solution $Z_{\rm SF}$ can be controlled in this way: there are parameters $P=P_{\rm org}^{\eps}$ such that  $E_\eps$ from Lemma~\ref{L:SP} possesses an $(N+1)$-fold root and perturbing $P$ away from this value allows to realise any 
small eigenvalue configurations associated to a stationary front solution $Z_{\rm SF}$ (except for the fixed translation eigenvalue at zero). For this purpose, we analyse $E_0$ in more detail and fix $\eps=0$ for now. Before entering into
the analysis, we stress that this control of eigenvalues
is also true for any multiplicity~$\ell, 1 \leq \ell \leq N + 1,$ and can be accomplished via the same techniques, but with a smaller number of parameters. For readability, we only show the case of highest degeneracy in all detail. We remove the factor $\lambda$ that is due to the translation symmetry from $E_0$ and set 
\begin{align*}
\uE(\lambda):=E_0(\lambda)/\lambda,
\end{align*} 
which we consider as extended to an analytic function away from the branch cut (which relates to the essential spectrum) that lies in the open left half plane. In particular, for $P=P_{\rm org}^{0}$, {\it{i.e.}} $\cP=0$, $\uE$ possesses a root at $\lambda=0$ of multiplicity $N$. The first question is then, which locations of the roots of $\uE$ can be realised by suitable choices of $\cP\approx 0$.

The Weierstrass preparation theorem provides, for $\cP\approx 0$, a function $\tE(\lambda)\neq 0$ that is analytic in $\lambda$ and smooth in $\cP$, and functions $\ta_j$, $j=1,\ldots,N$, that are 
smooth in $\cP$, such that 
\begin{align}
\uE(\lambda) 
& = \sum_{i=0}^\infty \ec_i\lambda^i = (\lambda^N - \ta_N \lambda^{N-1} - \ta_{N-1} \lambda^{N-2}\ldots - \ta_1) \tE(\lambda),
\quad \tE(\lambda) = \sum_{i=0}^\infty \te_i\lambda^i.\label{e:evansWeierN}
\end{align}
In particular, $\ec_i, \te_i$ are smooth functions of $\cP$, and $\ec_i$ can be read off \eqref{eq:taylor_evansN}. Let the super-index~$*$ denote evaluation at $\cP=0$. The $N$-fold root at $\cP=0$ gives $\ta_j^*=0$, $j=1,\ldots,N$, and $\te_0^*=e_N^*\neq 0$. For $\cP\approx 0$, the $\ta_j$ are the coefficients of a characteristic polynomial for the roots of $\uE$ near zero, which will be related to the linearisation of the center manifold reduction in \S\ref{S:DFS}. In particular, $(\ta_j)_{j=1}^N$ directly control the location of the roots of $\uE$ near zero for $\cP\approx0$ and, as we will show, thereby the location of critical eigenvalues associated with $Z_{\rm SF}$ in the complex plane. The following lemma means that with $\cP\approx 0$ we can create arbitrary configurations of the roots of the Evans function $E_0$ in a neighbourhood of zero (except for the fixed translation eigenvalue at zero). It also provides quantitative relations for the coefficients, although the specific evaluation is tedious.

%%%%
%% LEMMA
%%%%

\begin{lemma}[Full linear unfolding around $\lambda = 0$ at $\eps=0$]\label{l:linearunfold}
There is $\delta>0$ such that for any $P=P_{\rm org}^{0}+\cP$ with $P_{\rm org}^{0}$ as in \eqref{e:Porg} and $|\cP|<\delta$ there is a lower triangular matrix $B=B(\cP)$ with the following properties. Its diagonal entries are all identical and non-zero, and at $\cP=0$ it reads
\[
B^*= 
\begin{pmatrix}
e_N^* & 0 &\cdots &0\\
e_{N+1}^* &\ddots & \ddots &\vdots\\
\vdots & \ddots & \ddots &0\\
e_{2N-1}^* & \cdots & e_{N+1}^*  & e_N^*
\end{pmatrix} \,.
\]
Furthermore, it satisfies 
\begin{align}
 (\ta_j)_{j=1}^{N} &= B^{-1} \cdot(\ec_j)_{j=0}^{N-1}, \label{e:WeierReducedEvans}\\
((\nabla_P \ta_j)^*)_{j=1}^N &= (B^*)^{-1}\cdot((\nabla_P \ec_j)^*)_{j=0}^{N-1},\label{e:controlReducedEvans}
\end{align}
where $\mathrm{rank} \big(((\nabla_P \ec_j)^*)_{j=0}^{N-1}\big)=N$, so that $\mathrm{rank} \big(((\nabla_P \ta_j)^*)_{j=1}^N\big) = N$.
In particular, there is $\delta_1>0$ such that for any $(\tilde\ta_j)_{j=1}^{N}$ with $|\tilde a_j|<\delta_1$, $j=1,\ldots,N$,  there is $\cP$ with $|\cP|<\delta$ such that for $P=P_{\rm org}^{0}+\cP$ we have $(\ta_j)_{j=1}^{N}=(\tilde\ta_j)_{j=1}^{N}$. 
Specifically, this can be realised by setting $P$ equal $P_{\rm org}^0$, {\it{i.e.}} $\cP=0$, except for a suitable adjustment of $\alpha_j$, $j=1,\ldots,N$. Moreover, all statements remain valid for any multiplicity $\ell+1, 1 \leq \ell \leq N,$ of the zero root of $E_0$ ({\it i.e.} multiplicity $\ell$ for $\uE$).
\end{lemma}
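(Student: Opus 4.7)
The plan is to apply the Weierstrass preparation theorem to $\uE$ near $\lambda=0$ at $\cP=0$, extract the matrix relation between the Weierstrass coefficients $\ta_j$ and the Taylor coefficients $\ec_j$ by comparing coefficients in the product \eqref{e:evansWeierN}, and then use the Vandermonde-type rank argument already appearing in the proof of Proposition~\ref{L:N+1} to obtain the unfolding property. Since Proposition~\ref{L:N+1} guarantees that at $\cP=0$ the function $\uE$ has a zero of order exactly $N$ at $\lambda=0$, the preparation theorem applies and yields $\tE$ with $\tE(0)=\te_0\ne 0$ for $\cP$ close to zero. In particular, $\tE$ depends smoothly on $\cP$ and analytically on $\lambda$, and its expansion at $\cP=0$ satisfies $\te_i^*=\ec_{N+i}^*$ because then $\ta_j^*=0$ and $\uE^*(\lambda)=\lambda^N\tE^*(\lambda)$.

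The central computation is to expand the product in \eqref{e:evansWeierN} and compare coefficients of $\lambda^i$ for $i=0,\ldots,N-1$. Since the $\lambda^N$ term of the Weierstrass polynomial contributes only at orders $\lambda^{\geq N}$, the $\ec_i$ with $i<N$ are explicitly linear in $(\ta_j)_{j=1}^N$ with coefficients of the form $\pm\te_k$; the key structural point is that $\ta_j$ only enters the equation for $\ec_i$ through products $\te_k\ta_{i+1-k}$ with $k\leq i$, so the resulting $N\times N$ system is lower triangular with common diagonal $\pm\te_0$. Absorbing the sign into the matrix $B$, this gives \eqref{e:WeierReducedEvans} and, after evaluation at $\cP=0$ using $\te_i^*=\ec_{N+i}^*$, exactly the form of $B^*$ claimed. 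Differentiating \eqref{e:WeierReducedEvans} with respect to $P$ and evaluating at $\cP=0$ immediately yields \eqref{e:controlReducedEvans}, because the Leibniz contribution from $\nabla_P B$ vanishes at $\cP=0$ due to $\ta_j^*=0$.

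For the rank assertion, I would read off $(\nabla_{\alpha_j}\ec_i)^*$ directly from the Taylor series \eqref{eq:taylor_evansN}: up to nonzero prefactors one obtains $(\partial_{\alpha_j}\ec_i)^*\propto \tau_j^{i+1}/d_j$, so the matrix $((\nabla_{\alpha}\ec_i)^*)_{i=0}^{N-1}$ is, modulo diagonal scaling by $\tau_j/d_j$, a generalised Vandermonde matrix in the $\tau_j$. By Lemma~\ref{L:VDM} and the assumption $\tau_j$ pairwise distinct and positive, this matrix is invertible, so the full gradient has rank $N$. Since $B^*$ is invertible, the same rank bound transfers to $((\nabla_P\ta_j)^*)$. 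An inverse function theorem applied to the map $\alpha\mapsto (\ta_j)$ at $\alpha=\alpha^*$, holding all other parameters fixed at their $P_{\rm org}^0$-values, then gives the claimed surjectivity onto a small neighbourhood of $0$ in the $\ta$-coordinates.

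For the generalisation to multiplicity $\ell+1$ with $\ell<N$, only $\ell$ of the identities \eqref{TAYLOR} are enforced at the base point, so there is one more free $\alpha_j$-parameter available; the Weierstrass theorem yields an $\ell\times\ell$ Weierstrass polynomial, the same Toeplitz-type comparison of coefficients produces the analogous lower triangular $B$, and the corresponding $\ell\times N$ block of $((\nabla_\alpha\ec_i)^*)$ is again of generalised Vandermonde form and has full row rank $\ell$. The main technical obstacle, which is essentially bookkeeping but easy to misstate, is tracking the sign convention of the Weierstrass polynomial in \eqref{e:evansWeierN} versus the definition of $B$, and ensuring that the relation $\te_i^*=\ec_{N+i}^*$ is applied correctly to populate the sub-diagonals of $B^*$ in the stated order.
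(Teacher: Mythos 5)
Your proposal is correct and follows essentially the same route as the paper's proof: Weierstrass preparation at the $N$-fold root, comparison of coefficients in \eqref{e:evansWeierN} yielding the lower-triangular Toeplitz system with diagonal $\te_0$, the identification $\te_i^*=\ec_{N+i}^*$, differentiation using $\ta_j^*=0$ to kill the Leibniz term, and the generalised Vandermonde rank computation $\partial_{\alpha_j}\ec_{k-1}\propto\tau_j^{k}/d_j$ combined with an open-mapping/inverse-function argument. The only difference is that you spell out the lower-multiplicity case and the sign bookkeeping slightly more explicitly than the paper, which simply asserts these; no gap.
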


%% PROOF %%
\begin{proof}
Comparing coefficients of powers of $\lambda$, the product structure of $\uE$ in \eqref{e:evansWeierN} gives the relations
\begin{align*}
\ec_0 = \ta_1 \te_0,\;
\ec_1 = \ta_2 \te_0 + \ta_1 \te_1,\;
\,\,\, \ldots,\;
\ec_{N-1} &= \ta_N \te_0 + \ta_{N-1} \te_1 +  \ldots + \ta_1 \te_{N-1},
\end{align*} 
which can be written as $(\ec_j)_{j=0}^{N-1} = B\cdot (\ta_j)_{j=1}^{N}$ with a lower triangular matrix $B=B(\cP)$, whose diagonal entries all equal to $b_0$. Since $b_0\neq 0$ near $\cP=0$ this is invertible, which gives \eqref{e:WeierReducedEvans}. In particular, $(\ec_{j})_{j=0}^{k-1}=0 \Leftrightarrow (\ta_j)_{j=1}^k=0$ for $k=1,\ldots,N$, which occurs for $k=N$ at $\cP=0$ by assumption. Further comparison of coefficients gives $e_N^*=\te_0^*$ and
\begin{align*}
\ec_{j+N}^* = \te_j^*, \quad j\geq 0,
\end{align*}
so that $B^*$ can be written in terms of $(\ec_{j+N}^*)_{j=0}^{N-1}$ as claimed. 
Upon differentiating  \eqref{e:WeierReducedEvans} and using $(\ec_j^*)_{j=0}^{N-1}=0$ we obtain the leading order quantitative relation \eqref{e:controlReducedEvans}.

From \eqref{eq:taylor_evansN} we compute, for $k=1,\ldots, N$, 
\[
\partial_{\alpha_j} \ec_{k-1} = (-1)^k\frac{(2k-1)!!}{(2k)!!}\frac{\tau_j^k}{d_j},
\]
so that, as in \eqref{VDM}, 
\begin{align*}
\det\big(\nabla_{(\alpha_1,\ldots,\alpha_N)} (\ec_{k-1})_{k=1}^{N}\big) &= 
\det\left(\left((-1)^k\frac{(2k-1)!!}{(2k)!!}\frac{\tau_j^k}{d_j}\right)_{1\leq k,j \leq N}\right)\\
&=\prod_{k=1}^N (-1)^k\frac{(2k-1)!!}{(2k)!!}\prod_{j=1}^N (d_j)^{-1}
\det\left(\left(\tau_j^k\right)_{1\leq k,j \leq N}\right),
\end{align*}
which is non-zero by the assumption that $\tau_j > 0$. The full rank implies that the range of the right hand side of \eqref{e:WeierReducedEvans} as a function of $\cP$ contains a neighbourhood of the origin.
\end{proof}

%%%
% SUBSUBSEC
%%%
\subsubsection{Control of critical eigenvalues for $\eps$ sufficiently small}
The most relevant situation for our later purposes are parameters perturbed from $P_{\rm org}^0$ in \eqref{e:Porg}, where zero is an $(N+1)$-fold root of the Evans function and $N\leq 3$. For these cases we will show below, see Lemma~\ref{lem:criticalroots_N_leq_2} and \ref{lem:criticalroots_N_eq_3}, that there are no further unstable or critical roots. However, for general $N > 3$ we cannot rule out additional non-zero critical roots. Therefore, we introduce the following {\emph{condition}}, which is thus true for $ N \leq 3$ and currently a hypothesis for $N > 3$.

%%%%%%%%%%%%%
% CONDITION %
%%%%%%%%%%%%%
\begin{condition}[Non-zero roots of the Evans function]\label{hyp:criticalroots}
For the parameter set $P_{\rm org}^0$ from \eqref{e:Porg} with \eqref{N:ZERO} any non-zero root of the Evans function $E_0$ \eqref{EV0} has negative real part.
\end{condition}

%%%%%%%%%
% LEMMA %
%%%%%%%%%
\begin{lemma}[Critical eigenvalues]\label{lem:evals}
Let $\gamma = 0$, {\it i.e.,} a stationary front solution $Z_{\rm SF}$ as constructed in Proposition~\ref{P:EXeps} exists. Furthermore, let $\mathcal{L} = \partial_Z \mathcal{G}\left(Z_{\rm SF};P, \varepsilon \right)$ be the linearisation from~\eqref{Ldef} and $E_{\varepsilon}$ the Evans function determining its point spectrum as in Lemma~\ref{L:SP}. Finally, assume that Condition~\ref{hyp:criticalroots} is fulfilled. Then the following hold. 

\begin{itemize}
\item[(i)] (Persistence in $\varepsilon$ of control of eigenvalues) 
There is $\eta>0$ such that for any $\varepsilon > 0$ sufficiently small and any collection $(\nu_j)_{j=1}^{N}\in\mathbb{C}^{N}$ with $\nu_j\in \mathcal{U}:=\{z\in\mathbb{C}:|z|<\eta\}$ that is closed under complex conjugation the following holds. There exist parameters $P=P_{\rm org}^0 + \check{P}$ such that $E_\eps(\nu_j)=0$, $j=1,\ldots,N$ including multiplicities.  Moreover, $P$ can be chosen continuous in $\eps$, with $\gamma=0$ and equal to $P_{\rm org}^0$ except for possibly adjusted $(\alpha_j)_{j=1}^N$, and $|\cP|\to0$ as $(\nu_j)_{j=1}^{N}\to0$.
\item[(ii)] (Critical spectrum) 
With $\nu_j$ as in item (i) the critical and the only possible unstable eigenvalues are contained in
\[\sigma_{\rm pt, crit} :=\{0\}\cup\{\eps^2\nu_j, j=1,\ldots,N\}=  \sigma_{\rm pt} \cap \eps^2\mathcal{U},\]
which is a, within $\eps^2\mathcal{U}$, arbitrary set of $N$ points 
(up to containing zero and being closed under complex conjugation).
\item[(iii)] (Persistence in $\varepsilon$ of degeneracy order) 
In particular, for any $1\leq \ell\leq N$, there are parameters~$P^\eps_\ell$ such that the  spectrum with non-negative real part only consists of the zero eigenvalue of $\mathcal{L}$ and it has multiplicity $\ell + 1$. Moreover, $P^\eps_\ell$ can be obtained from $P_{\rm org}^0$ upon a continuous adjustment of $\alpha_j$ 
, $j=\ell+1,\ldots, N$.
\end{itemize}
\end{lemma}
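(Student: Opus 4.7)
The strategy is to upgrade Lemma~\ref{l:linearunfold} from $\eps=0$ to sufficiently small $\eps>0$ by combining the Weierstrass preparation theorem with the implicit function theorem. I set $\uE_\eps(\lambda):=E_\eps(\lambda)/\lambda$, which by Lemma~\ref{L:SP} is analytic in $\lambda$, smooth in $P$ for $\eps>0$, and continuous in $\eps\geq 0$. At $(P,\eps)=(P_{\rm org}^0,0)$, $\uE_0$ vanishes to order exactly $N$ at $\lambda=0$ with non-zero leading Taylor coefficient $e_N^*$ from Lemma~\ref{l:linearunfold}. The parametric Weierstrass preparation theorem with continuous parameter dependence then yields a unique factorisation
\begin{align*}
\uE_\eps(\lambda) = \bigl(\lambda^N - \ta_N(P,\eps)\lambda^{N-1} - \cdots - \ta_1(P,\eps)\bigr)\,\tE(\lambda;P,\eps)
\end{align*}
in a neighbourhood of $(\lambda,P,\eps)=(0,P_{\rm org}^0,0)$, where the coefficients $\ta_j$ are continuous in $\eps\geq 0$, smooth in $P$ for $\eps>0$, vanish at $(P_{\rm org}^0,0)$, and $\tE$ is non-vanishing near $\lambda=0$. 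This parametric extension across $\eps=0$ is the main technical step.

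For part (i), Lemma~\ref{l:linearunfold} gives that at $\eps=0$ the Jacobian $\nabla_{(\alpha_j)_{j=1}^N}(\ta_j)_{j=1}^N$ is invertible at $P=P_{\rm org}^0$; continuity preserves this for $\eps>0$ small. For each such $\eps$ the implicit function theorem delivers, for any small target $(\tilde\ta_j)_{j=1}^N$, adjusted parameters $\cP$ --- with the other components of $P_{\rm org}^0$ fixed --- such that $\ta_j(P_{\rm org}^0+\cP,\eps)=\tilde\ta_j$ and $|\cP|\to 0$ as $(\tilde\ta,\eps)\to 0$. Given $(\nu_j)_{j=1}^N$ closed under complex conjugation in $\mathcal{U}$, the polynomial $\prod_j(\lambda-\nu_j)$ has real coefficients which serve as the target $\tilde\ta_j$; feeding these in yields the required $P$, since the roots of $E_\eps=\lambda\uE_\eps$ near zero are then exactly $0$ and the $\nu_j$.

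Part (ii) follows by combining Lemma~\ref{L:SE} --- which places the essential spectrum strictly in $\{\Re\lambda<-\eps^2\chi\}$ --- with the factorisation $E_\eps=\lambda\cdot(\lambda^N-\cdots)\cdot\tE$. Within $\eps^2\mathcal{U}$ the only point-spectral contributions are thus $0$ and the $\eps^2\nu_j$. Outside $\mathcal{U}$, Condition~\ref{hyp:criticalroots} places all remaining roots of $E_0$, and hence of $\tE(\cdot;P_{\rm org}^0,0)$, in the open left half plane, and Hurwitz's theorem on compact subsets of $\{\Re\lambda\geq 0\}\setminus\mathcal{U}$ transfers this to small $(\cP,\eps)$. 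Finally for part (iii), I set $\nu_1=\cdots=\nu_\ell=0$ and choose $\nu_{\ell+1},\ldots,\nu_N$ as fixed small negative reals; the target then has $\tilde\ta_1=\cdots=\tilde\ta_\ell=0$, and the Vandermonde structure of the Jacobian recorded in Lemma~\ref{l:linearunfold} admits a solution where only $\alpha_{\ell+1},\ldots,\alpha_N$ are moved from $P_{\rm org}^0$. The principal obstacle throughout is the merely continuous (not smooth) dependence on $\eps$ at $\eps=0$, which necessitates the parametric Weierstrass preparation in continuous-parameter form; a secondary subtlety is confirming the specific parameter adjustment in part (iii), which needs the explicit Jacobian structure beyond its full rank.
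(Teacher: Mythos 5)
Your proposal follows the same route as the paper's proof: Lemma~\ref{l:linearunfold} shows that the map $\cP\mapsto(\ta_j)_{j=1}^N$ covers a neighbourhood of the origin at $\eps=0$, continuity of the Taylor/Weierstrass coefficients of $E_\eps(\lambda)/\lambda$ in $\eps$ transfers this to small $\eps>0$, the $\ta_j$ then place the $N$ controllable roots, and Condition~\ref{hyp:criticalroots} together with Lemmas~\ref{L:SP} and~\ref{L:SE} confines all remaining spectrum to $\{\Re\lambda<-\eps^2\eta\}$. The paper compresses the persistence step into a single sentence (``since these coefficients are continuous in $\eps$\ldots''), whereas you make it explicit via a parametric Weierstrass preparation and the quantitative implicit function theorem; this is a legitimate and indeed more careful rendering of the same idea, and your parts (i) and (ii) are fine.

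The one step that does not work as written is your justification of the restricted parameter adjustment in part (iii). You prescribe all $N$ Weierstrass targets $(\tilde{\ta}_j)_{j=1}^N$ (zeros for $j\leq\ell$, values encoding the chosen negative reals for $j>\ell$) and then claim these can be met by moving only the $N-\ell$ parameters $\alpha_{\ell+1},\ldots,\alpha_N$. That system is overdetermined: the relevant sub-Jacobian is $N\times(N-\ell)$ and cannot be surjective onto $\R^N$ for any $\ell\geq 1$. Even the weaker requirement --- only $\ta_1=\cdots=\ta_\ell=0$ exactly, with the remaining roots merely confined to the open left half plane --- imposes $\ell$ equality constraints on $N-\ell$ unknowns and so fails by this count whenever $\ell>N/2$ (e.g.\ $N=3$, $\ell=2$, where the two conditions $\delta\alpha_3\tau_3/d_3=\delta\alpha_3\tau_3^2/d_3=0$ force $\delta\alpha_3=0$). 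The existence content of (iii) is unproblematic: apply your part (i) with $\nu_1=\cdots=\nu_\ell=0$ and $\nu_{\ell+1},\ldots,\nu_N$ small negative reals, allowing all of $\alpha_1,\ldots,\alpha_N$ to move --- which is also all the paper's own proof does, deducing (iii) from (i) and (ii). If you wish to retain the claim about which $\alpha_j$ are adjusted, that needs a separate argument and should not be attributed to the Vandermonde structure of Lemma~\ref{l:linearunfold}.
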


Analogous to $P^0_{\rm org}$ from \eqref{e:Porg}, we denote by $P_{\rm org}^\eps:=P^\eps_N$  the $(N+1)$-fold zero degeneracy for $\eps\geq 0$ sufficiently small.

%% PROOF %%
\begin{proof}
By Lemma~\ref{L:SP} it suffices to consider eigenvalues related to the roots of the Evans function $E_\eps$. Due to the scaling of point eigenvalues versus roots of $E_\eps$ at $P=P_{\rm org}^0$ all (if any) eigenvalues that are not roots of $E_\eps$ have real part less than $-\eps^2{\eta}$ for suitable ${\eta}>0$. By Condition~\ref{hyp:criticalroots}, for all (if any) roots of $E_0$ that do not result from perturbing the $(N+1)$-fold root at the origin, ${\eta}$ can be adjusted so that these also have real parts less than $-{\eta}$. Hence, item $(ii)$ follows from $(i)$ and item $(iii)$ directly follows from $(i)$ and $(ii)$.

Concerning item $(i)$, by Lemma~\ref{l:linearunfold} the set of coefficient-vectors of the $N$-th Taylor polynomial of $E_0(\lambda)/\lambda$ that are generated by $\cP\approx 0$ contains an open neighbourhood of $0$ in $\R^{N}$. 
Since these coefficients are continuous in $\eps$ for $E_\eps(\lambda)/\lambda$, an open neighbourhood of~$0$ in $\R^{N}$ is also realised for any sufficiently small $\eps>0$. Hence, the claims also hold for the roots of $E_\eps$ (and thus the actual eigenvalues of $\calL$) for any sufficiently small $\eps>0$.
\end{proof}

Since Condition~\ref{hyp:criticalroots} is the core assumption for obtaining the spectral set-up described in Lemma~\ref{lem:evals}, see also Figure~\ref{f:spectrum_unfold}, it remains to elaborate on when we can prove that it is fulfilled. {We first infer from} previous work that it holds true for $N \in \{1, 2\}$.

%%%%%%%%%%
% FIGURE %
%%%%%%%%%%

\begin{figure}[!t]
\includegraphics[width = 0.4\textwidth]{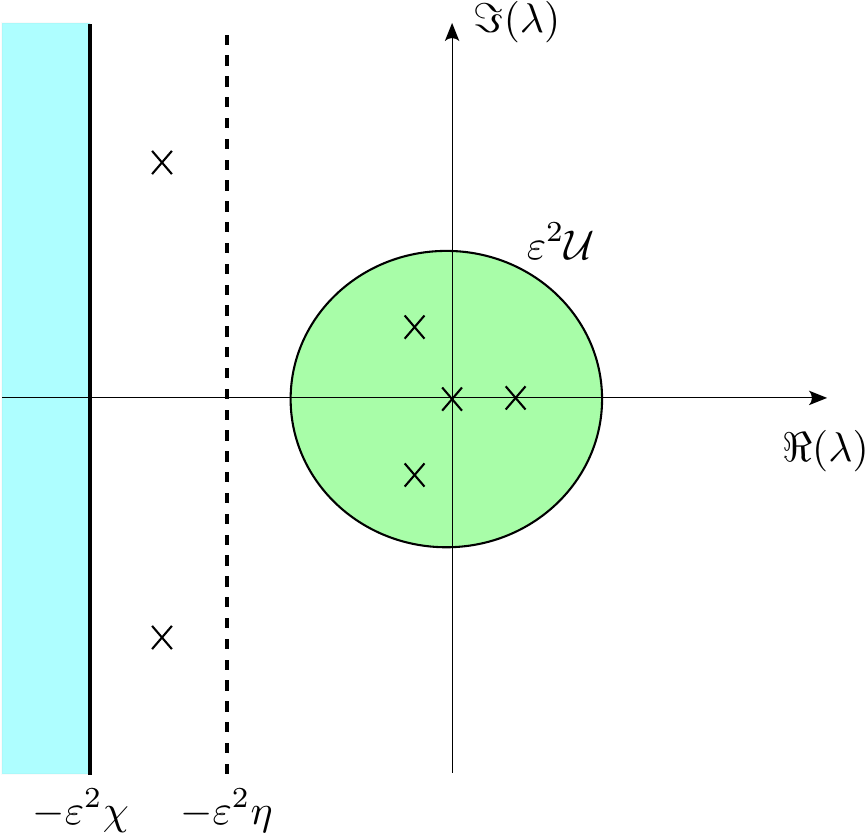}
\caption{Illustration of the spectrum of $\mathcal{L}$ from \eqref{Ldef} at $P = P^{\varepsilon}_{\mathrm{org}} + \check{P}$ for~$\eps,|\cP|$ sufficiently small (cf.\ Lemma~\ref{lem:evals}). 
The blue region bounded by $-\eps^2\chi$ contains (but is not equal) $\sigma_{\rm ess}$ (cf.\ Lemma~\ref{L:SE}). 
The $N+1$ eigenvalues (crosses) in the green region $\eps^2 \mathcal{U}$ can be fully controlled (up to one fixed zero and complex conjugation);  
any further eigenvalues lie to the left of the vertical dashed line; for $N\leq 3$, we prove $\eta\geq \chi$.
 This configuration is the basis for the center manifold reduction in Proposition~\ref{prop:reduced_system}.}
  \label{f:spectrum_unfold}
\end{figure}

%%%%%%%%%
% LEMMA %
%%%%%%%%%

\begin{lemma}[Number of eigenvalues for $N \in \{1,2\}$]\label{lem:criticalroots_N_leq_2} Let $\eps\geq 0$ be sufficiently small and assume $N \in \{1,2\}$. Then the Evans function $E_{\varepsilon}$ from Lemma~\ref{L:SP} possesses at most $N+1$ roots (counting multiplicity) 
and there are parameter settings realising any number $1, \ldots, N+1$, of roots. 
\end{lemma}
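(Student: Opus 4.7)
The plan is to establish the upper bound on the number of roots of $E_0$ by reducing $E_0(\lambda)=0$ to an algebraic equation via a square-root substitution, and then to derive the realisation statement directly from Lemma~\ref{l:linearunfold}; persistence for small $\eps>0$ will follow from the continuity of $E_\eps$ in $\eps$.

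For $N=1$, I substitute $u := \sqrt{\lambda\tau_1+1}$ on its principal branch, which maps the principal sheet of $E_0$ bijectively onto the open right half $u$-plane. Writing $k := \tfrac{3\sqrt{2}\,\alpha_1}{2 d_1}$ and multiplying $E_0(\lambda)=0$ by $\tau_1 u$ yields the cubic
\[
u^3 - (1+k\tau_1)u + k\tau_1 = (u-1)\bigl(u^2+u-k\tau_1\bigr) = 0.
\]
The factor $u=1$ corresponds to the fixed translation root $\lambda=0$. The roots of the remaining quadratic have sum $-1$, so by Vieta at most one of them has positive real part, giving at most $N+1=2$ valid roots of $E_0$ on the principal sheet.

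For $N=2$, the analogous substitution $u_j := \sqrt{\lambda\tau_j+1}$, $j=1,2$, combined with two successive squarings of $E_0=0$, produces a polynomial equation in $\lambda$. The squarings introduce extraneous roots corresponding to the three non-principal Riemann sheets (different sign choices for $u_1,u_2$); careful bookkeeping of which sign combinations are compatible with the principal sheet, together with dividing out the fixed root $\lambda=0$, shows at most $N+1=3$ principal-sheet roots. A cleaner route is to invoke the detailed spectral analysis of~\cite{CBDvHR15,CBvHHR19} for the system~\eqref{1F2S} (the $N=2$ case), where the three-root bound is established explicitly.

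For the realisation of any number in $\{1,\ldots,N+1\}$, Lemma~\ref{l:linearunfold} guarantees that at $P=P_{\rm org}^0$ the Evans function $E_0$ has an $(N+1)$-fold root at $\lambda=0$ that can be fully unfolded: the coefficients of the degree-$N$ Taylor polynomial of $E_0(\lambda)/\lambda$ span a neighbourhood of $0$ in $\R^N$ under small perturbations $\cP$, so one can place the $N+1$ near-zero roots in any desired configuration (up to complex conjugation), in particular realising any total multiplicity in $\{1,\ldots,N+1\}$. Persistence to small $\eps>0$ follows from the continuous dependence of $E_\eps$ on $\eps$ in Lemma~\ref{L:SP} together with the continuity of the Weierstrass data in Lemma~\ref{l:linearunfold}. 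The main obstacle is the $N=2$ case: cleanly tracking which of the many polynomial roots produced by the squarings lie on the principal sheet requires care, which is why the most economical argument leverages the explicit constructions from the previous papers rather than a fully self-contained algebraic computation.
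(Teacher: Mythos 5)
Your upper--bound argument for $N=1$ is correct and is actually more self-contained than the paper's proof, which simply cites \cite[Lemma 7]{CBDvHR15} together with the observation that the nonlinear part of $\calF^N$ does not enter $E_0$ at $c=0$ (a point you use implicitly by working directly with \eqref{EV0}, but should state, since the cited lemma was proved for affine coupling). The substitution $u=\sqrt{\lambda\tau_1+1}$, the factorisation $(u-1)(u^2+u-k\tau_1)$, and the Vieta argument that roots summing to $-1$ cannot both lie in $\{\Re u>0\}$ are all sound. For $N=2$ you end up invoking the same citation as the paper, so that part is fine, if not an improvement. Your persistence-in-$\eps$ remark is slightly glib (Rouch\'e controls roots in a fixed compact set, but one should also rule out roots entering from the branch cut or from infinity, as is done in Appendix~\ref{app:numberroots} for $N=3$), though the paper is no more explicit on this point.

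The genuine gap is in the realisation statement. Lemma~\ref{l:linearunfold} lets you place the $N$ non-translation roots of $E_0$ \emph{anywhere in a small neighbourhood of $\lambda=0$}, but it cannot make any of them disappear: for $|\cP|$ small all $N+1$ roots remain inside $\mathcal{U}$, no root crosses $\partial\mathcal{U}$, and hence by the argument principle the total count with multiplicity is identically $N+1$ throughout the unfolding. What the unfolding realises is any \emph{multiplicity of the zero root} between $1$ and $N+1$ (that is Proposition~\ref{L:N+1} and Lemma~\ref{lem:evals}(iii)), which is not the same as realising any total number of roots. Reducing the total count requires a root to leave the principal sheet through the branch point, which happens only at finite distance from $P_{\rm org}^0$: in your own $N=1$ computation, the extra root $u_+=\tfrac{-1+\sqrt{1+4k\tau_1}}{2}$ satisfies $\Re u_+>0$ iff $\alpha_1>0$, so one root is realised by $\alpha_1\le 0$ and two by $\alpha_1>0$ --- a sign change of $\alpha_1$, not a small perturbation of the organising centre. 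The fix is easy (exhibit explicit parameter settings, e.g.\ $\alpha_j=0$ for all $j$ gives exactly one root), but the argument as written does not prove the claim.
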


%% PROOF %%

\begin{proof}
See \cite[Lemma 7]{CBDvHR15} for $c = 0$, and note that the addition of nonlinear coupling terms in $\mathcal{F}^N(\vec{V})$ does not change the Evans function in the case $c = 0$.
\end{proof}

Although we expect that the same holds for general $N \in \mathbb{N}$, we refrain from elaborating on this further and {focus on $N = 3$, $P\approx P_{\rm org}^0$}, which we need in order to prove the occurrence of chaotic motion for dynamic front solutions, see Main Result~\ref{thm:intro}(b). 

%%%
% Lemma 
%%%
\begin{lemma}[Number of critical Evans function roots for $N = 3$]\label{lem:criticalroots_N_eq_3} 
Let $N =3 $ and let $P=P_{\rm org}^0$ from \eqref{e:Porg} such that there is a fourfold zero root of the Evans function $E_0$ from~\eqref{EV0} of a stationary front solution $Z_{\rm SF}$. Then for $P = P_{\rm org}^0 + \check{P}$ and any $\eps, |\cP|$ sufficiently small, the Evans function $E_\eps$ has precisely four roots (counting multiplicity). In particular, Condition~\ref{hyp:criticalroots} holds for $N=3$ and by varying  $\cP$ three of the critical eigenvalues can be placed at arbitrary locations in a neighbourhood of zero (up to respecting complex conjugation symmetry).
\end{lemma}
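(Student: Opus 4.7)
The plan is to reduce everything to verifying Condition~\ref{hyp:criticalroots} for $N=3$, that is, to showing that at $P=P_{\rm org}^0$ the Evans function $E_0$ from~\eqref{EV0} has no non-zero roots. Granted this, the remainder of the lemma is immediate: Proposition~\ref{L:N+1} already guarantees an $(N+1)=4$-fold root at $\lambda=0$, and Lemma~\ref{lem:evals}, whose hypothesis now holds, yields persistence in $\eps$, the control by $\cP$, and the precise count of four roots of $E_\eps$ for $\eps, |\cP|$ sufficiently small.

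The natural route to exclude non-zero roots is rationalisation. Writing~\eqref{EV0} in the form $(\lambda - C)\, y_1 y_2 y_3 = -\sum_{j=1}^3 c_j \prod_{k\neq j} y_k$ with $y_j := \sqrt{1+\lambda\tau_j}$, $c_j := \tfrac{3\sqrt{2}}{2}\alpha_j/d_j$, and $C := \sum_j c_j$, a first squaring clears the single-factor $y_j$'s on the right but leaves three cross terms of the form $y_j y_k$; isolating these and squaring a second time produces a polynomial equation $Q(\lambda)=0$ of moderate even degree whose zero set contains all roots of $E_0$. I would then (i) track the degree of $Q$ explicitly, (ii) identify and factor out the spurious roots introduced by the two squaring steps, which correspond to sign flips of the $y_j$ and hence to non-principal sheets of the underlying Riemann surface, and (iii) factor out the forced $\lambda^{N+1}=\lambda^4$ coming from the known fourfold root at the origin. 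What remains should be a polynomial factor with no further zeros; this reduces the question to a finite algebraic check made manageable by the explicit form~\eqref{N:ZERO} of $\alpha_j$ in terms of $\tau_j$ and $d_j$.

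For the passage to $\eps > 0$ and $|\cP| > 0$ small, I would invoke continuity of $E_\eps$ in $(\eps,\cP)$ from Lemma~\ref{L:SP} together with the asymptotic behaviour $E_\eps(\lambda) \sim \lambda - C$ as $|\lambda|\to\infty$. Uniform convergence on compact subsets ensures that the four roots near the origin persist and no new roots appear from bounded intermediate regions, while the asymptotics exclude roots escaping to or entering from infinity. With precisely four roots of $E_\eps$ established, Lemma~\ref{lem:evals}(i)--(ii) directly yields the claimed placement of three critical eigenvalues via $\cP$, the fourth being the translation eigenvalue pinned at zero.

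The main obstacle is step (ii) in the rationalisation, namely the bookkeeping of spurious factors from two rounds of squaring with three square roots present. An alternative route via the argument principle on the slit plane $\mathbb{C}\setminus(-\infty,-1/\tau_{\max}]$, using $E_0(\lambda)\sim\lambda - C$ at infinity to contribute one unit of winding and matching branch-cut contributions of the three functions $1/\sqrt{1+\lambda\tau_j}$ with the remaining count, is conceptually cleaner but requires delicate contour manipulations around the three branch points. Both strategies parallel the $N\leq 2$ argument of~\cite[Lemma~7]{CBDvHR15} but are algebraically strictly more involved because of the extra branch point and the additional squaring it forces.
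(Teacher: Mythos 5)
Your framing is right: the lemma reduces to showing that at $P=P_{\rm org}^0$ the Evans function $E_0$ has no roots beyond the fourfold zero, after which Proposition~\ref{L:N+1} and Lemma~\ref{lem:evals} deliver the persistence in $\eps$, the count for $E_\eps$, and the placement of the three free eigenvalues. But that reduction is the easy part; the substance of the lemma is precisely the excluded-roots claim, and there you offer only a plan, and the plan as described does not work. With three independent radicals $y_j=\sqrt{1+\lambda\tau_j}$, two squarings do not rationalise the equation: after the first squaring you are left with the three products $y_1y_2$, $y_1y_3$, $y_2y_3$, and squaring a linear combination of these regenerates such products, since for instance $(y_1y_2)(y_1y_3)=(1+\lambda\tau_1)\,y_2y_3$. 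To obtain a genuine polynomial you must take the norm over all $2^3$ sign choices of $(y_1,y_2,y_3)$, which produces a polynomial of much higher degree; and even then, the residual factor (after removing spurious sheets and the forced $\lambda^4$) must be shown root-free not for one fixed polynomial but for the whole family of parameters $(\tau_j,d_j)$ with $\alpha_j$ given by \eqref{N:ZERO} — this is a parametrised positivity statement, not a finite algebraic check, and neither it nor the argument-principle alternative is carried out. As it stands, Condition~\ref{hyp:criticalroots} for $N=3$ is asserted, not proved.

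For comparison, the paper's proof (Appendix~\ref{app:numberroots}) avoids rationalisation entirely. It first bounds all roots of $E_0$ uniformly, so that along any continuous deformation the root count can only change via roots entering or leaving through the branch cut $(-\infty,-1/\tau_3]$. It then performs a homotopy $\tau_3\to\tau_2$ (with a small regularisation $g$ of $\tau_2$ inside the coefficients $a_2,a_3$ to keep them bounded), at whose endpoint $E_0$ coincides with the $N=2$ Evans function, known from \cite[Lemma~7]{CBDvHR15} to have at most three roots. A case analysis on the real slit shows that exactly one root is absorbed at the coalescing branch point $-1/\tau_2$ during the homotopy and no others appear or disappear, which forces the count four at the starting point. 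If you want to salvage your approach, you would need to either execute the full $2^3$-fold norm computation and control its nontrivial factor over the whole admissible parameter range, or make the branch-cut/argument-principle bookkeeping rigorous — both of which are substantially harder than the homotopy route.
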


The proof of the Lemma is given in Appendix~\ref{app:numberroots} and is based on connecting the cases $N = 2$ and $N = 3$ by a homotopy argument. In particular, starting with a parameter set $P=P_{\rm org}^0$ for $N = 3$, where $\tau_2\neq \tau_3$, we show that during a homotopy to $\tau_2 = \tau_3$ (which is equivalent to $N = 2$) the number of roots always decreases by exactly one. 

%%%
% SUBSEC: generalised EFs
%%%

\subsubsection{(Generalised) eigenfunctions}\label{s:gen_efunc}
Having established multiplicity $N+1$ for the zero root of the Evans function, it remains to study the Jordan-chain structure in order to be able to analyse the resulting reduced dynamics.
The case of the three-component model with affine coupling function $\mathcal{F}^N(\vec{V})$, {\it i.e.}, \eqref{1F2S} (which is \eqref{eq:multi-component-RD} with $N=2$ and \eqref{N:NONL22} with $\mathcal{F}_{\rm nl}^N(\vec{V}) \equiv 0$), was studied in \cite[\S2]{CBvHHR19}. There the SLEP method \cite{nishiura1987stability, nishiura1990singular} was used to prove that the triple zero eigenvalue (from Proposition~\ref{L:N+1}/Lemma~\ref{lem:evals}) results in a Jordan block structure of length three, {\it{i.e.}}, a maximal Jordan chain. Here, we prove that this generalises to the $(N+1)$-component case~\eqref{eq:multi-component-RD} with general nonlinear coupling function $\mathcal{F}^N(\vec{V})$~\eqref{N:NONL22}. In preparation, we first consider eigenfunctions $\Phi$ of an eigenvalue $\eps^2\lambda$, that is,
\begin{align*}
\mathcal{L}\Phi(\lambda) = \varepsilon^2 \lambda \Phi(\lambda) \,,
\end{align*}
where $\calL$ is from \eqref{Ldef} for a stationary front solution. For this we need to introduce the following auxiliary operators. Since we consider exponentially localised eigenfunctions only, we do not specify range and domain; a canonical choice would be range $L^2(\R)$ and domain $H^2(\R)$ for each component.
\begin{deff}[Operators]
\label{LL}
 Let $U_{\rm SF}$ denote the $U$-component of a stationary front solution $Z_{\rm SF}.
$\begin{align*}
&L^c:= \partial_{z}^2  + \ 1 - 3\tanh^2{(z/\sqrt2)} + \eps c \partial_z\,, \quad
L^0:=L^c |_{c=0}\,, \\
&L_\eps := \eps^2 \partial_x^2+1-3 U_{\rm SF}^2.
\end{align*}
\end{deff}

%%
% Lemma
%%
\begin{lemma}[Eigenfunctions $\Phi(\lambda)$ for $c = 0$] \label{L:EIGSF}
For $\eps>0$ sufficiently small let $\lambda$ be a root of the Evans function $E_\eps$ from Lemma~\ref{L:SP}, whose location is to leading order given by \eqref{EV0}. 
Then the corresponding eigenfunction $\Phi(\lambda)$ of the linearisation in the associated stationary front solution~$Z_{\rm SF}$ is to leading order (and up to an arbitrary scaling factor) given by
\begin{align}\label{N:UPROFILE_EIG}
\varphi_U(z) = \left\{
\begin{aligned}
\dfrac{\sqrt2}{2 \eps}\sech^2{(z/\sqrt2)} &\,,\, z \in I_f, \\ 
0 &\,,\, z \in I_s^\pm,
\end{aligned}
\right. \quad
\varphi_j(y) = \left\{
\begin{aligned}
&\dfrac{1}{h_j(\lambda)} & ,\; y \in I_f, \\ 
&\dfrac{1}{h_j(\lambda)} e^{\mp h_j(\lambda) y/d_j^2} & ,\; y \in I_s^\pm,
\end{aligned}
\right.
\end{align}
with 
$
h_j(\lambda)  = d_j\sqrt{\tau_j \lambda +1}\,.
$
\end{lemma}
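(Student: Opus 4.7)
The plan is to exploit the slow-fast scale separation of \eqref{EQ:1FNS_EVPW} (with $c=0$) using the spatial decomposition from Definition~\ref{fields}. Writing the eigenvalue problem componentwise, the $u$-equation reads $L_\eps \varphi_U - \eps\nabla\calF^N(\vec{V}_{\rm SF})\cdot\vec\varphi = \eps^2\lambda\varphi_U$, while the slow equations take the form $d_j^2\partial_x^2\varphi_j + \varphi_U - (1+\lambda\tau_j)\varphi_j = 0$ for $j=1,\ldots,N$. I will treat these separately on the slow fields $I_s^\pm$ and the fast field $I_f$, match at $y=\pm\sqrt\eps$, and invoke the standard GSPT/NLEP persistence arguments from \cite{CBDvHR15,CBvHHR19,vHDK08}.

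In the slow fields $U_{\rm SF} = \pm 1 + \calO(\eps)$ so $L_\eps$ is uniformly invertible with principal symbol $\eps^2\partial_x^2-2+\calO(\eps)$. The $u$-equation then yields $\varphi_U = \calO(\eps)\nabla\calF^N\cdot\vec\varphi$, exponentially decaying on the fast scale; at the inner boundary $y=\pm\sqrt\eps$ it is of order $e^{-\sqrt{2/\eps}}$, hence negligible to any polynomial order in $\eps$. With $\varphi_U\equiv 0$ to leading order, the slow $v_j$-equations decouple into $d_j^2\partial_x^2\varphi_j = (1+\lambda\tau_j)\varphi_j$, whose solutions bounded at $\pm\infty$ are $A_j^\pm \exp(\mp h_j(\lambda)y/d_j^2)$ with $h_j(\lambda)=d_j\sqrt{\tau_j\lambda+1}$, giving the slow-field formulas in \eqref{N:UPROFILE_EIG} up to the constants $A_j^\pm$.

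In the fast field, writing the $v_j$-equation in the fast variable $z=y/\eps$ gives $(d_j^2/\eps^2)\partial_z^2\varphi_j = \varphi_U + \calO(1)\varphi_j$, from which $\varphi_j$ is constant to leading order in $z$, and its value is determined by continuous matching with the slow-field expressions at $y=\pm\sqrt\eps$; matching at $y=0^\pm$ forces $A_j^+ = A_j^- = 1/h_j(\lambda)$ up to a global normalisation, which is the value claimed in \eqref{N:UPROFILE_EIG}. For the $u$-equation, since $U_{\rm SF} = \tanh(z/\sqrt 2)+\calO(\eps)$ on $I_f$, we have $L_\eps = L^0+\calO(\eps)$ there, and the equation reduces at leading order to $L^0\varphi_U = 0$. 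The kernel of $L^0$ in the natural function space is one-dimensional and spanned by $\partial_z \tanh(z/\sqrt 2) = \tfrac{\sqrt 2}{2}\sech^2(z/\sqrt 2)$; the prefactor $1/\eps$ in \eqref{N:UPROFILE_EIG} is then fixed by the choice of normalisation in the slow-variable $L^2$ setting (equivalently, by demanding that $\varphi_U$ limit to the slow-variable translation mode $\partial_y U_{\rm SF}$ at $\lambda=0$).

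The main obstacle is not the formal leading-order calculation above but justifying rigorously that the matched asymptotic profile in \eqref{N:UPROFILE_EIG} is indeed the leading order of a genuine eigenfunction for $\eps>0$. This is achieved in the by-now standard way (see \cite[e.g.]{vHDK08}): one constructs $\varphi_U$ on $I_f$ and on $I_s^\pm$ as solutions of the respective slow and fast limit problems, parametrised by the matching constants, and the existence of a solution with matching spatial derivatives across $y=\pm\sqrt\eps$ is equivalent to the vanishing of the Evans function, {\it{i.e.}} to $\lambda$ being a root of $E_\eps$. Since this is precisely our hypothesis on $\lambda$, an Implicit Function Theorem / Lyapunov–Schmidt argument then promotes the leading order profile to a true eigenfunction with $\calO(\eps)$ corrections, completing the proof.
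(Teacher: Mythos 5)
Your overall slow--fast decomposition, the identification of $\varphi_U$ with the kernel of $L^0$ in the fast field, and the deferral of rigorous persistence to the vanishing of the Evans function all follow the paper's route. However, there is a genuine gap at the single step that actually produces the claimed profile: the determination of the amplitudes $A_j^\pm=1/h_j(\lambda)$ of the slow components. You assert that ``continuous matching \ldots at $y=0^\pm$ forces $A_j^+=A_j^-=1/h_j(\lambda)$ up to a global normalisation'', but continuity of $\varphi_j$ only yields $A_j^+=A_j^-$; it cannot produce the value $1/h_j(\lambda)$, and the relative amplitude of $\varphi_j$ with respect to the already-normalised $\varphi_U$ is \emph{not} a free normalisation --- it is intrinsic to the eigenfunction and carries the entire $j$- and $\lambda$-dependence $1/\bigl(d_j\sqrt{\tau_j\lambda+1}\bigr)$ that the lemma asserts. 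Worse, if one also matches derivatives naively (ignoring the fast component as a source), one gets $-A_j h_j/d_j^2 = +A_j h_j/d_j^2$, forcing $A_j=0$. This is exactly what happens at leading order in the paper's systematic expansion: there $v_j^{0}\equiv 0$, and the nontrivial slow profile only appears at the next order.

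The missing mechanism is the jump in $\partial_y\varphi_j$ across the fast field induced by $\varphi_U$ acting as an effective point source on the slow scale: integrating $\partial_z^2 v_j = (\eps^2/d_j^2)\bigl((1+\tau_j\lambda)v_j - u\bigr)$ over $I_f$ (note also the sign of the $\varphi_U$ term in your fast-field equation is off) gives, with $u^0=C\sech^2(z/\sqrt2)$,
\begin{align*}
\Delta\bigl(\partial_y\varphi_j\bigr) \;=\; -\frac{1}{d_j^2}\int_{-\infty}^{\infty} C\sech^2\!\bigl(z/\sqrt2\bigr)\,dz \;=\; -\frac{2\sqrt2\,C}{d_j^2}\,,
\end{align*}
and equating this to $-2A_jh_j(\lambda)/d_j^2$ yields $A_j=\sqrt2\,C/h_j(\lambda)$, hence $A_j=1/h_j(\lambda)$ for $C=1/(\sqrt2\,\eps)$. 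Without this derivative-jump computation your argument does not establish \eqref{N:UPROFILE_EIG}; with it included, your proof would coincide with the paper's (which organises the same content as a regular expansion in $\eps$, finding $v_j^0=0$, $u^1=0$, the jump condition at $\mathcal{O}(\eps)$ fixing $v_j^1$, and the $\mathcal{O}(\eps^2)$ solvability condition reproducing the Evans function).
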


%%
% Proof
%%
\begin{proof}
This follows directly from adapting the proof of~\cite[Lemma 5]{CBvHHR19}\footnote{There is a mistake in the profile of the eigenfunction in~\cite[Lemma 5]{CBvHHR19}: the exponents of the slow components in the slow fields should have a $1/h$ instead of $h$.} to the current situation. The details can be found in Appendix~\ref{A:E}. 
\end{proof}
A suitably scaled $k$-th generalised eigenfunction $\Psi_k$ associated to an eigenfunction $\Psi_0 = \Phi(\lambda)$ from Lemma~\ref{L:EIGSF} satisfies
\begin{align}
\label{GEIG}
\mathcal{L} \Psi_k = \eps^2 \Psi_{k-1}.
\end{align} 
The $\eps^2$-scaling in \eqref{GEIG} is used for convenience and is related to the slow-fast scaling of the system. We focus on $\lambda=0$, the translation eigenvalue, whose eigenfunction on the $x$-scale is given by $\partial_x Z_{\rm SF}$.
Following \cite{CBvHHR19}, we introduce $\Psi_k = (\Psi_k^U, \vec{\Psi}_k^V)$, and write \eqref{GEIG} 
as
$$
\begin{pmatrix}
L_\eps & \eps A \\
\mathds{1}_{N} & \diag(d_j^2 \partial_x^2-1)
\end{pmatrix}
\begin{pmatrix}
\Psi_k^U \\[2mm] \vec{\Psi}_k^V
\end{pmatrix}=
M(\vec{\tau})
\begin{pmatrix}
\eps^2 \Psi_{k-1}^U \\[2mm]  \vec{\Psi}_{k-1}^V
\end{pmatrix}\,,
$$
with 
$L_\eps$ from Definition~\ref{LL}, and $A$ the row vector given by 
$A := (-\nabla \mathcal{F}^N(\vec{V}_{\rm SF}))^t$. 
The latter differs from the one considered in \cite{CBvHHR19} in terms of the nonlinearity $\mathcal{F}_{\rm nl}^N(\vec{V})$ of the coupling term $\mathcal{F}^N(\vec{V})$ \eqref{N:NONL22} and its length. In particular, in \cite{CBvHHR19} this row vector had constant coefficients, which {is not the case here in general}. However, $A$ is still a bounded operator and the proof of~\cite[Proposition~2]{CBvHHR19} generalises so that we obtain a `SLEP equation' similar to \cite[(22)]{CBvHHR19}. As in that case, the SLEP equation can be seen as a solvability condition {for the existence of a generalised eigenfunction}, where an operator involving $A$ is tested with $\partial_x U_{\rm SF}$, the $U$-component of $\partial_x Z_{\rm SF}$. {Since $U_{\rm SF}$ is to leading order a step function,} to leading order $A$ is just evaluated at $x=0$. From $\mathcal{F}^N(\vec{V})$ \eqref{N:NONL22} and since $\vec{V}^*=0$ \eqref{VSTARN} for stationary front solutions, we have that $A|_{x=0}$ is simply the constant row vector $(-\alpha_j)_{j=1}^N$ as in \cite{CBvHHR19} but now with $N$ components. That is, the SLEP equation becomes independent of the nonlinearity of $\mathcal{F}^N(\vec{V})$. This is as expected since also the Evans function \eqref{EV0} is not directly depending on the nonlinearity of $\mathcal{F}^N(\vec{V})$. Consequently, the SLEP results of \cite{CBvHHR19} generalise as follows ({ where we omit some details of the proof, which proceeds as that of  \cite[Proposition~2]{CBvHHR19} up to the mentioned modifications)}. 

%%%
% Lemma
%%%
\begin{lemma}
[Maximality of Jordan chain at $\lambda=0$]
\label{L:N+1_J}
Let $\eps$ be sufficiently small, 
$\ga=0$ and let the remaining system parameters be such that  the zero eigenvalue of $\calL$ from \eqref{Ldef} associated to $Z_{\rm SF}$ has multiplicity $\ell\in\{1,\ldots,N+1\}$, e.g., with $\alpha_j$, $j=1,\ldots,\ell -1$ as in Lemma~\ref{lem:evals}(iii). Then $\calL$ possesses an associated Jordan chain of length $\ell-1$.
\end{lemma}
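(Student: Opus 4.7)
The plan is to adapt the SLEP argument from \cite[Proposition~2]{CBvHHR19} (proven there for $N=2$ with affine coupling) to the present $(N+1)$-component system with general nonlinear coupling $\mathcal{F}^N(\vec V)$. As emphasized in the paragraph preceding the lemma, for a stationary front solution one has $\vec V^{*}=\vec 0$ and hence $A|_{x=0}=(-\alpha_j)_{j=1}^N$, so that the nonlinearity $\mathcal F_{\rm nl}^N$ does not affect the SLEP reduction at leading order. The proof therefore parallels \cite{CBvHHR19} in structure; the only novelty is that the arbitrary dimension $N$ replaces the fixed dimension $2$, and this is precisely the regime where the Vandermonde machinery of Lemma~\ref{L:VDM} is needed.

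I would proceed iteratively in $k=1,\ldots,\ell-1$, solving $\calL\Psi_k=\eps^2\Psi_{k-1}$ starting from $\Psi_0=\partial_x Z_{\rm SF}$. At each step, invert the slow block $\diag(d_j^2\partial_x^2-1)$ to express $\vec{\Psi}_k^V$ in terms of $\Psi_k^U$ and the already-known $\vec\Psi_{k-1}^V$; substituting back into the fast equation yields a scalar equation for $\Psi_k^U$ of the form $L_\eps\Psi_k^U + (\text{nonlocal term}) = (\text{RHS depending on }\Psi_{k-1})$. Since the leading-order kernel of $L_\eps$ is spanned by $\partial_z U_{\rm SF}^0=(\sqrt 2/2)\sech^2(z/\sqrt 2)$, solvability reduces by the Fredholm alternative to a scalar SLEP equation at level~$k$.

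The key step is to identify this solvability condition, up to a non-zero multiplicative constant, with the $(k-1)$-st Taylor coefficient of $\uE(\lambda)=E_0(\lambda)/\lambda$ at $\lambda=0$ as computed in \eqref{eq:taylor_evansN}. The cleanest way to see this is by differentiating the eigenvalue relation $\calL\Phi(\lambda)=\eps^2\lambda\Phi(\lambda)$ $k$ times in $\lambda$ and evaluating at $\lambda=0$: the identity $\calL\bigl(\partial_\lambda^k\Phi|_{\lambda=0}/k!\bigr)=\eps^2\,\partial_\lambda^{k-1}\Phi|_{\lambda=0}/(k-1)!$ holds whenever the preceding Taylor coefficients of the Evans function vanish, and the obstruction to Fredholm solvability at the next step is precisely the next Taylor coefficient of $\uE$. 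By the hypothesis that $\lambda=0$ is a root of $E_\eps$ of multiplicity $\ell$ --- equivalently, $\uE$ has a zero of order $\ell-1$, so that its first $\ell-1$ Taylor coefficients vanish --- the solvability conditions hold consecutively for $k=1,\ldots,\ell-1$, yielding the Jordan chain of that length beyond~$\Psi_0$.

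The main obstacle is the explicit matching of the SLEP solvability integrals with the Evans function coefficients for general~$N$; the integrals naturally produce sums of the form $\sum_{j=1}^N \alpha_j\tau_j^k/d_j$ that appear in both \eqref{TAYLOR} and \eqref{eq:taylor_evansN}, so Lemma~\ref{L:VDM} guarantees that the identification is compatible component-wise in $j$. Persistence of the chain for $0<\eps\ll 1$ then follows from the smooth $\eps$-dependence of $L_\eps$, of the eigenfunction profiles in Lemma~\ref{L:EIGSF}, and of the SLEP coefficients, together with the non-degeneracy of the leading-order kernel of $L_\eps$.
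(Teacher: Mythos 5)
Your proposal is correct and follows essentially the same route as the paper: the authors also prove this lemma by generalising the SLEP argument of \cite[Proposition~2]{CBvHHR19}, using that $A|_{x=0}=(-\alpha_j)_{j=1}^N$ so the nonlinearity drops out, and then (in Lemma~\ref{L:GEIGSG}) iteratively constructing $\Psi_k$ and identifying the solvability condition at step $k$ with the $\mathcal{O}(\lambda^k)$ Taylor coefficient of the Evans function in \eqref{TAYLOR}. Your $\lambda$-differentiation shortcut is a harmless repackaging of that identification, and the appeal to Lemma~\ref{L:VDM} is not actually needed here (it is only used to show the degeneracy can be \emph{realised} by parameter choices, not for the Jordan-chain structure itself).
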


From the leading order profile of the eigenfunction $\Psi_0= \Phi|_{\lambda=0}$, see~\eqref{N:UPROFILE_EIG}, together with the equation for the $k$-th generalised eigenfunction \eqref{GEIG}, we can now
iteratively compute the leading order profiles of these generalised eigenfunctions $\Psi_k$. In \cite{CBvHHR19} this is done for the first and second generalised eigenfunctions for the 
three-component system~\eqref{1F2S}. This derivation instantly generalises to the $N$-component case with nonlinear coupling term $\mathcal{F}^N(\vec{V})$~\eqref{N:NONL22} and is similar to the proof of Lemma~\ref{L:EIGSF} (related to the eigenfunction) given in Appendix~\ref{A:E}. 
From this it becomes clear that the crux in obtaining a leading order formula for the $k$-th generalised eigenfunction $\Psi_k, k=1, \ldots, N$, is the shape of the slow components.
In particular, mimicking the proof from~\cite{CBvHHR19} we observe that these shapes are determined by the leading order equations for the slow components in the slow fields. For the $k$-th generalised eigenfunction $\Psi_k$ these become
\begin{align} \label{GSlow}
d_j^2 (\Psi_{k,j})_{yy} =   \Psi_{k,j} + \tau_j \Psi_{k-1,j}\,, \qquad j=1,2,\ldots,N\,,
\end{align}
where $\Psi_{k,j}$, respectively $\Psi_{k-1,j}$, are understood to be the leading order $V_j$-component of the $k$-th, respectively $(k-1)$-th, generalised eigenfunction $\Psi_{k}$, respectively $\Psi_{k-1}$. In other words, we can relate these slow profiles of the generalised eigenfunctions to the solutions of a set of nonhomogeneous ODEs. From these
expressions on the slow scale we also obtain a solvability condition at the next $\O(\eps)$-level in the fast field, and this condition coincides  
with equating the $\mathcal{O}(\lambda^k)$-term of the Taylor expansion of the Evans function \eqref{EV0} to zero, see \eqref{TAYLOR}. Thus, consistent with Lemma~\ref{L:N+1_J}, this computational approach leads to a $k$-th generalised eigenfunction if $\lambda=0$ is a root of at least order $k$.

%%%
% Lemma
%%%
\begin{lemma}[ODE structure of the Jordan chain]
\label{L:EIGv}
Consider the coupled set of ODE\footnote{By rescaling space we can nondimensionalise \eqref{GSlow} to be independent from $d_j$.}
\begin{align}
\label{SET}
\left\{
\begin{aligned}
(v_{-}^{j})''(x) = v_-^j(x) + \tau v_-^{j-1}(x)\,, & \quad x \in (-\infty,0]\,, \\
(v_{+}^{j})''(x) = v_+^j(x) + \tau v_+^{j-1}(x)\,, & \quad x \in [0,\infty)\,,
\end{aligned}
\right.
\end{align}
with $j=1, \ldots, k$, 
together with the boundary and matching conditions
\begin{align} \label{BC}
v_{\pm}^{j}(\pm \infty) = 0\,, \qquad v_{-}^{j}(0) = v_{+}^{j}(0)\,, \qquad (v_{-}^{j})'(0) = (v_{+}^{j})'(0)\,.
\end{align}
If
$v^0_{\pm}(x) = e^{\mp x}/d$,\footnote{{\it i.e.} the leading order slow profile of the eigenfunction, see Lemma~\ref{L:EIGSF} (and Lemma 5 of \cite{CBvHHR19}) with $\lambda=0$.} then it holds true that 
the solutions to \eqref{SET} with \eqref{BC} are given by
\begin{align}
v^j_{+}(-x) &= v^j_{-}(x)\,, & j=0,1,  \ldots, k,\label{PAR}\\
v^{j}_{+}(x) &= (-1)^{j}\dfrac{(2j-1)!!}{(2j)!!} \tau^{j}v^0_{+}(x)  \sum_{i=0}^{j} \dfrac{2^i}{i!}  \dfrac{\tbinom{2j-i}{j}}{  \tbinom{2j}{j}}   x^i\,, & j=1,2,\ldots,k.\label{EQ1n}
\end{align}
\end{lemma}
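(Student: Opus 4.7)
The plan is to proceed by induction on $j$, first establishing the parity \eqref{PAR} from the symmetry of the problem together with uniqueness, and then verifying the explicit formula \eqref{EQ1n} via a polynomial ansatz that reduces the ODE to a termwise combinatorial identity. Throughout, the base case $j=0$ holds by assumption on $v^0_\pm$ (and trivially for \eqref{EQ1n} with the convention $(-1)!!=1$ and empty sums).

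\emph{Parity.} Assume \eqref{PAR} at index $j-1$ and define $\tilde v^j_\pm(x):=v^j_\mp(-x)$. Changing variables $x\mapsto -x$ in \eqref{SET} and using the inductive hypothesis $v^{j-1}_\mp(-x)=v^{j-1}_\pm(x)$ shows that $\tilde v^j_\pm$ solves the same inhomogeneous problem \eqref{SET}--\eqref{BC} as $v^j_\pm$. The associated homogeneous problem (trivial forcing, decay at $\pm\infty$, matching at $0$) admits only the trivial solution: on $[0,\infty)$ decay forces $v^j_+(x)=C_+ e^{-x}$, on $(-\infty,0]$ decay forces $v^j_-(x)=C_-e^x$, and the matching of value and derivative at $0$ gives $C_+=C_-$ together with $C_+=-C_-$. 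Hence the inhomogeneous solution is unique, $\tilde v^j_\pm\equiv v^j_\pm$, which is \eqref{PAR}. It now suffices to determine $v^j_+$ on $[0,\infty)$, and \eqref{PAR} combined with the derivative matching in \eqref{BC} reduces to the single extra condition $(v^j_+)'(0)=0$.

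\emph{Polynomial ansatz and reduction.} Set $c_j:=(-1)^j \frac{(2j-1)!!}{(2j)!!}\tau^j$ and $P_j(x):=\sum_{i=0}^j \frac{2^i}{i!}\frac{\binom{2j-i}{j}}{\binom{2j}{j}}x^i$, so that the claimed formula reads $v^j_+(x)=c_j P_j(x)e^{-x}/d$. Then $(v^j_+)''-v^j_+ = (c_j/d)\,e^{-x}\bigl(P_j''(x)-2P_j'(x)\bigr)$, so after using $c_j/(\tau c_{j-1})=-(2j-1)/(2j)$, the ODE in \eqref{SET} becomes the polynomial identity
\begin{align*}
P_j''(x)-2P_j'(x) \;=\; -\frac{2j}{2j-1}\,P_{j-1}(x).
\end{align*}
The boundary condition $v^j_+(\infty)=0$ is automatic from the exponential factor, while the matching condition $(v^j_+)'(0)=(c_j/d)(P_j'(0)-P_j(0))=0$ amounts to $2\binom{2j-1}{j}=\binom{2j}{j}$, a direct consequence of Pascal's rule and the symmetry $\binom{2j-1}{j}=\binom{2j-1}{j-1}$.

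\emph{Verification of the recursion and main obstacle.} Comparing the coefficient of $x^i$ in $P_j''-2P_j'$ for $0\leq i\leq j-1$ and applying Pascal's identity $\binom{2j-i-1}{j}-\binom{2j-i-2}{j}=\binom{2j-i-2}{j-1}$ collapses the left-hand side to $-\frac{2^{i+2}}{i!\,\binom{2j}{j}}\binom{2j-i-2}{j-1}$; matching against the coefficient of $x^i$ on the right-hand side reduces everything to the single factorial ratio
\begin{align*}
\binom{2j}{j}\bigg/\binom{2j-2}{j-1} \;=\; \frac{(2j)(2j-1)}{j^2} \;=\; \frac{2(2j-1)}{j},
\end{align*}
which closes the induction. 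The main (modest) obstacle is not the combinatorics itself but the normalisation bookkeeping: the prefactor $c_j$, the $2^i/i!$ weights, and the binomial ratio inside $P_j$ must align exactly so that the factor $2(2j-1)/j$ produced by the Pascal identity cancels against $c_j/(\tau c_{j-1})=-(2j-1)/(2j)$ at every step. Provided this is tracked carefully, uniqueness (established in the parity step) shows that the formula \eqref{EQ1n} is the full solution.
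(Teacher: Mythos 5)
Your proof is correct. For the explicit formula \eqref{EQ1n} you follow essentially the same route as the paper: substitute the ansatz $v^j_+ = c_j P_j(x)e^{-x}/d$ into the ODE, reduce to the polynomial recursion $2P_j'-P_j''=\tfrac{2j}{2j-1}P_{j-1}$ (identical to the paper's relation for $f^j$), and close it with Pascal's rule plus the ratio $\binom{2j}{j}/\binom{2j-2}{j-1}=2(2j-1)/j$ — the same combinatorics the paper carries out via the recurrences \eqref{a1} and \eqref{ainew}. Where you genuinely diverge is the parity statement \eqref{PAR}: the paper first builds explicit variation-of-constants integral representations for $v^j_\pm$, fixes the constants from \eqref{BC}, and then verifies parity by an inductive change of variables inside those integrals, whereas you prove uniqueness of the matched decaying solution directly (the homogeneous problem forces $C_+=C_-$ and $C_+=-C_-$, hence zero) and deduce parity by noting that the reflected pair solves the same problem. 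Your version is cleaner and avoids the integral formulas entirely; it also lets you reduce the two matching conditions to the single condition $(v^j_+)'(0)=0$, i.e.\ $a_j^0=a_j^1$, which you correctly verify as $2\binom{2j-1}{j}=\binom{2j}{j}$. The paper's integral representation, on the other hand, makes existence of the solution manifest before any ansatz is proposed; in your argument existence is only delivered a posteriori by the verified closed form, which is logically sufficient but worth stating explicitly.
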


%% PROOF %%
\begin{proof} See Appendix~\ref{A:L}. 
\end{proof}
The first few terms $v^j_{+}(x)$ generated from \eqref{EQ1n} are 
\begin{align*}
\begin{aligned}
v_+^1 (x) &= - \frac{\tau }{2d}  (1+x) e^{-x}= v_-^1 (-x)\,,\\
v_+^2 (x) &= \frac{3 \tau^2}{8d}  \left(1+x+\dfrac{x^2}{3}\right)e^{-x} = v_-^2 (-x)\,,\\
v_+^3 (x) &= -\frac{5 \tau^3}{16d}  \left(1+x+\dfrac{2x^2}{5} + \dfrac{x^3}{15}\right)e^{-x} = v_-^3 (-x)\,,\\
v_+^4 (x) &= \frac{35 \tau^4}{128d}  \left(1+x+\dfrac{3x^2}{7}+\dfrac{2x^3}{21}+\dfrac{x^4}{105} \right)e^{-x} = v_-^4 (-x)\,.
\end{aligned}
\end{align*}
Indeed, the first two terms $v_+^{1,2}(x)$ coincide with the leading order expressions for the slow components of the first and second generalised eigenfunction in the slow field $I_s^+$ as derived in Lemma 6 and 7 of \cite{CBvHHR19}.
%%
% Lemma
%%
\begin{lemma}[Profiles of generalised eigenfunctions]
\label{L:GEIGSG}
Let $\ga=0$ and let the remaining system parameters be such that the zero eigenvalue of $\mathcal{L}$ is algebraically of order $\ell+1$, $ 1 \leq \ell \leq N$. For $\varepsilon > 0$ sufficiently small the $k$-th generalised eigenfunction $\Psi_k$, $k=1, \ldots, \ell,$ associated to a stationary front solution $Z_{\rm SF}=(U_{\rm SF},\vec{V}_{\rm SF})$ -- whose leading order profiles $(U^0_{\rm SF},\vec{V}^0_{\rm SF})$ can be deduced from \eqref{N:PROFILES} -- is to leading order given by $(\Psi_{k,U}, \Psi_{k,1}, \ldots, \Psi_{k,N})$ with
\begin{align}\label{N:UPROFILE_EIGl}
\begin{aligned}
\Psi_{k,U}(y)&= \left\{
\begin{aligned}
K_k  &\,,& y \in I_f, \\ 
-\dfrac12 \eps \sum_{j=1}^N \left(\alpha_j + \partial_j \mathcal{F}^N_{\rm{nl}}(\vec{V}_{\rm SF}^0(y))\right) \Psi_{k,j}(y)&\,,& y \in I_s^\pm,
\end{aligned} \right.\\
\Psi_{k,j}(y) &= \left\{
\begin{aligned}
(-1)^{k}\dfrac{(2k-1)!!}{(2k)!!} \dfrac{\tau_j^{k}}{d_j} &\,,& y \in I_f \,, \\ 
v_\pm^k\left(\dfrac{y}{d_j}\right)&\,,& y \in I_s^\pm\,,
\end{aligned}
\right.
\end{aligned}
\end{align}
with $K_1 = \eps/(3 \sqrt2),$ and $K_k=\mathcal{O}(\eps^2)$ for $k = 2, \ldots, \ell$, and
$v^k_{\pm}$ given by \eqref{PAR} and \eqref{EQ1n}.
\end{lemma}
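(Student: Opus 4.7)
\textbf{Proof proposal for Lemma~\ref{L:GEIGSG}.} The plan is to proceed iteratively in $k$, using the defining relation $\mathcal{L}\Psi_k = \varepsilon^2 \Psi_{k-1}$ with base case $\Psi_0$ the eigenfunction of Lemma~\ref{L:EIGSF} at $\lambda = 0$, which coincides with $\partial_z Z_{\rm SF}$ on the fast scale. The overall strategy mirrors that of \cite{CBvHHR19}, which treats $N=2$, $\ell=2$: decouple slow and fast contributions using the spatial splitting of Definition~\ref{fields}, obtain leading-order ODEs on each subinterval, and glue them together via matching conditions at $y = \pm\sqrt{\varepsilon}$ and the Fredholm solvability condition on $L^0$ in the fast field. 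The existence of the chain up to length $\ell$ is guaranteed by Lemma~\ref{L:N+1_J}.

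First, in the slow fields $I_s^\pm$ the $U$-component satisfies $L_\eps\Psi_{k,U} = \eps\,\nabla\mathcal{F}^N(\vec{V}_{\rm SF})\cdot\vec{\Psi}_{k,V} + \eps^2\Psi_{k-1,U}$, and since $1 - 3U_{\rm SF}^2 = -2 + \mathcal{O}(\eps)$ on $I_s^\pm$ while $\eps^2\partial_x^2$ is formally lower order on the slow scale, we obtain, to leading order, the algebraic slow-field relation
\begin{align*}
\Psi_{k,U}(y) = -\tfrac12 \eps \sum_{j=1}^N \bigl(\alpha_j + \partial_j\mathcal{F}^N_{\rm nl}(\vec{V}_{\rm SF}^0(y))\bigr)\Psi_{k,j}(y),\qquad y\in I_s^\pm,
\end{align*}
as claimed. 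Substituting this into the slow $V_j$-equations and dividing by $\eps^2$ leaves, to leading order, the decoupled linear ODEs $d_j^2(\Psi_{k,j})_{yy} = \Psi_{k,j} + \tau_j\Psi_{k-1,j}$, i.e.\ \eqref{GSlow}. Rescaling $y\mapsto y/d_j$ (and setting $\tau=\tau_j$) brings this into the form \eqref{SET} with matching/decay data \eqref{BC} inherited from the exponentially localised eigenfunction $\Psi_0$ and from continuity of $\Psi_{k,j}$ and $(\Psi_{k,j})_y$ across the interface. Applying Lemma~\ref{L:EIGv} iteratively then yields the slow-field profile $\Psi_{k,j}(y) = v_\pm^k(y/d_j)$.

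Second, in the fast field $I_f$ the slow components $\Psi_{k,j}$ vary only on the slow scale, so to leading order they are constant equal to their value at $y=0$; by continuity with the slow-field expression this constant is
\[
v_\pm^k(0) = (-1)^k\frac{(2k-1)!!}{(2k)!!}\frac{\tau_j^k}{d_j},
\]
matching the stated formula. The $U$-component in the fast field satisfies $L^0\Psi_{k,U} = \eps\,A|_{z}\cdot\vec{\Psi}_{k,V} + \eps^2\Psi_{k-1,U}$, where $L^0$ has one-dimensional kernel spanned by $\partial_zU_{\rm SF}^0 \propto \sech^2(z/\sqrt2)$. Because the right-hand side is of order $\eps$, one solves at leading order $L^0\Psi_{k,U}=0$, giving a constant $K_k$ (as the bounded element of the kernel on the fast scale is $\sech^2$-localised whereas the constants are generated by the lift from the slow fields). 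The value $K_1 = \eps/(3\sqrt2)$ is recovered exactly as in \cite[Lemma~6]{CBvHHR19}, and for $k\geq 2$ one has $K_k=\mathcal{O}(\eps^2)$ by the matching with the $\mathcal{O}(\eps)$ slow-field $U$-amplitudes. At the next order in $\eps$ in the fast field, projecting the residual onto $\partial_z U_{\rm SF}^0$ produces a solvability condition which, by direct evaluation using \eqref{EQ1n}, coincides with equating the $\mathcal{O}(\lambda^k)$ coefficient of the Taylor expansion~\eqref{eq:taylor_evansN} of the Evans function to zero; this is satisfied precisely under our hypothesis that $\lambda=0$ is a root of order at least $k+1$, so the iteration produces $\Psi_k$ for each $k = 1,\ldots,\ell$.

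The main obstacle is the careful $\eps$-bookkeeping across the fast/slow interface: one must verify that the matching of the $\mathcal{O}(\eps)$ slow-field $U$-amplitudes with the fast-field constant $K_k$ is consistent at every order and that the Fredholm solvability condition at order $\eps$ in the fast field reproduces exactly the Evans-function Taylor coefficient from~\eqref{TAYLOR}. Once this identification is made (in parallel with \cite[Proposition~2]{CBvHHR19}), the iterative construction closes and the leading-order formulas in \eqref{N:UPROFILE_EIGl} follow.
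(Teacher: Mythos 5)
Your proposal is correct and follows essentially the same route as the paper's proof: induction on $k$ using $\mathcal{L}\Psi_k=\eps^2\Psi_{k-1}$, leading-order analysis separately in the fast and slow fields, Lemma~\ref{L:EIGv} for the slow profiles, and identification of the fast-field Fredholm solvability condition with the vanishing Taylor coefficients \eqref{TAYLOR} of the Evans function. The one imprecision is the claim that solving $L^0\Psi_{k,U}=0$ at leading order ``gives a constant $K_k$'' --- constants are not in the kernel of $L^0$; the paper instead takes the $\sech^2$-kernel contribution to be zero and obtains $K_k$ only at higher order by matching the fast-field value against the limits of the $\mathcal{O}(\eps)$ slow-field expression, which is what your parenthetical and final paragraph gesture at.
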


%% PROOF %%
\begin{proof}
Let $\ell \in \mathbb{N}$ be given. We use induction on $k$ $(\in \{1, \ldots, \ell\})$ to construct the leading order profiles \eqref{N:UPROFILE_EIGl} of the generalised eigenfunctions $\Psi_k$. 

The base cases\footnote{We take two base cases since $K_1= \mathcal{O}(\eps)$ and not $ \mathcal{O}(\eps^2)$.} $k=1$ and $k=2$ related to the first and second generalised eigenfunctions follow directly from generalising the proof for the three-component system~\eqref{1F2S}, see Appendix A of~\cite{CBvHHR19}. This generalisation goes in a similar fashion as in the proof of Lemma~\ref{L:EIGSF}, hence we omit the details.

Next, assume \eqref{N:UPROFILE_EIGl} holds for the $(k-1)$-th generalised eigenfunction. The $k$-th generalised eigenvalue problem is given by $\mathcal{L} \Psi_k = \eps^2 \Psi_{k-1}$ \eqref{GEIG},
which, by setting $\Psi_k= (u,\vec{v}) = (u,v_1, \ldots, v_N)$ and with slight abuse of notation, can be written as a system of ODEs. In the slow scaling it is given by 
\begin{align}\label{GEN3_SLOW}
\left\{
\begin{aligned}
\eps \partial_y u =& p\\
\eps \partial_y p =& \eps^2 \Psi_{k-1,U} -  u + 3(U_{\rm SF})^2   u + \varepsilon 
\nabla \mathcal{F}^N(\vec{V}_{\rm SF}) \cdot \vec{v} \,,\\
\partial_y v_j =&  q_j\,,\\
\partial_y q_j =&  \dfrac{1}{d_j^2} \left( v_j - u + \tau_j \Psi_{k-1,j}\right), \qquad j = 1, \ldots, N \,, 
\end{aligned}
\right.
\end{align}
and in the fast scaling by
\begin{align}\label{GEN3_FAST}
\left\{
\begin{aligned}
\partial_z u =& p\\
\partial_z p =& \eps^2 \Psi_{k-1,U} -  u + 3(U_{\rm SF})^2   u + \varepsilon 
\nabla \mathcal{F}^N(\vec{V}_{\rm SF}) \cdot \vec{v} \,,\\
\partial_z v_j =&  \eps  q_j\,,\\
\partial_z q_j =&  \dfrac{\eps }{d_j^2} \left( v_j - u + \tau_j \Psi_{k-1,j}\right), \qquad j = 1, \ldots, N \,. 
\end{aligned}
\right.
\end{align}
We proceed 
in a similar fashion as in the proof of Lemma~\ref{L:EIGSF} given in Appendix~\ref{A:E} and we are going to determine the correct asymptotic scaling of $(u, \vec{v})$ by subsequently examining~\eqref{GEN3_FAST} in the fast field $I_f$ and \eqref{GEN3_SLOW} in the slow fields $I_s^\pm$, see Definition~\ref{fields}.\\

{\bf Fast field $I_f$, $\O(1)$}: 
Upon using a regular expansion $(u,v_1, \ldots, v_N)  = (u^0,v_1^{0}, \ldots, v_N^{0})+ \eps (u^1,v^1_{1}, \ldots, v^1_{N}) + \eps^2 (u^2,v^2_{1}, \ldots, v^2_{N})+ \O(\eps^3)$ and recalling the leading order part of $U_{\rm SF}$ from \eqref{N:PROFILES}, we get to leading order in the fast field from \eqref{GEN3_FAST} 
$$
\O(1): \partial_z^2 u^0 = 3 (U_{\rm SF}^0)^2 u^0 - u^0 \implies u^0 = \bar{C} \sech^2{(z/\sqrt2)}\,.
$$
That is, $u^0$ is in the kernel of $L^0$, see Definition~\ref{LL}, and 
without loss of generality we can take $\bar{C}=0$, thus $u^0=0$. Since $\partial_z v_j^0=0$ and $\partial_z q_j^{0}=0$, for $j=1, \ldots, N,$ in $I_f$, the slow components, and their derivatives, are constant to leading order in $I_f$. \\

{\bf Slow fields $I_s^\pm$, $\O(1)$}: 
Since $u^0=0$ in the slow fields $I_s^\pm$, the slow equations \eqref{GEN3_SLOW} in these fields reduce to 
\begin{align*}
\begin{aligned}
\O(1):  d_j^2 \partial_{y}^2 v_j^{0} = v_j^0  + \tau_j \Psi_{k-1,j}
\,, \qquad j=1,2,\ldots, N\,,
\end{aligned}
\end{align*}
where we recall that $\Psi_{k-1,j}$ is the leading order $V_j$-component of the $k-1$-th generalised eigenfunction $\Psi_{k-1}$. Hence, by the inductive assumption we get
\begin{align*}
\begin{aligned}
\O(1):  d_j^2 \partial_{y}^2 v_j^{0} = v_j^0  + \tau_j v_{\pm}^{k-1} \left(\dfrac{y}{d_j}\right)
\,, \qquad y \in I_s^\pm\,,  \qquad j=1,2,\ldots, N\,, 
\end{aligned}
\end{align*}
and, by Lemma~\ref{L:EIGv} and in particular \eqref{PAR} and \eqref{EQ1n}, we get
\begin{align} \label{VV}
v_j^{0}(y) = 
\left\{
\begin{aligned}
v_-^{0}\left(\dfrac{y}{d_j}\right) &=  v_+^{0}\left(-\dfrac{y}{d_j}\right)  \,, \quad y \in I_s^-,\\
v_+^{0}\left(\dfrac{y}{d_j}\right) &= (-1)^{k}\dfrac{(2k-1)!!}{(2k)!!} \frac{\tau_j^{k}}{d_j} e^{y/d_j}  \sum_{i=0}^{k} \dfrac{2^i}{i!}  \dfrac{\tbinom{2k-i}{k}}{  \tbinom{2k}{k}}   \left(\dfrac{y}{d_j}\right)^i
\,, \quad y \in I_s^+\,.
\end{aligned}
\right.
\end{align}
This describes the leading order behaviour of the slow components in the slow fields. Note that we used that the slow components, and their derivatives, are to leading order constant over the fast field and should thus match in the fast field. \\

{\bf Fast field $I_f$, $\O(\eps)$}:
By the asymptotic scaling of $\Psi_{k-1,U}$ \eqref{N:UPROFILE_EIGl} in the fast field,
we get that the $\mathcal{O}(\eps)$-fast equation in the fast field is given by
$$
\O(\eps): \partial_z^2 u^1 = - u^1 +3 (U_{\rm SF}^0)^2 u^1  + 
\nabla \mathcal{F}^N(\vec{V}^0_{\rm SF}) \cdot \vec{v}^0\,.
$$
With Definition~\ref{LL} and \eqref{VV}, this becomes
$$
L^0 u^1 
= (-1)^{k}\dfrac{(2k-1)!!}{(2k)!!} \sum_{j=1}^N  \frac{\alpha_j \tau_j^{k} }{d_j}\,,
$$
where we used that $\partial_j \mathcal{F}^N_{\rm{nl}}(\vec{V}_{\rm SF}^0(y))$ is to leading order $0$ in the fast field.
Hence, we obtain a solvability condition, which, since $k \geq 2$, coincides with \eqref{TAYLOR}. That is, to have a $k$-th generalised eigenfunction we require that $\lambda=0$ is a root of at least order $k$.
The equation for $u^1$ now reduces to
$
L^0 u^1 
= 0
$
and, as before we can, without loss of generality, set $u^1 = 0$ in the fast field, see also \cite{CBvHHR19}. \\

{\bf Slow fields $I_s^\pm$, $\O(\eps)$}:
Finally, to determine the leading order behaviour of the fast component in the slow fields $I_s^\pm$, we look at the equation of the fast component in $I_s^\pm$:
$$
\eps^2 \partial_y^2 u =
 \eps^2 \Psi_{k-1,U} -  u + 3(U_{\rm SF})^2   u + \varepsilon 
\nabla \mathcal{F}^N(\vec{V}_{\rm SF}) \cdot \vec{v} \,.\\
$$
Using that $\Psi_{k-1,U}$ in $I_s^\pm$ is $\O(\eps)$ , $U_{\rm SF}= \pm 1 + \eps^2 U_{\rm SF}^2$, and $u=\eps u^1 + \O(\eps^2)$ we get at $\O(\eps)$
$$
u^1 = -\frac12\left( \sum_{j=1}^N \left( \al_j + \partial_j \mathcal{F}^N_{\rm nl}(\vec{V}_{\rm SF}^0)(y) \right)v_j^{0}(y) \right)
\,,
$$
see \eqref{N:UPROFILE_EIGl}.

By the $\O(\lambda^k)$-equality of \eqref{TAYLOR}, the $\O(\eps)$-expressions for the fast $U$-component of the generalised eigenfunction $\Psi_{k,U}, k=2,3,\ldots, \ell $ in the slow fields as given in \eqref{N:UPROFILE_EIGl} indeed approach (to leading order) zero as $x$ approaches the boundary of the slow fields.   
In particular, to leading order we have
\begin{align*}
\lim_{x \to \pm \eps}\Psi_{k,U} (x) &= 
-\dfrac12 \eps  \sum_{j=1}^N \left( \al_j + \partial_j \mathcal{F}^N_{\rm nl}(\vec{V}_{\rm SF}^0)(0_\pm) \right)\Psi_{k,j}(0_\pm) ) \\&=   (-1)^{k+1}  \eps  \dfrac{(2k-1)!!}{2(2k)!!} 
 \sum_{j=1}^N 
 \dfrac{\alpha_j  \tau_j^{k}}{d_j} =0\,.
\end{align*}
Similarly, for the fast $U$-component of the first generalised eigenfunction $\Psi_{1,U}$ as given in~\eqref{N:UPROFILE_EIGl} we indeed  to leading order have, by the leading order equality of \eqref{TAYLOR}, that
$$
\lim_{x \to \pm \eps} \Psi_{1,U} (x)=
-\dfrac12 \eps  \sum_{j=1}^N \left( \al_j + \partial_j \mathcal{F}^N_{\rm nl}(\vec{V}_{\rm SF}^0)(0_\pm) \right)\Psi_{1,j}(0_\pm) ) 
=  \dfrac14 \eps \sum_{j=1}^N  \dfrac{\alpha_j \tau_j}{d_j}
= \dfrac{\eps}{3 \sqrt2}\,.
$$
This concludes the proof. 
\end{proof}
\begin{remark} 
It can be shown (similar as in the proof given in Appendix~\ref{A:L}) that the polynomials
\begin{align*}
f^j(x) :=\sum_{i=0}^{j} a_j^i   x^i := \sum_{i=0}^{j} \dfrac{2^i}{i!}  \dfrac{\tbinom{2j-i}{j}}{  \tbinom{2j}{j}}   x^i
\end{align*} 
in Lemma~\ref{L:EIGv}, generated from \eqref{a1} and~\eqref{ainew}, are also generated
from $g^0(y)=1, g^1(y)=1+y$ and the recurrence relation 
\begin{align*}
g^{j+1}(y) = g^{j}(y) +  \dfrac{1}{(2j-1)(2j+1)} y^2 g^{j-1}(y)\,, \qquad j=1,2,\ldots\,,
\end{align*}
studied in \cite{WIT}.
\end{remark}

\subsection{Simultaneous degeneracies for existence and zero eigenvalue}
\label{S34}

Next we show that there are parameter combinations such that \eqref{EX:MAXa} of Proposition~\ref{L:EX} and \eqref{N:ZERO} of Proposition~\ref{L:N+1} hold at the same time. That is, for $\mathcal{F}^N(\vec{V})= \mathcal{F}^N_\beta(\vec{V})$ \eqref{NONLnew} there exist parameter combinations such that the existence function $\Gamma_0(c)$ \eqref{E:N} is highly degenerate while simultaneously the Evans function $E_0$ has an $(N+1)$-fold root (which will later on serve as organising center). In particular, \eqref{EX:MAXa} and \eqref{N:ZERO} are both simultaneously true if $\tau_j$ and $d_j$ are chosen such that $\tau_j \neq \tau_k \neq 0$, $\tau_j/d_j \neq \tau_k/d_k >0$, and
\begin{align}
\label{COND1}
\tau_j d_k^2 = \tau_k d_j^2\,, \qquad 1 \leq j < k \leq N\,,
\end{align}
while the remaining parameters obey \eqref{EX:MAXa}, \eqref{EX:MAXb} and \eqref{N:ZERO}.
Condition \eqref{COND1} immediately follows from equating the expressions for $\alpha_j$ from \eqref{EX:MAXa} and \eqref{N:ZERO}
\begin{align*}
 \dfrac{2 \sqrt 2 d_j}{3 \tau_j}  \prod_{k=1,k \neq j}^N \dfrac{\tau_k}{\tau_k-\tau_j}  &=  \dfrac{2 \sqrt 2 d_j}{3 \tau_j}  \prod_{k =1, k \neq j}^N \dfrac{(\tau_k/d_k)^2}{(\tau_k/d_k)^2-(\tau_j/d_j)^2} \implies \\ &
  \prod_{k=1,k \neq j}^N \dfrac{1}{1-\tau_j/\tau_k}  = \prod_{k =1, k \neq j}^N \dfrac{1}{1-(\tau_j d_k)^2/(\tau_k d_j)^2} 
\end{align*}
and this equation is solved by \eqref{COND1}.
Note that $\tau_i \neq \tau_j \neq 0$ together with \eqref{COND1} implies that we necessarily have $d_j \neq d_k \neq 0$.
Furthermore, observe that \eqref{COND1} is a dependent set of linear equations in $\tau_j$
$$
\begin{pmatrix}
d_2^2 & -d_1^2 & 0 &\cdots&\cdots  &0 \\
d_3^3& 0 & -d_1^2 &\ddots & &\vdots \\
\vdots & \vdots & \ddots & \ddots & \ddots & \vdots\\
\vdots & \vdots &  & \ddots & \ddots & 0\\
d_N^2 & 0 & \cdots & \cdots & 0 &-d_1^2 \\
0 & d_3^2 & -d_2^2 & 0 &\cdots  &0 \\
&& \vdots &&\\
0&0&0&0& -d_N^2 & -d_{N-1}^2
\end{pmatrix}
\begin{pmatrix}
\tau_1 \\ \tau_2 \\ \vdots \\ \tau_N
\end{pmatrix}
=0\,,
$$
with rank $N-1$.\footnote{We have $\sum_{i=1}^N d_i^2 C_i=0$, where $C_i$ is the $i$-th column of the matrix.} Hence, one can {\it{freely pick}} one $\tau_j$. So, given a set of $d_j, j=1, \ldots, N,$ and~$\tau_1$, \eqref{COND1} fixes $\tau_j, j=2, \ldots, N$ and, in turn, \eqref{EX:MAXa}/\eqref{N:ZERO} fixes $\alpha_j, j=1, \ldots, N$. In particular, for a given set $d_j, j=1, \ldots, N$ and $\tau_1$, we get
\begin{equation} \label{OPTIMAL}
\tau_j = \tau_1 \dfrac{d_j^2}{d_1^2}\,, \qquad j=2,\ldots, N \,, \quad %{\rm and}
\quad
\alpha_j
= 
\frac{2\sqrt2}{3}\dfrac{d_1^2}{\tau_1 d_j} \prod_{k=1, k \neq j=1}^N \dfrac{d_k^2}{d_k^2-d_j^2} \,, \quad j=1, \ldots, N.
\end{equation}
We emphasize that the above construction is, {\it{a priori}}, only valid in the singular limit $\eps = 0$ and not for $0<\eps\ll1$. For both degeneracies \eqref{EX:MAXa} and~\eqref{N:ZERO} the coefficients $\alpha_j$ are used separately to create an open set of expansions and thus a sufficient condition that each degeneracy can be created for $\eps>0$, respectively. Hence, using this approach in the present situation for $\eps>0$ we would need to show that the parameters also generate an open set for the combined coefficient vector. We do not  pursue this further in this paper.

By combining the results of the previous two sections, in particular Proposition~\ref{L:EX} and Proposition~\ref{L:N+1}, it is now straightforward to find parameter combinations 
for which the existence condition $\Gamma_0$ has a particular singularity structure near $c=0$, while 
the zero root
of the Evans function $E_0$ of the related stationary front solution still has maximum degeneracy. For instance,
if we set $N=3$, $\mathcal{F}^N(\vec{V}) = \mathcal{F}^N_\beta(\vec{V})$ \eqref{NONLnew} and
\begin{equation}\label{e:transpar22}
\begin{aligned}
&\alpha_1=\dfrac{578 \sqrt2}{315}, \; \alpha_2=-\dfrac{289}{90\sqrt{2}},\; \alpha_3=\dfrac{3125}{2142\sqrt{2}},\;
\beta_1=1,\; \beta_2=\beta_3=\gamma=0,\\
&\tau_1=1,\; \tau_2=\left(\frac{3}{2}\right)^2,\; \tau_3=\left(\frac{17}{10}\right)^2,\;
d_1=1,\; d_2=\frac32,\; d_3=\frac{17}{10}\,,
\end{aligned}
\end{equation}
such that $\Gamma_0(c)$~\eqref{EQ:EX_AFF} is $\mathcal{O}(c^2)$
and $E_0(\lambda)$~\eqref{eq:taylor_evansN} is $\mathcal{O}(\lambda^{4})$,
then we have an analytically predicted transcritical bifurcation point $c=0$, see the left panel of Figure~\ref{F:TC}.

In contrast, if we set $N=3$, $\mathcal{F}^N = \mathcal{F}^N_\beta$ \eqref{NONLnew} and
\begin{equation}\label{e:pitchpar}
\begin{aligned}
&\alpha_1=\dfrac{578 \sqrt2}{315}, \; \alpha_2=-\dfrac{2023}{675\sqrt{2}},\; \alpha_3=\dfrac{3125}{2142\sqrt{2}},\;
\beta_1=1,\; \beta_2=-\dfrac{784}{2025},\; \beta_3=\gamma=0,\\
&\tau_1=1,\; \tau_2=\left(\frac{3}{2}\right)^2,\; \tau_3=\left(\frac{17}{10}\right)^2,\;
d_1=1,\; d_2=\frac75,\; d_3=\frac{17}{10}\,,
\end{aligned}
\end{equation}
such that $\Gamma_0(c)$~\eqref{EQ:EX_AFF} is $\mathcal{O}(c^3)$
and $E_0(\lambda)$~\eqref{eq:taylor_evansN} is $\mathcal{O}(\lambda^{4})$,
then we have an analytically predicted pitchfork bifurcation point $c=0$, see the right panel of Figure~\ref{F:TC}.
%%%
\begin{figure}
\begin{center}
\includegraphics[width=0.97\textwidth]{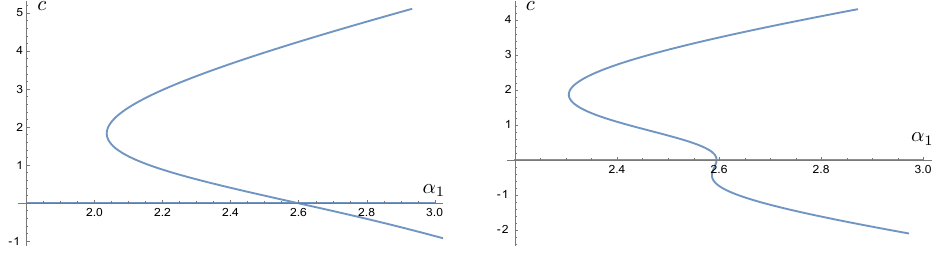}
\caption{Solutions of $\Gamma_0(c)$=0 \eqref{E:N}, with $\mathcal{F}^N(\vec{V}) = \mathcal{F}^3_\beta(\vec{V})$ \eqref{NONLnew} and for varying~$\alpha_1$. The remaining system parameters are as given in \eqref{e:transpar22}, respectively,~\eqref{e:pitchpar}. In the left panel we observe a bifurcation at $c=0$ that will turn out to be transcritical (see Figure~\ref{f:trans_branches}), while in the right panel we have a pitchfork bifurcation at $c=0$.}
\label{F:TC}
   \end{center}
   \end{figure}

From Proposition~\ref{L:EX} we know that we can keep on increasing the singularity structure of the existence function $\Gamma_0(c)$~\eqref{EQ:EX_AFF} up to $\mathcal{O}(c^7)$ (for $N=3$), while ensuring that the zero root of the Evans function $E_0$ is fourfold. In particular, we take $\mathcal{F}^3_\beta(\vec{V})$ to be affine and use~\eqref{OPTIMAL}
to obtain the required parameter combination
\begin{equation}
 \label{OPT}
 \begin{aligned}
&\alpha_1=\dfrac{578 \sqrt2}{315}, \; \alpha_2=-\dfrac{289}{90\sqrt{2}},\; \alpha_3=\dfrac{3125}{2142\sqrt{2}},\;
\beta_1=\beta_2=\beta_3=\gamma=0,\\
&\tau_1=1,\; \tau_2=\left(\frac{3}{2}\right)^2,\; \tau_3=\left(\frac{17}{10}\right)^2\,,\;
d_1=1,\; d_2=\frac32,\; d_3=\frac{17}{10}.
\end{aligned}
\end{equation}

\begin{remark}
    The above parameter sets \eqref{e:transpar22}--\eqref{OPT} display a lot of similarities. This is by construction. The maximally degenerate parameter set \eqref{OPT} is determined by picking $d_1, d_2, d_3$ and $\tau_1$ freely, $\mathcal{F}^3_\beta(\vec{V})$ affine, and the remaining parameters as in \eqref{OPTIMAL}. The other degeneracies can be obtained by tuning the parameters from this optimal parameter set. For instance, changing only $\beta_1$ from \eqref{OPT}, as in \eqref{e:transpar22}, still results in a 
    fourfold zero root of the Evans function $E_0$ (as the Evans function at $c=0$ is independent of $\beta_1$, see \eqref{eq:taylor_evansN}), while it results in an existence condition $\Gamma_0(c)=\mathcal{O}(c^2)$, see~\eqref{EQ:EX_AFF}. To subsequently increase the singularity structure to $\Gamma_0(c)=\mathcal{O}(c^3)$, $\alpha_2,\beta_2$, and $d_2$ are updated to ensure the $\mathcal{O}(c^2)$-term of $\Gamma_0$ vanishes, while keeping the order four of the zero root of $E_0$. 
\end{remark}

%%%%
%%  SECTION: $1$-Fast and $N$-slow components: center manifold reduction
%%%%

\section{Dynamic front solutions}\label{S:DFS}
This section contains several core novelties of the present work. We start by performing center manifold reduction to finally derive the reduced system of ODEs \eqref{FR}- \eqref{eq:main_ODE} for dynamic front solutions. This allows to prove the statements of Main Result~\ref{thm:intro} and the extensions thereof from Remark~\ref{r:impactN}.
%%%
% SUBSEC: center manifold reduction
%%%
\subsection{Center manifold reduction}\label{S:CMR}

Recall from Lemma~\ref{lem:evals} that there are parameters $P^\eps_{\Np}$ such that \eqref{eq:multi-component-RD} possesses a stationary front solution $ Z_{\rm SF} $ with an $(\Np+1)$-fold zero eigenvalue of the linearisation $\calL$ from \eqref{Ldef}. Let $\vec{\Psi}(x)$ denote the matrix with columns $\Psi_j(x)$, $j=1,\ldots,\Np$ of generalised eigenfunctions. Applying the ansatz for center manifold reduction with continuous symmetry, \cite[e.g]{haragus2010local}, to the translation symmetry and generalised kernel gives, with $\eta(t) := x-a(t)$ and $\vec{b}(t)\in\R^\Np$, that 
\begin{align}\label{eq:Zsplit1}
 Z(x,t) = Z_{\rm SF}(\eta(t)) + \vec{\Psi}(\eta(t))\cdot\vec{b}(t) + R(\eta(t), t) \, ,
\end{align}
where $R(\eta(t),t)$ is orthogonal to the generalised kernel of the adjoint of $\calL$. 
It turns out that this uniquely defines $a(t)$, which gives a somewhat implicit but standard notion of the position~$a(t)$,~\cite[e.g.]{MZ09}. This is relative to the stationary front solution $Z_{\rm SF}$ and is not as directly geometric as, {\it e.g.}, the zero of the fast component. 
As shown in \cite[e.g.]{haragus2010local}, using $\eta(t)$ allows to infer the skew-product structure of the reduced ODE \eqref{FR},\eqref{eq:main_ODE} in which $a(t)$ only appears in the equation \eqref{FR} for the symmetry group, and does not enter into the complementary part \eqref{FR}. We note that when expanding $Z_{\rm SF}(x-a(t)) = Z_{\rm SF}(x) - a(t) Z_{\rm SF}'(x) + \calO(a(t)^2)$, the term of order $a(t)$ lies in the kernel, but using $Z(x,t) = Z_{\rm SF}(x) - a(t) Z_{\rm SF}'(x) + \vec{\Psi}(\eta(t))\cdot\vec{b}(t) + R(x, t)$ for the center manifold will not directly reveal the skew-symmetry structure of the reduced ODE. 

%%%%%
% PROPOSITON: center manifold reduction 
%%%%%
\begin{proposition}[Center manifold reduction for nilpotent linear parts]\label{prop:reduced_system}
Consider parameters $P = P^\eps_{\Np} + \check{P}$ from Lemma~\ref{lem:evals} and let $|\cP|$, as well as $\varepsilon>0$, be sufficiently small. Then in a `tubular' neighborhood of the spatial translates of $Z_{\rm SF}$ in $(L_2(\R))^3$,  \eqref{eq:multi-component-RD} possesses an exponentially attracting $(\Np+1)$-dimensional center manifold, wherein all solutions are dynamic front solutions $Z = (U, \vec{V})$ with representation 
 \eqref{eq:Zsplit1}.

The dynamics of the front position $ a = a(t) $ is governed by
\begin{align*}
 \dfrac{d}{dt} a & = \varepsilon^2 c_1 \, ,
\end{align*}
and the dynamics of the front speed $ c_1 = c_1(t) $
by an $\Np$-dimensional system of ODEs 
 \begin{align}\label{SPEED:ODE_compact}
  \dfrac{d}{dt} \vec{c} = \varepsilon^2 \GCM(\vec{c}; \check{P},\eps)\,,
 \end{align}
for $\vec{c}=(c_1,\ldots,c_{\Np})$.  
It is of the $\Np$-th order form
\begin{align}\label{SPEED:ODE}
\left\{
\begin{array}{lcl}
  \dfrac{d}{dt} c_{k} & = & \varepsilon^2 c_{k+1} \,,  \qquad  k = 1, \ldots, \Np-1 \, ,\\[.25cm]
  \dfrac{d}{dt} c_{\Np}   & = & \eps^2 \Geps(\vec{c}) =\varepsilon^2  \Gred(\vec{c}; \check{P})  + \Gerr(\vec{c}; \check{P})\,,
\end{array}\right.
\end{align}
with 
$\Gerr(\vec{c}; \check{P}) =  \Gnot(\vec{c};\cP) + \calO(\eps^2(|\vec{c}|^3+|\vec{c}|^2 |\cP|)) + o(\eps^2)$
where $\Gnot(\vec{c};\cP)=\calO(\sum_{j= 2}^\Np|c_j|^2)$, and the decisive term
\begin{align*}
\Gred(\vec{c}; \check{P}) = a_0(\check{P}) + \sum_{j = 1}^{\Np} a_j(\check{P}) c_j + 
 c_1\sum_{j=1}^\Np a_{1j} c_j \, , 
\end{align*}
where $a_{1j}\in\R$, $a_j$ are real valued and $a_j(0)=0$ for $j=0,1,\ldots,\Np$.
Moreover, in terms of \eqref{eq:Zsplit1} we have that $\vec{b}$ equals $\vec{c}$ up to a near-identity coordinate change and $R = \calO(|\vec{c}|^2 + |\cP|)$.
\end{proposition}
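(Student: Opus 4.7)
The plan is to establish the proposition in four steps, all resting on the spectral analysis of Section~\ref{S:1+N_S}.

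First, I would establish existence of the center manifold. At $\cP=0$ (so $P=P^\eps_\Np$), Lemma~\ref{lem:evals} yields an $(\Np+1)$-fold critical eigenvalue at $0$ with all remaining point spectrum to the left of $-\eps^2\eta$ for some $\eta>0$, while Lemma~\ref{L:SE} confines the essential spectrum to $\Re\lambda<-\eps^2\chi$; both bounds persist uniformly for $|\cP|$ small. This uniform spectral gap, together with the semilinear parabolic structure of \eqref{eq:multi-component-RD} posed on an exponentially weighted Sobolev space (so that the translation mode $\partial_x Z_{\rm SF}$ and the generalized eigenfunctions of Lemma~\ref{L:GEIGSG} lie in the space and decay at $\pm\infty$), allows me to invoke the standard parameter-dependent center manifold theorem, e.g.\ in the form of~\cite{haragus2010local}. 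This yields the claimed exponentially attracting $(\Np+1)$-dimensional invariant manifold, smoothly parametrized in a tubular neighborhood of the translates of $Z_{\rm SF}$ by the ansatz \eqref{eq:Zsplit1}.

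Second, I would extract the skew-product structure and the nilpotent linear form. The translation symmetry of \eqref{eq:multi-component-RD} survives the reduction, and using $\eta(t)=x-a(t)$ together with orthogonality of $R$ to the adjoint generalized kernel both uniquely determines $a(t)$ and decouples the reduced ODE into the symmetry equation for $a$ plus an autonomous equation for $\vec b$. Choosing $\vec\Psi$ as the Jordan chain $\calL\Psi_k=\eps^2\Psi_{k-1}$ of Lemma~\ref{L:N+1_J}, the linear part of the $\vec b$-equation at $\cP=0$ is already in strict upper-shift nilpotent form. A near-identity change $\vec b=\vec c+\O(|\vec c|^2+|\vec c||\cP|)$, chosen to remove higher-order terms in rows $k<\Np$, then produces \eqref{SPEED:ODE} and (together with the normalization $K_1=\eps/(3\sqrt{2})$ from Lemma~\ref{L:GEIGSG} of $\Psi_1$ against the translation mode $\partial_x Z_{\rm SF}$) gives $\dot a=\eps^2 c_1$.

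Third, the $\cP$-dependent linear terms $a_0(\cP)+\sum a_j(\cP)c_j$ of $G_0^{\rm red}$ encode the unfolding of the $(\Np+1)$-fold zero root of the Evans function. On the center manifold the critical spectrum of $\calL$ coincides with the spectrum of the linear part of \eqref{SPEED:ODE}, so at leading order in $\vec c$ the coefficients $(a_j)_{j=0}^\Np$ reproduce, up to sign, the characteristic polynomial in the Jordan basis. Via the invertible linear relation \eqref{e:controlReducedEvans} of Lemma~\ref{l:linearunfold}, the $a_j(\cP)$ are thus in 1-to-1 correspondence with the reduced Evans coefficients $\ta_j(\cP)$, each smooth in $\cP$ with $\ta_j(0)=0$; closure of the spectrum under complex conjugation guarantees that the $a_j$ are real.

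The fourth and most delicate step produces the quadratic term $c_1\sum a_{1j}c_j$. It is obtained by pairing the leading quadratic contribution of the nonlinearity -- namely $-3U_{\rm SF}(\Psi_U\cdot\vec b)^2$ from the cubic in $U$ and $-\eps\,\mathrm{Hess}\,\calF^N(\vec V_{\rm SF})[\vec\Psi^V\vec b,\vec\Psi^V\vec b]/2$ from the coupling -- against the last element of the adjoint generalized chain, and evaluating against the explicit profiles of Lemma~\ref{L:GEIGSG}. The concentration of $\Psi_{1,U}=(\sqrt{2}/(2\eps))\sech^2(z/\sqrt{2})$ in the fast field, versus the slow $\O(1)$ profiles of the higher $\Psi_k$, picks out precisely the products $c_1c_j$ at leading order in $\eps$. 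The main obstacle is then the uniform bookkeeping of $\eps$-powers: showing that all remaining quadratic cross-terms in $(c_2,\ldots,c_{\Np})$ collect into $\Gnot(\vec c;\cP)=\O(\sum_{j\geq 2}|c_j|^2)$ inside $\Gerr$, and that the cubic and higher residues from $R=\O(|\vec c|^2+|\cP|)$ satisfy the stated $\O(\eps^2(|\vec c|^3+|\vec c|^2|\cP|))$ and $o(\eps^2)$ bounds. This rests on the $\eps^2$-weight in $\calL\Psi_k=\eps^2\Psi_{k-1}$ together with the slow-fast decomposition of each (adjoint) generalized eigenfunction, and is where the bulk of the calculation resides.
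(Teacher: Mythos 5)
Your architecture for the center manifold itself matches the paper's: spectral gap from Lemma~\ref{lem:evals} and Lemma~\ref{L:SE}, the ansatz \eqref{eq:Zsplit1} with $\eta=x-a(t)$ and $R$ orthogonal to the adjoint generalised kernel, projection onto the Jordan chain of Lemma~\ref{L:N+1_J}, and the Haragus--Iooss reduction; the paper executes exactly this in Appendix~D, with two small auxiliary lemmas (the pairings $\langle\Psi_{k_1},\Psi_{k_2}^*\rangle$ vanish below index $\Np$ and are non-zero at index $\Np$, giving an invertible triangular projection matrix; and $\langle\Psi_j',\Psi_k^*\rangle=0$ by parity) that you implicitly use but should state, since they are what decouple $\dot a$ from the $\vec b$-equations and yield $\dot a=\eps^2 c_1$ after the near-identity change.

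Where you genuinely diverge is your fourth step. The paper does \emph{not} compute the coefficients $a_{1j}$ (nor the $a_j(\cP)$) by projecting the quadratic part of the nonlinearity onto the adjoint generalised kernel; it explicitly avoids "the tedious standard route of computing various projections". Instead, the decomposition $\Geps=\Gred+\Gerr/\eps^2$ with $\Gnot=\calO(\sum_{j\geq2}|c_j|^2)$ is obtained by plain Taylor expansion (it is essentially a bookkeeping of which quadratic monomials contain a factor $c_1$), and the claims that the linear terms and the $c_1c_j$-terms carry the factor $\eps^2$ are established \emph{a posteriori}: the eigenvalues of the reduced linearisation must coincide with the $\calO(\eps^2)$ roots of the Evans function, and the restriction $\Geps(c,0,\ldots,0)$ must reproduce the existence function $H_\eps\Gamma_\eps$ --- this is then packaged in Lemma~\ref{lemma:connections} and only cashed in when $a_{11}\neq0$ is actually needed in Theorem~\ref{thm:chaos}. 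Your route is viable in principle but faces a concrete obstacle you do not flag: the adjoint Jordan chain is \emph{not explicitly known} (the paper says so verbatim), and Lemma~\ref{L:GEIGSG} gives the profiles of the chain of $\calL$, not of $\calL^*$. Pairing the quadratic terms $-3U_{\rm SF}(\Psi_U\cdot\vec b)^2$ and $-\tfrac{\eps}{2}\,\mathrm{Hess}\,\calF^N(\vec V_{\rm SF})[\vec\Psi^V\vec b,\vec\Psi^V\vec b]$ against "the last element of the adjoint generalized chain" therefore requires an additional SLEP-type construction of $\Psi_\Np^*$ that appears nowhere in the paper and would be a substantial computation in its own right. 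For the proposition as stated this extra work is also unnecessary: only the structural form of $\Gred$ and the $\eps$-orders are asserted, not the values of $a_{1j}$, so the paper's indirect identification via $\Gamma_\eps$ and $E_\eps$ is both shorter and sufficient. Your approach would buy explicit formulas for the $a_{1j}$, at the price of deriving the adjoint profiles; the paper's buys the result with only the information already in hand.
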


\begin{proof}
See Appendix~\ref{subsec:proof_CM}.
\end{proof}

\begin{remark}\label{r:inaccessible}
We note that $\Gnot$ in $\Gerr$ from \eqref{SPEED:ODE} is the only term that is possibly not of order $\eps^2$. 
However, we still write $\eps^2\Geps$ for later convenience (e.g. $\Gnot=0$ for $\Np=1$), although it may contain terms of order $1$. While it appears natural that the dynamics on the center manifold is of order $\eps^2$, and this in particular holds for $\Gred$, for our Main Result \ref{thm:intro} it is not necessary to verify it for $\Gnot$. 
\end{remark}

%%%%%
% Remark: Choice of organising center 
%%%%%
\begin{remark}
While the parameter choice $P=P^{\varepsilon}_{\mathrm{org}}$ for creating the organising center is unique for $\eps=0$, cf.\ Proposition~\ref{L:N+1}, we expect this is not the case for $\eps>0$. For $P^{\varepsilon}_{\mathrm{org}}$ we used the most convenient option (by Lemma~\ref{l:linearunfold}) to adjust $\alpha_j$, cf.\ Lemma~\ref{lem:evals}, but other adjustments are likely available in general. We do not pursue this further here.
\end{remark}

%%%%%%%%%
% TABLE %
%%%%%%%%%
{{\small{
\begin{table}[!h]
\begin{tabular}{ p{3cm} p{6.7cm} p{3.4cm}  }
 \hline
  \\ [-1.5ex]
 \multicolumn{3}{c}{\underline{Multi-component reaction-diffusion equation}} \\[2mm]
 $\partial_t Z = \mathcal{G}(Z, P, \varepsilon)\, , $  
 &
\multirow{ 2}{*}{$\left\{\begin{aligned}
 \partial_t U & =   \varepsilon^2 \partial_x^2 U+ U - U^3 - \varepsilon  \mathcal{F}^N(\vec{V})\,, \\
 \tau_j \partial_t V_j & =  \varepsilon^2 d_j^2 \partial_x^2 V_j  + \varepsilon^2( U -  V_j )\, 
 \end{aligned}\right.$}
 & 
 $\mathcal{F}^N(\vec{V}) :=$ \\$Z = (U, \vec{V}),$&& $ \gamma + \sum \alpha_j V_j + \mathcal{F}^N_{\rm{nl}}(\vec{V})$\\[5mm] 
 \hline 
 \\ [-1.5ex]
\multicolumn{3}{l}{$\bullet$ \underline{Uniformly travelling front solution $Z(x,t) = Z_{\rm TF}(x-\varepsilon^2 ct)$:} existence condition
} \\[2mm]
\multicolumn{3}{c}{$0 = \Gamma_0(c;P) = \mathcal{F}^N\left(\vec{V}^*(c)\right) - \dfrac13\sqrt2 c \,,
\quad
V^*_k(c) =  
 \dfrac{c \tau_k}{\sqrt{4 d^2_k+c^2 \tau_k^2}} $} \\[4mm]
 \hline
  \\ [-1.5ex]
 \multicolumn{3}{l}{$\bullet$ \underline{Stationary front solution $Z(x,t) = Z_{\rm SF}(x)
$:} critical point spectrum} \\[2mm]
\multicolumn{3}{c}{$\sigma_{\rm pt, crit} \left( \partial_Z \mathcal{G}(Z_{\rm SF};P)\right) = \Bigg\{ \varepsilon^2 \lambda \in \mathbb{C} ~\left|~ E_0(\lambda; P) =  
\lambda + \dfrac{3\sqrt2}{2}\sum_{k=1}^N 
\dfrac{\alpha_k}{d_k} \left(\dfrac{1}{\sqrt{\lambda \tau_k+1}}-1 \right) = 0 \right\}$
}\\[5mm]
 \hline
  \\ [-1.5ex]\multicolumn{3}{l}{$\bullet$ \underline{Dynamic front solution $Z(x,t) \approx  Z_{\rm SF}(x-\varepsilon^2 a(t))$}: ODE on center manifold at $P = P_{\Np}^\varepsilon + \check{P}$} \\[2mm]
\multicolumn{3}{c}{$\dfrac{d}{dt}a(t) = c_1(t), \quad  \dfrac{d}{dt}\vec{c}(t) = \GCM(\vec{c},\cP,\eps) \;\Leftrightarrow\;
\left\{
\begin{array}{lcl}
  \dfrac{d}{dt} c_{k} =  \varepsilon^2 c_{k+1} \,,  \qquad  k = 1, \ldots, \Np-1,\\[.4cm]
  \dfrac{d}{dt} c_{\Np}  =  \eps^2\Geps(\vec{c}; \check{P}).
\end{array} \right. (\textnormal{Speed ODE})
$} \\
\multicolumn{3}{l}{\hspace{5mm}Speed ODE equilibria: $\vec{c}_*=(c, 0, \ldots, 0)$}\\[2mm]
\multicolumn{3}{c}{
$G_0(\vec{c}_*;\check{P}) = 0  = H_0(c{ ;\cP})\Gamma_0(c;\check{P})$\,, 
\qquad $H_0(c{ ;\cP})\neq 0$}\, \\[2mm]
\multicolumn{3}{l}
{\hspace{5mm}Linearisation around trivial equilibrium 
} \\[2mm]
\multicolumn{3}{c}
 {$
\sigma(D\GCM(\vec{c}_*;\cP,\eps)) =  \Big\{ \varepsilon^2 \lambda \in \mathbb{C} ~\Big|~ \underbrace{(-1)^{\Np}\left( \lambda^{\Np} - \sum_{k = 0}^{\Np-1} \partial_{c_{k+1}}G_0(0, 0, \ldots, 0;\check{P}) \lambda^{k} \right)}_{
 = \widetilde{H}_0(\lambda{; \cP}) E_0(\lambda;P_{\Np}^0+ \check{P})/\lambda} = 0  \Big\}$}
\\[5mm]
 \hline\\
\end{tabular}
\caption{Overview of the results from the previous sections. All are understood to be leading order in $\varepsilon$. 
The function $\Geps(\vec{c}; \check{P})$  
is described in Proposition~\ref{prop:reduced_system} and $H_\eps(c{;\cP}), \widetilde{H}_\eps(\lambda{;\cP})$ 
in Lemma~\ref{lemma:connections}. }
 \label{T:overview}
\end{table}}
}}

%%%%
% SUBSUBSEC: Analysis of the reduced ODE
%%%%
\subsection{Analysis of the speed ODE}\label{subsec:recipe}
The conventional path of analysing the speed ODE~\eqref{SPEED:ODE} is to use the adjoint eigenfunctions and projections to compute the coefficients of the Taylor expanded right hand side $\Gred$. Following our strategy from \cite{CBvHHR19}, we choose to circumvent the added computational effort of this approach and instead exploit the results of the previous sections for stationary and uniformly travelling front solutions. In Table~\ref{T:overview} we have assembled an overview of these results that allow to spot an immediate connection between the right hand side $\Gred$ of the speed ODE and the existence function $\Gamma_0$ and Evans function $E_0$. Since $\Gamma_0$ and $E_0$ themselves depend in turn on the system parameters and the coupling function $\calF^N(\vec{V})$, it becomes conceivable how a tuning of these can be performed to create the desired dynamics in the speed ODE as stated in Main Result~\ref{thm:intro}. The next lemma summarises some aspects connecting $\Geps$ with $\Gamma_\eps$ (cf.\ Proposition~\ref{P:EXeps}) and $E_\eps$ also for $\eps>0$.

%%%
% LEMMA
%%%
\begin{lemma}\label{lemma:connections}
Consider $P=P_\Np^\eps+\cP$ for $|\cP|$ and $\eps>0$ sufficiently small, so that Proposition~\ref{prop:reduced_system} applies. Then the following holds true.
\begin{itemize}
\item[(i)] Consider the linearisation $D\GCM(\vec{0};\cP,\eps)$ 
of the speed ODE in the trivial equilibrium $\vec{c}=\vec{0}_\Np$ corresponding to a stationary front solution. There is a 1-to-1 correspondence between the roots of $\underline{E}_\eps(\lambda) := E_\eps(\lambda)/\lambda$ and the non-zero critical roots of the characteristic polynomial of $D\GCM(\vec{0};\cP,\eps)$ 
given by
\begin{align*}
  \underline{E}_\eps^{\rm red}(\lambda; \check{P}) :=(-1)^{\Np}\left( \lambda^{\Np} - \sum_{k = 0}^{\Np-1} \partial_{k+1}\Geps(\vec{0}_\Np; \check{P}) \lambda^{k} \right) = 0 \, .
\end{align*}
In particular, there exists a smooth, non-vanishing function $\widetilde{H}_\eps(\lambda;\cP)$, 
for $\lambda$ in a neighbourhood of the origin, such that for sufficiently small 
$\check{P}$ we have
\begin{align*}
 \underline{E}_\eps^{\rm red}(\lambda; \check{P})  = \widetilde{H}_\eps(\lambda;\cP) \underline{E}_\eps(\lambda; \check{P}) \, .
\end{align*}
\item[(ii)] Equilibria of the speed ODE are of the form 
$\vec{c}=(c, 0, \ldots,0)$ 
and are in 1-to-1 correspondence with uniformly travelling front solutions $Z_{\rm TF}$, that is, there is a 1-to-1 correspondence of roots of $\Geps$ from \eqref{SPEED:ODE} and the existence function $\Gamma_\eps$ from Proposition~\ref{P:EXeps}. In particular, there exists a smooth, non-vanishing function $H_\eps(c{ ;\cP})$ such that for sufficiently small $c$ and $\check{P}$ we have
\begin{align*}
    \Geps(c, 0, \ldots, 0; \check{P}) = H_\eps(c{ ;\cP}) \Gamma_\eps(c;\check{P}) \,,
\end{align*}
\end{itemize}
\end{lemma}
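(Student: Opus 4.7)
The plan for part (ii) is to exploit the skew-product form of the speed ODE~\eqref{SPEED:ODE}: any equilibrium satisfies $c_2 = c_3 = \cdots = c_{\Np} = 0$ by virtue of the first $\Np-1$ equations, and then $\Geps(c_1, 0, \ldots, 0; \cP) = 0$ by the last. Via \eqref{FR}, such an equilibrium lifts to a center manifold solution $a(t) = a(0) + \eps^2 c_1 t$ with vanishing residual, i.e.\ a uniformly travelling front of speed $\eps^2 c_1$. Conversely, every uniformly travelling front sufficiently close to $Z_{\rm SF}$ lies on the exponentially attracting center manifold of Proposition~\ref{prop:reduced_system}, so together with Proposition~\ref{P:EXeps} this yields a bijection between the roots of $c \mapsto \Geps(c, 0, \ldots, 0; \cP)$ and those of $\Gamma_\eps(c; \cP)$ in a neighbourhood of $c=0$. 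A Weierstrass preparation argument in the spirit of~\eqref{e:evansWeierN}, applied to each side, factors both functions through a common monic polynomial whose roots are precisely these shared zeros; the smooth non-vanishing $H_\eps$ is then the ratio of the two resulting Weierstrass units.

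For part (i), I would compute $D\GCM(\vec{0};\cP,\eps)$ directly from \eqref{SPEED:ODE}: up to the overall factor $\eps^2$, it is a companion-type matrix whose characteristic polynomial is exactly $\underline{E}_\eps^{\rm red}(\lambda;\cP)$ as displayed in the statement. Center manifold theory guarantees that the linearisation on the center manifold inherits all critical eigenvalues of $\calL$ together with their algebraic multiplicities. The translation symmetry contributes the zero eigenvalue carried by the decoupled equation \eqref{FR} at $c_1=0$; the remaining $\Np$ critical eigenvalues of $\calL$ must therefore coincide, on the scale $\eps^2\lambda$ and with multiplicities, with the eigenvalues of $D\GCM(\vec{0};\cP,\eps)$. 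By Lemma~\ref{L:SP} these are precisely the roots of $E_\eps$, so removing the translation factor gives the claimed one-to-one correspondence with the roots of $\underline{E}_\eps = E_\eps/\lambda$.

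To upgrade this equal-root-set statement into the factorisation $\underline{E}_\eps^{\rm red} = \widetilde{H}_\eps \cdot \underline{E}_\eps$, I would apply Weierstrass preparation to $\underline{E}_\eps(\lambda;\cP)$ at $\lambda = 0$, writing $\underline{E}_\eps = \tE_\eps(\lambda)\, Q_\eps(\lambda;\cP)$ with $\tE_\eps(0) \neq 0$ and $Q_\eps$ a monic polynomial of degree $\Np$ in $\lambda$ whose roots are those of $\underline{E}_\eps$ near the origin. Since $\underline{E}_\eps^{\rm red}$ is itself a polynomial of degree $\Np$ with leading coefficient $(-1)^{\Np}$ sharing these roots with the same multiplicities, it equals $(-1)^{\Np} Q_\eps$, and $\widetilde{H}_\eps(\lambda;\cP) := (-1)^{\Np}/\tE_\eps(\lambda)$ is smooth and non-vanishing on a neighbourhood of the origin.

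The main obstacle I anticipate is the bookkeeping of algebraic multiplicities rather than the mere matching of root sets. For part (i) this requires the standard fact that center manifold reduction preserves generalised eigenspaces on the critical part of the spectrum, which in our setting is consistent with the Jordan chain structure established in Lemma~\ref{L:N+1_J} and Lemma~\ref{L:GEIGSG}. For part (ii) the equality of vanishing orders at shared roots follows once one observes that both $\Geps(c, 0, \ldots, 0; \cP)$ and $\Gamma_\eps(c;\cP)$ play the role of scalar existence equations for the same family of uniformly travelling fronts, forcing their orders of vanishing at any common equilibrium to coincide.
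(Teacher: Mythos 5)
Your overall route coincides with the paper's: part (i) rests on the invariance of the critical eigenvalues (with algebraic multiplicities) under the center manifold reduction, so that the eigenvalues of $D\GCM(\vec{0};\cP,\eps)$ are exactly the non-trivial critical roots of $E_\eps$, followed by Weierstrass preparation of $\uE_\eps$ at $\lambda=0$ to produce $\widetilde{H}_\eps$; part (ii) identifies equilibria $(c,0,\ldots,0)$ of the skew-product system with uniformly travelling fronts and hence with roots of $\Gamma_\eps$. Both factorisation arguments are sound \emph{once} the root sets and the orders of vanishing are known to agree.

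The one genuine gap is your justification of equal vanishing orders in part (ii). You assert that because $\Geps(c,0,\ldots,0;\cP)$ and $\Gamma_\eps(c;\cP)$ are ``scalar existence equations for the same family'', their orders of vanishing at common roots must coincide. This does not follow: two smooth functions with the same zero set can vanish to different orders (compare $c$ with $c^3$), and the non-vanishing of $H_\eps$ at a degenerate root is precisely the statement that the orders agree --- so this is the content to be proved, not a consequence of the set-theoretic bijection. The paper closes this by an unfolding-and-counting argument: any critical multiple root of $\Gamma_\eps$ can be split into simple roots by perturbing $\cP$ (Proposition~\ref{P:EXeps}); at a simple root one has $\partial_c\Gamma_0\neq 0$, which (comparing with the derivative of $E_0$ at $\lambda=0$, cf.\ Proposition~\ref{L:S}) corresponds to hyperbolicity of the associated equilibrium within the critical directions, hence to a simple root of $\Geps$; since the number of roots counted with multiplicity is locally constant under such perturbations for both functions, the multiplicities at the degenerate parameter value must agree, and this persists for $0<\eps\ll 1$. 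You should insert this (or an equivalent counting argument) before defining $H_\eps := \Geps(\vec{c}_*(c);\cP)/\Gamma_\eps(c;\cP)$ or invoking the common monic Weierstrass factor.
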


% PROOF
\begin{proof}
Concerning (i), since eigenvalues are invariant under coordinate changes, the eigenvalues of the linearisation of \eqref{SPEED:ODE} in equilibrium points coincide with the roots of the Evans function for front solutions. The Weierstrass preparation theorem as in \S\ref{s:fulllinear} then proves the claim. Concerning (ii), equilibria of the speed ODE are of the form $\vec{c}_*(c)=(c,0\ldots,0)$, and the solutions of 
$\Geps(\vec{c}_*(c); \check{P})  = 0. $
Since the center manifold reduction contains all equilibria near zero, for $\check{P}, c$ sufficiently near zero, these $\vec{c}_*(c)$ are in 1-to-1 correspondence with the solutions of 
$\Gamma_\eps({c}; \check{P})  = 0.$  
Also, the multiplicities of these roots coincide since any multiplicity for critical roots of $\Gamma_\eps$ (which is at most $2N$) can be unfolded with $\cP$ (cf.\ Proposition~\ref{P:EXeps}), and simple roots are hyperbolic: This can be readily seen when comparing $E_0(0)$ from Proposition~\ref{L:S} with $\partial_c \Gamma_0$ (for $c=0$ this is also reflected in item (i)) and using that simple roots remain simple for $0<\eps\ll1$. Hence, $H_\eps({c};\cP) := \Geps(\vec{c}_*(c);\cP)/\Gamma_\eps(c;\cP)$ is smooth with $H_\eps(0;0)\neq 0$ as claimed. 
\end{proof}
%%%%
% SUBSUBSEC: Scaling transformation
%%%%
\subsubsection{Scaling transformation}\label{subsec:cmr_ode_terms_general}
As usual in center manifold reduction, we cannot expect to be able to actually compute all terms (not even to leading order) in the speed ODE~\eqref{SPEED:ODE}. However, by normal form theory, in order to demonstrate the occurrence of specific dynamics, typically only a few terms are relevant. For our particular case where the linearisation is nilpotent with a full Jordan chain, we know from~\cite{BarrientosBook,IR05} that {for a relevant nonlinearity $c_1^2$} it is insightful to use the scaling transformation
\begin{align*}
 a_0 = \delta^{2\Np} \nu_0 \, , \quad a_k = \delta^{\Np-k+1} \nu_k \, ,  \quad c_k = \delta^{\Np{+}k{-}1} z_k \, , \quad k = 1, \ldots, \Np \, , 
\end{align*}
with $\vec{\nu} = (\nu_1, \ldots, \nu_{\Np}) \in \mathbb{R}^{\Np}, \|\vec{\nu}\| = 1$. {Applying this to \eqref{SPEED:ODE},} and changing to slow time $T = \varepsilon^2 {\delta} t$,  converts (the $\varepsilon$-leading order of)  \eqref{SPEED:ODE} into the simplified form
\begin{align}\label{e:zscaled}
\left\{
\begin{aligned}
  \dfrac{d}{dT} z_{k} & =   z_{k+1} \,,  \quad \qquad  k = 1, \ldots, \Np-1 \, ,\\[.2cm]
  \dfrac{d}{dT} z_{\Np}   & =  \nu_0 + 
   \vec{\nu}\cdot\vec{z}
+ {a_{11}}
   \,  z_1^2 +   
{a_{12}} 
   \, z_1 z_2 \delta 
+ \mathcal{O}(\delta^2) \,.
\end{aligned} 
\right.
\end{align}
By Lemma~\ref{l:linearunfold} we know that the linear coefficients $\nu_k$ can be fully controlled by the parameters of~\eqref{eq:multi-component-RD} with \eqref{N:NONL22}, {\emph{i.e.}}, on a linear level the unfolding  is complete. Hence,
it remains to determine $a_{11}$ and possibly $a_{12}$, in terms of the system parameters, depending on the type of dynamics of interest. This observation is used in \S\ref{S:1+3} for the case $N=\Np=3$ in the proof of Lemma~\ref{lem:chaos}  for proving chaotic dynamics following \cite{IR05}.
%%%%
% SUBSEC: $1$-Fast and $1$-slow component: Singularity embeddings and further results
%%%%
\subsection{Generating arbitrary singularity structures} 
\label{subsec:sing}
\label{S:sing}

By Proposition~\ref{prop:reduced_system} the speed ODE \eqref{SPEED:ODE} is scalar in the case $\Np = 1$, {\emph{i.e.}},
\begin{align}\label{e:N1CMF}
\begin{array}{lcl}
  \dfrac{d}{dt} c_1   & = & \varepsilon^2  \Geps(c_1; \check{P})   \,.
\end{array}
\end{align}
This can happen for $N = 1$, but also for $N > 1$ (which was already demonstrated in \cite{CBDvHR15} for $N = 2$). The fact that the ODE is scalar allows particularly strong control over the possible dynamics of front solutions via the coupling function $\calF^N$ of the original multi-component system \eqref{eq:multi-component-RD}. In particular, beyond Proposition~\ref{P:EXeps}, one can generate arbitrary scalar singularities in the speed ODE. This is the main result of the following theorem, as announced in Main Result~\ref{thm:intro} for $N=1$, and for $N>1$ with reducible coupling as in Remark~\ref{r:impactN}. We briefly elaborate on the latter.

If $\calF^N(\vec{V})$ is independent of $V_m$ for some $m$, then the $V_m$-component in~\eqref{eq:multi-component-RD} satisfies the linear equation $\tau_m\partial_t V_m =\eps( d_m \partial_x^2 V_m + U-V_m)$ into which the $U$-component enters as an inhomogeneity. Since $V_m$ does not appear anywhere else in \eqref{eq:multi-component-RD}, the $V_m$-component, and thus the parameters $\tau_m, d_m$, do not impact the existence of, {\it e.g.}, uniformly travelling or stationary front solutions. In addition, there is also no impact on the critical stability properties: 
Recall the linearization yields the block matrix operator $M_1$ from the proof of Lemma~\ref{L:SE}. 
Since $V_m$ only appears in the linear equation for $V_m$, the impact of $V_m$ on its spectrum is only through the linearisation of the equation for $V_m$ with respect to $V_m$. This reads $\tau_m\partial_t V_m = \eps^2(d_m\partial_{xx}V_m-V_m)$, and the spectrum of the associated linear operator is $(-\infty,-\eps^2/\tau_m]$, which is part of the strictly stable essential spectrum for $\eps>0$ and thus not critical. This observation directly extends to any number of decoupled components.

%%%%%%%%%%%
% THEOREM %
%%%%%%%%%%%
\begin{theorem}[Generating arbitrary scalar singularities for $N\geq 1$]\label{thm:one_slow_sing}
Assume the coupling function $\calF^N$ depends only on one component as in Remark~\ref{r:impactN} for $k=1$. Consider $P=P_\Np^\eps+\cP$ with $\Np = 1$ and $|\cP|$, $\varepsilon>0$ sufficiently small. 
Then for any $M \in \mathbb{N}$, any Taylor polynomial $\mathcal{T}_{\Geps}^M$  in~\eqref{e:N1CMF} can be realised by suitable choice of $\tau_1$ and~$\mathcal{T}_{\calF^N}^M$. In particular, any scalar singularity can be embedded and unfolded in the reduced system on the center manifold. In addition, if all roots of~$\Gamma_0$ from Lemma~\ref{L:N} are simple, then the speed ODE \eqref{e:N1CMF} is topologically equivalent to the scalar ODE
\[
\dot z = \eps^2 \Gamma_0(z).
\]
\end{theorem}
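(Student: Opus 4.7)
The plan is to chain together a sequence of lower-triangular, invertible linear correspondences between Taylor coefficients,
$$
\mathcal{T}^M_{\calF^1} \longrightarrow \mathcal{T}^M_{\Gamma_0} \longrightarrow \mathcal{T}^M_{\Gamma_\eps} \longrightarrow \mathcal{T}^M_{G_\eps}.
$$
By the hypothesis and the decoupling discussion preceding the theorem, it suffices to treat $N=1$; write $\calF = \calF^1$ and $f_k = \calF^{(k)}(0)$. For the first arrow, Lemma~\ref{L:N} gives $\Gamma_0(c) = \calF(V_1^*(c)) - \tfrac{\sqrt{2}}{3}c$, where $V_1^*(c) = \tfrac{\tau_1}{2d_1} c + \mathcal{O}(c^3)$ is an odd analytic function of $c$ with non-zero leading coefficient (under the assumption $\tau_1, d_1>0$). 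Substituting the power series of $\calF$, the $c^k$-coefficient $g_k$ of $\Gamma_0$ depends linearly only on $f_0,\ldots,f_k$, with $f_k$ entering via $\tfrac{1}{k!}(V_1^*)^k$ with non-zero weight $(\tau_1/(2d_1))^k/k!$. Hence the map $(f_0,\ldots,f_M)\mapsto(g_0,\ldots,g_M)$ is lower-triangular with non-zero diagonal and therefore invertible at every order $M$. (The odd/even parity of $V_1^*$ in fact decouples this into two independent triangular blocks.)

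For the second arrow, Proposition~\ref{P:EXeps} ensures that $\Gamma_\eps$ limits smoothly to $\Gamma_0$, so the Taylor coefficients of $\Gamma_\eps$ are $\mathcal{O}(\eps)$-perturbations of those of $\Gamma_0$, and the triangular matrix from the previous step persists as invertible for $\eps, |\cP|$ sufficiently small. For the third arrow, Lemma~\ref{lemma:connections}(ii) gives $G_\eps = H_\eps\, \Gamma_\eps$ with $H_\eps$ smooth and non-vanishing near the origin; multiplication by $H_\eps$ at the level of Taylor coefficients is again lower-triangular with non-zero diagonal $H_\eps(0;\cP)$, hence invertible. Composition yields the desired invertible map $\mathcal{T}^M_\calF \to \mathcal{T}^M_{G_\eps}$ and proves the first assertion of the theorem.

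For the topological equivalence claim, assume all roots of $\Gamma_0$ are simple; these are hyperbolic zeros of the scalar field $z \mapsto \Gamma_0(z)$, and for $\eps, |\cP|$ sufficiently small they persist as simple zeros of $\Gamma_\eps$ and, since $H_\eps$ has constant non-zero sign, of $G_\eps = H_\eps \Gamma_\eps$. The orbit structure of a scalar ODE with hyperbolic equilibria is determined by the locations of zeros together with the signs of the right-hand side between consecutive zeros; all these data coincide for $\dot c = \eps^2 G_\eps(c)$ and $\dot z = \eps^2 \Gamma_0(z)$, yielding topological equivalence. The main obstacle is sign control: orientation-preserving topological equivalence requires $H_\eps > 0$, not merely $H_\eps \neq 0$. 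This is pinned down by matching the linearisation $\partial_c G_\eps$ at an equilibrium against the corresponding Evans function root via Lemma~\ref{lemma:connections}(i) (both encode the leading-order PDE spectrum), forcing the signs to agree and hence $H_\eps(0;0) > 0$.
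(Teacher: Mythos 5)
Your proof follows the same chain as the paper's: reduce to $N=1$, pass from $\mathcal{T}^M_{\calF^1}$ to $\mathcal{T}^M_{\Gamma_0}$ by a triangular coefficient map, push to $\eps>0$ by continuity, and transfer to $\Geps$ via the non-vanishing factor $H_\eps$ from Lemma~\ref{lemma:connections}(ii); the topological-equivalence claim is handled by persistence of simple zeros in both treatments. The one substantive divergence in the first part is how the term $-\frac{\sqrt2}{3}c$ is handled. You treat the coefficient map as affine, so that $-\frac{\sqrt2}{3}$ is an additive shift of $g_1$ that does not enter the Jacobian, whose diagonal entries $(v'(0))^k/k!$ are non-zero for all $\tau_1,d_1>0$; hence you get unconditional invertibility. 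The paper instead absorbs $-\frac{\sqrt2}{3}$ into the diagonal of its matrix $S$, so the first non-trivial diagonal entry becomes $\tau_1/(2\sqrt{d_1})-\sqrt2/3$, which can vanish and is precisely why the statement allows a ``suitable choice of $\tau_1$'' and why Main Result~\ref{thm:intro}(a) carries the condition $\tau_1/(2\sqrt{d_1})\neq\sqrt2/3$. Your Faà di Bruno computation of the Jacobian of $(f_0,\dots,f_M)\mapsto(g_0,\dots,g_M)$ is the correct one (an additive constant in $g_1$ does not affect derivatives), so your route is sound and in fact renders the $\tau_1$-adjustment superfluous; but you should at least remark on this discrepancy with the stated theorem rather than silently proving a statement in which $\tau_1$ plays no role.

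On the topological equivalence, you correctly observe something the paper's one-line argument glosses over: matching zeros and non-vanishing of $H_\eps$ only give equivalence up to time reversal unless $H_\eps>0$, and time reversal would swap stability of equilibria. Flagging this is a genuine improvement. However, your resolution is not yet a proof. At $\cP=0$ both $\Gamma_\eps(0)$ and $\partial_c\Gamma_\eps(0)$ vanish (this is the $\Np=1$ organising center), so comparing $\partial_{c}\Geps(0)$ with the Evans-function root via Lemma~\ref{lemma:connections}(i) yields $0=0$ and no sign information. To close this you need an actual evaluation: either compare second derivatives, $H_\eps(0;0)=\partial_c^2\Geps(0)/\partial_c^2\Gamma_\eps(0)$, or (cleaner) use that $H_\eps$ has constant sign on a connected neighbourhood and evaluate it at a nearby simple zero $c_*$, where $H_\eps(c_*)=\mu_*/(\eps^2\partial_c\Gamma_\eps(c_*))$ with $\mu_*$ the critical PDE eigenvalue; positivity then follows from the identity $\partial_c\Gamma_0=-\frac{\sqrt2}{3}\,\partial_\lambda E_0(0)$ (compare \eqref{EQ:EX_AFF} with \eqref{eq:taylor_evansN}) together with positivity of the quadratic Taylor coefficient of $E_0$ near $P^0_{\rm org}$. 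As written, ``forcing the signs to agree'' asserts the conclusion rather than deriving it.
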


We actually prove that for $\tau_1/(2\sqrt{d_1}) \neq \sqrt{2}/3$ there are neighbourhoods  of zero in $\R^M$ wherein the coefficient vectors of $\calT_{\Geps}^M$ are in 1-to-1 correspondence with those of $\calT_{\calF^1}^M$; without loss of generality $V_1$ is the relevant component. In case $\tau_1/(2\sqrt{d_1})= \sqrt{2}/3$, we additionally need to invoke $\tau_1$, so that the relation is no longer injective.
% Proof
\begin{proof}
By discussion of decoupled components before the theorem it suffices to consider $N=1$. For a general smooth coupling function $\calF^1=:\calF$, the existence function from \eqref{E:N} is given (to leading order in $\varepsilon$) by
\begin{align*}
    \Gamma_0(c) = \calF(v(c)) - \frac {\sqrt{2}} 3  c \,, \qquad v(c):=\vb=\frac{c \tau_1}{\sqrt{c^2 \tau_1^2 +4 d_1^2}} \, ,
\end{align*}
where we suppressed the $P$-dependence $ \Gamma_0 =  \Gamma_0(c;P)$ for notational convenience. For the Taylor polynomial of degree $M \in \mathbb{N}$ of nested functions we have by the chain rule that
\begin{align*}
    &\Gamma_0(v(0)) +  \dfrac{d}{dc}\Gamma_0(v(0))c + \frac12\dfrac{d^2}{dc^2}\Gamma_0(v(0))c^2 + \ldots + \frac{1}{M!}\dfrac{d^M}{dc^M}\Gamma_0(v(0))c^M\\[.2cm]
     & \quad =  \calF(v(0)) +
    \left(
    \begin{array}{cccc}
        v'(0) - \dfrac {\sqrt{2}} 3 &   0   & \cdots & 0 \\
          *   & v'(0) & \ddots & \vdots \\
          \vdots   & \ddots &  \ddots & 0 \\
          *   & \cdots &  * & v'(0) \\
    \end{array}
    \right)
    \left(
    \begin{array}{c}
         \dfrac{d}{dV_1} \calF(v(0)) c\\
         \vdots \\
          \dfrac{1}{M!} \dfrac{d^{M}}{dV_1^{M}} \calF(v(0)) c^M\\
    \end{array}
    \right) \\[.2cm]
     & \qquad \quad  =  \calF(0) +
    \left(
    \begin{array}{cccc}
        \dfrac{\tau_1}{2\sqrt{d_1}} - \dfrac {\sqrt{2}} 3 &   0   & \cdots & 0 \\
          *   & \dfrac{\tau_1}{2\sqrt{d_1}}& \ddots & \vdots \\
          \vdots   & \ddots &  \ddots & 0 \\
          *   & \cdots &  * & \dfrac{\tau_1}{2\sqrt{d_1}} \\
    \end{array}
    \right)
    \left(
    \begin{array}{c}
         \dfrac{d}{dV_1} \calF(0) c\\
         \vdots \\
          \dfrac{1}{M!} \dfrac{d^{M}}{dV_1^{M}} \calF(0) c^M\\
    \end{array}
    \right) \, .
\end{align*}
{Hence, for a triangular matrix $S$ with diagonal $\left(1,\frac{\tau_1}{2\sqrt{d_1}}-\frac{\sqrt{2}}3, \frac{\tau_1}{(2\sqrt{d_1}})\mathds{1}_{M-1}\right)$ we have
\[
\left(\frac{d^j}{dc^j}\Gamma_0(0)\right)_{j=0}^M = S \left(\frac{d^j}{dc^j}\calF(0)\right)_{j=0}^M.
\] 
}
Since $\tau_1/(\sqrt{d_1})\neq 0$ the inverse function theorem gives that,
if $\tau_1/(2\sqrt{d_1}) - \sqrt{2}/3 \neq 0 $, there is a 1-to-1 correspondence between the Taylor coefficients of $\Gamma_0 $ and those of $\calF$ in a neighbourhood of $c = 0$. 
In case $\tau_1/(2\sqrt{d_1}) = \sqrt{2}/ 3$, the right hand side gives a surjection
when additionally invoking $\tau_1$. 
As in the proof of Proposition~\ref{P:EXeps}, this extends to $\eps>0$ sufficiently small, and the claim about $\Geps$ follows by Lemma~\ref{lemma:connections}. 

By Proposition~\ref{prop:reduced_system} the speed ODE \eqref{SPEED:ODE} is scalar and given by \eqref{e:N1CMF}. If all of the finitely many roots of $\Gamma_0$ near zero are simple they remain simple under perturbation, and thus are in 1-to-1 correspondence with simple roots of $\Geps$ from \eqref{SPEED:ODE} for $0<\eps\ll1$. Hence, the dynamics is strictly monotone between and away from equilibria, and thus topologically equivalent as claimed. 
\end{proof}

Theorem~\ref{thm:one_slow_sing} means that in a generic case with $N=1$ (albeit near a bifurcation point), the dynamics is directly related to that generated by $\Gamma_0$. In a non-generic case with higher multiplicity of roots, the realisation of such a degeneracy on the center manifold requires a choice of $\calF$ from the proof of Theorem~\ref{thm:one_slow_sing} that is not known explicitly. However, such $\calF$ will be a perturbation from a choice of $\calF$ that realises the degeneracy in $\Gamma_0$. To illustrate basic examples let
\begin{align}\label{e:N1nonlin}
\calF^1(V) =  \gamma+ \alpha V +  \beta V^2 + \kappa V^3 + \rho V^4,
\end{align}
so that
\begin{align}\label{e:1slow_ex}
\Gamma_0(c) = \gamma+ \left(\frac{\alpha\tau_1}{2\sqrt{d_1}}- \frac{\sqrt{2}}{3}\right)c 
+ \frac{\beta\tau_1^2}{4d_1}c^2
+\frac{(2\kappa-\alpha)\tau_1^3}{16 d_1^{3/2}} c^3
+ \frac{(\rho\tau_1^2-\beta)\tau_1^2}{16 d_1^2} c^4 + \calO(|c|^5).
\end{align}
\textbf{Cusp.} From \eqref{e:1slow_ex} we see that a cusp singularity occurs in $\Gamma_0$, {\emph{i.e.}}, $\Gamma_0=\calO(|c|^3)$, precisely for $\alpha = 2\sqrt{2 d_1}/(3\tau_1)$, $\beta=0$, and $\kappa\neq \sqrt{2 d_1}/(3\tau_1)$, and can be unfolded by $\gamma$ and $\alpha$. In Figures~\ref{fig:cusp} and~\ref{fig:butter}(a) we plot numerical examples for the resulting bifurcations and dynamic front solutions.\\
\begin{figure}[!t]
\begin{center}
\begin{tabular}{ccc}
\includegraphics[width=0.3\textwidth]{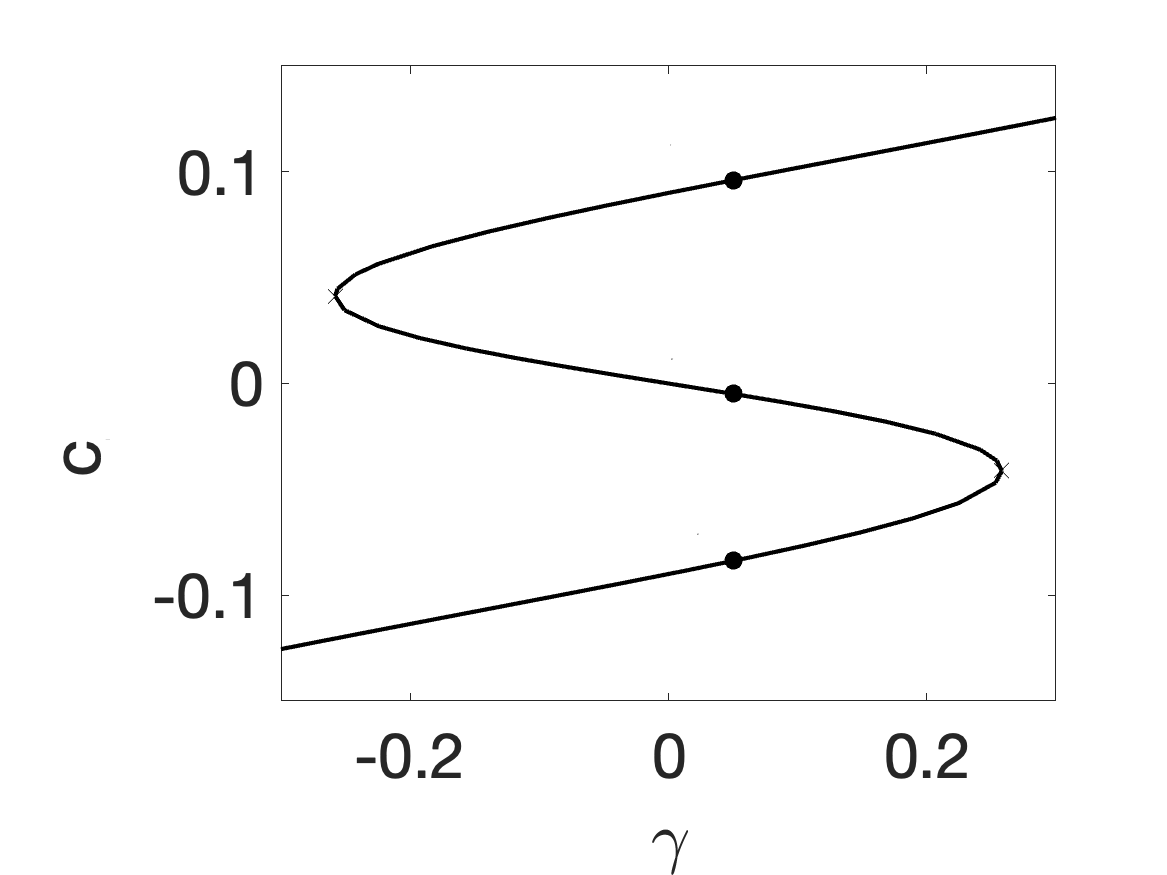}
&\includegraphics[width=0.3\textwidth]{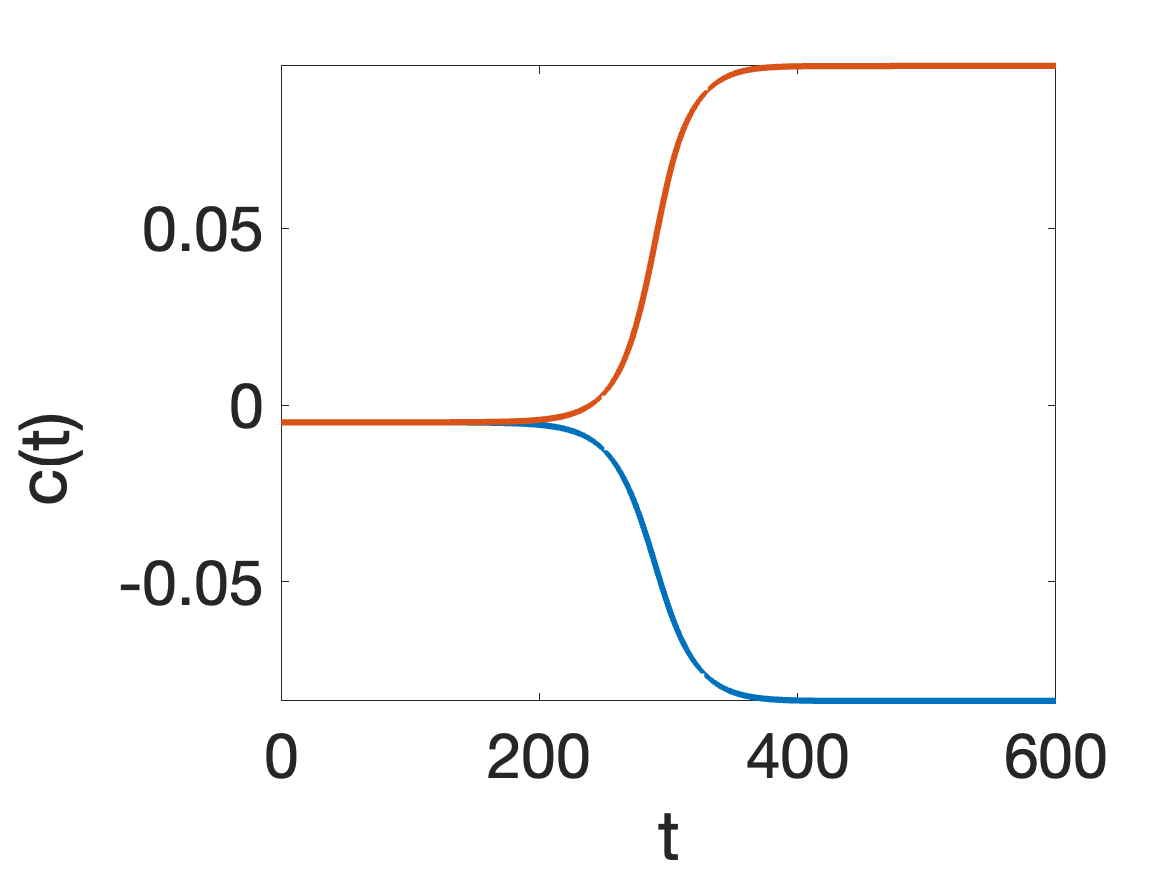}
&\includegraphics[width=0.3\textwidth]{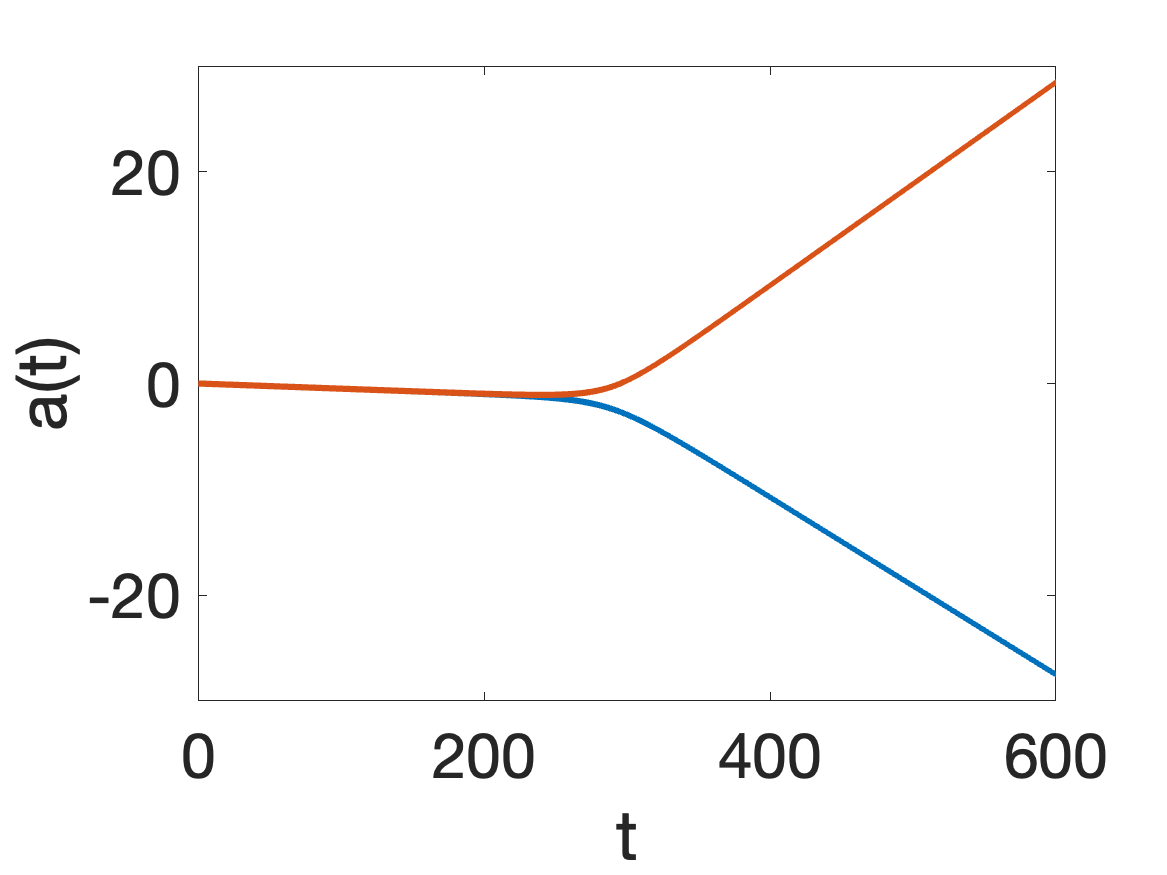}\\
(a) & (b) & (c)
\end{tabular}
\caption{(a) Numerical bifurcation diagram for \eqref{eq:multi-component-RD} with \eqref{e:N1nonlin} and $d_1=\tau_1=1$, $\kappa=-1$, $\rho=0$, $\eps=0.2$, $\beta=0$ and $\alpha=2$. (b,c) Numerical computation of dynamic front solutions upon perturbing from the solution with $c\approx 0$ marked with a bullet in panel (a); (b): The speed approaches the other equilibria marked with bullets in (a); (c): spatial position without co-moving frame. See \S\ref{s:num} for some details on the numerical method. 
\label{fig:cusp}}
   \end{center}
   \end{figure}
\\
\noindent\textbf{Butterfly.} Again we read from \eqref{e:1slow_ex} that a butterfly singularity occurs in $\Gamma_0$, {\it i.e.}, $\Gamma_0=\calO(|c|^4)$, precisely for $\alpha = 2\sqrt{2 d_1}/(3\tau_1)$, $\beta=0$, $\kappa= \sqrt{2 d_1}/(3\tau_1)$, and $\rho\neq0$. It can be unfolded by $\gamma, \alpha, \beta$ and $\kappa$. In Figure~\ref{fig:butter}(b) we plot a numerical example for a signature bifurcation diagram from the unfolding of a butterfly singularity. Notably, the quadratic coefficient $\beta=0$, but the odd symmetry is broken due to the non-zero quartic coefficient~$\rho$. 
%%%%
% Figure
%%%%
\begin{figure}[!t]
\begin{center}
\begin{tabular}{cc}
\includegraphics[width=0.32\textwidth]{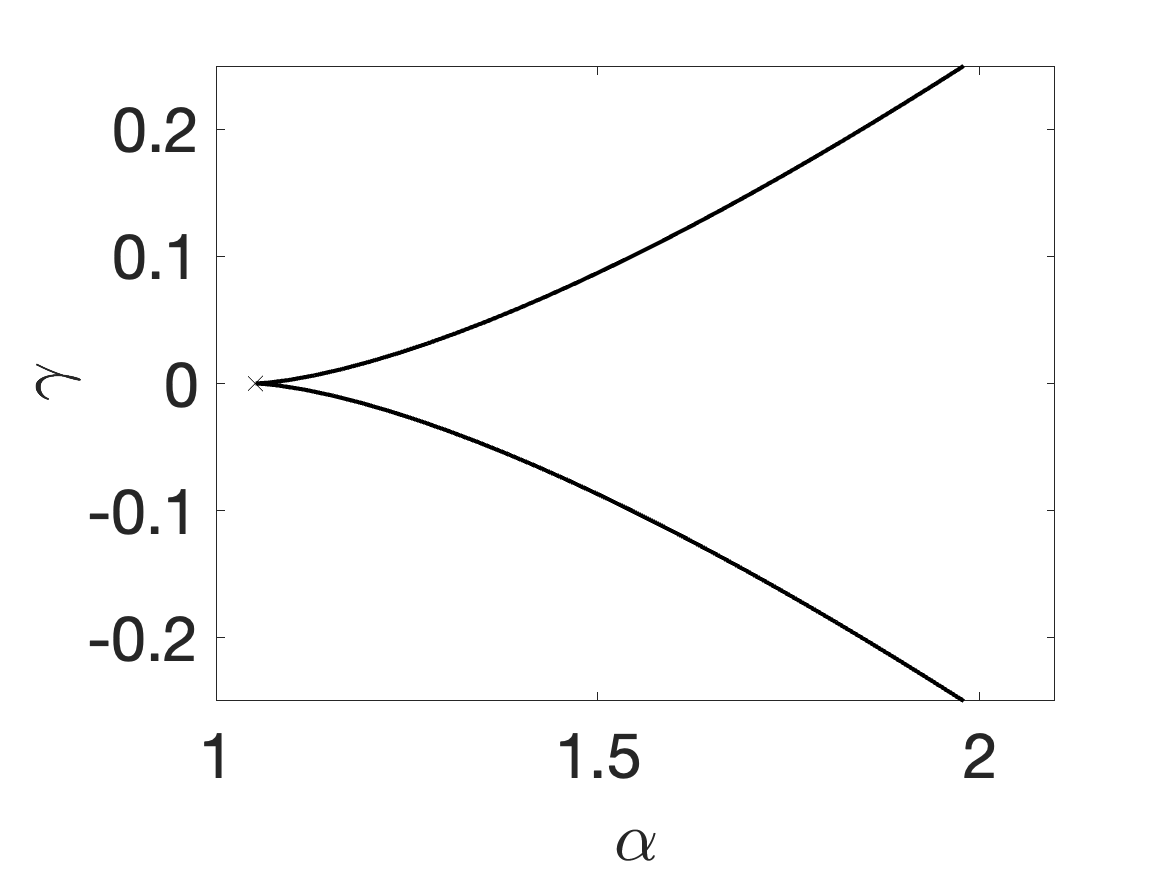}
 &\includegraphics[width=0.32\textwidth]{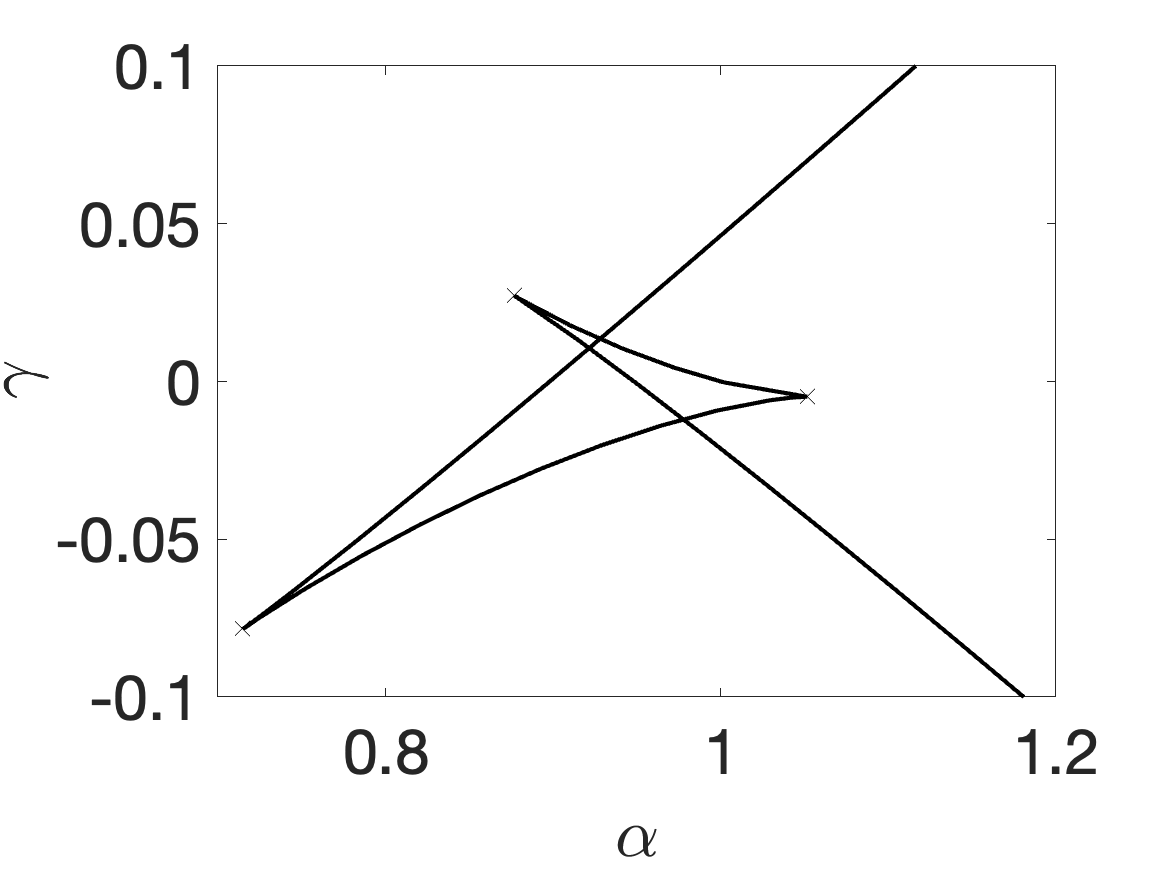}\\
(a) & (b) 
\end{tabular}
\caption{Numerical bifurcation diagrams of fold points for \eqref{eq:multi-component-RD} with \eqref{e:N1nonlin} and $d_1=\tau_1=1$, $\beta=0$, $\eps=0.2$, and (a): $\kappa=-1$, $\rho=0$ as in Figure~\eqref{fig:cusp}, 
(b): $\kappa=2$, $\rho=-0.5$. The numerical setup is as in Figure~\eqref{fig:cusp}. \label{fig:butter}}
   \end{center}
\end{figure} 
%%%%
%% REMARK
%%%%
\begin{remark}[General coupling function $\calF^N$ and $N \geq 1$]\label{R:GEN}
Following up on Remark~\ref{r:impactN}, we comment on coupling functions that depend on more that one slow variable.
\begin{itemize}
\item[(a)]
From Proposition~\ref{L:EX}, if $\tau_j/d_j$ are pairwise distinct, for $\alpha_j$ as in \eqref{EX:MAXa} we have $\calT_{\Gamma_0}(c) = \mathcal{O}(c^{2N})$, which in particular means a steady state bifurcation so that the $\calO(\lambda)$-equation in \eqref{TAYLOR} holds. A necessary condition for one-dimensional center manifold $\Np=1$ is that the $\calO(\lambda^2)$-equation in \eqref{TAYLOR} does not hold. Indeed, this is generic: the summands of the $\calO(\lambda^2)$-equation are those of the $\calO(\lambda)$-equation with an extra factor $\tau_j$. Since $\tau_j,d_j>0$ we can write $d_j=\tau_j r_j$, with $r_j>0$, which makes the summands of the $\calO(\lambda)$-equation independent of $\tau_j$. Thus, for generic $\tau_j$, the $\calO(\lambda^2)$-equation in \eqref{TAYLOR} does not hold if that for $\calO(\lambda)$ holds. However, for $N>2$, $\Np=1$ requires to rule out eigenvalues on the imaginary axis (and with positive real parts for a stable center manifold). While this can be checked using the Evans function in specific cases, general statements seems challenging. For $N=2$ this is possible since only one real eigenvalue needs to be controlled, see \cite{CBDvHR15}.
\item[(b)] Suppose the coupling function $\calF^N$ for $N>1$ and $\Np=1$ depends on more that one slow variable, e.g., 
\begin{align*}
0= \Gamma_0(c) = \mathcal{F}^N(V^*_1(c), V^*_2(c)) - \frac13\sqrt2 c \,,
\quad
V^*_k(c) =  
\frac{c \tau_k}{\sqrt{4 d^2_k+c^2 \tau_k^2}}\,,  k = 1,2\,.
\end{align*}
In such a case, following the proof of Theorem~\ref{thm:one_slow_sing}, would require a Taylor expansion of nested multi-variable functions. While it is tedious to pursue this procedure in full generality, it is a possible strategy for specific cases. In particular, a 1-to-1 correspondence can still hold. Indeed, this was done for $\calF^N=\calF^N_\beta$, $N=2$ and $\Np=1$ in \cite{CBDvHR15}.
\end{itemize}
\end{remark}
%%
% SUBSEC: Chaotic dynamics for the front speed
%%
\subsection{Chaotic dynamics for the front speed}
\label{S:1+3}
The goal in this section is to find chaotic dynamics of a single front solution in the sense that the speed $c(t)$ of the dynamic front solution satisfies the speed ODE {\eqref{SPEED:ODE}} with chaotic dynamics. This requires an at least three-dimensional ODE, and 
hence we set $N=3$, so $\vec{V}=(V_1,V_2,V_3)$, and we choose parameters 
$P=P_3^\eps+\cP$ with $|\cP|$ and $\eps>0$ sufficiently small so that  
Proposition~\ref{prop:reduced_system} holds for $\Np=3$. In order to apply known results from ODE theory, it turns out that also the symmetry of the original system with affine coupling term needs to be broken, cf.\ Remark~\ref{ADV}, which we do here by quadratic terms in $\calF^N$ of \eqref{eq:multi-component-RD} (see also \eqref{NONLnew}) so that 
\begin{align}\label{eq:F3}
\calF^3(\vec{V}) = \gamma+\sum_{j=1}^3 \alpha_j V_j + \sum_{j=1}^3 \beta_j V_j^2 + \calO(\|\vec{V}\|^3)  \,, \qquad \alpha_j, \beta_j, \gamma \in \mathbb{R} \, .
\end{align}
The existence condition for uniformly travelling front solutions and the corresponding Evans function for stationary front solutions $Z_{\rm SF}$ read for $N=3$, cf.\ \eqref{E:N} and \eqref{EV}, 
\begin{align}
 \Gamma_0(c) &=  \mathcal{F}^3
 (\vec{V}^*) - \frac13\sqrt2 c = 0 \quad, 
 V_j^* =  \frac{c \tau_j}{\sqrt{4 d^2_j+c^2 \tau_j^2}}\,,  \qquad j = 1, 2, 3 \,,
 \label{3:E}\\
E_0(\lambda) = &
\lambda + 3\sqrt2\sum_{i=1}^3  
\partial_j \mathcal{F}^3
(\vec{V}^*) \, \left(\frac{1}{\sqrt{c^2 \tau_i^2+ 4 d_i^2 (\lambda \tau_i+1)}}-\frac{1}{\sqrt{4 d_i^2+ c^2 \tau_i^2}} \right)  \, .\label{3:EV}
\end{align}
In particular, $\gamma=0$ always admits the solution $c=0$ to $\Gamma_0(c)=0$, which corresponds to a stationary front solution of \eqref{eq:multi-component-RD} for $N=3$. The system parameters are in this setting
 \[
P = \big(\gamma, (\alpha_j,\beta_j,\tau_j, d_j)_{j = 1, 2, 3}\big)
\]
and we consider $P = P_{\rm org}^{\varepsilon} + \check{P}$ with $P_{\rm org}^\eps=P_3^\eps$ 
-- in particular we assume $\tau_1,\tau_2,\tau_3$ are pairwise distinct. Since we will invoke quadratic terms, the resulting organising center for $\eps=0$, $P_{\rm org}^0$, is a transcritical bifurcation point of $Z_{\rm SF}$  for which $E_0$ possesses a quadruple root, i.e.,
\[
\left.\frac{\mathrm{d}^i }{\mathrm{d} \lambda^i} E_0(0)\right|_{P=P_{\rm org}^{0}} = 0, \; \qquad i=0,1,2,3, 
\]
with the explicit set of parameters, cf.\ Proposition~\ref{L:N+1} and \eqref{e:Porg}, given by
\begin{align*}
 \alpha_j  =  \dfrac{2 \sqrt 2 d_j}{3 \tau_j} \prod_{k =1, k \neq j}^3 \dfrac{\tau_k}{\tau_k-\tau_j}
\,, \qquad j=1, 2, 3\,.
\end{align*}
By Proposition \ref{prop:reduced_system}, for any  sufficiently small $|\cP|$ there is a corresponding center manifold reduction to dynamic front solutions \eqref{eq:Zsplit1}. Here $c_1,c_2,c_3$ satisfy the three-dimensional ODE 
\begin{align}\label{SPEED:ODE3}
 \begin{pmatrix}
  \dot{c}_1 \\ \dot{c}_2 \\ \dot{c}_3 
 \end{pmatrix}
 = \varepsilon^2 
 \begin{pmatrix}
   0   & 1 & 0 \\
   0   & 0 & 1 \\
   0   & 0  & 0
  \end{pmatrix}
 \begin{pmatrix}
  {c}_1 \\ {c}_2 \\ {c}_3 
 \end{pmatrix}
 +
 \begin{pmatrix}
  0 \\ 0 \\ \varepsilon^2 \Gred(c_1, c_2, c_3; \check{P})  + \Gerr(c_1, c_2, c_3; \check{P})
  \end{pmatrix}  \, ,
\end{align}
with $\Gerr(c_1, c_2, c_3; \check{P})=\calO({ |c_2^2|+|c_3^2|+}\eps^2(|\vec{c}|^3 + |\vec{c}|^2|\cP|)) + o(\eps^2)$ and
\begin{align}\label{eq:NF_G3}
\Gred(c_1, c_2, c_3; \check{P}) = a_0(\check{P}) + \sum_{j=1}^3 a_j(\check{P}) c_j + 
{ c_1 \sum_{j=1}^3 a_{1j}c_j}
\, , \qquad a_j(\cP), a_{kj} \in \mathbb{R} .
\end{align}
In particular,  $ (c_1, c_2, c_3) = (0,0,0) $ is an equilibrium at $ \check{P} = 0 $ with nilpotent linearisation featuring a full Jordan chain. 
Based on this and the results obtained so far, our proof for chaotic dynamics relies on the results of \cite{IR05} through the following Lemma.
%%%%%%%
% LEMMA %
%%%%%%%
\begin{lemma}[Shilnikov homoclinic orbit]\label{lem:chaos}
Consider $N=3$ and parameters $P=P_{\rm org}^\eps+\cP$ with $|\cP|$, $\eps>0$ sufficiently small. For $a_0, a_2, a_3, a_{11}$ from~\eqref{eq:NF_G3} assume  
\begin{equation}\label{e:chaos_cond}
a_{11}\neq 0, \quad
\mathrm{rank}((\nabla a_j(0))_{j=0, 2,3})=3.
\end{equation}
Then there exist parameter perturbations $\check{P}$ with arbitrarily small $|\cP|$ such that \eqref{SPEED:ODE3} possesses a Shil'nikov homoclinic orbit to an equilibrium that can be transversally unfolded by $\check{P}$. Such $\check{P}$ satisfy $a_0(\check{P}) a_{11}<0$ and $a_2(\check{P})<0$ and the unfolding occurs by the resulting changes in $a_3(\check{P})$. 
\end{lemma}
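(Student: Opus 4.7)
The plan is to reduce the speed ODE \eqref{SPEED:ODE3} to the canonical unfolding analysed in \cite{IR05} by means of the scaling from \S\ref{subsec:cmr_ode_terms_general}, and then to verify that the hypotheses \eqref{e:chaos_cond} suffice to realise the conditions in \cite{IR05} which produce Shil'nikov saddle-focus homoclinic orbits. Concretely, I would first apply the scaling with $\Np = 3$: $a_0 = \delta^6 \nu_0$, $a_j = \delta^{4-j}\nu_j$ for $j=1,2,3$, $c_k = \delta^{2+k}z_k$ for $k=1,2,3$, together with the slow time $T = \varepsilon^2\delta t$. This converts \eqref{SPEED:ODE3} into
\begin{align*}
\dot z_1 = z_2, \qquad \dot z_2 = z_3, \qquad \dot z_3 = \nu_0 + \nu_1 z_1 + \nu_2 z_2 + \nu_3 z_3 + a_{11} z_1^2 + \mathrm{err}(z;\delta,\varepsilon),
\end{align*}
where $\mathrm{err}$ gathers the remaining cross-quadratic terms $c_1 c_j$ ($j=2,3$), higher-order terms from $\Gred$ and the contributions of $\Gerr$. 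The only part of $\Gerr$ that need not be $\calO(\varepsilon^2)$ is $\Gnot = \calO(c_2^2+c_3^2) = \calO(\delta^8)$ in the scaled variables, which enters at order $\delta^2/\varepsilon^2$ after the time rescaling; choosing $\delta$ as a suitable power of $\varepsilon$ (for example $\delta = \varepsilon^2$) then makes the entirety of $\mathrm{err}$ uniformly $o(1)$ on any bounded region in $z$-space.

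Next I would restrict to $\nu_1 = 0$ and compare the $\mathrm{err} \equiv 0$ truncation with the canonical polynomial vector field
\begin{align*}
\dot z_1 = z_2, \qquad \dot z_2 = z_3, \qquad \dot z_3 = \nu_0 + \nu_2 z_2 + \nu_3 z_3 + a_{11} z_1^2,
\end{align*}
which is precisely the model treated in \cite{IR05}. The main result there establishes that, provided $a_{11}\neq 0$ together with the sign conditions $\nu_0 a_{11}<0$ and $\nu_2<0$ (which ensure two equilibria, one of which is a Shil'nikov saddle-focus), there is a curve in $\nu_3$ along which a saddle-focus homoclinic orbit exists and unfolds transversally in $\nu_3$. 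Translating back via the scaling, these sign conditions become $a_0(\cP) a_{11}<0$ and $a_2(\cP)<0$ with unfolding parameter $a_3(\cP)$, as stated in the lemma. The rank condition $\mathrm{rank}((\nabla a_j(0))_{j=0,2,3}) = 3$ from \eqref{e:chaos_cond} then guarantees, via the submersion theorem, that the map $\cP \mapsto (a_0(\cP),a_2(\cP),a_3(\cP))$ is locally surjective onto a neighbourhood of the origin in $\R^3$, so every Shil'nikov configuration above is realised by arbitrarily small $\cP$; the additional requirement $\nu_1 = 0$ is either enforced by a further small perturbation of $\cP$, or a non-zero $a_1$ is absorbed into $\mathrm{err}$ as an $\calO(\delta)$ perturbation of the truncation. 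Since the Shil'nikov homoclinic orbit is codimension one and unfolds transversally in $\nu_3$, the implicit function theorem ensures that the corresponding bifurcation surface persists under the smooth perturbation $\mathrm{err}$ for $\varepsilon,|\cP|$ sufficiently small, which completes the argument.

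The principal technical obstacle is the control of the $\Gerr$ term under the scaling, since $\Gnot$ is not automatically small in $\varepsilon$; this is resolved by the quadratic vanishing of $\Gnot$ in $(c_2,c_3)$, giving an order $\delta^8$ gain in the scaled variables, combined with the joint smallness of $\delta$ and $\varepsilon$. A secondary technical point is the consistency of setting $\nu_1 = 0$ with the available unfolding directions in $\cP$-space, which is granted by the rank condition and a small normal-form adjustment, so that the full Shil'nikov scenario of \cite{IR05} transfers intact to the speed ODE.
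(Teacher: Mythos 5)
Your overall strategy coincides with the paper's: apply the scaling of \S\ref{subsec:recipe} with $\Np=3$, match the result to the canonical unfolding of \cite{IR05}, import the sign conditions $\nu_0 a_{11}<0$, $\nu_2<0$ from \cite{DIK01,IR05}, and use the rank condition \eqref{e:chaos_cond} to realise the required $(\nu_0,\nu_2,\nu_3)$ by small $\cP$. Your explicit bookkeeping of $\Gnot=\calO(\delta^8)$ under the scaling is a welcome addition.

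There is, however, a genuine gap in your treatment of the term $a_1(\cP)\,c_1$, i.e.\ of $\nu_1 z_1$ after scaling. The model of \cite{IR05} contains no linear term in $z_1$, so this term must be eliminated, and neither of your two proposed routes works as stated. Absorbing a non-zero $a_1$ into $\mathrm{err}$ is not an $\calO(\delta)$ perturbation: with $a_1=\delta^3\nu_1$ and $c_1=\delta^3 z_1$ the contribution to $\dot z_3$ is $\nu_1 z_1$, which is $\calO(1)$ unless $\nu_1$ itself is small. Enforcing $\nu_1=0$ by a further perturbation of $\cP$ is not licensed by \eqref{e:chaos_cond} either: the rank hypothesis concerns only $(\nabla a_j(0))_{j=0,2,3}$ and says nothing about whether $a_1(\cP)=0$ can be imposed simultaneously with prescribing $(a_0,a_2,a_3)$ in the required open set. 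The paper's proof resolves this precisely where your argument stalls, and this is one of the reasons the hypothesis $a_{11}\neq0$ appears: one completes the square via the shift $\tilde c_1=c_1-a_1(\cP)/(2a_{11})$, which removes the linear-in-$c_1$ term for \emph{every} small $\cP$, at the cost of smooth, $\cP$-dependent corrections to $a_0$ and (through the cross terms $c_1c_j$) to $a_2,a_3$ that vanish at $\cP=0$ and hence preserve the rank condition. With this shift inserted in place of your $\nu_1=0$ reduction, the remainder of your argument (surjectivity of $\cP\mapsto(a_0,a_2,a_3)$, transversal unfolding in $a_3$, persistence of the codimension-one Shil'nikov surface under the small smooth perturbation $\mathrm{err}$) goes through as in the paper.
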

% proof
\begin{proof}
We prove that the assumptions imply the sufficient conditions from \cite[Theorem 4.1]{IR05} for the statements. 
{Applying the scaling transformation from \S\ref{subsec:cmr_ode_terms_general}} gives \eqref{e:zscaled} for $\Np=3$. 
The assumption $a_{11}\neq 0$ admits the coordinate shift $\tilde c_1=c_1-a_1(\check{P})/(2a_{11})$ that removes the term $a_1(\cP) c_1$ from $\Gred$ in \eqref{eq:NF_G3}, and analogously, for $\eps>0$ sufficiently small, the term that is linear in $c_1$ from \eqref{SPEED:ODE3}.
Hence, we obtain \begin{align}\label{NF_IBAN}
\frac{d}{dT} \left( 
 \begin{array}{c}
  {z}_1 \\ {z}_2 \\ {z}_3 
 \end{array}
 \right)
 = %\varepsilon^2 
\left( 
 \begin{array}{ccc}
   0   & 1 & 0 \\
   0   & 0 & 1 \\
   0   & 0  & 0
  \end{array}
 \right) 
 \left( 
 \begin{array}{c}
  z_1 \\ z_2 \\ z_3 
 \end{array}
 \right)
 +
 \left( 
 \begin{array}{c}
  0 \\ 0 \\   
  \check{G}(z_1, z_2, z_3;  \bar{\lambda}, \bar{\mu} ,\bar{\nu})
  \end{array}
 \right)  \, ,
\end{align}
where {$(\bar{\lambda}, \bar{\mu} ,\bar{\nu})=\vec{\nu}+ o_\eps(1)$ from \S\ref{subsec:cmr_ode_terms_general},} $  \bar{\lambda}^2 + \bar{\mu}^2 + \bar{\nu}^2 = 1 $ 
\begin{align*}
  \check{G}(z_1, z_2, z_3;  \bar{\lambda}, \bar{\mu} ,\bar{\nu}) = \bar{\lambda} + \bar{\mu} z_{ 2} + \bar{\nu} z_{ 3} +  (a_{11}{ +o_\eps(1)}) z_1^2 + \mathcal{O}(\delta) \, .
\end{align*}
This system has been studied in \cite{IR05} for $0<\delta\ll1$, cf.\ \cite[eq.\ (1)]{IR05}. It is known from the results of \cite{DIK01} that the only case for possibly chaotic dynamics is $(a_{11}{ +o_\eps(1)})\bar{\lambda}<0$, $\bar{\mu}<0$ and for suitable choices of such $(\bar{\lambda}, \bar{\mu} ,\bar{\nu})$ structurally stable dynamics as claimed was established in \cite[Theorem 4.1]{IR05}. Hence, a sufficient condition to realise such parameters is that $\{(a_j(\check{P}))_{j=0,2,3}: |\check{P}|<r\} +o_\eps(1)\subset\R^3$ contains an open ball around the origin{, where $r>0$ is sufficiently small so Proposition~\ref{prop:reduced_system} applies}. This is in particular the case if \eqref{e:chaos_cond} holds, and $\eps>0$ is sufficiently small. For such a fixed $\eps$ we can apply \cite[Theorem 4.1]{IR05} and it follows that $a_3$ is an unfolding parameter.
\end{proof}
%%%
%% REMARK
%%%
\begin{remark}
From \cite{IR05} one can extract more detailed information. The Shil'nikov homoclinic orbit is actually a result of unfolding a T-point heteroclinic cycle between two equilibria with one transverse and one codim-two heteroclinic connection with leading oscillatory dynamics near the equilibria. The T-point occurs in \eqref{NF_IBAN} at $\delta=\varepsilon=0$ for $\bar{\nu}=0$ and a specific choice of $\bar{\lambda}, \bar{\mu}$, and is unfolded by $\bar{\nu}$ and $\bar{\lambda}$ or $\bar{\mu}$.
Hence, this also occurs in \eqref{SPEED:ODE} and thus for dynamic front solutions. \end{remark}

In order to verify the possibility to generate chaotic dynamics for front solutions in \eqref{eq:multi-component-RD} in the given setting, it remains to show \eqref{e:chaos_cond}. For this, we use the information provided by the leading order (in $\varepsilon$) existence function and Evans function.
%%%%%
% Theorem
%%%%%
\begin{theorem}[Chaotic dynamics for $N=\Np=3$]\label{thm:chaos}
Let the conditions of Lemma~\ref{lem:chaos} be fulfilled so that Proposition \ref{prop:reduced_system} holds with $N=\Np=3$ and assume for $\calF^3(\vec{V})$ from \eqref{eq:F3} that
\begin{align}\label{e:quad_cond}
\sum_{j=1}^3\frac{\beta_j \tau_j^2}{ d_j^2} \neq 0, \quad \tau_j\neq\tau_k>0, \; j\neq k,\; j,k=1,2,3. 
\end{align}
Then for any sufficiently small $\eps>0$ there are arbitrarily small parameter perturbations $\check{P}$ to $P=P_{\rm org}^\eps+\check{P}$ such that the dynamic fronts move chaotically in the following sense. The speed ODE~\eqref{SPEED:ODE3} for dynamic fronts of \eqref{eq:multi-component-RD} with $\calF^N(\vec{V})=\calF^3(\vec{V})$  
possesses a Shil'nikov homoclinic orbit that is transversally unfolded by modifying $\cP$. Specifically, such $\cP$ necessarily satisfy $a_{11}a_0(\check{P})<0$ and $a_2(\check{P})<0$, and can be realized by suitable choices of $\gamma, \alpha_1, \alpha_2$. The unfolding can be realised by $\alpha_1, \alpha_2$ through their control of $a_3$. The statement trivially extends for any $N> 3$ by adjusting the coupling function as in Remark~\ref{r:impactN}.
\end{theorem}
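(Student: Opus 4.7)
My plan is to verify the two sufficient conditions of Lemma~\ref{lem:chaos}, namely $a_{11}\neq 0$ and $\mathrm{rank}\big((\nabla a_j(0))_{j=0,2,3}\big)=3$, and then invoke that lemma directly. Throughout I work at the organising center $P_{\rm org}^\eps$ for $N=\Np=3$, treat the singular limit $\eps=0$ first, and argue persistence for $0<\eps\ll 1$ by continuity of $\Geps$, $\Gamma_\eps$ and the auxiliary factors $H_\eps,\widetilde H_\eps$ from Lemma~\ref{lemma:connections}. The extension to $N>3$ will be immediate from Remark~\ref{r:impactN}.

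The nondegeneracy $a_{11}\neq 0$ I will obtain by evaluation along the $c_1$-axis via Lemma~\ref{lemma:connections}(ii): $\Geps(c,0,0;0)=H_\eps(c;0)\,\Gamma_\eps(c;0)$ with $H_\eps(0;0)\neq 0$. At the organising center, Proposition~\ref{L:N+1} together with the first line of \eqref{TAYLOR} gives $\Gamma_0(0)=0$ and $\Gamma_0'(0)=\tfrac12\sum_j \alpha_j\tau_j/d_j-\tfrac{\sqrt 2}{3}=0$, so
\[
2\,a_{11}=\partial_{c_1}^2 \Geps(\vec{0};0)\big|_{\eps=0}=H_0(0;0)\,\Gamma_0''(0).
\]
The value $\Gamma_0''(0)=\tfrac12\sum_{j=1}^3 \beta_j\tau_j^2/d_j^2$ is read directly off the $c^2$-coefficient in \eqref{EQ:EX_AFF}, and the first part of \eqref{e:quad_cond} makes it nonzero.

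For the rank condition I will restrict to the three-parameter subfamily $(\gamma,\alpha_1,\alpha_2)$. The Evans function $E_\eps$ at $c=0$ given by \eqref{EV0} is independent of $\gamma$, hence $\partial_\gamma a_j(0)=0$ for $j=1,2,3$; conversely $a_0(\cP)=H_\eps(0;\cP)\,\Gamma_\eps(0;\cP)$ has leading behaviour $\Gamma_0(0;\cP)=\gamma$, so $\partial_\gamma a_0(0)=H_0(0;0)\neq 0$. It then suffices to show that the $2\times 2$ block $\nabla_{(\alpha_1,\alpha_2)}(a_2,a_3)$ is nonsingular, for then the full Jacobian of $(a_0,a_2,a_3)$ with respect to $(\gamma,\alpha_1,\alpha_2)$ is block-triangular with nonzero determinant. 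To this end I combine Lemma~\ref{lemma:connections}(i), $\uE_\eps^{\rm red}(\lambda;\cP)=\widetilde H_\eps(\lambda;\cP)\,\uE_\eps(\lambda;\cP)$ with $\widetilde H_\eps(0;0)\neq 0$, with the Weierstrass factorisation of Lemma~\ref{l:linearunfold}: the coefficients $(a_j)_{j=1}^3$ of $\uE^{\rm red}$ near $\lambda=0$ are related to the Taylor coefficients $(\ec_{j-1})_{j=1}^3$ of $\uE$ by an invertible lower-triangular change of variables. The derivatives of $(\ec_{j-1})_{j=1}^3$ with respect to $(\alpha_1,\alpha_2,\alpha_3)$ were computed explicitly in Lemma~\ref{l:linearunfold} and form a scaled Vandermonde matrix in $\tau_j$, nonsingular by the pairwise distinctness assumption in the second part of \eqref{e:quad_cond}. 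Projecting onto $(\alpha_1,\alpha_2)$ and appealing to the invertibility of the triangular change of variables yields the $2\times 2$ nonsingularity I need.

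The main obstacle is this second step: the transformation $\widetilde H_\eps$ is not explicit, so I cannot directly read $\nabla_{\vec\alpha}(a_j)$ off $\nabla_{\vec\alpha}(\ec_j)$. However, nonvanishing of $\widetilde H_\eps$ near zero preserves the rank of the derivative map at $\cP=0$, which is precisely what lifts the explicit Vandermonde computation of Lemma~\ref{l:linearunfold} to the $a_j$ themselves. Once \eqref{e:chaos_cond} is established, Lemma~\ref{lem:chaos} produces the Shil'nikov homoclinic orbit and its transversal unfolding, with the sign constraints $a_{11}a_0<0$, $a_2<0$ and the unfolding via $a_3$ all realised through the three-parameter family $(\gamma,\alpha_1,\alpha_2)$ as claimed.
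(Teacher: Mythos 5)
Your overall route is the one the paper takes: obtain $a_{11}\neq 0$ from the second derivative of the existence function along the $c_1$-axis via Lemma~\ref{lemma:connections}(ii) (using $\Gamma_0(0)=\Gamma_0'(0)=0$ at the organising center so that $\partial_c^2\Geps(\vec{0})=H_0(0;0)\Gamma_0''(0)$ with $\Gamma_0''(0)=\tfrac12\sum_j\beta_j\tau_j^2/d_j^2\neq 0$ by \eqref{e:quad_cond}), and obtain the rank condition in \eqref{e:chaos_cond} by splitting off $\gamma$ (which controls $a_0$ and does not enter $a_2,a_3$) and transferring the Vandermonde computation of Lemma~\ref{l:linearunfold} from the Taylor coefficients of $\uE$ to the Weierstrass coefficients $\ta_j$, which agree with the $a_j$ by Lemma~\ref{lemma:connections}(i). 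Up to that point your argument matches the paper's proof and is sound.

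The gap is in the final step of your rank argument. Lemma~\ref{l:linearunfold} gives that the $3\times 3$ Jacobian $\nabla_{(\alpha_1,\alpha_2,\alpha_3)}(\ta_1,\ta_2,\ta_3)$ is nonsingular (a scaled Vandermonde composed with an invertible lower-triangular matrix $B^*$). From this it follows that the $2\times 3$ matrix $\big(\nabla_{(\alpha_1,\alpha_2,\alpha_3)}\ta_j\big)_{j=2,3}$ has rank $2$ — two rows of an invertible matrix are independent — and that is all that condition \eqref{e:chaos_cond} of Lemma~\ref{lem:chaos} requires, since the gradient there is taken over the full perturbation $\cP$. Your proposal instead restricts to the two parameters $(\alpha_1,\alpha_2)$ and asserts that ``projecting onto $(\alpha_1,\alpha_2)$'' yields nonsingularity of the $2\times 2$ block $\nabla_{(\alpha_1,\alpha_2)}(\ta_2,\ta_3)$. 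That does not follow: an invertible $3\times 3$ matrix can have singular $2\times 2$ submatrices, and here the rows $\nabla\ta_2,\nabla\ta_3$ are mixtures of $\nabla\ec_0,\nabla\ec_1,\nabla\ec_2$ through the (only implicitly known) off-diagonal entries of $(B^*)^{-1}$, so the particular minor in the columns $\alpha_1,\alpha_2$ could in principle vanish. Your remark about $\widetilde H_\eps$ preserving rank addresses a different (and unproblematic) issue. The repair is simply to not restrict the parameter set: work with all of $(\gamma,\alpha_1,\alpha_2,\alpha_3)$, where the block-triangular structure you set up gives $\mathrm{rank}\big((\nabla a_j(0))_{j=0,2,3}\big)=1+2=3$ directly, and then invoke Lemma~\ref{lem:chaos} exactly as you do.
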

%%%%
%% Proof
%%%%
\begin{proof}
Taylor expanding the existence condition \eqref{3:E}  gives
\begin{align}
0= \calT_{\Gamma_0}(c;\cP) &= \gamma 
 -\frac{3\sqrt{2}}{2} e_0\, c 
+ \frac14 \left(\sum_{j=1}^3 \frac{\beta_j \tau_j^2}{ d_j^2} \right)c^2 
-  \frac{1}{16} \left( \sum_{j=1}^3 \alpha_j \dfrac{\tau_j^3}{d_j^3}\right) c^3 + \mathcal{O}(c^4) \,, \nonumber
\end{align}
with
$$e_0 := 1- \frac{3\sqrt{2}}{4}\sum_{j=1}^3  \frac{\alpha_j \tau_j }{d_j} \, .$$
Taylor expanding the Evans function \eqref{3:EV}, cf.\ \eqref{eq:taylor_evansN},  around $c = c_1=0$ gives
\begin{align*}
\calT_{E_0}
(\lambda;\cP)) &=  
e_0 \lambda 
+ \frac{3\sqrt{2}}{2}\sum_{k=2}^\infty \left( (-1)^k
\frac{(2k-1)!!}{(2k)!!}
\sum_{j=1}^3\ \frac{\alpha_j \tau_j ^k}{d_j} \right) \lambda^k \,.
\end{align*}
By Lemma \ref{lemma:connections}, for any $K\in\N_0$, the Taylor polynomials of $\Gred$, $\Gamma_0$, cf.\ Definition~\ref{TAYDEF}, satisfy
\begin{align*}
\calT^K_{\Gred(c,0,0;\cP)}(c,\cP) = \calT^K_{H_0(c,\cP) \Gamma_0(c,\cP)}(c,\cP).
\end{align*} 
In particular, since $H_0(0;0)\neq 0$, for any parameter $p$ from $P$ and also for $p=c$ we have
\begin{align}\label{e:Gderivative1}
\partial_p \Gamma_0(0)\neq 0 \Rightarrow \partial_p \Gred(0) \neq 0.
\end{align} 
Since $\partial_\lambda E_0(0)=e_0=0$ at $P=P_{\rm org}^0$ we have $\partial_c \Gamma_0(0)= 0$
so $\partial_c^2 \Gred(0)=2a_{11}$ implies
\begin{align*}
\partial_c^2 \Gamma_0(0) \neq 0 \Rightarrow a_{11} \neq 0.
\end{align*}  
Hence, since $\partial_c^2 \Gamma_0(0) \neq 0$ by assumption \eqref{e:quad_cond}, the first condition in \eqref{e:chaos_cond} holds. 

Towards the rank condition in \eqref{e:chaos_cond}, we first observe that $\partial_\gamma \Gamma_0=1$ and this implies, by \eqref{e:Gderivative1} and \eqref{eq:NF_G3}, that $\nabla_{\gamma} a_0(0)\neq 0$.  Since $a_2, a_3$ are independent of $\gamma$,  it therefore remains to show
\begin{equation}\label{e:rank_cond2}
\mathrm{rank}((\nabla a_j(0))_{j=2,3})=2,
\end{equation}
for which we use the Evans function \eqref{3:E} as in \S\ref{s:organising}. The characteristic equation of the linearization of \eqref{SPEED:ODE3} in  $c_1=c_2=c_3=0$ is given by
\begin{align*}
\lambda^3 - a_3(\cP) \lambda^2 - a_2(\cP) \lambda - a_1(\cP) = 0 
\end{align*}
so that the product structure in \eqref{e:evansWeierN} and Lemma~\ref{lemma:connections} imply for any $K\in\N$ that 
\begin{align*}
 \mathcal{T}_{\ta_j}^K(\cP) = \mathcal{T}_{a_j}^K(\cP), \; j=1,2,3,
 \end{align*}
which allows us to replace $a_j$ in \eqref{e:rank_cond2} by $\ta_j$, $j=2,3$. 
Therefore, Lemma~\ref{l:linearunfold} implies that~\eqref{e:rank_cond2} holds. Hence, \eqref{e:chaos_cond} holds and thus the claim follows from Lemma~\ref{lem:chaos}.
\end{proof}
%%%
% REMARK
%%%
\begin{remark}
The proof of Theorem \ref{thm:chaos} is simplified by the fact that it only requires qualitative information of normal form coefficients on the center manifold that are directly accessible through the existence function (coefficients $\gamma,a_{11})$ and the Evans function (coefficients $\ta_2,\ta_3$). In general, gaining information on the normal form coefficients requires the adjoint eigenfunctions in order to compute the center manifold reduction. By way of the existence function and Evans function we can (only!) obtain  information about coefficients of terms $c_jc_1^k$ in $\calT_{\Gred}^{k+1}$ for $j=1,\ldots,N$, $k\in\N_0$, i.e., linear in $c_j$. This approach has been used before in \cite{CBvHHR19} for $N=2$. 
\end{remark}

%%%
% SUBSECTION: Numerical results
%%%
\subsection{Numerical results}\label{s:num}
%% FIGURE %%
\begin{figure}
\begin{tabular}{cc}
   \includegraphics[width=0.4\textwidth]{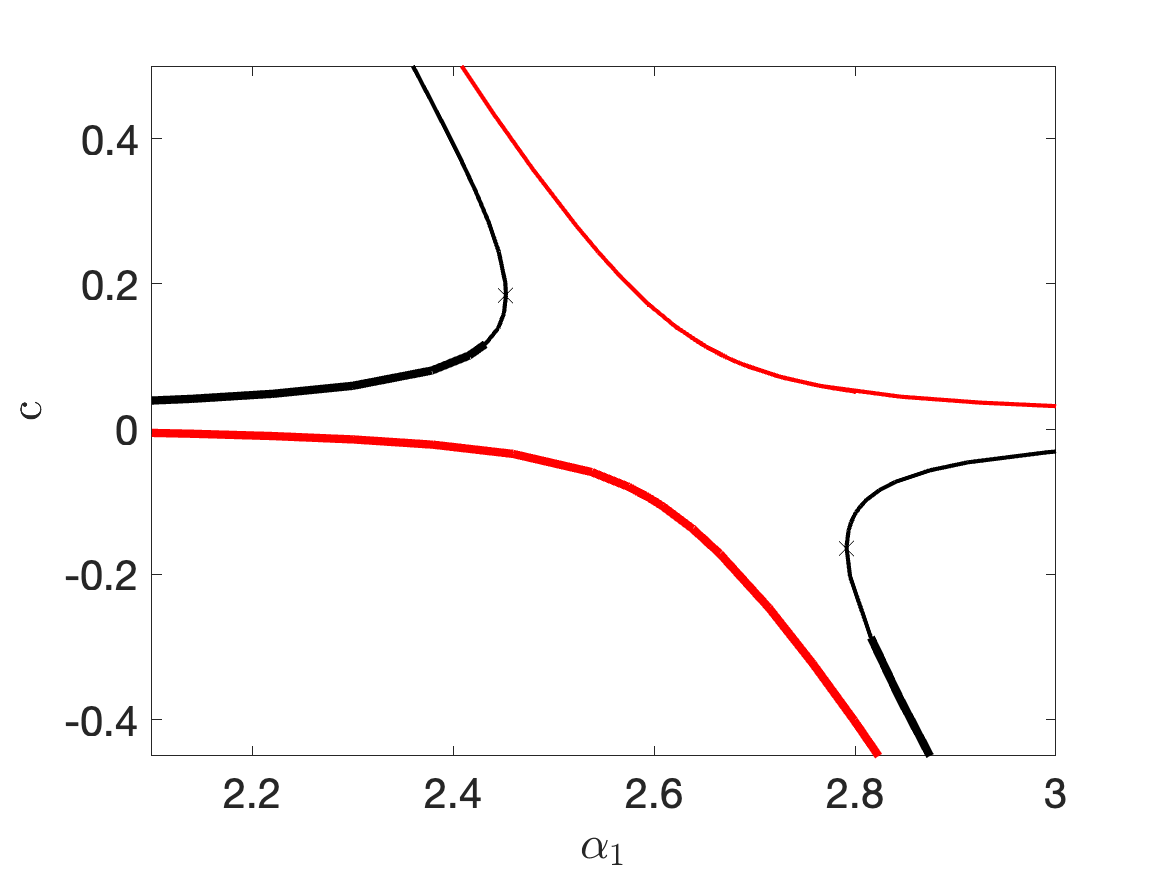}
   & \includegraphics[width=0.4\textwidth]{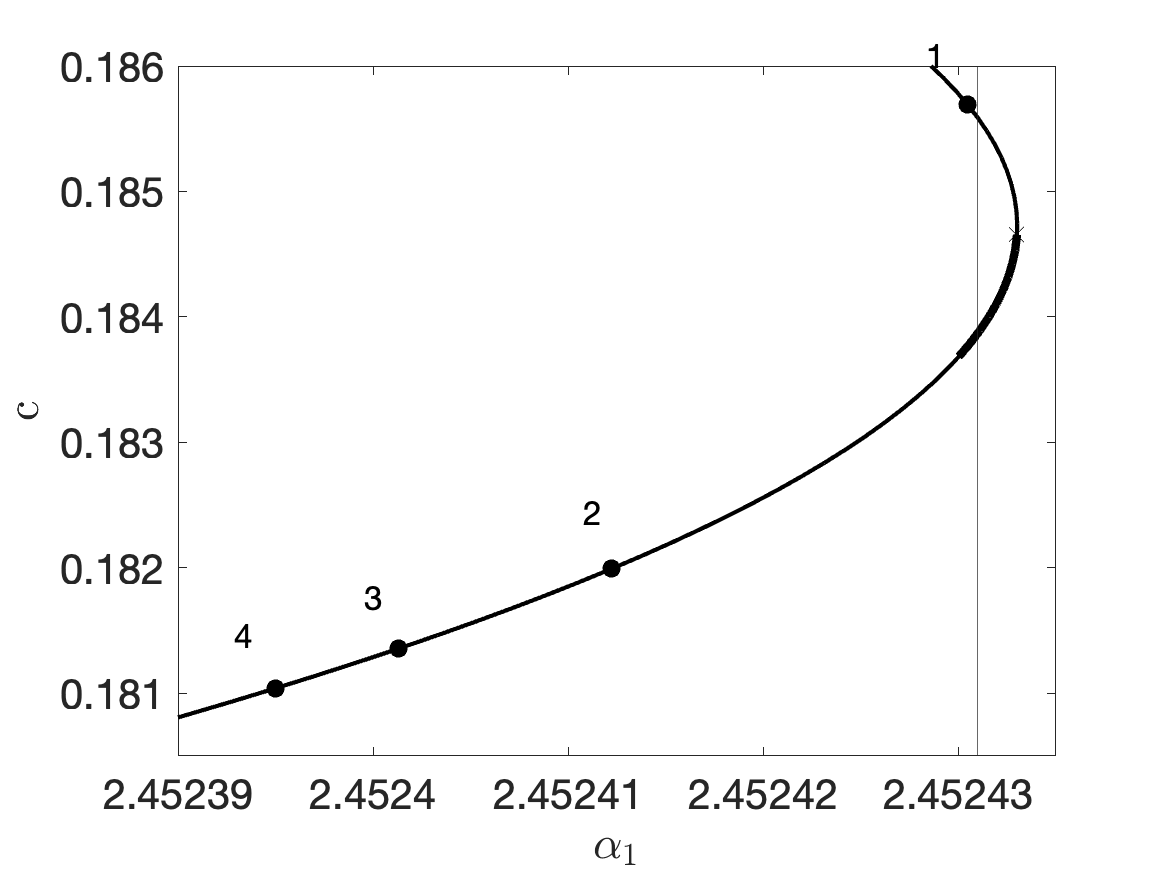}\\
   (a) & (b)  
   \end{tabular}
   \caption{(a) Branches of travelling front solutions as described in the text with parameters $\eps=0.03$ and \eqref{e:transpar22} with $\gamma=0$ (red) and $\gamma=0.11$ (black); thick lines indicate stable parts of a branch. (b) Selected magnification of (a) very near the left fold point;  
the vertical line marks the $\alpha_1$-value of solution \#1, which co-exists with a slower stable front solution. }
   \label{f:trans_branches}
\end{figure}

We present numerical computations that illustrate aspects of the preceding analysis 
for \eqref{eq:multi-component-RD} with $N=3$ and $\mathcal{F}^N(\vec{V})= \mathcal{F}^3_\beta(\vec{V})$ from \eqref{NONLnew}. 
Unfortunately, the chaotic motion arising from Shil'nikov homoclinics that we proved to occur is challenging to directly observe numerically since the attractors have very small basins of attraction and the characteristic dynamics occurs over long time scales. A discussion of such a phenomenon and the relation to normal forms can be found in  \cite[e.g.]{WITTENBERG19971}. However, we can seek signature bifurcations of ODEs in this context, in particular the period doublings found in \cite{ACST85} and the Hopf-zero bifurcation near a nilpotent singularity \cite{BarrientosBook}. The additional difficulty is that we work with the PDE \eqref{eq:multi-component-RD} so that the reduced ODE is only indirectly accessible. 

Since Theorem~\ref{thm:chaos} requires quadratic terms, we start with parameters near the analytically predicted transcritical bifurcation point with four small eigenvalues from \eqref{e:transpar22}, and we fix $\eps=0.03$ for the numerical computations discussed next. All of these are done with the \textsc{Matlab} package \textsc{pde2path} \cite{p2pbook,p2p}. This software was also used for the computations of Figure~\ref{fig:cusp}, where -- to illustrate the robustness -- the domain is short $[-10,10]$ with homogeneous Neumann boundary conditions and coarse with $95$ grid-points that lie denser near the interface.
For the results discussed next, the domain was $[-20,20]$ with $1077$ grid-points. 

In Figure~\ref{f:trans_branches}(a) we plot the branches resulting from this setting, with $\gamma=0$ in red. That the red branches are disjoint as in a generic unfolding of a transcritical bifurcation can be understood from the fact that $\eps>0$ so that the parameters are not precisely at a transcritical bifurcation point. From such an unfolding, we expect that nearby parameter values produce branches which are conversely connected. Indeed, we can find this for suitable $\gamma>0$, and we plot such branches for $\gamma=0.011$ in black. 

\begin{figure}
\begin{tabular}{ccc}
   \includegraphics[width=0.3\textwidth]{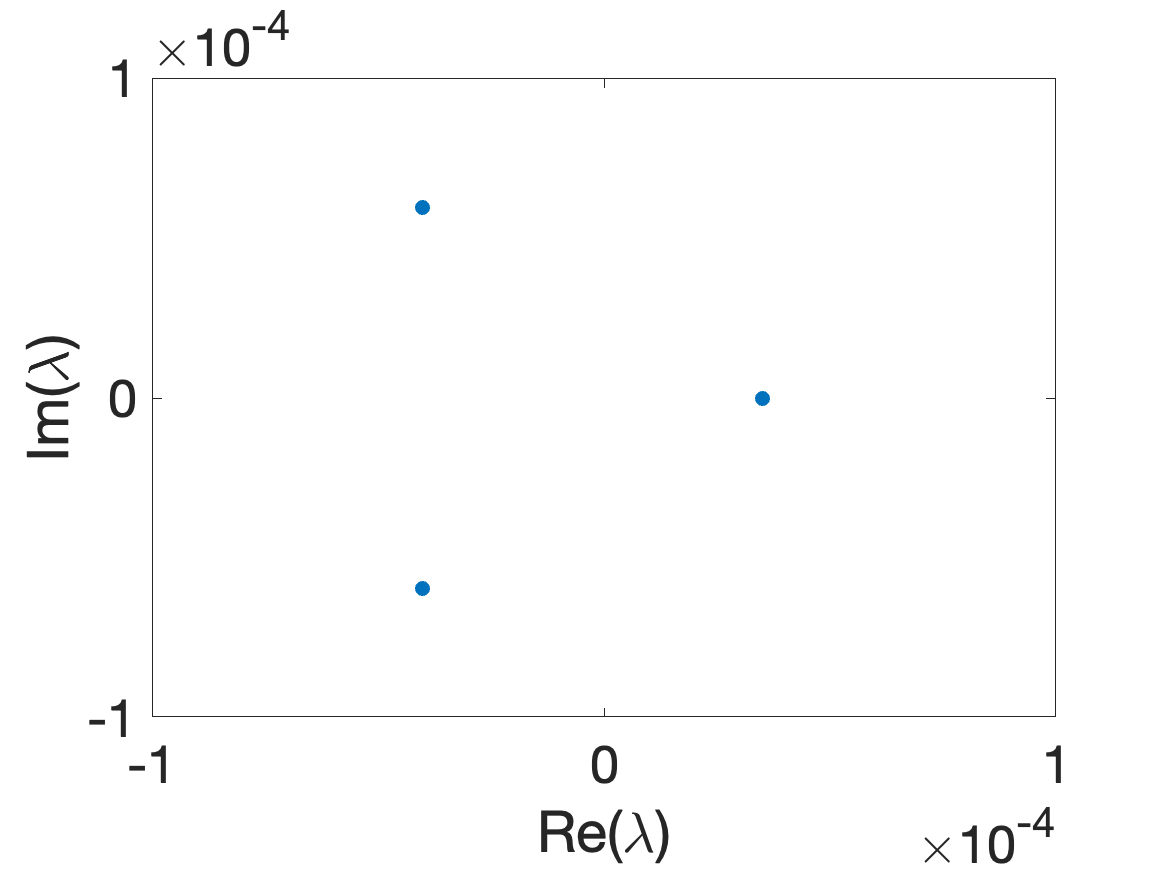}
   & \includegraphics[width=0.3\textwidth]{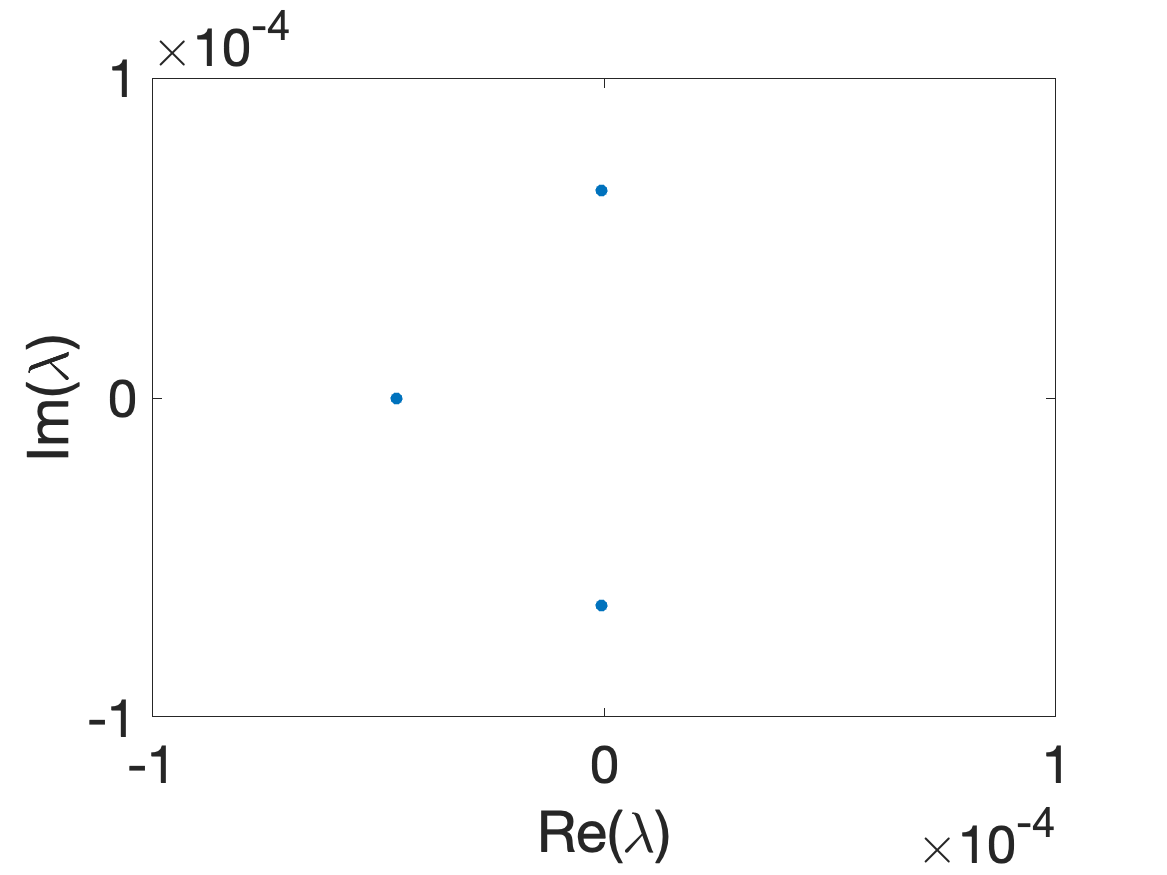} 
   &\includegraphics[width=0.3\textwidth]{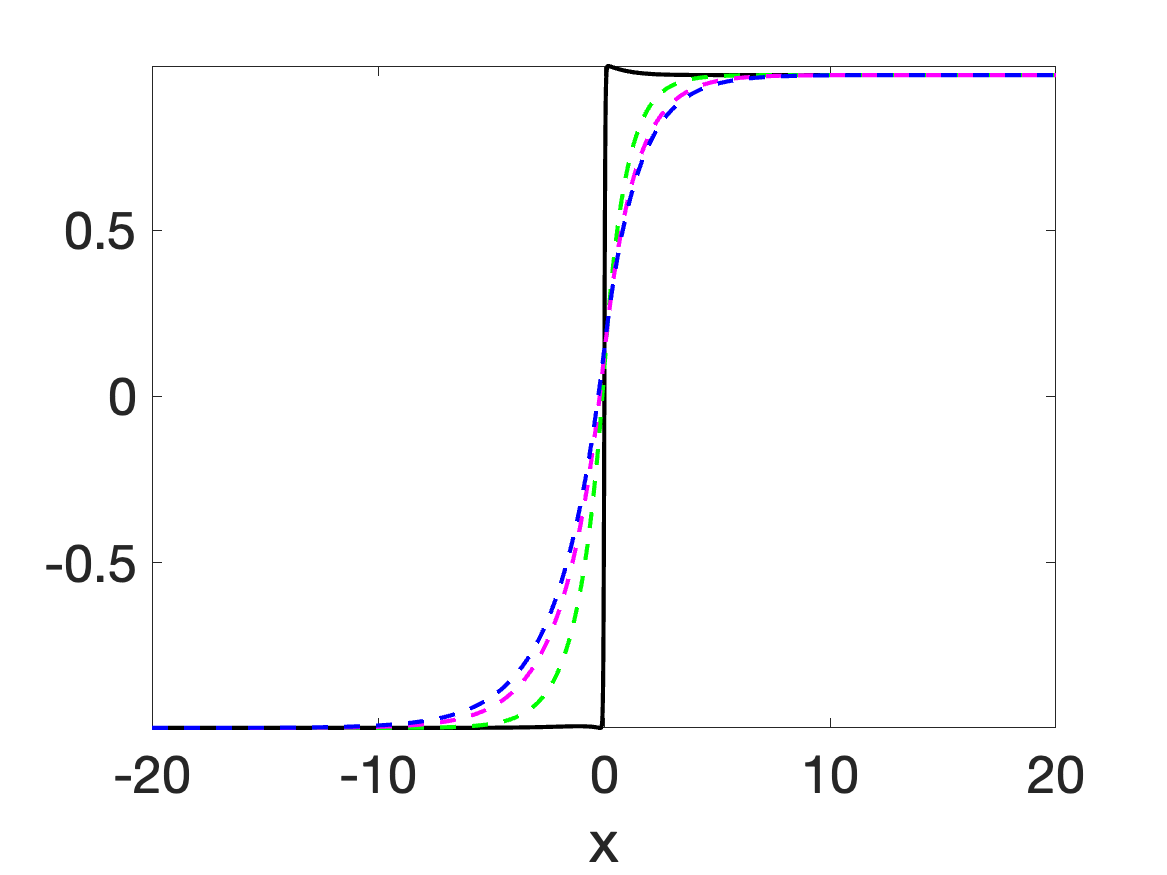}\\
   (a) & (b) & (c)
   \end{tabular}
   \caption{(a,b) Critical spectrum (translation mode removed) at unstable solution $\#1$ in Figure~\ref{f:trans_branches}(b), and at stable solution with the same value of $\alpha_1$ on lower part of the branch in Figure~\ref{f:trans_branches}(b). (c) Solution profile at stable solution. }
\label{f:trans_branches_spec}
\end{figure}

The magnification in Figure~\ref{f:trans_branches}(b) highlights that the fold point stabilises the underlying front solution, which then loses stability in a Hopf bifurcation on the lower part of this branch. At point~$\#1$ the unstable front solution indeed possesses one real unstable eigenvalue and, as expected from the Hopf bifurcation beyond the fold, the leading stable eigenvalues are a complex conjugate pair, see Figure~\ref{f:trans_branches_spec}(a). At the same value of $\alpha_1$ on the lower part of the branch the front solution is stable with a leading complex conjugate pair of eigenvalues (the real eigenvalue from the fold has become more stable than these), see Figure~\ref{f:trans_branches_spec}(b). To give an impression of the shape of the underlying front solutions we plot the profile of the latter solution in Figure~\ref{f:trans_branches_spec}(c). The proximity of the fold point and Hopf bifurcation suggests a nearby fold-Hopf (or Hopf-zero) point, which indeed appears in the unfolding of the nilpotent singularity, cf.\ \cite[e.g. Fig. 5.8]{BarrientosBook}, and we expect to find dynamics related to its unfolding. 

\begin{figure}
\begin{tabular}{cc}
   \includegraphics[width=0.4\textwidth]{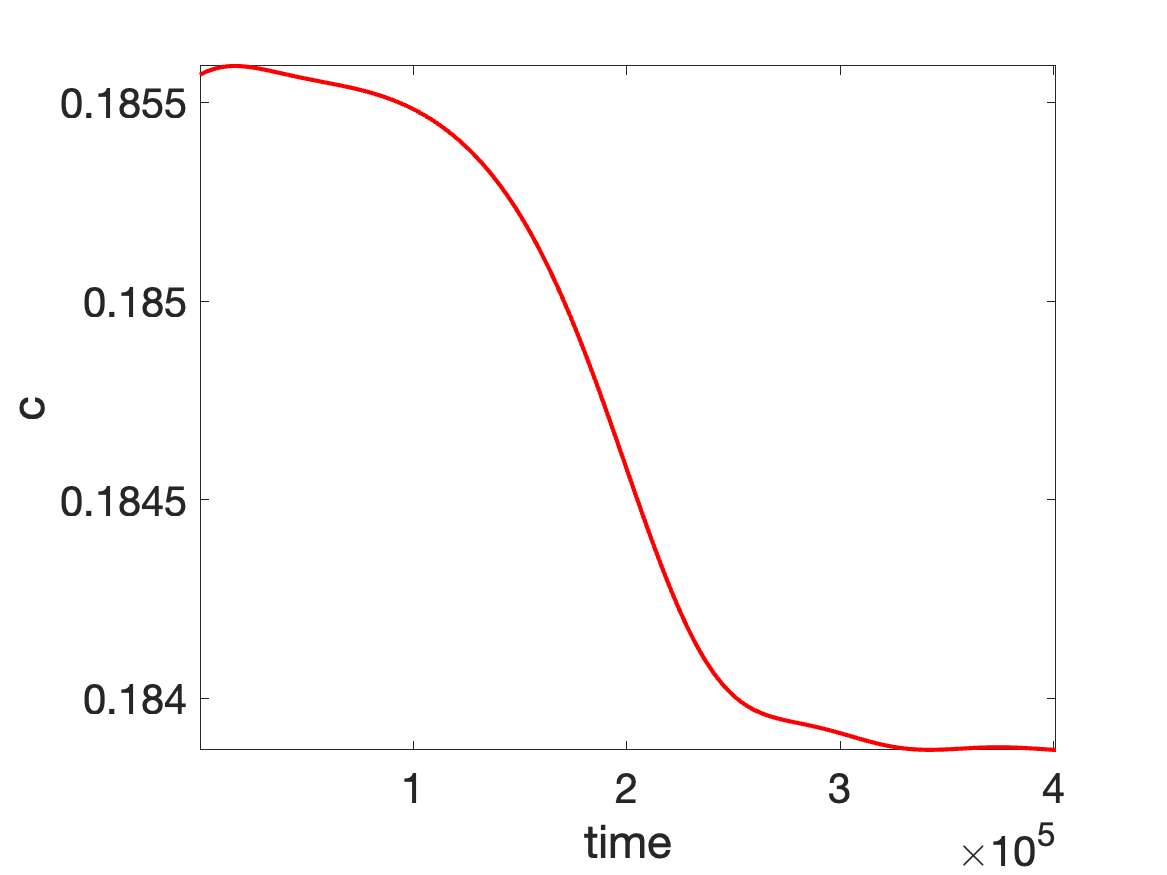}
&   \includegraphics[width=0.4\textwidth]{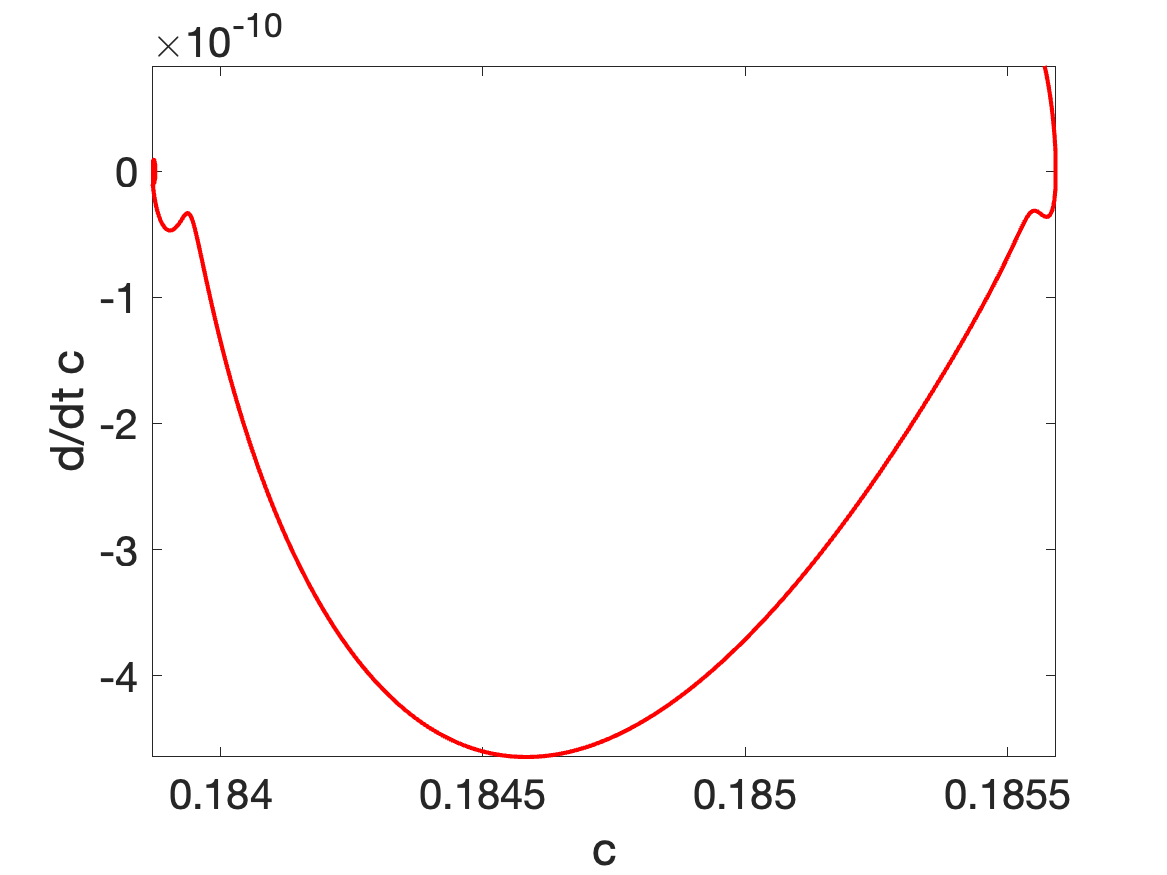} \\
(a) & (b) 
\end{tabular}
   \caption{Time simulation from perturbation off unstable solution $\#1$ in Figure~\ref{f:trans_branches}(b). In panel (a) we plot time versus speed and in (b) local speed versus its finite difference derivative, highlighting the oscillations near the steady states.
   }
   \label{f:trans_branches_sim}
\end{figure}

Upon perturbing the solution $\#1$ in Figure~\ref{f:trans_branches}(b) in the direction of the leading stable eigenspace we expect initial oscillations followed by a drift along the real unstable eigenspace. The nearby fold further suggests this unstable manifold connects to the stable front solution on the lower branch at this value of $\alpha_1$. The convergence to this should then be oscillatory, along the leading stable eigenspace. Numerical simulations indeed confirm all this as plotted in Figure~\ref{f:trans_branches_sim}. Here the time simulation implements `freezing', which determines an instantaneous speed that corresponds to $c$ by a projection of the time step predictor, see \cite[e.g.]{p2pbook}. 

\begin{figure}
\begin{tabular}{cccc}
   \includegraphics[width=0.22\textwidth]{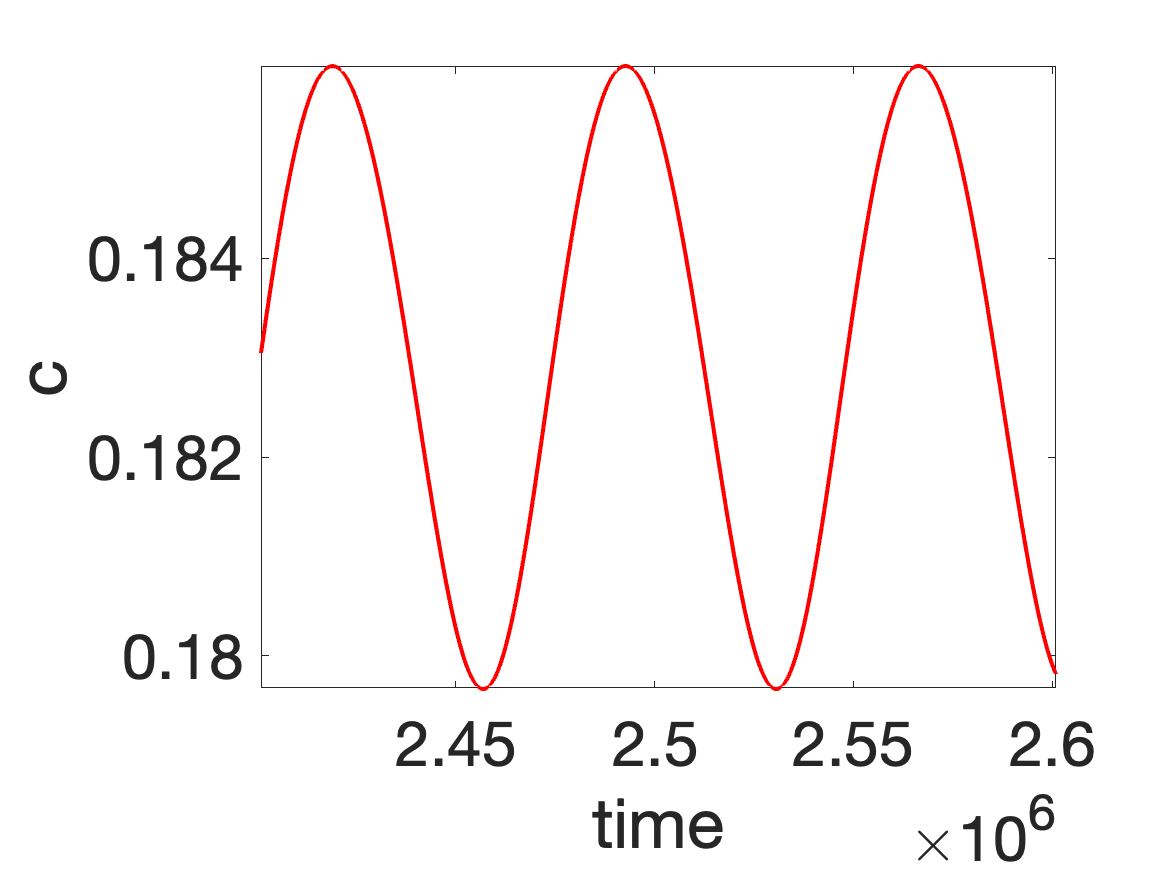} 
&   \includegraphics[width=0.22\textwidth]{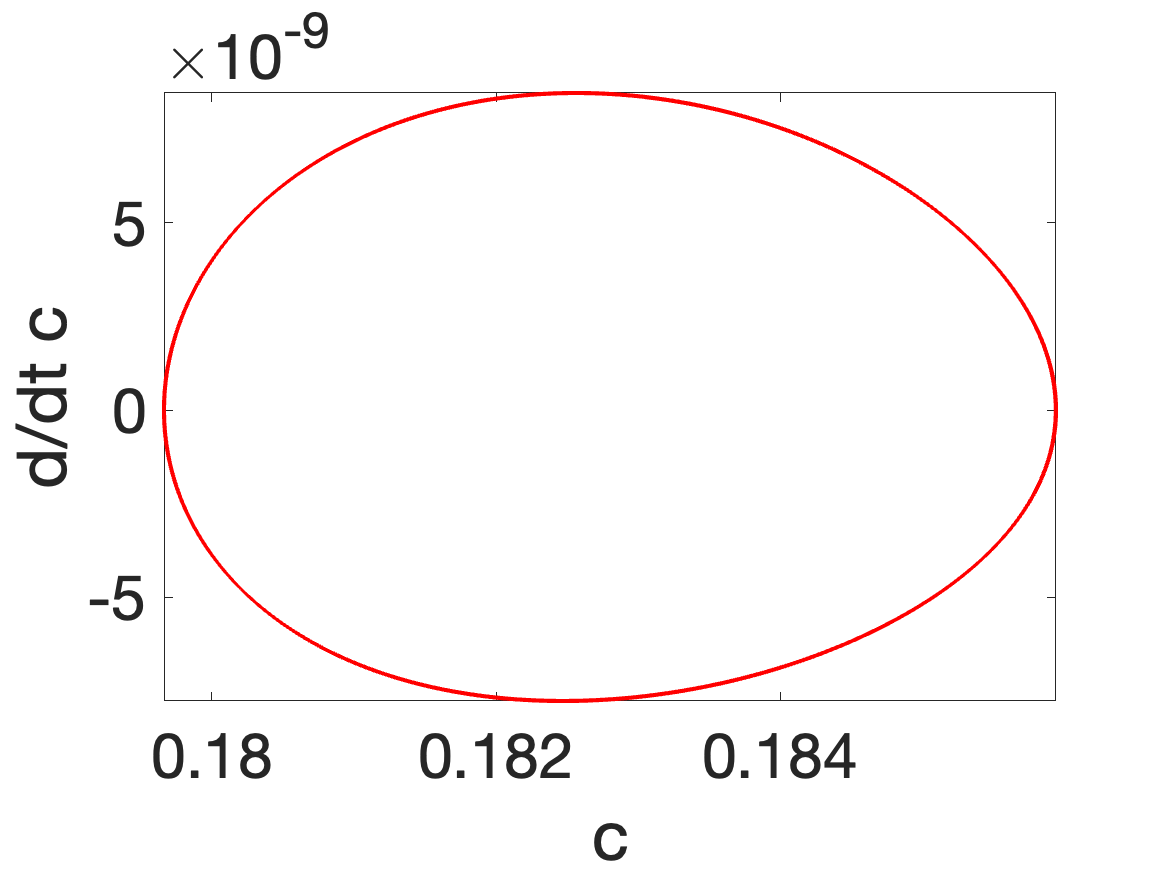}
&   \includegraphics[width=0.22\textwidth]{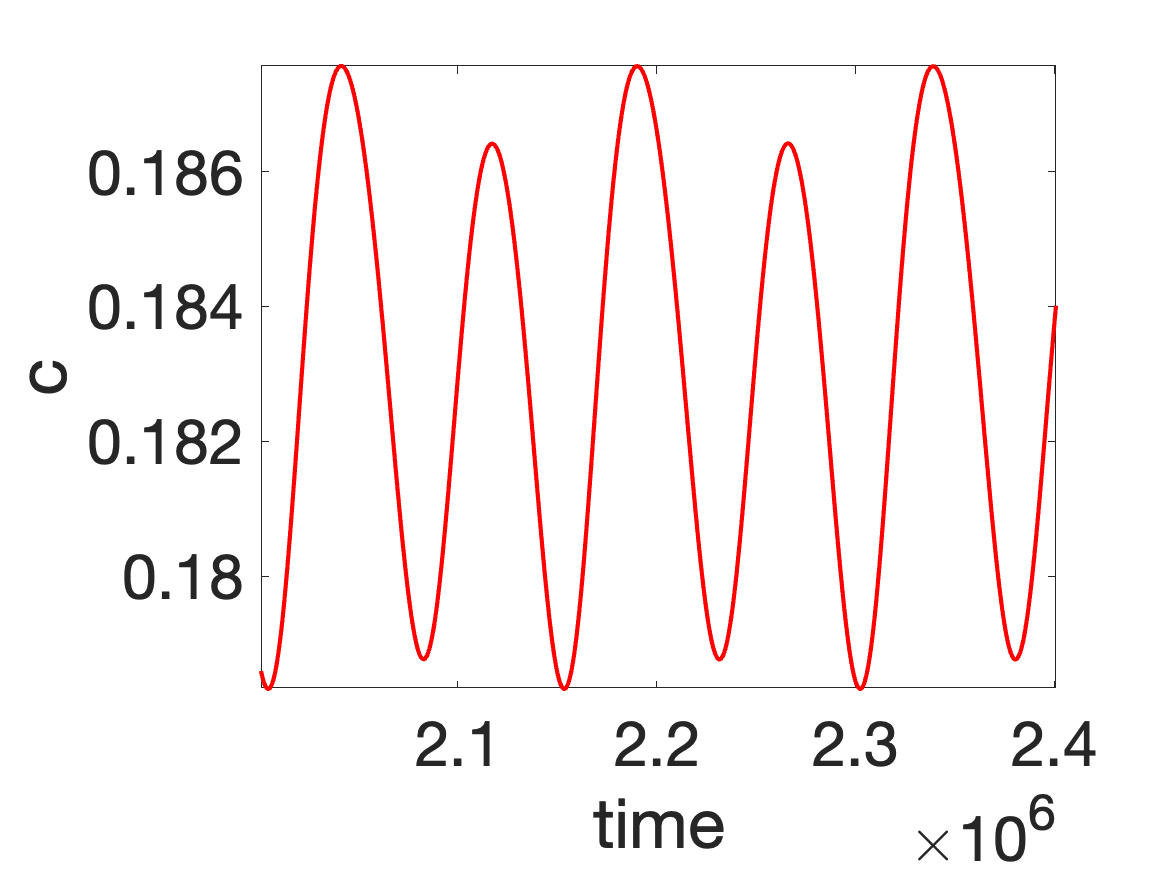} 
&   \includegraphics[width=0.22\textwidth]{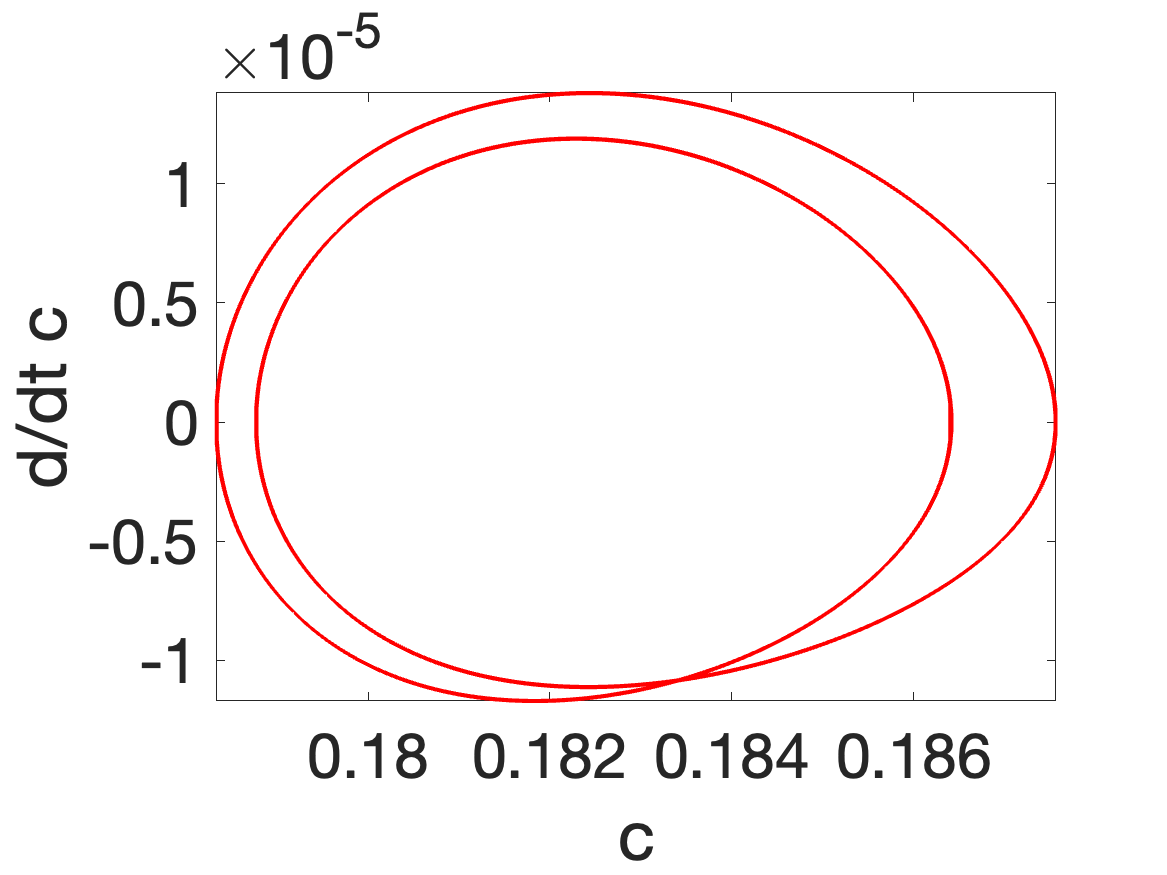}\\
(a) & (b) & (c) & (d)
\end{tabular}
   \caption{Time simulation after a transient from perturbation off unstable solution $\#2$ in panels (a,b) and solution $\#3$ in panels (c,d), cf.\ Figure~\ref{f:trans_branches}(b). 
   }
   \label{f:trans_branches_sim_per}
\end{figure}

In Figures~\ref{f:trans_branches_sim_per}, \ref{f:trans_branches_sim_chaos} we present simulation results from perturbing the unstable front solutions $\#2, 3$ and $4$ from Figure~\ref{f:trans_branches}, which lie in the 2D unstable region beyond the Hopf bifurcation. Closest to the Hopf bifurcation is solution $\#2$ and the simulation upon perturbing from it appears to converge to a period orbit, thus suggesting that the Hopf bifurcation is supercritical. Further along the branch, perturbing from $\#3$ yields a period  whose nature suggests a period doubling bifurcation along the corresponding branch of periodic solutions. Finally, perturbing from $\#4$ gives an apparently chaotic solution, which suggest a torus bifurcation. 
This scenario is a powerful indication for complex dynamics and for the link of the ODE and front dynamics, and is consistent with the unfolding of a fold-Hopf bifurcation as described in~\cite[Ch. 8.5]{KuznetsovBook}. It also bears similarities with the numerical results for parameters of ODE with Shil'nikov homoclinic orbits beyond reach of rigorous analysis in \cite{ACST85}.

\begin{figure}
\begin{tabular}{ccc}
   \includegraphics[width=0.3\textwidth]{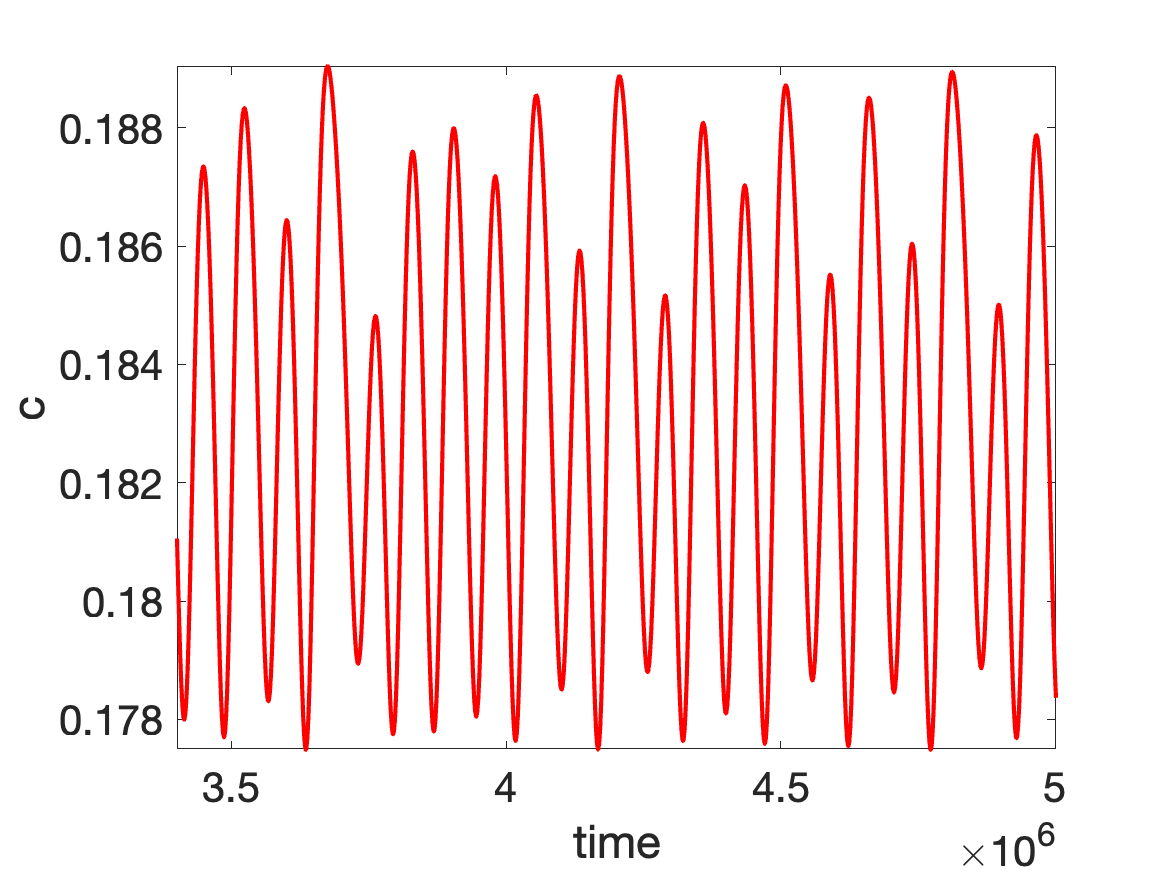} 
&   \includegraphics[width=0.3\textwidth]{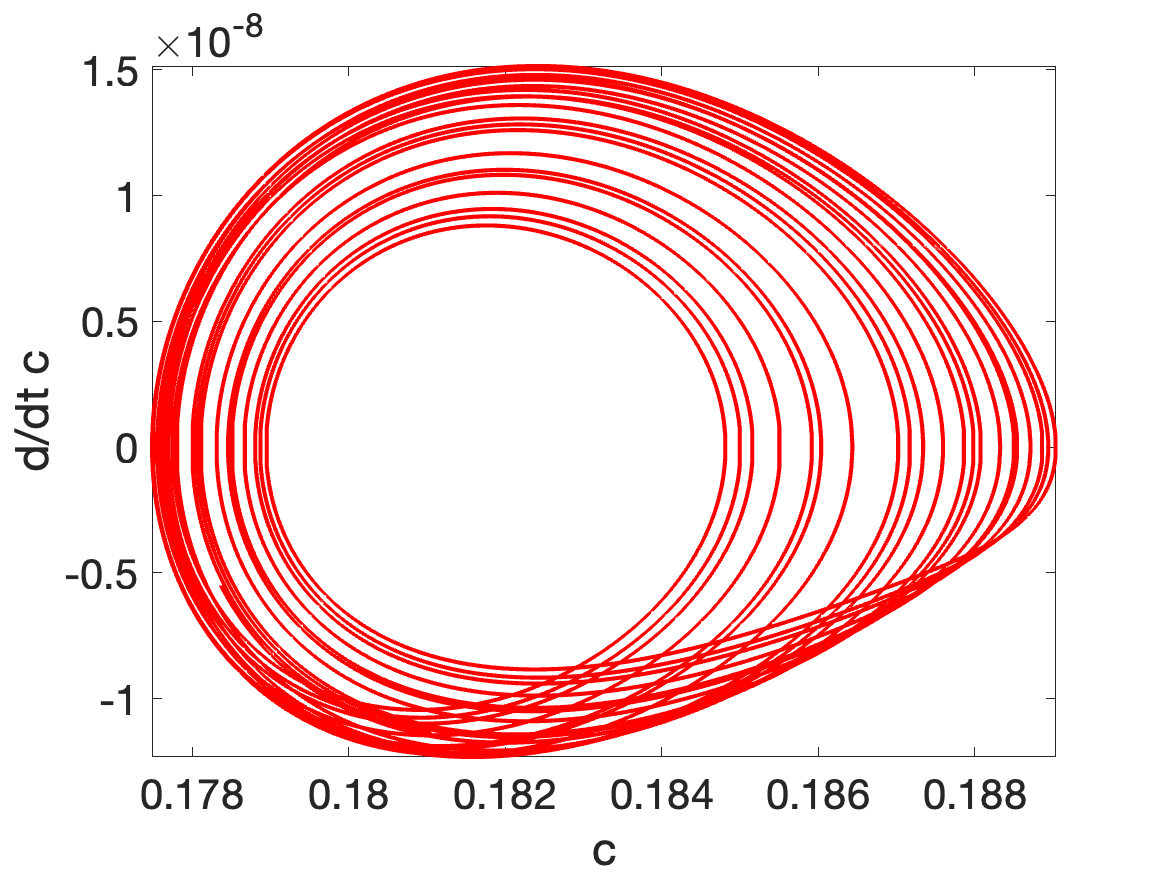}
&   \includegraphics[width=0.3\textwidth]{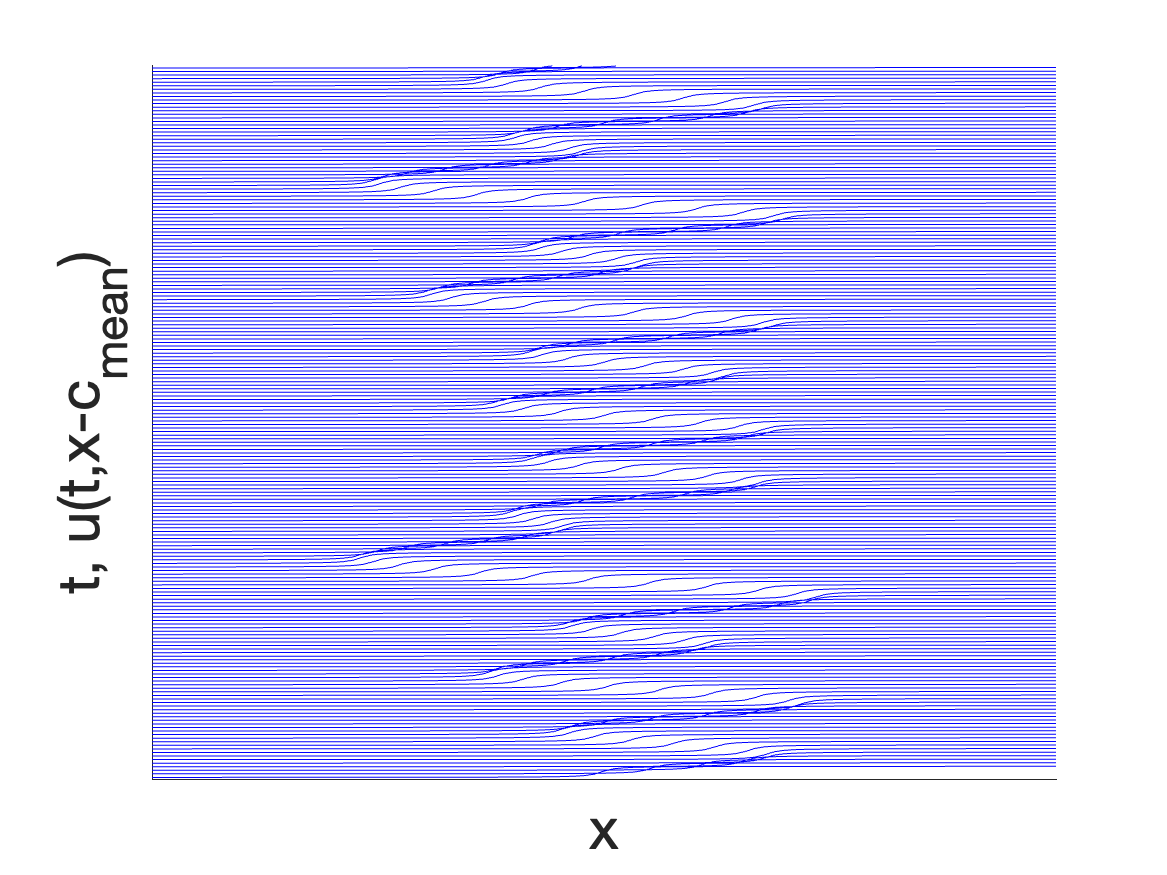}\\
(a) & (b) & (c)
\end{tabular}
   \caption{Time simulation after a transient from perturbation off unstable solution $\#4$ in Figure~\ref{f:trans_branches}(b) analogous to Figure~\ref{f:trans_branches_sim_per}. Panel (c) gives an impression of the corresponding front motion in a frame with the mean velocity over the time window.
   }
   \label{f:trans_branches_sim_chaos}
\end{figure}

%%%%%
%%  Discussion
%%%%%

\section{Discussion}
\label{S:DIS}
In the present work, we have examined the dynamics of front solutions of the multi-component reaction-diffusion system \eqref{eq:multi-component-RD} with one fast variable $U$ governed by an Allen-Cahn-type equation coupled to $N$ linear PDEs for the slow variables $V_j, j = 1, \ldots, N$. Here, we will first give a brief summary of our main results to then discuss possible further research directions in the outlook.

\subsection{Summary}
We began by observing that \eqref{eq:multi-component-RD} possesses a global attractor that, in particular, attracts all dynamic front solutions, see Lemma~\ref{lem:trap}. For the rest of the work, we concentrated on the dynamics of stationary, uniformly travelling and dynamic front solutions. We gave a range of intermediate results - each of interest in their own right - in order to demonstrate the complicated (even chaotic) behaviour that the time evolution of front speeds can exhibit.

Using spatial dynamics and GSPT, we converted the question of the existence of uniformly travelling front solutions with speed $\varepsilon^2 c $ into examining the roots of the ($\varepsilon$-leading order) existence function $\Gamma_0(c)= 0$, with $\Gamma_0$ as in \eqref{E:N}, see Lemma~\ref{L:N}. With this explicit algebraic equation at hand, we determined the highest possible degree of degeneracy by computing the highest multiplicity of the root $c = 0$ for a generic nonlinear coupling function $\calF^N(\vec{V}) =\calF^N_\beta(\vec{V})$, see Proposition~\ref{L:EX}. The structure of \eqref{eq:multi-component-RD} -- and, in particular, the imposed coupling through $\mathcal{F}^N(\vec{V})$ -- caused the underlying computations to involve Vandermonde matrices which are technically convenient, see Lemma~\ref{L:VDM}.
 
Linearisation around the so obtained uniformly travelling front solutions revealed that only certain point spectrum -- described by roots of the so-called Evans function -- can lead to instabilities. Hence, there are no essential instabilities in the parameter regime of interest, even though the essential spectrum approaches the imaginary axis in the limit $\varepsilon \rightarrow 0$, {and the eigenvalues can be viewed as emanating from it.} See Proposition~\ref{L:S}, Lemma~\ref{L:SE} and Lemma~\ref{L:SP}. 

In order to prepare the center manifold reduction describing dynamic front solutions that bifurcate from stationary ones, we then focused on the linear analysis around stationary front solutions. We determined the highest possible degeneracy of the Evans function $E_0$, see Proposition~\ref{L:N+1}, which is then used as organising center for unfolding bifurcations to dynamics front solutions. Before executing the reduction, we demonstrated that a full unfolding is possible on a linear level, see Lemma~\ref{l:linearunfold}, and also discuss the number of critical eigenvalues to be expected in general, see Hypothesis~\ref{hyp:criticalroots} and Lemma~\ref{lem:evals}. For $N=1,2,3$ we gave the precise number, see Lemma~\ref{lem:criticalroots_N_leq_2}, Lemma~\ref{lem:criticalroots_N_eq_3}.

Decisive for the center manifold reduction is our general result that the geometric multiplicity of the $\lambda = 0$ eigenvalues is always one, leading to Jordan chains for algebraic multiplicities larger than one, see Lemma~\ref{L:N+1_J}. We closed this section by giving an approximation of the corresponding (generalised) eigenfunctions, see Lemma~\ref{L:EIGSF} and Lemma~\ref{L:GEIGSG}, which heavily rely on the results presented in Lemma~\ref{L:EIGv}. This sets the scene for the unfolding around a nilpotent organising center.

Next, an important intermezzo was given in \S\ref{S34}, where we determined an explicit parameter set of maximal degeneracy both for the existence function and Evans function in the case of $N=3$ and $\calF^N(\vec{V}) =\calF^N_\beta(\vec{V})$. This explicit parameter set directly paved the way for finding chaotic behaviour numerically in \S\ref{s:num}.

In \S\ref{S:DFS} we moved to dynamic front solutions and used various results from the previous sections to prove Main Result~\ref{thm:intro}. Using a combination of center manifold reduction and bifurcation theory we derived an $\Np$-dimensional system of ODEs, see~\eqref{SPEED:ODE} of Proposition~\ref{prop:reduced_system} (see also \eqref{eq:main_ODE} of \S\ref{S:intro}) governing the speed of the dynamic front. Based on the linear analysis around a stationary front solution, the linearisation of the right hand side is nilpotent, hence, allowing a bridge to research on chaotic dynamics unfolded in ODE systems with nilpotent linear parts.

In preparation of the main results, we briefly described the procedure of analysing the dynamics of the reduced ODE for the speed, which, in particular, involved a convenient use of the existence function and Evans function from previous sections and the Weierstrass Preparation Theorem.

All these findings paved the way for finally showing how complicated dynamics can be caused by (possibly a combination of) two different mechanisms:
\begin{itemize}
    \item The coupling function $\mathcal{F}^N(\vec{V})$ is directly linked to the existence function function $\Gamma_0$, which, in turn, determines the singularity structure of the reduced ODE for the front speed \eqref{SPEED:ODE} (all in the sense of Taylor expansions). This gives a means of generating arbitrary singularity structures for the front motion, see Theorem~\ref{thm:one_slow_sing} (and the first part of Main Result~\ref{thm:intro}).
    \item The number of slow components $N \in \mathbb{N}$ can be used to control the dimension of the reduced ODE for the front speed \eqref{SPEED:ODE}, which can enable or inhibit complicated dynamics. A paradigm example of this is the occurrence of a Shil'nikov bifurcation, see Lemma~\ref{lem:chaos}, which is then confirmed to appear in \eqref{eq:multi-component-RD} for $N=3$ and a specific coupling function $\calF^N(\vec{V})$ in Theorem~\ref{thm:chaos} (see also the second part of Main Result~\ref{thm:intro}).
\end{itemize}
The type of chaotic motion that we encounter here is hard to verify numerically. However, in \S\ref{s:num} we numerically observed  some signature bifurcations of the dynamics around a zero-Hopf bifurcation and the occurrence of Shil'nikov homoclinics (as, {\it{e.g.}}, in \cite{ACST85}).

%%%%%
% Subsection: Outlook
%%%%%
\subsection{Outlook} 
We view the results presented in this paper as starting point for embarking on a quest to further study chaotic dynamics of localised structures in the context of PDEs of class \eqref{eq:multi-component-RD} and generalisations thereof. As alluded to, we have chosen but two specific scenarios to illustrate the novel insights that the procedure introduced in this work allows. 

An immediate extension would be to keep the structure of our multi-component system~\eqref{eq:multi-component-RD} and use the framework presented here to embed (via $\mathcal{F}^N{(\vec{V}})$) other singularity structures (see \cite[e.g.]{WP74}) and prove chaotic dynamics by associating, for instance, results on chaos around Takens-Bogdanov points (as treated in \cite{AGS89, Ma99, RK17}). Furthermore, we chose $\Np = 3$ here for conciseness and a proof of concept. There are also more general results on chaotic dynamics available for nilpotent singularities of order $\Np > 3$,  see~\cite[e.g.]{BIR11, DIR07, DIR08}. Further connections to these could be explored.  In particular, in the case of even $\Np$ there is a perturbed Hamiltonian structure in the speed ODE at onset, which could be an interesting direction to pursue (see \cite{BIR11}, Theorem 3.3, for details of this aspect). Another direction that we left unexplored is to introduce new terms in \eqref{eq:multi-component-RD} to get other kinds of symmetry-breaking as, {\it{e.g.}}, described in Remark~\ref{ADV}.

Moving away from the nilpotent singularities with full Jordan chains, one could also attempt to change the system of PDEs such that the geometric multiplicity of the zero eigenvalue is two or larger, from which one would expect chaotic motion as in the Lorenz system (see the original work \cite{Lor63} and, {\it{e.g.},} \cite{OT17} for a normal form implying the existence of a Lorenz attractor). This would be particularly interesting since the Lorenz system features various routes to observable chaotic motion, including a homoclinic explosion and period-doubling.

One interpretation of \eqref{eq:multi-component-RD} is  an Allen-Cahn-type PDE for the fast variable $U$ that is ``forced" by a symmetry breaking term $\gamma(x,t)$. Recall that for constant $\gamma$, {\it i.e.} \eqref{AC},  fronts move with a speed proportional to $\gamma$. One could attempt to develop this further by using spatially periodic {$\gamma$ analogous to} \cite{NTYU07} or chaotic forcing as, {\emph{e.g.}}, done in \cite{ASCFK20} for the bistable Swift-Hohenberg equation with multiplicative forcing by the Kuramoto-Sivashinky equation. 
While one expects that arbitrary front motion over finite time scales can be created {by suitable $\gamma(t)$}, it is not clear that long time properties can be realised in this way.

Similar to the already mentioned related results on interacting localised structures which lead to chaos, one could attempt to study dynamic pulse, or $2$-front, solutions in \eqref{eq:multi-component-RD} (obtained from a combination of a front and a back solution as described in \cite{DvHK09,NS21, vHDK08, vHDKP10} for the three-component model \eqref{1F2S}). Since the number of critical eigenvalues (potentially) doubles by doubling the number of interfaces, one anticipates from \cite{vHDKP10} that, in certain parameter regions, the dimension and structure of the reduced ODEs for the interface positions is such that chaotic motion can occur.

Another interesting research direction to pursue is the influence of heterogeneity of different kinds on the dynamics of front solutions. As already mentioned, numerical discretisations where found in \cite{VD13} to cause chaotic motion for pulse solutions in a system of reaction-diffusion systems. Since the corresponding reduced ODE for the pulse motion in this case was planar, one can anticipate that the numerical discretisation causes the translational eigenvalue to move through the imaginary axis, thereby increasing the dimension of the reduced ODE such that it can potentially support chaotic behaviour. A similar effect is to be expected upon adding spatially varying coefficients to \eqref{eq:multi-component-RD} in a similar fashion as in \cite{bastiaansen2020pulse}. This also removes the translation invariance and, in particular, could lead to a three-dimensional reduced ODE for the front motion, even for $N = 2$. A case of the latter was formally analyzed in \cite{NTYU07}, which led to numerical evidence for chaos.

%%%
% SECTION
%%%
\section*{Acknowledgements}
The authors thank D. Turaev and I. Ovsyannikov for fruitful discussions. They also thank V. Ramirez Garcia Luna for finding the closed formula for the recurrence relations~\eqref{a1} and  \eqref{ainew} needed to prove Lemma~\ref{L:EIGv}. PvH and JR acknowledge support from the {\emph{Alexander von Humboldt Foundation}} under the 
{\emph{Research Fellowship for Experienced Researchers}} program. PvH thanks the University of Bremen for their hospitality.
JR notes this paper is a contribution to projects M2 and M7 of the Collaborative Research Centre TRR 181 `Energy Transfer in Atmosphere and Ocean' funded by the Deutsche Forschungsgemeinschaft (DFG, German Research Foundation) -- Project Number 274762653. Part of this research has been carried out during the Lorentz Center Workshop "Coherent Structures: Current Developments and Future Challenges".

%%%%%%%%%%%%%%%%%%%%%%%%%
\bibliographystyle{siam}
\bibliography{Chaotic_Fronts_arXiv}
%%%%%%%%%%%%%%%%%%%%%%%%

%%%%%
%%  APPENDIX
%%%%%
\begin{appendix}
%%%%
\section{Proof of Lemma~\ref{lem:criticalroots_N_eq_3}.}\label{app:numberroots}
We study the Evans function $E_0$ corresponding to a stationary front solution $Z_{\rm SF}$ for $N=3$, see~\eqref{EV0}, for $\alpha_j$ near those from Proposition~\ref{L:N+1}.
To ease notation, we write $E_0$ as
\[
E_0(\lambda) = \lambda + \sum_{j=1}^3 a_j \left( \frac{1}{\sqrt{\lambda \tau_j +1}}-1 \right)\;, \quad
a_j:=\frac{3\sqrt{2}}{2}\frac{\alpha_j}{d_j}. 
\]
Since the values of $\tau_j$ are assumed to be mutually distinct for a stationary front solution (cf.\ Proposition~\ref{L:N+1}), we can order them as $0<\tau_1<\tau_2<\tau_3$, which means that the singularities of $E_0$ satisfy $-1/\tau_1< -1/\tau_2< -1/\tau_3<0$.
The values of $a_j$ from Proposition~\ref{L:N+1} read 
\begin{align}\label{e:app4fold}
a_1 = \frac{2}{\tau_1}\frac{\tau_2}{\tau_2-\tau_1}\frac{\tau_3}{\tau_3-\tau_1}\;,\quad
a_2 = \frac{2}{\tau_2} \frac{\tau_1}{\tau_1-\tau_2}\frac{\tau_3}{\tau_3-\tau_2}\;,\quad
a_3 = \frac{2}{\tau_3}\frac{\tau_1}{\tau_1-\tau_3}\frac{\tau_2}{\tau_2-\tau_3}, 
\end{align}
so that by the above ordering we have 
\begin{align*}
a_1>0\;,\quad a_2<0\;,\quad a_3>0.
\end{align*}

We next perform a homotopy along $\tau_3$ from its given initial value $\tau_3=\tau_3^0$ to $\tau_3=\tau_2$. At the endpoint of this homotopy $E_0$ is equivalent to the Evans function for $N=2$, for which we know from \cite{CBDvHR15} that it possesses at most three roots. It therefore suffices to show that the number of root decreases by one during the homotopy. 

Along the homotopy we choose $a_j$ as in \eqref{e:app4fold} except the values of $a_2$ and $a_3$ are slightly perturbed to ensure they remain bounded. In particular, in the formulas for $a_2, a_3$ from~\eqref{e:app4fold} (but not elsewhere) we replace $\tau_2$ by $\tau_2+g(\tau_3)$, where $g$ is continuous and monotone with $g(\tau_3^0)=0$ and $g(\tau_2)\in(0,\tau_3-\tau_2)$, which will be chosen sufficiently small in the following. Notably, this does not change the locations of the singularities of $E_0$.

More generally, for any $a_j$ we can bound the location of the roots of $E_0$. 
We estimate 
\[
|E_0(\lambda)| \geq |\lambda| - \sum_{j=1}^3 | a_j| \left(\left|\frac{1}{\sqrt{\tau_j \lambda+1}}\right| + 1\right) 
\geq |\lambda| -2 \sum_{j=1}^3|a_j|,
\]
where the last inequality holds for $1\leq |\tau_j\lambda+1|$, $j=1,2,3$. 
Since $|\tau_j\lambda+1|\geq |\tau_j \lambda|-1$ the latter holds for $|\lambda|\geq 1/\tau_1$ and we infer that there are no roots with
\[
|\lambda| > 2 \max\left\{\sum_{j=1}^3 |a_j|, \;\frac{1}{\tau_1}
\right\}.
\]
Therefore, during the homotopy roots remain bounded and thus the number of roots can only change due to roots on the branch cut $(-\infty, -1/\tau_3]$.  Specifically, since there can be no root near the fixed singularity  $-1/\tau_1$ there are four cases to consider for roots $\lambda$ of $E_0$: 
\begin{tabular}{rlrl}
(i): & $\lambda \in (-\infty, -1/\tau_1)$, &
(ii): & $\lambda \in (-1/\tau_1,-1/\tau_2)$, \\
(iii): & $\lambda \in (-1/\tau_2,-1/\tau_3)$, & and 
(iv):& $\lambda\to -1/\tau_2$ as $\tau_3\to \tau_2$. 
\end{tabular}

\medskip
\textbf{Case (i):}  $\lambda<-1/\tau_1$ means for a root that $\Re(E(\lambda)) = \lambda - a_1-a_2-a_3=0$, {\emph{i.e.}}, $\lambda = a_1+a_2+a_3$. But $\lambda<-1/\tau_1$ requires $a_1+a_2+a_3< -1/\tau_1$ and from \eqref{e:app4fold} for $g=0$ we have $$a_1+a_2+a_3 = \frac{2}{\tau_1}+\frac{2}{\tau_2}+\frac{2}{\tau_3}>0>-\frac{1}{\tau_1}.$$ Hence, there is no such root for $g>0$ sufficiently small and case (i) does not occur \textbf{Case (ii):}
$\lambda\in(-1/\tau_1,-1/\tau_2)$ means for a root that 
$\Im(E(\lambda)) = \dfrac{a_1}{\sqrt{-\tau_1\lambda-1}} + \dfrac{a_2}{\sqrt{-\tau_2\lambda-1}} = 0$, 
which means $a_1a_2<0$ and $a_1^2(\tau_2\lambda+1) = a_2^2(\tau_1\lambda+1)$ from which we obtain 
$\lambda = \lambda_2:= \dfrac{a_2^2-a_1^2}{a_1^2\tau_2-a_2^2\tau_1}$. Now $\lambda_2\in(-1/\tau_1,-1/\tau_2)$ requires $a_1\neq a_2$ and we compute that 
\[
\mathrm{sgn}\; \partial_{\tau_1} \lambda_2 = \mathrm{sgn} (a_1^2-a_2^2)\;, \quad
\mathrm{sgn}\; \partial_{\tau_2} \lambda_2 = \mathrm{sgn} (a_2^2-a_1^2). 
\]
At $\tau_1=\tau_2$ we have $\lambda_2 = -1/\tau_1 =  -1/\tau_2$ so that the signs of the derivatives mean for $\tau_1<\tau_2$: 
\begin{itemize}
\item by increasing $\tau_2$ we have $\lambda_2< -1/\tau_1$ if $a_1^2>a_2^2$, and
\item by decreasing $\tau_1$ we have $\lambda_2> -1/\tau_2$ if $a_2^2>a_1^2$.
\end{itemize}
In both cases $\lambda_2\notin(-1/\tau_1,-1/\tau_2)$, which means case (ii) does not occur for a root. \textbf{Case (iii):}  $\lambda\in(-1/\tau_2,-1/\tau_3)$ means that 
$\Im(E(\lambda)) = \dfrac{a_3}{\sqrt{-\tau_3\lambda-1}}$, which is non-zero for $\lambda\neq 0$. Hence, case (iii) cannot occur for a root. \textbf{Case (iv):} It suffices to study whether $\lambda\approx -1/\tau_3$ for a root when $\tau_3 = \tau_2+\delta$ with homotopy parameter $\delta\approx 0$. Writing $\lambda= \nu -1/\tau_2$ with $\nu\in\mathbb{C}$ yields 
\begin{align*}
E_0 =& \nu -\frac{1}{\tau_2} 
+ a_1\left(\frac{1}{\sqrt{\tau_1\nu-\dfrac{\tau_1}{\tau_2}+1}}-1\right) 
\\&+ \frac{1}{\sqrt{\tau_2\nu}}\left( 
a_2(1-\sqrt{\tau_2\lambda})
+ a_3\left(\frac{1}{\sqrt{1+\dfrac{\delta}{\tau_2\nu}(\nu-\dfrac{1}{\tau_2})}}-\sqrt{\tau_2\nu}\right)\right),
\end{align*}
and multiplication with $\sqrt{\tau_2\nu}$ gives that its roots are those of 
\begin{align*}
F:= &
\sqrt{\tau_2\nu}\left(\nu -\frac{1}{\tau_2} 
+ a_1\left(\frac{1}{\sqrt{\tau_1\nu-\dfrac{\tau_1}{\tau_2}+1}}-1\right)  
- (a_2+a_3)\right)
\\&+ a_2
+ \frac{a_3}{\sqrt{1+\dfrac{\delta}{\tau_2\nu}(\nu-\dfrac{1}{\tau_2})}}.
\end{align*}
Since $a_2, a_3\neq0$ it follows that roots near $\nu=0$ can only occur for bounded $\delta/\nu$ as $\delta\to 0$.   
Defining $\rho := \delta/(\nu\tau_2)$ we find that $F=0$ at $\nu=0$ requires $a_2+ a_2(1-\rho/\tau_2)^{-1/2}=0$.  Since $a_2a_3<0$ we infer that the occurrence of roots requires $\rho\to\rho^*:= \tau_2(1-(a_3/a_2)^2)$ as $\delta\to 0$. From \eqref{e:app4fold} (i.e. for $g=0$)
and the ordering of $\tau_j$ we have 
\[
\frac{a_3}{a_2} = -\frac{\tau_1-\tau_2}{\tau_1-\tau_3}\frac{\tau_2^2}{\tau_3^2} \in(-1,0),
\]
which implies $\rho^*>0$ for sufficiently small $g>0$. 

With $\rho$ substituting $\delta/(\nu\tau_2)$ we have that $F$ is a smooth function of $w:=\sqrt{\nu}$ and $\rho$ for $\nu\approx0$. On the one hand, we compute for $g=0$ that
\[
\partial_w F|_{w=0,\rho=\rho^*} = \sqrt{\tau_2}A, \;\; {\rm{with}} \;\; 
A= -\frac{1}{\tau_2}+a_1\sqrt{\frac{\tau_2}{\tau_2-\tau_1}} - (a_1+a_2+a_3)\in \R.
\]
For generic modification of $\tau_2$ in $a_2, a_3$ by $g>0$ we thus have real $\partial_w F|_{w=0,\rho=\rho^*}\neq 0$. 

On the other hand, we directly compute, since $a_2<0$, that 
\[
\partial_\rho F|_{w=0,\rho=\rho^*} = \frac{a_3}{2\tau_2}\left(\frac{a_3}{a_2}\right)^{-3} 
= \frac{a_2^3}{2\tau_2 a_3^2}<0.
\]
Hence, there is a unique real root $w(\rho)$ of $F$ and thus a unique $\nu(\rho)>0$ for $\rho\approx 0$, and this also holds for sufficiently small $g>0$. 
This means that at the endpoint of the homotopy, where $\tau_2=\tau_3$ the number of roots of $E_0$ has decreased by one, which proves the claim.$\hfill \square$
 \section{Proof of Lemma~\ref{L:EIGSF}}
\label{A:E}
Before we display the proof we need to state a few more details on the uniformly travelling front solution $Z_{\rm TF}$ which can be proved in the same way as in \cite{CBDvHR15}. As before, we therefore omit the proof of these statements.
%%%
% LEMMA
%%%
\begin{lemma}[More details on front profiles]
Let the conditions of Lemma~\ref{L:N} be fulfilled. Then there is an expansion of the uniformly travelling front solution $Z_{\rm TF}= 
Z_{\rm TF}^0 + \eps Z_{\rm TF}^1 + \eps^2 Z_{\rm TF}^2 + \mathcal{O}(\eps^3)= 
(U_{\rm TF}^0,\vec{V}^{0}_{\rm TF}) + 
\eps (U_{\rm TF}^1,\vec{V}^{1}_{\rm TF}) + \eps^2
(U_{\rm TF}^2,\vec{V}^{2}_{\rm TF}) 
+ \mathcal{O}(\eps^3)$
 such that in $I_f$, see Definition~\ref{fields}, and with $z := y/\eps$ we have
\begin{enumerate}
\item $U_{\rm TF}^1$ is an even function and obeys
\begin{align*}
L^0 \left(\partial_z U_{\rm TF}^1\right) = -\frac12 \sqrt2 c \,\partial_z \rho(z) +3\sqrt2 U_{\rm TF}^0 U_{\rm TF}^1 \rho(z)\,,
\end{align*}
where
$
L^0$
is given in Definition~\ref{LL}
and
$
\rho(z) := \sech^2{(z/\sqrt2)} \,.
$
\item $U_{\rm TF}^2$ obeys
\begin{align} 
\label{UTF2}
\begin{aligned}
&c \int \partial_z^2(U_{\rm TF}^1) \rho \, dz - \dfrac{3\sqrt2}{2} \int (U_{\rm TF}^1)^2 \rho^2 \,dz - 6 \int U_{\rm TF}^0 U_{\rm TF}^1 \partial_z(U_{\rm TF}^1) \rho \,dz \\ &- 3 \sqrt2 \int U_{\rm TF}^0 U_{\rm TF}^2  \rho^2 \, dz
= 4\sqrt2 \left( \sum_{j=1}^N \dfrac{ \alpha_j}{\sqrt{4 d_j^2 + c^2 \tau_j^2}}
+ \dfrac{\partial_j \mathcal{F}^N_{\rm nl}(\vec{V}^*)}{\sqrt{4 d_j^2 + c^2 \tau_j^2}}  \right)\,,
\end{aligned}
\end{align}
where we used the short hand notation
\begin{align*}
\partial_j \mathcal{F}^N_{\rm nl}(\vec{V}^*) :=\left.\frac{\partial \mathcal{F}_{\rm nl}^N(\vec{V})}{\partial V_j}\right|_{\vec{V} = \vec{V}^*} \,,
\end{align*}
and we recall that $\mathcal{F}_{\rm nl}^N(\vec{V})$ is given in \eqref{N:NONL22}.
\end{enumerate}
\end{lemma}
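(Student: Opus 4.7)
My plan would be to proceed by a direct spatial dynamics / GSPT computation on the travelling wave ODE associated with \eqref{eq:multi-component-RD}. First, rewrite the $U$-equation in the fast travelling wave variable $z = y/\eps$, so that
\[
\partial_z^2 U + \eps c \,\partial_z U + U - U^3 - \eps \,\mathcal{F}^N(\vec{V}) = 0,
\]
and expand $U_{\rm TF} = U_{\rm TF}^0 + \eps U_{\rm TF}^1 + \eps^2 U_{\rm TF}^2 + \calO(\eps^3)$ (similarly for $\vec V_{\rm TF}$). From Lemma~\ref{L:N} (and its proof via GSPT) one has in $I_f$ that $U_{\rm TF}^0(z)=\tanh(z/\sqrt2)$ and $\vec V_{\rm TF}^0\equiv \vec V^*$. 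Combining this with $\partial_z U_{\rm TF}^0 = \rho(z)/\sqrt 2$ and $(U_{\rm TF}^0)^2-1=-\rho$, the subsequent orders in $\eps$ produce linear inhomogeneous equations for $U_{\rm TF}^k$ governed by the self-adjoint operator $L^0$ from Definition~\ref{LL}.

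For part~(1), the $\calO(\eps)$-equation in $I_f$ is
\[
L^0 U_{\rm TF}^1 = -c\,\partial_z U_{\rm TF}^0 + \mathcal{F}^N(\vec V^*),
\]
where the right hand side is even since $\partial_z U_{\rm TF}^0$ and the constant $\mathcal{F}^N(\vec V^*)$ are both even, and the bounded solution perpendicular to $\ker L^0=\mathrm{span}\{\partial_z U_{\rm TF}^0\}$ (which is itself even) inherits this parity. Then I would differentiate this identity in $z$ and exploit the commutator
\[
[\partial_z, L^0] = -\partial_z\bigl(3(U_{\rm TF}^0)^2\bigr) = -6\,U_{\rm TF}^0\,\partial_z U_{\rm TF}^0 = -3\sqrt 2\, U_{\rm TF}^0\,\rho(z),
\]
together with $\partial_z^2 U_{\rm TF}^0 = \tfrac{1}{\sqrt 2}\partial_z \rho$ and the fact that $\partial_z \mathcal{F}^N(\vec V^0)=0$ in $I_f$. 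Rearranging gives
\[
L^0(\partial_z U_{\rm TF}^1) = -\tfrac{\sqrt 2}{2}\,c\,\partial_z \rho(z) + 3\sqrt 2\, U_{\rm TF}^0 U_{\rm TF}^1\rho(z),
\]
which is precisely the claim.

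For part~(2), the $\calO(\eps^2)$ equation in $I_f$ reads
\[
L^0 U_{\rm TF}^2 = -c\,\partial_z U_{\rm TF}^1 + 3\,U_{\rm TF}^0(U_{\rm TF}^1)^2 + \nabla\mathcal{F}^N(\vec V^*)\cdot \vec V_{\rm TF}^1,
\]
and by self-adjointness of $L^0$ a solvability condition arises from pairing with the kernel element $\partial_z U_{\rm TF}^0 \propto \rho$. The plan is to combine this with the derivative identity from~(1) and integrate against $\rho$ (with the integration-by-parts moves $\int \partial_z^2 U_{\rm TF}^1\,\rho\,dz = -\int \partial_z U_{\rm TF}^1\,\partial_z\rho\,dz$ and $\partial_z\rho = -\sqrt 2\,U_{\rm TF}^0\rho$) in order to produce the four distinctive $\rho,\rho^2$-weighted integrals on the left hand side of~\eqref{UTF2}. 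The right hand side must then be identified with the slow-field contribution transported into $I_f$: $\vec V_{\rm TF}^1$ in $I_f$ is obtained by Taylor-expanding the slow profiles~\eqref{N:PROFILES} about $y=0$, whose slopes $\Lambda_j^{\pm}(V_j^*\mp 1)$ produce, upon matching in $I_f$ and using $1/\sqrt{4 d_j^2+c^2\tau_j^2}$ from~\eqref{VSTARN}, exactly $4\sqrt 2\,\sum_j \partial_j\mathcal{F}^N(\vec V^*)/\sqrt{4 d_j^2+c^2\tau_j^2}$ after splitting $\nabla\mathcal{F}^N=(\alpha_j)+\nabla\mathcal{F}^N_{\rm nl}$.

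I anticipate the main obstacle to be the careful slow-fast matching of $\vec V_{\rm TF}^1$ across $z=0$: because the leading slow profiles are piecewise (different decay rates $\Lambda_j^{\pm}$ on $I_s^{\pm}$), the linear-in-$z$ part of $\vec V_{\rm TF}^1$ in $I_f$ has different one-sided slopes, and one must verify that their contributions under integration against the odd/even pieces of the kernel combine to yield the symmetric expression in the right hand side of~\eqref{UTF2}. A secondary bookkeeping issue is distinguishing the $\alpha_j$-contribution (coming from the affine part of $\mathcal{F}^N$) from $\partial_j\mathcal{F}^N_{\rm nl}(\vec V^*)$ (coming from the nonlinear part evaluated at the nonzero fast-field value $\vec V^*$), since only together do they reproduce the full $\partial_j\mathcal{F}^N(\vec V^*)$. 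The remainder is a routine, if tedious, adaptation of the $N=2$ computation from~\cite{CBDvHR15}, which serves as a template.
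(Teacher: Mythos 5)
Your proposal is correct and follows exactly the route the paper intends: the paper in fact omits this proof altogether, deferring to the $N=2$ computation in \cite{CBDvHR15}, and your order-by-order expansion of the travelling wave ODE — with the commutator $[\partial_z,L^0]=-3\sqrt2\,U_{\rm TF}^0\rho$ for part (1), and for part (2) the pairing against $\partial_z\rho=-\sqrt2\,U_{\rm TF}^0\rho$ (equivalently, differentiating the $\mathcal{O}(\eps^2)$ equation and testing with $\rho$, using $L^0\partial_z\rho=3\sqrt2\,U_{\rm TF}^0\rho^2$) — is precisely that computation. The one obstacle you flag, the apparently mismatched one-sided slopes of $V_{{\rm TF},j}^1$, in fact dissolves upon checking: $\Lambda_j^+(V_j^*+1)=\Lambda_j^-(V_j^*-1)=2/\sqrt{4d_j^2+c^2\tau_j^2}$, so $\partial_z V_{{\rm TF},j}^1$ is a single constant across $I_f$ and $\int \partial_z V_{{\rm TF},j}^1\,\rho\,dz = 4\sqrt2/\sqrt{4d_j^2+c^2\tau_j^2}$ yields the right-hand side of \eqref{UTF2} directly after splitting $\partial_j\mathcal{F}^N(\vec{V}^*)=\alpha_j+\partial_j\mathcal{F}^N_{\rm nl}(\vec{V}^*)$.
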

\begin{proof}[Proof of Lemma~\ref{L:EIGSF}]
For notational convenience we write $\Phi = (u,\vec{v})= (u,v_1,\ldots,v_N)$.
Then the eigenvalue problem for $c=0$ and for an $\O(\eps^2)$-eigenvalue with real part larger than $- \eps^2\chi$ (cf.\ Lemma~\ref{L:SE}) can be deduced from \eqref{EQ:1FNS_EVPW} and is given by 
\begin{align}\label{EVPW}
\left\{
\begin{aligned}
\eps^2 \lambda u &= \left(\partial_{z}^2  + \ 1 - 3U_{\rm SF}^2  \right) u - \varepsilon 
\nabla \mathcal{F}^N(\vec{V}_{\rm SF}) \cdot \vec{v}\,,\\
\eps^2 \tau_j \lambda v_j &=  \left(d_j^2 \partial_{z}^2 - \eps^2\right) v_j + \eps^2 u, \qquad \qquad  j = 1, \ldots, N \,. 
\end{aligned}
\right.
\end{align}
As a first order system \eqref{EVPW} becomes
\begin{align} \label{EVPW_SYS}
\left\{
\begin{aligned}
\partial_z u =& p\\
\partial_z p =& \eps^2 \lambda u -  u + 3U_{\rm SF}^2   u + \varepsilon 
\nabla \mathcal{F}^N(\vec{V}_{\rm SF}) \cdot \vec{v}\,,\\
\partial_z v_j =& \eps q^j\,,\\
\partial_z q_j =&  \dfrac{\eps}{d_j^2} \left((1+ \tau_j \lambda) v_j - u \right), \qquad j = 1, \ldots, N \,, 
\end{aligned}
\right.
\end{align}
with the corresponding equivalent slow system
\begin{align}\label{EVPW_SYS2}
\left\{
\begin{aligned}
\eps \partial_y u =& p\\
\eps \partial_y p =& \eps^2 \lambda u -  u + 3U_{\rm SF}^2   u + \varepsilon 
\nabla \mathcal{F}^N(\vec{V}_{\rm SF}) \cdot \vec{v}\,,\\
\partial_y v_j =&  q_j\,,\\
\partial_y q_j =&  \dfrac{1}{d_j^2} \left((1+ \tau_j \lambda) v_j - u \right), \qquad j = 1, \ldots, N \,. 
\end{aligned}
\right.
\end{align}
Since a stationary front has speed zero, we have, similarly to \cite{CBDvHR15, CBvHHR19, DvHK09}, that the first order correction term of $U_{\rm SF}$ in the fast field equals zero. That is,
$$
U_{\rm SF}(z) = U^0_{\rm SF}(z) + \eps^2 U^2_{\rm SF}(z) + \mathcal{O}(\eps^3) =
\tanh{\left(\dfrac{z}{\sqrt2}\right)} + \eps^2 U^2_{\rm SF}(z) + \mathcal{O}(\eps^3)
 \,,\qquad z \in I_f,
$$ 
see \eqref{N:PROFILES} of Lemma~\ref{L:N}. Furthermore, the solvability condition for the second order correct term  of a travelling front $U_{\rm TF}^2$ \eqref{UTF2} simplifies to
\begin{align} \label{SOLVSF}
 - 3 \sqrt2 \int U_{\rm SF}^0 U_{\rm SF}^2  \rho^2 d\,z 
= 2\sqrt2 \left( \sum_{j=1}^N \dfrac{ \alpha_j}{ d_j}
 \right)\,,
\end{align}
where we observe that the term coming from the nonlinearity of the coupling function $\mathcal{F}^N(\vec{V}^*)$ drops out since $V_j^*=0$ at $c=0$, see \eqref{VSTARN}. That is, $\partial_j \mathcal{F}_{\rm nl}^N(\vec{0})=0$.

We are now going to determine the correct asymptotic scaling of $(u,\vec{v})$ by subsequently examining \eqref{EVPW_SYS} in the fast field $I_f$ and \eqref{EVPW_SYS2} in the slow fields $I_s^\pm$, see Definition~\ref{fields}.
\\

{\bf Fast field $I_f$, $\O(1)$}: 
Upon using a regular expansion\footnote{Note that $(u^0,v_1^0, \ldots, v_N^{0}) = (\varphi_U, \varphi_1, \ldots, \varphi_N)$ in the notation of Lemma~\ref{L:EIGSF}.} 
$(u,\vec{v}) = (u^0,\vec{v}^0)+ \eps(u^1,\vec{v}^1)+\eps^2(u^2,\vec{v}^2) + \mathcal{O}(\eps^3)  = (u^0,v_1^0, \ldots, v_N^{0})+ \eps (u^1,v_1^{1}, \ldots, v_N^{1}) + \eps^2 (u^2,v_1^{2}, \ldots, v_N^{2})+ \O(\eps^3)$
we get to leading order in the fast field from~\eqref{EVPW_SYS} 
$$
\O(1): \partial_z^2 u^0 = 3 (U_{\rm SF}^0)^2 u^0 - u^0 \implies u^0 = C \sech^2{(z/\sqrt2)} = C \rho\,,
$$
see again Lemma~\ref{L:N}. \\

{\bf Slow fields $I_s^\pm$, $\O(1)$}: In turn, this reduces the slow equations in the slow fields $I_s^\pm$, since $u^0=0$ in these fields, to
$$
\O(1): d_j^2 \partial_y^2 v_j^{0} = (1+ \tau_j \lambda) v_j^{0} , \qquad j = 1, \ldots, N\,. 
$$ 
These are $N$ linear decoupled equations that can be solved explicitly in terms of exponentials. However, they have to be bounded at $\pm \infty$ (recall $\Re(\lambda)>-\chi$) and match at zero (since the slow components do not change over the fast field, from \eqref{EVPW_SYS}: $\partial_z v_j^{0} = 0 = \partial_z q_j^{0}$). Thus we get that $v_j^{0} = 0 = q_j^{0}$. \\

{\bf Fast field $I_f$, $\O(\eps)$}: Since $U_{\rm SF}^1 = 0$, we get 
 $$
\O(\eps): \partial_z^2 u^1= 3 (U_{\rm SF}^0)^2 u^1 - u^1 \implies u^1 = \hat{C} \sech^2{(z/\sqrt2)}\,.
$$
However, without loss of generality, we can set $\hat{C} =0$ and hence $u^1=0$ \cite{CBvHHR19}. \\

{\bf Slow fields $I_s^\pm$, $\O(\eps)$}: For the next order correction terms in the slow fields, we still get from \eqref{EVPW_SYS2}
$$
\O(\eps): d_j^2 \partial_y^2 v_j^{1} = (1+ \tau_j \lambda) v_j^{1} , \qquad j = 1, \ldots, N\,. 
$$ 
This is solved by 
\begin{align*}
v_j^{1} = 
\left\{
\begin{aligned}
A_j^- e^{\Theta_j^+ y}\,, \quad y \in I_s^-,\\
A_j^+ e^{\Theta_j^- y}\,, \quad y \in I_s^+,
\end{aligned}
\right.
\end{align*}
with
$
\Theta^\pm :=  \pm \dfrac{1}{ d_j} \sqrt{1+\tau_j \lambda},
$
and where we already forced that the solutions stay bounded as $y \to \pm \infty$.

At the $\mathcal{O}(1)$-level the profiles $v_j^{0}$ and their derivatives $q_j^{0}$ needed to match over the fast field. This is no longer true here. While the change of the slow components over the fast field is still zero $\partial_z v_j^{1} = 0$, see \eqref{EVPW_SYS},
the change of the derivative of the slow components over the fast fields is no longer zero. Again, from \eqref{EVPW_SYS} we get $$\partial_z q_j^{1}=-u^0/d_j^2,$$ which implies that, to leading order,
$$
\Delta q_j^{1} = q_j^{1}(0^+) - q_j^{1}(0^-)
=
\int_{-\infty}^{\infty} \partial_z q_j^{1} d z = -\dfrac{1}{d_j^2} \int_{-\infty}^{\infty} C \sech^2{(z/\sqrt2)} d z
=
-2 \sqrt2 \dfrac{C}{d_j^2}\,.
$$
Hence, matching the $v_j^{1}$ profiles over the fast field gives
$
A_j^-= A_j^+:= A_j\,,
$ while the derivatives give
$$
\Theta^+ A_j -2 \sqrt2 \dfrac{C}{d_j^2} = \Theta^- A_j\implies 
A_j =  \dfrac{2 \sqrt2 C}{d_j^2(\Theta^+-\Theta^-)} 
= \dfrac{ \sqrt2  C}{ d_j\sqrt{ 1+\tau_j \lambda}}.
$$
Thus, we get $v_j^{1} = \sqrt2 C \varphi_j(y)$ \eqref{N:UPROFILE_EIG}. \\

{\bf Fast field $I_f$, $\O(\eps^2)$}: Finally, at the $\O(\eps^2)$-level we get for the fast component in the fast scaling in the fast field
 $$
\O(\eps^2): \partial_z^2 u^2 = 3 (U_{\rm SF}^0)^2 u^2 - u^2  + 6 U_{\rm SF}^0 U_{\rm SF}^2 u^0 + 
\nabla \mathcal{F}^N(\vec{V}^0_{{\rm SF}}) \cdot  \vec{v}^1
+ \lambda u^0\,.$$
Recalling that for the coupling function $\mathcal{F}^N(\vec{V})$ \eqref{N:NONL22} we get in the fast field -- since $V^0_{{\rm SF},j} =0$ in the fast field, see \eqref{N:PROFILES} -- that 
$
\nabla \mathcal{F}^N(\vec{V}^0_{\rm SF}) \cdot  \vec{v}^1 = \sum_{j=1}^N\alpha_jv_j^{1}\,.
$
Applying a solvability condition, {\it i.e.}, testing the above expression with $u^0$, and using \eqref{SOLVSF}, yields
\begin{align*}
\begin{aligned}
0&=6 \int_{-\infty}^\infty U_{\rm SF}^0 U_{\rm SF}^2 (u^0)^2 dz + 
\nabla \mathcal{F}^N(\vec{0}) \cdot  \vec{v}^1(0) \int_{-\infty}^\infty  u^0 dz+ \lambda \int_{-\infty}^\infty (u^0)^2  dz \\
&=  4C^2 \left(- \sum_{j=1}^N \dfrac{ \alpha_j}{ d_j}+\sum_{j=1}^N   \dfrac{ \alpha_j}{d_j \sqrt{\tau_j \lambda+1}} + \dfrac{ \sqrt 2 \lambda }{3}\right)  \,,
\end{aligned} 
\end{align*}
which is equivalent to the Evans function \eqref{EV0}.

To conclude, we 
set $C = 1/(\sqrt{2} \eps)$ to get \eqref{N:UPROFILE_EIG} and this completes the proof of Lemma~\ref{L:EIGSF}. 
\end{proof}
%%
% SECTION
%%
\section{Proof of Lemma~\ref{L:EIGv}}
\label{A:L}
\begin{proof}
The general solutions to \eqref{SET} are given by
$$
v^j_{-}(x) = A^j_{-} e^{x} + B^j_{-} e^{-x} +\dfrac{\tau}{2} e^{x} \int_{0}^x e^{-s} v^{j-1}_-(s) ds -
\dfrac{\tau}{2} e^{-x} \int_{-\infty}^x e^{s} v^{j-1}_-(s) ds \,,
$$
and
$$
v^j_{+}(x) = A^j_{+} e^{x} + B^j_{+} e^{-x} +\dfrac{\tau}{2} e^{x} \int_\infty^x e^{-s} v^{j-1}_{+}(s) ds -
\dfrac{\tau}{2} e^{-x} \int_0^x e^{s} v^{j-1}_{+}(s) ds \,.
$$
Implementing the boundary conditions at $\pm \infty$, see \eqref{BC}, gives $B^j_{-} = A^j_{+}=0$. Next, we match at $0$ to obtain
\begin{align*}
\begin{aligned}
v^j_{-}(0) &= v^j_{+}(0) &&\implies  A^j_{-} - \dfrac{\tau}{2} \int_{-\infty}^0 e^{s} v^{j-1}_{-}(s) ds = 
B^j_{+} +  \dfrac{\tau}{2} \int_\infty^0 e^{-s} v^{j-1}_{+}(s) ds \\
(v^j_{-})'(0) &= (v^j_{+})'(0) &&\implies
A^j_{-}   +  \dfrac{\tau}{2} \int_{-\infty}^0 e^{s} v^{j-1}_{-}(s) ds = - B^j_{+} + \dfrac{\tau}{2} \int_\infty^0 e^{-s} v^{j-1}_{+}(s) ds\,.
\end{aligned}
\end{align*}
This yields
$$
A^j_{-}  =  \dfrac{\tau}{2} \int_\infty^0 e^{-s} v^{j-1}_{+}(s) ds\,, \qquad 
B^j_{+} = -\dfrac{\tau}{2} \int_{-\infty}^0 e^{s} v^{j-1}_{-} ds \,,
$$
and \eqref{SET} with \eqref{BC} are thus solved by
\begin{align}
\label{EQ0-}
v^j_{-}(x) = \dfrac{\tau}{2} e^{x} \int_\infty^0 e^{-s} v^{j-1}_{+}(s) ds  +\dfrac{\tau}{2} e^{x} \int_{0}^x e^{-s} v^{j-1}_{-}(s) ds -
\dfrac{\tau}{2} e^{-x} \int_{-\infty}^x e^{s} v^{j-1}_{-}(s) ds \,,\\
\label{EQ0+}
v^j_{+}(x) =  -\dfrac{\tau}{2} e^{-x}\int_{-\infty}^0 e^{s} v^{j-1}_{-}(s) ds  +\dfrac{\tau}{2} e^{x} \int_\infty^x e^{-s} v^{j-1}_{+}(s) ds -
\dfrac{\tau}{2} e^{-x} \int_0^x e^{s} v^{j-1}_{+}(s) ds \,.
\end{align}

We use induction to show that \eqref{PAR} holds if $v^0_{\pm}(x) = e^{\mp x}/d$.
For the base case $m=0$ the results hold true by assumption since $v^0_{+}(-x) =  e^{-(-x)}/d = v^0_{-}(x).$
For the inductive step, we assume that \eqref{PAR} holds for $m=j-1$. Then, for $m=j$ we get from \eqref{EQ0-}, \eqref{EQ0+} and the inductive assumption that
\begin{align*}
\begin{aligned}
v^j_{+}(-x) &=  -\dfrac{\tau}{2} e^{x}\int_{-\infty}^0 e^{s} v^{j-1}_{-}(s) ds  +\dfrac{\tau}{2} e^{-x} \int_\infty^{-x} e^{-s} v^{j-1}_{+}(s) ds -
\dfrac{\tau}{2} e^{x} \int_0^{-x} e^{s} v^{j-1}_{+}(s) ds \\
&=
 \dfrac{\tau}{2} e^{x}\int_{\infty}^0 e^{-z} v^{j-1}_{-}(-z) dz  -\dfrac{\tau}{2} e^{-x} \int_{-\infty}^{x} e^{z} v^{j-1}_{+}(-z) dz +
\dfrac{\tau}{2} e^{x} \int_0^{x} e^{-z} v^{j-1}_{+}(-z) dz
 \\
&=
 \dfrac{\tau}{2} e^{x}\int_{\infty}^0 e^{-z} v^{j-1}_{+}(z) dz  -\dfrac{\tau}{2} e^{-x} \int_{-\infty}^{x} e^{z} v^{j-1}_{-}(z) dz +
\dfrac{\tau}{2} e^{x} \int_0^{x} e^{-z} v^{j-1}_{-}(z) dz 
\\&= v^j_{-}(x).
\end{aligned}
\end{align*}
This completes the proof of \eqref{PAR}. 

Next, we prove the final statement \eqref{EQ1n} of Lemma~\ref{L:EIGv}, which we repeat for convenience,
\begin{align}
\label{aij_A}
v^{j}_{+}(x) = C_{j}  v^0_{+}(x)  \sum_{i=0}^{j} a_j^i   x^i :=
\underbrace{(-1)^{j}\dfrac{(2j-1)!!}{(2j)!!} \tau^{j}}_{:= \,\,C_{j}} v^0_{+}(x)  \underbrace{\sum_{i=0}^{j} \dfrac{2^i}{i!}  \dfrac{\tbinom{2j-i}{j}}{  \tbinom{2j}{j}}   x^i}_{:= f^j(x)}
 = C_jv_{+}^0(x)f^j(x)
\,,
\end{align} 
for $j=1,2,\ldots,k$. The symmetry of \eqref{PAR} implies, since $v_+^0(0)=1/d$, that it must hold that
$$(v^{j}_{+})'(0)=0 \implies C_{j} \left(-\dfrac{a_{j}^0}{d}+\dfrac{a_{j}^1}{d}\right) =0 \implies a_{j}^0 = a_{j}^1\,,$$
which is indeed true
$$
a_{j}^0 = \dfrac{2^0}{0!}  \dfrac{\tbinom{2j}{j}}{  \tbinom{2j}{j}} = 1
=  \dfrac{2^1}{1!}  \dfrac{\tbinom{2j-1}{j}}{  \tbinom{2j}{j}} = a_j^1\,.
$$ 
Furthermore, observe that $v^{j}_{+}(x)$ \eqref{EQ1n}/\eqref{aij_A} as given in the lemma obeys the asymptotic boundary conditions \eqref{BC}. 
Next, we compute the second derivative of $v^{j}_+(x)$ (using that $v_+^0(x) = e^{-x}/d$)
\begin{align*}
\begin{aligned}
(v^{j}_{+})''(x) &= C_{j} \left((v^0_{+})''(x) f^{j}(x)+2(v^0_{+})'(x) (f^{j})'(x)+ v^0_{+}(x) (f^{j})''(x)\right) \\
&= 
C_{j} \left(v^0_{+}(x) f^{j}(x)-2v^0_{+}(x) (f^{j})'(x)+ v^0_{+}(x) (f^{j})''(x)\right)\\
&= v^{j}_{+}(x) + C_{j} \left(-2v^0_{+}(x) (f^{j})'(x)+ v^0_{+}(x) (f^{j})''(x)\right)\,.
\end{aligned}
\end{align*} 
So, after substituting $v^{j}_{+}(x)$ \eqref{EQ1n}/\eqref{aij_A} and the above into the ODE \eqref{SET}, we get that the following relation 
\begin{align*}
C_{j} \left(-2v^0_{+}(y) (f^{j})'(x)+ v^0_{+}(x) (f^{j})''(x)\right) &= \tau v^{j-1}_{+}(x) = 
\underbrace{(-1)^{j-1}\dfrac{(2j-3)!!}{(2j-2)!!} \tau^{j-1}}_{:= \,\,C_{j-1}} \tau v^0_{+}(x) f^{j-1}(x)\,,
\end{align*}
most hold for \eqref{EQ1n}/\eqref{aij_A} to be true.
Dividing out $v^0_{+}(x)$ and using the relation
$$C_j = -(2j-1) \tau C_{j-1}/(2j)$$
gives the following relation for $f^{j-1}$
\begin{align*}
\dfrac{2j}{2j-1}  f^{j-1}(x) = 2 (f^{j})'(x) - (f^{j})''(x)\,, \qquad j=2,3, \ldots, k\,.
\end{align*}
Next, we substitute the expansion of $f^j(x)$ and $f^{j-1}(x)$ \eqref{aij_A} into the above relation to obtain
\begin{align*}
\dfrac{2j}{2j-1} \sum_{i=0}^{j-1} a_{j-1}^i x^i&=   2\sum_{i=1}^{j} i a_{j}^i x^{i-1} - \sum_{i=2}^{j} i(i-1) a_{j}^i x^{i-2} \\
 &= 2\sum_{i=0}^{j-1} (i+1) a_{j}^{i+1} x^{i} 
        - \sum_{i=0}^{j-2} (i+2)(i+1) a_{j}^{i+2} x^{i}\,.
\end{align*}
Equating equal powers of $x$ gives, after observing that the latter term does not include an $x^{j-1}$-term, two types of recurrence relations. For $i = j-1$ we get
\begin{align}
\label{a1}
a_{j}^{j} = \dfrac{1}{2j-1} a_{j-1}^{j-1}= 
 \dfrac{1}{(2j-1)!!} \, .
\end{align}
Furthermore,
\begin{align}
\label{ainew}
\dfrac{2j}{2j-1} a^i_{j-1} = 
 2 (i+1)a_{j}^{i+1} -  (i+1)(i+2)a_{j}^{i+2} \,, \qquad 0 \leq i \leq j-2 \,.
\end{align}
The final step in the proof is to show that the $a_j^i$ defined in \eqref{aij_A} indeed 
obey
the
recurrence relation \eqref{a1} and \eqref{ainew}. This follows directly from several straightforward calculations. In particular, \eqref{a1} follows from
$$
a_j^j := \dfrac{2^j}{j!}  \dfrac{\tbinom{2j-j}{j}}{  \tbinom{2j}{j}} = 
\dfrac{2^j}{j!}  \dfrac{1}{  \tbinom{2j}{j}} =  \dfrac{2^j j!}{(2j)!} = 
\dfrac{(2j)!!}{(2j)!}=
\dfrac{1}{(2j-1)!!}\,,
$$
while \eqref{ainew} follows from the observation that
\begin{align*}
2 (i+1)a_{j}^{i+1} &-  (i+1)(i+2)a_{j}^{i+2} \\:=& 
\dfrac{2(i+1)2^{i+1}}{(i+1)!}  \dfrac{\tbinom{2j-i-1}{j}}{  \tbinom{2j}{j}}
- (i+2)(i+1)
\dfrac{2^{i+2}}{(i+2)!}  \dfrac{\tbinom{2j-i-2}{j}}{  \tbinom{2j}{j}} \\
=&
\dfrac{2^{i+2}}{i! \tbinom{2j}{j}} \left(\dbinom{2j-i-1}{j} - \dbinom{2j-i-2}{j} \right) 
=
\dfrac{2^{i+2}}{i! \tbinom{2j}{j}} \dbinom{2j-i-2}{j-1} \\
=& \dfrac{2^{i+2} j}{i!}  \dfrac{j!(2j-i-2)!}{(2j)!(j-i-1)!}
=
\dfrac{2^{i+1} j}{i!} \dfrac{(j-1)!(2j-i-2)!}{(2j-1)!(j-i-1)!}
\end{align*}
and
\begin{align*}
\dfrac{2j}{(2j-1)} a^i_{j-1} :=&  \dfrac{2j}{(2j-1)}  \dfrac{2^i}{i!}\dfrac{\tbinom{2j-i-2}{j-1}}{  \tbinom{2j-2}{j-1}} \\ =&  \dfrac{2^{i+1} j }{i! (2j-1)} \dfrac{(2j-i-2)!(j-1)!(j-1)!}{(j-1)!(j-i-1)!(2j-2)!}
=
\dfrac{2^{i+1} j }{i!} \dfrac{(2j-i-2)!(j-1)!}{(j-i-1)!(2j-1)!}\,.
\end{align*}
This completes the proof of Lemma~\ref{L:EIGv}. 
\end{proof}
%%%%
% SECTION: Proof of Proposition~\ref{prop:reduced_system}
%%%%
\section{Proof of Proposition~\ref{prop:reduced_system}}
\label{subsec:proof_CM}
Before we start with the actual proof it is convenient to collect a few preliminaries that are not relevant to mention in the main body of the paper and too lengthy and distracting to be discussed in the proof below. By the assumptions of Proposition~\ref{prop:reduced_system}, the operator $\mathcal{L}$ from  \eqref{Ldef} possesses a zero eigenvalue with algebraic multiplicity $\Np+1$ and geometric multiplicity 1. The rest of the spectrum is located in the left half-plane and is bounded away from the imaginary axis for positive $\varepsilon$. In addition to the kernel eigenfunction $\Psi_0$, there exist $\Np$ generalised eigenfunctions satisfying 
\begin{align}\label{e:Jordan}
 \mathcal{L} \Psi_0 = 0 \, , \quad \mathcal{L} \Psi_{j+1} = \varepsilon^2 \Psi_j \, , \quad  j = 0, \ldots, \Np-1 \,.
 \end{align}
The $L^2$-adjoint operator $ \mathcal{L}^* $ with respect to the duality product
$$
 \langle Z, \widetilde{Z} \rangle := \langle U, \widetilde{U} \rangle_{L^2} + \sum_{j = 1}^{\Np} \langle V_j, \widetilde{V}_j \rangle_{L^2} 
$$
has the same spectral properties as $ \mathcal{L}^*$ with a (not explicitly known) Jordan chain
\begin{align*}
 \mathcal{L}^* \Psi_0^* = 0 \, , \quad  \mathcal{L}^* \Psi^*_{j+1} = \varepsilon^2 \Psi_j^* \, , \quad  j = 0, \ldots, \Np-1 \,.
 \end{align*}
In the following we will use the adjoint eigenfunctions to perform projections. 
 For completeness, we also prove the following basic observations. 
% Lemma
\begin{lemma}\label{lemma:CM_1}
For $\Np>0$ and $k_1,k_2=0,1,\ldots,\Np$ we have
\begin{align}\label{eq:p_n}
 \langle \Psi_{k_1},  \Psi_{k_2}^* \rangle =: p_j \, , k_1 + k_2 = j \, , \quad j = 0,1, \ldots, 2\Np, 
\end{align}
and $p_j = 0$ for all $ j < \Np$. 
Moreover, $p_\Np\neq 0$.
\end{lemma}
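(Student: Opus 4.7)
My plan is to exploit the Jordan chain relations \eqref{e:Jordan} together with the analogous chain for the adjoint $\mathcal{L}^*$. The key observation is a \emph{shifting identity}: whenever $0 \leq k_1 \leq \Np-1$ and $1 \leq k_2 \leq \Np$, the adjoint pairing gives
\[
\varepsilon^2 \langle \Psi_{k_1}, \Psi^*_{k_2}\rangle
= \langle \mathcal{L}\Psi_{k_1+1}, \Psi^*_{k_2}\rangle
= \langle \Psi_{k_1+1}, \mathcal{L}^*\Psi^*_{k_2}\rangle
= \varepsilon^2 \langle \Psi_{k_1+1}, \Psi^*_{k_2-1}\rangle .
\]
Iterating this identity yields $\langle \Psi_{k_1}, \Psi^*_{k_2}\rangle = \langle \Psi_{k_1'}, \Psi^*_{k_2'}\rangle$ whenever $k_1 + k_2 = k_1' + k_2'$ with all indices in $\{0,\ldots,\Np\}$, which is precisely the well-definedness of $p_j$ in \eqref{eq:p_n}.

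For the vanishing $p_j = 0$ when $j < \Np$, I would represent $p_j$ in the form $\langle \Psi_j, \Psi^*_0\rangle$ (legal since $j \leq \Np-1$) and compute
\[
\varepsilon^2 p_j
= \varepsilon^2 \langle \Psi_j, \Psi^*_0\rangle
= \langle \mathcal{L}\Psi_{j+1}, \Psi^*_0\rangle
= \langle \Psi_{j+1}, \mathcal{L}^*\Psi^*_0\rangle
= 0,
\]
where the last equality uses $\Psi^*_0 \in \ker \mathcal{L}^*$ and $j+1 \leq \Np$ so that $\Psi_{j+1}$ is within the chain.

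For the nondegeneracy $p_\Np \neq 0$, I would argue by the maximality of the Jordan chain. Suppose for contradiction $p_\Np = \langle \Psi_\Np, \Psi^*_0\rangle = 0$. Since the zero eigenvalue of $\mathcal{L}$ has geometric multiplicity one, the generalised kernel of $\mathcal{L}^*$ critical to solvability at zero is spanned (modulo stable directions) by $\Psi^*_0$; by the Fredholm alternative (applicable because by Lemma~\ref{L:SE} and Proposition~\ref{L:S} the remaining spectrum of $\mathcal{L}$ is bounded away from zero, making $\mathcal{L}$ Fredholm of index zero on the appropriate localised function space), the equation $\mathcal{L}\Psi_{\Np+1} = \varepsilon^2 \Psi_\Np$ would then be solvable. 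The resulting $\Psi_{\Np+1}$ extends the Jordan chain, contradicting the hypothesis that the algebraic multiplicity of the zero eigenvalue equals $\Np+1$. Hence $p_\Np \neq 0$.

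The routine steps are the shifting identity and the vanishing argument; the only genuinely conceptual point is the Fredholm/maximality step for $p_\Np \neq 0$, which relies on the spectral picture established in \S\ref{S:1+N_S} (geometric multiplicity one of the zero eigenvalue, and the rest of $\sigma(\mathcal{L})$ being bounded away from the imaginary axis). I do not anticipate any serious technical obstacle beyond verifying that the $L^2$-duality product is well defined on the exponentially localised (generalised) eigenfunctions constructed in Lemma~\ref{L:GEIGSG}, which is immediate from their explicit profiles.
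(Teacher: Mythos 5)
Your proposal is correct and follows essentially the same route as the paper: the shifting identity from the two Jordan chains gives well-definedness of $p_j$, pairing against the kernel element gives the vanishing for $j<\Np$, and the nondegeneracy follows from the range–kernel orthogonality (Fredholm alternative) together with maximality of the Jordan chain — the paper phrases this last step via $\mathrm{ran}(\mathcal{L}^*)=(\ker\mathcal{L})^\perp$ and the adjoint chain, while you use the dual statement for $\mathcal{L}$, which is an immaterial difference.
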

\begin{proof}
Using the Jordan chain structure, we have that
\begin{align*}
 \langle \Psi_j, \Psi_k^* \rangle =  \langle \Psi_{j-1}, \Psi_{k+1}^* \rangle =  \langle \Psi_{j+1}, \Psi_{k-1}^* \rangle \,,
\end{align*}
which justifies 
definition \eqref{eq:p_n}. Moreover, 
\begin{align*}
 \langle \Psi_0, \Psi_k^* \rangle 
=  \langle \Psi_0, \calL^*\Psi_{k+1}^* \rangle = \langle \calL \Psi_0, \Psi_{k+1}^* \rangle 
 =  0 \,, \quad k = 0, \ldots, \Np-1\, ,
\end{align*}
(for $k=0$ we use $\Psi_0=\calL \Psi_1$) and by the previous relation
\begin{align*}
 \langle \Psi_j, \Psi_k^* \rangle=  0 \,, \quad j + k \leq \Np-1\, ,
\end{align*}
which is the second statement. 
Concerning the last statement, since $\mathrm{ran}(\calL^*) = (\mathrm{ker}(\calL))^\perp$, if $\langle \Psi_0, \Psi_{\Np-1}^*\rangle=0$ then $\Psi_{\Np-1}\in\mathrm{ran}\calL^*$, which is not possible as the dimension of the generalised kernel is $\Np$.
\end{proof}
\begin{lemma}\label{lemma:CM_2}
$\langle \Psi_{j}',  \Psi_{k}^* \rangle  = 0\, , \quad  j,k \in \{0, \ldots,\Np\}.
$
\end{lemma}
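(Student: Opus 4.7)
My plan is to combine integration by parts with the Jordan chain relations for both the primal and adjoint (generalised) eigenfunctions, together with the translation invariance of \eqref{eq:multi-component-RD}. Since $\Psi_j$ and $\Psi_k^*$ decay exponentially at $\pm\infty$ (they belong to point spectrum), integration by parts directly gives
\[
\langle \Psi_j',\Psi_k^*\rangle = -\langle \Psi_j,(\Psi_k^*)'\rangle,
\]
so the claim is antisymmetric upon shifting the derivative between the two factors.

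The decisive step will be the base case $j=0$. Since $\Psi_0=c_0\partial_x Z_{\rm SF}$ is the translation mode, $\Psi_0'=c_0 Z_{\rm SF}''$ carries structural meaning: differentiating the equilibrium family identity $\mathcal{G}(Z_{\rm SF}(\cdot-a))\equiv 0$ twice in $a$ at $a=0$ yields
\[
\mathcal{L}Z_{\rm SF}'' = -D^2\mathcal{G}(Z_{\rm SF})[Z_{\rm SF}',Z_{\rm SF}'].
\]
Pairing with $\Psi_{k+1}^*$ and using $\mathcal{L}^*\Psi_{k+1}^*=\varepsilon^2\Psi_k^*$ rewrites $\langle Z_{\rm SF}'',\Psi_k^*\rangle$ for $k\leq \Np-1$ as an explicit projection of the quadratic nonlinearity against $\Psi_{k+1}^*$. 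The vanishing of this projection is the Fredholm/compatibility condition already implicitly used at the top of the chain in the proof of Lemma~\ref{lemma:CM_1}; the explicit slow--fast profiles of Lemmas~\ref{L:EIGSF}, \ref{L:GEIGSG} and the reflection symmetry \eqref{PAR} of the slow components will be used to close the residual $k=\Np$ case by direct computation.

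For $j\geq 1$, I differentiate the Jordan chain identity $\mathcal{L}\Psi_{j+1}=\varepsilon^2\Psi_j$ in $x$, obtaining $\mathcal{L}\Psi_j' = \varepsilon^2\Psi_{j-1}'-\mathcal{K}\Psi_j$, where $\mathcal{K}:=[\partial_x,\mathcal{L}]$ acts non-trivially only on the $U$-component (the slow $V_j$-equations have constant coefficients and commute with $\partial_x$). Pairing with $\Psi_{k+1}^*$ yields the recursion
\[
\langle \Psi_j',\Psi_k^*\rangle = \langle \Psi_{j-1}',\Psi_{k+1}^*\rangle - \varepsilon^{-2}\langle \mathcal{K}\Psi_j,\Psi_{k+1}^*\rangle,\qquad j\geq 1,\ k\leq \Np-1,
\]
which together with the base case allows propagation of the vanishing along the antidiagonals $j+k=\mathrm{const}$. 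The main obstacle is that the correction term $\langle \mathcal{K}\Psi_j,\Psi_{k+1}^*\rangle$ can, via $\mathcal{K}\Psi_j=\varepsilon^2\Psi_{j-1}'-\mathcal{L}\Psi_j'$, reproduce the recursion tautologically. This self-reference is broken by feeding in the base-case identity from the second paragraph and by iteratively applying integration by parts; the endpoint $k=\Np$, where no further recursion in $k$ is available, is then treated separately via the explicit leading-order profile of $\Psi_\Np^*$.
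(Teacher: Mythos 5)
Your route is genuinely different from the paper's, but it contains a gap that I do not see how to close without importing the very ingredient the paper's proof is built on. The decisive base case is asserted, not proved: from $\mathcal{L}Z_{\rm SF}''=-D^2\mathcal{G}(Z_{\rm SF})[Z_{\rm SF}',Z_{\rm SF}']$ and $\mathcal{L}^*\Psi_{k+1}^*=\varepsilon^2\Psi_k^*$ you obtain
\[
\varepsilon^2\langle Z_{\rm SF}'',\Psi_k^*\rangle=-\langle D^2\mathcal{G}(Z_{\rm SF})[Z_{\rm SF}',Z_{\rm SF}'],\Psi_{k+1}^*\rangle,
\]
and you claim the right-hand side vanishes as a ``Fredholm/compatibility condition already implicitly used'' in Lemma~\ref{lemma:CM_1}. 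No such condition appears there: that proof only uses $\langle\mathcal{L}\Psi_0,\cdot\rangle=\langle\Psi_0,\mathcal{L}^*\cdot\rangle$ together with $\mathrm{ran}(\mathcal{L})\perp\ker(\mathcal{L}^*)=\mathrm{span}\{\Psi_0^*\}$. The latter controls only the pairing with $\Psi_0^*$, i.e.\ the case $k+1=0$, which never occurs here; for $k+1\geq 1$ the generalised adjoint eigenfunctions are not in $\ker\mathcal{L}^*$, and the pairing of an element of $\mathrm{ran}(\mathcal{L})$ with them has no reason to vanish on abstract grounds — pairings of $D^2\mathcal{G}$ against generalised adjoint eigenfunctions are precisely the quadratic normal-form coefficients that the paper needs to be \emph{nonzero} elsewhere. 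The same difficulty recurs in your induction step: the commutator contributions $\langle[\partial_x,\mathcal{L}]\Psi_j,\Psi_{k+1}^*\rangle$ are genuine unknowns, and ``iteratively applying integration by parts'' only moves the derivative back and forth; you concede the recursion threatens to be tautological, and without an external input it is.

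The external input that makes all of these quantities vanish is a reflection symmetry, and that is the paper's entire (two-line) proof: $Z_{\rm SF}$ is odd in $x$, so $\Psi_0=Z_{\rm SF}'$ is even; since $\mathcal{L}$ and $\mathcal{L}^*$ preserve parity, every generalised eigenfunction and adjoint generalised eigenfunction is even; hence $\Psi_j'$ is odd, $\Psi_k^*$ is even, and the $L^2$ pairing of an odd with an even function over $\R$ vanishes. The same parity is the reason your $D^2\mathcal{G}[\Psi_0,\Psi_0]$ and $[\partial_x,\mathcal{L}]\Psi_j$ (both odd) pair to zero with the even $\Psi_{k+1}^*$. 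Without this observation your scheme does not close; with it, the machinery is unnecessary. Note also that your proposed treatment of the $k=\Np$ endpoint via leading-order profiles would only yield approximate vanishing, whereas the projection argument in Proposition~\ref{prop:reduced_system} uses the identity exactly.
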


\begin{proof}
The stationary front $ Z_{\rm SF} $ is an odd function of $x$. Hence, $\Psi_0 =  Z_{\rm SF}'$ is even. Furthermore, since $\mathcal{L}$ preserves parity, all (generalised) eigenfunctions are even functions of $x$. The same is true for the adjoint (generalised) eigenfunctions functions. 
\end{proof}

%%%
% Proof
%%%
\begin{proof}[Proof of Proposition~\ref{prop:reduced_system}]
We consider a fixed sufficiently small $\eps>0$ so that Lemma~\ref{L:N+1_J} applies. For readability, we suppress the $\varepsilon$-dependence of terms in the following. 
We will apply the center manifold theory as in \cite[e.g.]{haragus2010local} which will justify using the Ansatz \eqref{eq:Zsplit1} $Z(x,t) = Z_{\rm SF}(\eta) + \vec{\Psi}(\eta)\cdot\vec{b} + R(\eta, t)$ with $\eta=x-a(t)$ and $R(\cdot,t)$ orthogonal to $\Psi_j^*$, $j=0,1,\ldots,\Np$. For completeness, we next show the relevant steps in the derivation. Inserting the ansatz into \eqref{eq:multi-component-RD} in the form \eqref{eq:Zeqn} gives 
\begin{align*}
  \dot{a} \left( \Phi + \vec{\Psi}'\cdot\vec{b} +\partial_{\eta} {R}  \right) + \vec{\Psi}\cdot\dot{\vec{b}} +\partial_{t} {R} = \mathcal{L} \left( \vec{\Psi}\cdot\vec{b} \right) + \mathcal{L} {R}  + \check{\mathcal{N}}(\vec{b}, R,\check{P}) \,,
\end{align*}
with nonlinear part $\check{\mathcal{N}}(\vec{b}, R,\check{P}) = \calO\left(|\vec{b}|^2 + \|R\|^2 + |\cP|\right)$. Using  \eqref{e:Jordan} this simplifies to
\begin{align}\label{eq:preCMF}
  \dot{a} \left( \Phi + \vec{\Psi}'\cdot\vec{b} +\partial_{\eta} {R}  \right) + \vec{\Psi}\cdot\dot{\vec{b}} +\partial_{t} {R} =  \varepsilon^2 \sum_{j=0}^{\Np-1} b_{j+1} \Psi_j + \mathcal{L} R +  \check{\mathcal{N}}(\vec{b}, R, \check{P}) \,.
\end{align}
Successive projections of this onto the generalised eigenspace via the adjoint eigenfunctions, using Lemmas~\ref{lemma:CM_1} and \ref{lemma:CM_2}, and the assumption that $R$ is orthogonal to the generalized kernel
gives the system
\begin{align*}
 \dot{a} \widetilde{\rho}(R) + A
 \begin{pmatrix}
  \dot{a} \\ \dot{\vec{b}} 
 \end{pmatrix}
 = \varepsilon^2 
A\Jnil %B
 \begin{pmatrix}  {a} \\ \vec{b}
\end{pmatrix}
 +
 \bar{\mathcal{N}}(\vec{b}, R,\check{P}) \, ,
\end{align*}
where  $\bar{\mathcal{N}}(\vec{b}, R,\check{P})=\calO(|\vec{b}|^2 + \|R\|^2+|\cP|)$, 
$\widetilde{\rho}(R)=\calO(\|R\|)$ and 
\begin{align*}
\widetilde{\rho}(R) := \begin{pmatrix}
 \langle \partial_{\eta} R, \Psi_{\Np}^* \rangle \\
 \!\!\langle \partial_{\eta} R, \Psi_{\Np-1}^* \rangle\!\! \\
  \vdots \\
  \langle \partial_{\eta} R, \Psi_0^* \rangle 
\end{pmatrix}, \,
    \bar{\mathcal{N}}(\vec{b}, R, \check{P}) := \begin{pmatrix}
 \langle \check{\mathcal{N}}(\vec{b}, R, \check{P}), \Psi_{\Np}^* \rangle \\
 \langle \check{\mathcal{N}}(\vec{b}, R, \check{P}), \Psi_{\Np-1}^* \rangle \\
  \vdots \\
  \langle \check{\mathcal{N}}(\vec{b}, R, \check{P}), \Psi_0^* \rangle 
  \end{pmatrix} \, ,\\
 \Jnil:=
{\small \begin{pmatrix}
   0   & 1 & 0 & \cdots & 0 \\[-1ex]
   \vdots   & \ddots & \ddots & \ddots &  \vdots  \\[-1ex]
   \vdots &  & \ddots & \ddots &0\\[-1ex]
   \vdots   &   &  &  \ddots & 1 \\[-1ex]
   0   & \cdots & \cdots & \cdots& 0 
\end{pmatrix} }\,,\,
A := \begin{pmatrix}
 p_{\Np} & p_{\Np+1} & \cdots & p_{2\Np} \\
    0  & \ddots & \ddots & \vdots \\
    \vdots  &   \ddots   & \ddots & p_{\Np+1} \\
    0  &   \cdots   & 0     & p_{\Np}
\end{pmatrix}.
\end{align*}
Due to Lemma~\ref{lemma:CM_1} $A$ is invertible and multiplication with 
$A^{-1}$ gives the system
\begin{align}\label{eq:arho-sys}
 \dot{a} \rho(R) +
 \left( 
 \begin{array}{c}
  \dot{a} \\ \dot{\vec{b}}  
 \end{array}
 \right)
 = \varepsilon^2 
\Jnil\begin{pmatrix}  {a} \\ \vec{b}
\end{pmatrix} +
 A^{-1}\bar{\mathcal{N}}(\vec{b}, R,\check{P}),
\end{align}

where ${\rho}(R) := A^{-1}\widetilde{\rho}(R) = \calO(\|R\|)$. 
For $\|R\|$ sufficiently small, extracting the first equation of \eqref{eq:arho-sys} and expanding $1/(1+\rho_1(R))$ gives 
\begin{align*}
\dot{a} = \left( \frac{1}{1+\rho_1(R)} \right) \left( \varepsilon^2 b_1 + 
(A^{-1}\bar{\mathcal{N}}(R, \vec{b},\check{P}))_1 \right)  = \varepsilon^2 b_1 + g_A(R, \vec{b},\check{P}) \, ,
\end{align*}
where $g_A(R, \vec{b},\check{P})=\calO(\|R\|^2+|\vec{b}|^2 + |\cP|)$. Multiplication with $\rho_j$ thus gives
\begin{align*}
 \dot{a} {\rho}_j(R) = \left( \varepsilon^2 b_{1} + g_A(R, \vec{b}),\check{P} \right) {\rho}_j(R) 
 = \calO\left(|\vec{b}|^2 \|R\|^2 + |\cP|\right)\, , \quad j = 2, \ldots,\Np+1 \,.
\end{align*}
Substitution into \eqref{eq:arho-sys} allows to write the last $\Np$ components of \eqref{eq:arho-sys} as
\begin{align*}
\frac{d}{dt} \vec{b}
 = \varepsilon^2 
\Jnil \vec{b}
 +
 \mathcal{N}(\vec{b}, R,\check{P}) \, ,
\end{align*}
where $ \mathcal{N}(\vec{b}, R,\check{P}) =\calO\left(|\vec{b}|^2 \|R\|^2 + |\cP|\right)$ 
accounts for all purely nonlinear terms,
which we do not need to specify at this point. Center manifold theory as presented in \cite[e.g.]{haragus2010local} or as employed for $N=2$ in \cite{CBvHHR19} provides the existence of a center manifold that can locally be described by a smooth function such that 
\begin{align*}
 R = H(\vec{b}, \check{P})  =\calO\left(|\vec{b}|^2 + |\cP|\right)\, .
\end{align*}
independent of $a$. In conclusion,  
\eqref{eq:preCMF}
is locally of the form
\begin{align}\label{eq:CMF0}
 \dot{a} =  \varepsilon^2 b_1 + g_A(H(\vec{b}, \check{P}) , \vec{b},\check{P})  \, , \quad
\frac{d}{dt} \vec{b}
 = \varepsilon^2 
\Jnil \vec{b}
 +
 \mathcal{N}(\vec{b}, H(\vec{b}, \check{P}),\check{P})\, .
\end{align}
with nonlinear terms $g_A, \mathcal{N} = \calO(|\vec{b}|^2 + |\cP|)$. In particular, 
\begin{align*}
\dot{a} = \varepsilon^2 c_1 \, \text{ with }
 \varepsilon^2 c_1 := \varepsilon^2 b_1 + g_A(H(\vec{b}, \check{P}) , \vec{b},\check{P}) \, ,
\end{align*}
and further performing a series of near-identity changes of variables (as, for instance, in \cite{CBvHHR19}, Lemma 3) and Taylor expansion finally yields the statement. The factor $ \varepsilon^2 $ of {the linear terms in} $\Gred$ follows {\it a posteriori} from the fact that, by construction, the eigenvalues of $\mathcal{L}$ of $\mathcal{O}(\varepsilon^2)$ coincide with the eigenvalues of the reduced system on the center manifold. Combining this with the leading order existence analysis implies that all terms in $\Gred$ are order $\eps^2$. This concludes the proof.
\end{proof}
\end{appendix}

\end{document}